%
\documentclass[10pt]{article}
\usepackage{amsthm}
\usepackage{amsfonts, amssymb, amsmath, amscd, latexsym}
\usepackage{mathrsfs,epsfig}
\usepackage[latin1]{inputenc}
\usepackage[all]{xy}
\usepackage{cite}
\usepackage{subcaption}
\usepackage[dvipsnames]{xcolor}

\newtheorem{knowntheorem}{Theorem}



\def\today{\ifcase\month\or
January\or February\or March\or April\or May\or June\or July \or
August\or September\or October\or November\or December\fi
\space\NNumber\day, \NNumber\year}
\numberwithin{equation}{section}

\def\eu{ {\, \textrm{\rm e} }}

\def\sbwd{\sigma^{\textrm{bwd}}}

\def\sfwd{\sigma^{\textrm{fwd}}}
\def\sbwd{\sigma^{\textrm{bwd}}}
\def\sTfwd{\Sigma^{\textrm{fwd}}}
\def\sTbwd{\Sigma^{\textrm{bwd}}}
\def\dist{{\mathscr{D}}}
\def\al{\alpha}

\def\ga{\vec{\gamma}}

\def\ep{\varepsilon}
\def\la{\lambda}
\def\si{\sigma}

\def\Om{\Omega}
\def\na{\vec{\nabla}}

\def\N{{\mathbb N}}
\def\Z{{\mathbb Z}}
\def\R{{\mathbb R}}

\def\ee{{\mathrm e}}
\def\EE{{\mathcal E}}

\def\MM{{\mathcal M}}

\def\SS{{\mathcal S}}
\def\TT{{\mathcal T}}

\def\DDD{\mathscr{D}}
\def\TTT{\mathscr{T}}
\def\PPP{\mathscr{P}}

\def\dd{\delta}
\def\aup{a^{\uparrow}}
\def\adown{a^{\downarrow}}
\def\lzero{\Lambda^0}
\def\l1{\Lambda^{1}}

\def\Q{\vec{Q}}
\def\P{\vec{P}}

\def\zz{\vec{\zeta}}
\def\xxi{\vec{\xi}}

\def\x{\vec{x}}
\def\y{\vec{y}}
\def\f{\vec{f}}
\def\g{\vec{g}}

\newcommand{\lit}{{\lim_{t \to +\infty}}}
\newcommand{\lito}{{\lim_{t \to -\infty}}}

\def\Ae(t){\boldsymbol{A^\ep}}

\def\({\left(}
\def\){\right)}
\DeclareMathOperator{\tr}{tr}
\DeclareMathOperator{\lspan}{span}
\newcommand\bs[1]{{\boldsymbol{#1}}}

\def\ft{\footnote}
\def\und{\underline}
\def\ov{\overline}

\def\ogap{\Theta}   
\def\ngap{\Theta}    

%

\newcommand\assump[1]{{\rm\textbf{#1}}}
\def\inn{\textrm{in}}

%
\newtheorem{theorem}{Theorem}[section]
\newtheorem{proposition}[theorem]{Proposition}
\newtheorem{lemma}[theorem]{Lemma}
\newtheorem{cor}[theorem]{Corollary}
\theoremstyle{definition}

\theoremstyle{remark}
\newtheorem{remark}[theorem]{Remark}
%


\begin{document}
\title{Some contributions on Melnikov chaos for smooth and piecewise-smooth planar systems:
``trajectories chaotic in the future"}
\pagestyle{myheadings}
\date{}
\markboth{}{Melnikov chaos for planar systems}

\author{	
A. Calamai\thanks{Dipartimento di Ingegneria Civile, Edile e Architettura,
Universit\`a Politecnica delle Marche, Via Brecce Bianche 1, 60131 Ancona -
Italy. Partially supported by G.N.A.M.P.A. - INdAM (Italy) and
PRIN 2022 - Progetti di Ricerca di rilevante Interesse Nazionale, \emph{Nonlinear differential
problems with applications to real phenomena} (Grant Number: 2022ZXZTN2).
},
M. Franca\thanks{Dipartimento di Matematica e Informatica, Universit\`a di Firenze,  50134 Firenze, Italy.
 Partially supported by G.N.A.M.P.A. - INdAM (Italy) and PRIN 2022 - Progetti di Ricerca di rilevante Interesse Nazionale, \emph{Nonlinear
 differential
problems with applications to real phenomena} (Grant Number: 2022ZXZTN2). },
M. Posp\' i\v sil\thanks{Department of Mathematical Analysis and Numerical Mathematics, Faculty of Mathematics, Physics and Informatics,
Comenius University in Bratislava,
Mlynsk\'a dolina, 842 48 Bratislava, Slovakia; Mathematical Institute, Slovak Academy of Sciences, \v Ste\-f\'a\-ni\-ko\-va 49, 814 73
Bratislava, Slovakia.
Partially supported by the Slovak Research and Development Agency under the Contract no. APVV-23-0039, and by the Grants VEGA 1/0084/23 and
VEGA-SAV 2/0062/24.}
}

\maketitle
\begin{abstract}
We consider a $2$-dimensional autonomous system subject to a $1$-periodic perturbation, i.e.
$$ \dot{\x}=\f(\x)+\ep\g(t,\x,\ep),\quad \x\in\Om .$$
We assume that for $\ep=0$ there is a trajectory $\ga(t)$  homoclinic to the origin which is
a critical point: in this context Melnikov theory provides a sufficient condition for the insurgence of a
chaotic pattern when $\ep \ne 0$.

In this paper we show that for any line $\Xi$ transversal to $\{\ga(t) \mid t \in \R \}$ and any
$\tau \in [0,1]$  we can find a set $\aleph^+(\Xi,\tau)$ of initial conditions, located in $\Xi$ at $t=\tau$, giving rise
to a pattern chaotic just in the future, i.e.\ for $t\geq\tau$. Further diam$(\aleph^+(\Xi,\tau)) \le \ep^{(1+\nu)/ \und{\sigma}}$
where  $\und{\sigma}>0$ is a constant and $\nu>0$ is a parameter that can be chosen as large as we wish.

The same result holds true for $t\leq\tau$: we show that there is a set $\aleph^-(\Xi,\tau)$ of initial conditions giving rise
to a pattern chaotic just in the past.
In fact all the results are developed in a piecewise-smooth context, assuming that $\vec{0}$ lies on the discontinuity  curve: we recall
that in this setting chaos is not possible if we have sliding phenomena close to the origin.
This paper can also be
considered as
the first part of the project to show the existence of classical chaotic phenomena when sliding close to the origin is not present.
\end{abstract}

{\bf Keywords: Melnikov theory, piecewise-smooth systems,  homoclinic trajectories, non-autonomous dynamical systems,
non-autonomous perturbation}

{\bf 2020 AMS Subject Classification: Primary 34A36, 37G20, 37G30; Secondary 34C37, 34E10, 37C60.}

\section{Introduction}
In this paper we study the chaotic behavior of a $2$-dimensional piecewise-smooth  system  (possibly discontinuous) subject
to a non-autonomous perturbation, by means of the Melnikov method.
Let us start for illustrative purposes to consider the smooth case, i.e.,

\begin{equation}\label{eq-smooth}\tag{S}
	\dot{\x}=\f(\x)+\ep\g(t,\x,\ep),\quad \x\in\Om ,
\end{equation}
where  $\Omega \subset \R^2$ is an open set, $\ep\in\R$ is a small parameter, $\f$ and $\g$ are $C^r$, $r \ge 2$.

We assume that the origin $\vec{0} \in\Om\subset \R^2$ is a critical point for \eqref{eq-smooth} for any $\ep \ge 0$ and that
for $\ep=0$ there is a trajectory $\ga(t)$ homoclinic to $\vec{0}$.
The classical Melnikov theory provides a condition which is sufficient for the insurgence
of a chaotic pattern for $\ep \ne 0$.
Namely it is sufficient to require that $\g$ is $1$-periodic in $t$ and that a computable function $\MM(\tau)$, see
\eqref{melni-disc} below, has a non-degenerate zero:
this leads to the formation of a Smale horseshoe and of chaotic dynamics.

More precisely, let $\x(t,\tau; \xxi)$ denote the trajectory leaving from $\xxi$ at $t=\tau$, and denote by $\Phi(t; \tau) \xxi= \x(t,\tau;
\xxi)$, so that
$\Phi(t; \tau)$ is a $C^r$-diffemorphism for any $t,\tau \in \R$; let $\tau_0 \in [0,1]$ be such that $\MM(\tau_0)=0 \ne \MM'(\tau_0)$.
Let $\bs{\Gamma}= \{\ga(t) \mid t \in \R \} \cup \{\vec{0} \}$.
Then, there is $\ogap(\ep)$ large enough so that for any positive integer $k>\ogap(\ep)$ we can find  a set  $\aleph$ which is invariant
for $\Phi(k+\tau_0,\tau_0)$ and which has the following properties.
\emph{For any sequence $\ee \in \EE= \{0,1 \}^{\Z}$ there is a unique $\xxi \in \aleph$ such that
the trajectory $\x(t, \tau_0; \xxi)$ is either ``close to
 $\bs{\Gamma}$'' or ``close to the origin'';} i.e., either $\|\x(t, \tau_0; \xxi)-\ga(t-T_{2j})\|=O(\ep)$ or $\|\x(t,
 \tau_0; \xxi)\|=O(\ep)$
 whenever $t \in [T_{2j-1}, T_{2j+1}]$, where $T_j= jk+\tau_0$.

   This kind of results started from the work of Melnikov
  \cite{Me}, but an important step forward
was performed by Chow et al.\ in \cite{CHM}, and a big progress is due to
 \cite{Pa84} where Palmer  addressed the
$n$-dimensional case where $n \ge 2$.
This theory has been generalized in several directions, in particular it has been extended to the almost periodic case,
see e.g. \cite{Sch, PS},  and to the case where the zeros of $\MM(\tau)$ are degenerate, see e.g. \cite{BF02C, BF02M}.
Afterwards
the so-called perturbation approach has been widely developed by many authors.
 Melnikov theory is by now well-established for smooth systems, and there are many works devoted to it.
For example we refer to \cite{CHM,G1,GH, Pa84, Pa00, PS,Sch, St,MS,WigBook, Wig99}.

In this paper we review completely the construction of the chaotic pattern, motivated by the project to extend the classical theory to
a discontinuous setting, and to relax further the assumptions on the recurrence properties and on non-degeneracy of the zeros of the Melnikov
function.
 In particular all the proofs are written directly in the piecewise-smooth discontinuous setting.

However, in this paper we present some results of intrinsic mathematical interest, which, as far as we are aware, are new even in the
\emph{smooth} case. In particular we develop a new iterative scheme which allows us to select sets
$\aleph^+$ and $\aleph^-$  giving rise to solutions performing a  possibly infinite number of loops either in the
future or in the past,
following any prescribed sequence of two symbols.
Roughly speaking     the trajectories leaving
from $\aleph^+$ are ``chaotic in the future'', while the ones leaving from $\aleph^-$ are ``chaotic in the past'': we find
some information concerning $\aleph^{\pm}$ which have not appeared previously in literature, as far as we are aware.
More precisely let us  consider, as before, the case in which
 $\g$ is $1$-periodic in $t$.
Let $\Xi$ be a  line (or more generally, any curve)
 transversal to any point in $\bs{\Gamma} \setminus \{ \vec{0} \}$,  say $\ga(s)$, $s \in \R$ for definiteness,
 let $\tau_0 \in [0,1]$ and let $T_j=kj+\tau_0$ as above and set
$\TT^+=(T_j)$, $j \ge 1$, $\TT^-=(T_j)$, $j \le -1$. Then
we construct the sets $\aleph^+= \aleph^+(\tau, \TT^+, \Xi)$ and
$\aleph^-= \aleph^-(\tau, \TT^-, \Xi)$ with the following properties: if $\xxi \in \aleph^+$ then
$\x(t,  \tau ; \xxi)$ is either ``close to
 $\bs{\Gamma}$'' or ``close to the origin'' for $t \ge \tau$, while if $\xxi \in \aleph^-$,
 $\x(t,  \tau ; \xxi)$ has this property for $t \le \tau$  (see Section \ref{fractal.space} for more details).

  In fact we may  also get a  better localization: if we replace the constant $\ngap(\ep)$ defined above by
 $(\nu+1) \ngap(\ep)$ where $\nu \ge \nu_0 \ge 1$ is arbitrary high,
   the set $\aleph^+$ gets smaller. More precisely, let
  us
  denote by diam$(\aleph^+)$
  the diameter of $\aleph^+$. Then we get
  $$ \textrm{diam}(\aleph^+):= \sup \{ \|\vec{P}-\vec{Q}\| \mid \vec{P}, \vec{Q} \in \aleph^+ \} =O(\ep^{ (1+\nu)/\underline{\sigma}}),$$
  where $0<\underline{\sigma}<1$ is a constant which depends only on the eigenvalues of $\bs{f_x}(\vec{0})$, see \eqref{defsigma}.
 Whence $\textrm{diam}(\aleph^+)$ can be chosen arbitrarily small, even with respect to $\ep$ which is the size of the perturbation,
just paying the prize of a larger time spent by the trajectories to perform a loop; further
$\aleph^+$ is located  in a one-dimensional set.

 However, $\aleph^+$ oscillates with $\tau$ within an $\ep$-neighborhood of $\ga(s)$; so even if we know that its
  diameter is $O(\ep^{ (1+\nu)/\underline{\sigma}})$, and that it is $O(\ep^{ (1+\nu)/\underline{\sigma}})$ close to $\P_s(\tau)$,
 the intersection point between the stable manifold $\tilde{W}^s(\tau)$ and $\Xi$,
  we know its position just with $O(\ep)$ accuracy.

    In fact we have all the analogous results for the set $\aleph^-$ giving rise to patterns chaotic in past.

     As  discussed in \cite{CFPdiscAr},  this $O(\ep^{ (1+\nu)/\underline{\sigma}})$
      precision on the size of $\aleph^+$ and $\aleph^-$ is lost when we look at the set of initial
conditions
$\aleph$ giving rise to the classical pattern: chaotic both in the past and in the future. In fact
$\aleph$ will lie $O(\ep^{ (1+\nu)/\underline{\sigma}})$ close to the stable and unstable manifold but it
 will spread in an $O(\ep)$ neighborhood along the direction
of $\bs{\Gamma}$.
So we cannot locate $\aleph$ in a one-dimensional set.

 On the contrary, we
 emphasize  once again that the sets $\aleph^\pm$ are located in a $1$-dimensional object, $\Xi$, further we have a different set for any
 $\tau \in [0,1]$ and
 any  line (or more generally, any curve)
$\Xi$.
Indeed we conjecture that for any $\ee^+ \in \EE^+=\{0;1 \}^{\Z^+}$ and any $\tau \in [0,1]$ we may find a manifold of initial conditions of
trajectories
which mimic the
sequence $\ee^+$ in forward time.

 Our approach, inspired by the work by Battelli and Fe\v ckan, see \cite{BF08, BF10, BF11, BF12}, allows us to obtain a great flexibility
in the choice of the sets $\aleph^{\pm}$, and to weaken the recurrence properties required from the Melnikov
function $\MM(\tau)$,  see  \eqref{melni-disc} below.
\\
Let us consider again the smooth case.
In fact, even if the system is $1$-periodic we can choose aperiodic sequences $\tilde{\TT}=(\tilde{T}_j)_{j \in \Z}$.
 For example we can choose $\tilde{\TT}$ so that the gap $  \tilde{T}_j- \tilde{T}_{j-1}$
varies periodically between $\big[ \ngap +1 \big]$ and $2 \big[ \ngap +1 \big]$ (here $[\cdot]$ denotes the integer part) or it
  becomes unbounded.
Further, if there is $\tau_1 \ne \tau_0$ such that $\MM(\tau_1)=0 \ne \MM'(\tau_1)$ and   $\tau_1 \in [0,1]$, we may choose
a sequence $\bar{\TT}=(\bar{T}_j)_{j \in \Z}$ jumping randomly from a translate of $\tau_0$ to a translate of $\tau_1$.

Anytime we change either one of $\tau$, $\TT^{\pm}$  we obtain different sets $\aleph^{\pm}$ (in fact infinitely many of them) giving rise to
different chaotic
patterns and
they are all contained in the same $O(\ep^{ (1+\nu)/\underline{\sigma}})$ one-dimensional neighborhood, and they vary also if we change $\Xi$.

 In our opinion all these results concerning $\aleph^+$ and $\aleph^-$ increase our knowledge on the sensitive dependence of the system  on
 initial conditions.

We stress the fact that we assume much weaker recurrence properties than the classical ones. In particular we do not require any
non-degeneracy of the zeros of
$\MM(\tau)$
 (we just need $\MM$ to change sign): this allows us to consider systems in which $\g$ is made up by
 a sum of a periodic component and a noise,  not necessarily small, and on which we have little information, see Remark \ref{remP1}.
Notice that for this reason we choose to formulate our assumptions in terms of the Melnikov
function instead of  in terms of the maps $\f$ and $\g$.

 Further we show that, in the periodic case, the action of $\Phi(T_{i+1}, T_i)$ on $\aleph^+$ and $\aleph^-$ is semi-conjugated to the action
 of the Bernoulli
 shift on
$\EE^+= \{0;1 \}^{\Z^+}$ and $\EE^-= \{0;1 \}^{\Z^-}$ respectively, and we obtain analogous results in the aperiodic case.

In fact in literature we usually find conjugation (or semi-conjugation as e.g.\ in \cite{BF02C, SW}) with the Bernoulli shift on two symbols,
but using a fixed time gap $T_{j+1}-T_j=k>  \ngap(\ep)$ for any $j$ if the system is periodic or almost periodic (see
assumption \assump{P2} in Section \ref{fractal.space})
and $k$ is a multiple of a period or a quasi-period.
Here we can reproduce the classical situation (i.e. constant $T_{j+1}-T_j$), but we can also consider sequences where
$T_{j+1}-T_j=k_j>  \ngap(\ep)$ varies, see Section \ref{proof.Bernoulli}. In the former case we find semi-conjugation with the shift in two
symbols
with constant time gaps while in the latter with variable time gaps.
We use always   the same endomorphism in the periodic case, and  slightly different endomorphisms  in the almost periodic case
as we classically find in literature, see e.g.~\cite{PS}.

This allows us to consider the aperiodic case with variable time gaps and a different sequence  of endomorphisms
producing the semi-conjugation. In fact we think that   in all the cases we might find positive topological entropy
in the spirit of \cite{BW}, even if an actual proof is beyond the purpose of this paper.

In fact all the results of this paper are obtained already in a piecewise-smooth context. More precisely
we consider the piecewise-smooth system
\begin{equation}\label{eq-disc}\tag{PS}
	\dot{\x}=\f^\pm(\x)+\ep\g(t,\x,\ep),\quad \x\in\Om^\pm ,
\end{equation}
where $\Om^{\pm} = \{ \x\in\Om \mid \pm
G(\x)>0\}$, $\Om^{0}= \{ \x\in\Om \mid G(\x) = 0 \}$,   $G$ is a $C^{r}$-function on $\Om$ with $r> 1$
such that $0$ is a regular value of
$G$. Next,  $\f^\pm \in C^{r}_b(\Om^\pm \cup \Omega^0, \R^2)$, $\g\in C_b^r(\R\times\Om\times\R,\R^2)$ and
$G\in C_b^r(\Om,\R)$, i.e., the
derivatives of $\f^\pm$, $\g$ and $G$ are uniformly continuous and bounded up to the $r$-th order, respectively,
 if $r\in \N$ and up to $r_0$ if $r=r_0+r_1$, with $r_0 \in \N$ and $0< r_1 <1$ and the $r_0$-th derivatives are $r_1$
H\"older continuous.

In the last 20 years many authors addressed the problem of generalizing Melnikov theory
to a discontinuous (piecewise-smooth) setting, but assuming that $\vec{0} \not \in \Om^0$: among the other papers, let us mention e.g.,
 \cite{K2,KRG, LSZH16, BF08, BF11, CaFr, CDFP, DE13, GZ23, HuLi}.
Most of the cited papers concern the $2$-dimensional case, but Battelli  and Fe\v ckan managed to address the $n \ge 2$ case, and
they proved the persistence of a homoclinic \cite{BF08}, the insurgence of chaos
when the unperturbed homoclinic exhibits no sliding phenomena \cite{BF11} and when it does \cite{BF10}.

If, instead, we assume that the critical point \emph{lies on the discontinuity hypersurface} (in the case $n=2$, the curve $\Omega^0$),
the problem of detecting a chaotic behavior becomes even more challenging.
In this setting,
as a preliminary step, in \cite{CaFr, HuLi, HL24}  it was shown that the Melnikov condition found by Battelli and Fe\v ckan together with a
further
(generic) transversality requirement
(always satisfied in two dimensions)  are enough to prove the persistence of the homoclinic trajectory in the $n \ge 2$ case.

Quite surprisingly, in   \cite{FrPo},
which is still set in the two-dimensional case, it was shown
that the Melnikov condition,
which ensures the persistence of the homoclinic \cite{CaFr} and the transversality of
the crossing between stable and unstable leaves, \emph{does not guarantee} the insurgence of chaos
differently from the smooth setting, and also from the piecewise-smooth setting considered, e.g., in  \cite{K2,KRG, LSZH16, BF08, BF11,
CaFr,CDFP, LPT,MV21}.
In fact,  a natural       geometrical obstruction prevents the formation of chaotic patterns, and new bifurcation
 phenomena take place, scenarios which may exist just in a discontinuous context.

 This paper can also be regarded as an intermediate step to complete the picture   in the two-dimensional  discontinuous  case showing
 that, when    this obstruction is removed,
  system \eqref{eq-disc} exhibits a chaotic behavior
as in the smooth setting: the complete result is discussed in \cite{CFPdiscAr}.

 Summing up the main achievements of the paper   are the following:
  \begin{description}
 \item[1] We construct new sets $\aleph^-$ and $\aleph^+$ of initial conditions giving rise
 respectively  to patterns chaotic in the past and chaotic in the future.
 These sets lie on  \emph{any arbitrarily chosen  $1$-dimensional transversal} to  $\bs{\Gamma} \setminus \{\vec{0}\}$, say $\Xi$, and their
 diameters
 may be chosen of order $O(\ep^{(1+\nu)/\underline{\sigma}})$, where
   $\nu \ge \nu_0 \ge 1$ is arbitrary high and $\und{\sigma}>0$ is a constant (independent of $\ep$ and $\nu$),
 see \eqref{defsigma}, even if  the size $\ep$ of
 the perturbation is fixed.
 Further if we fix the sequence $\TT^{\pm}$ we get a new $\aleph^{\pm}$
 whenever we let either $\tau$ or $\Xi$ vary.
  \item[2] We   weaken the recurrence requirements on the function $\MM(\tau)$ and we allow its zeros to be degenerate.
   \item[3] The results are proved in a piecewise-smooth setting, and they  are a first step to prove the existence of a
   classical chaotic pattern (i.e., a pattern taking place both in the past and in the future) in the context where the critical point
 $\vec{0}$  lies on the discontinuity   level  $\Om^0$. This analysis will be completed in a forthcoming paper.
\end{description}

The paper is divided as follows: in Section \ref{S.WuWs} we  list  the main assumptions and recall some basic constructions such as stable and
unstable manifold; in Section \ref{fractal.space} we state the main results of the paper, i.e., Theorems \ref{restatefuture},
\ref{restatepast}
and \ref{restatebis};
in Section 4, using the results of
\cite{CFPRimut}, we construct the  Poincar\'e map
going from a transversal to $\bs{\Gamma}$ at  $\ga(s)$ (but we set $s=0$ for definiteness)  back to itself and in particular we state the
crucial results,
Theorems \ref{key} and \ref{keymissed},
which evaluate space displacement and flight time in performing a loop; in Section \ref{S.pf.main}
we prove our main results: we develop the iterative scheme needed to construct $\aleph^{+}$ in \S \ref{proof.future} in the setting with
weaker assumptions on
the zeros of $\MM(\tau)$, while in \S
 \ref{proof.futurebis} we adapt the argument to a setting where the zeros of $\MM(\tau)$
are non-degenerate, so that we get sharper results. Then in \S \ref{proof.past} we use a classical inversion of time argument
to construct $\aleph^-$.
 In Section \ref{proof.Bernoulli}, following a classical scheme, we show that
under the assumptions of Theorems \ref{restatefuture} and \ref{restatepast},
 the action of the forward (resp.\ backward) flow of
(PS) on the set $\aleph^+$ (resp.\ $\aleph^-$) is semi-conjugated with the
forward (resp.\ backward) Bernoulli shift.
Finally in the appendix we correct a minor error  in the formula for the $2$-dimensional Melnikov function in the
piecewise-smooth case, appeared in \cite{CaFr} and then repeated in \cite{CFPRimut}, which however does not affect the main argument of those
papers.

\section{Preliminary constructions and notation}\label{S.WuWs}

 In this preliminary section
 we specify the notion of solution
of the system \eqref{eq-disc}
and we collect the basic assumptions which we assume through the whole paper.
By a \emph{solution} of
\eqref{eq-disc} we mean a continuous, piecewise $C^r$ function
$\x(t)$ that satisfies
\begin{align}
\dot{\x}(t)=\f^+(\x(t)) + \ep \g(t,\x(t),\ep), \quad \textrm{whenever } \x(t) \in \Om^+,  \label{eq-disc+} \tag{PS$+$}\\
\dot{\x}(t)=\f^-(\x(t)) + \ep \g(t,\x(t),\ep), \quad \textrm{whenever } \x(t) \in \Om^-. \label{eq-disc-}\tag{PS$-$}
\end{align}
 Moreover, if  $\x(t_{0})$ belongs to
$\Om^{0}$ for some $t_0$,   then  we assume
either $\x(t)\in\Om^{-}$ or $\x(t)\in\Om^{+}$  for $t$ in some left neighborhood of
$t_{0}$, say $]t_{0}-\tau,t_{0}[$ with $\tau>0$.
In the first case, the
left derivative of $\x(t)$ at $t=t_0$ has to satisfy $\dot
\x(t_{0}^{-}) = \f^{-}(\x(t_0))+ \ep \g(t_0,\x(t_0),\ep)$; while in the
second case, $\dot \x(t_{0}^{-}) = \f^{+}(\x(t_0))+ \ep
\g(t_0,\x(t_0),\ep)$. A~similar condition is required for the right
derivative $\dot \x(t_{0}^{+})$.
We stress that, in this paper, we do not consider solutions of equation
\eqref{eq-disc} that belong to $\Om^{0}$ for $t$ in some
nontrivial interval, i.e., sliding solutions.

\subsubsection*{Notation}
Throughout the paper we will use the following notation. We denote scalars by small letters,
e.g.\ $a$, vectors in $\R^2$ with an arrow, e.g.\ $\vec{a}$, and $n \times n$ matrices by bold letters, e.g.\ $\bs{A}$.
By $\vec{a}^*$ and $\bs{A}^*$ we mean the transpose of the vector $\vec{a}$ and of the matrix $\bs{A}$, resp.,
so that $\vec{a}^*\vec{b}$ denotes the scalar product of the vectors $\vec{a}$, $\vec{b}$.
We denote by $\|\cdot\|$ the Euclidean norm in $\R^2$, while for matrices
we use the functional norm $\|\bs{A}\|= \sup_{\|\vec{w}\| \le 1} \|\bs{A} \vec{w}\|$.
We will use the shorthand notation $\bs{f_x}=\bs{\frac{\partial f}{\partial x}}$ unless this may cause confusion.\\
Further we denote by $B(\xxi,\delta):=\{\Q \mid \|\Q-\xxi\| < \delta \}$.

\medskip

We list here some hypotheses which we assume through the whole paper.

\begin{description}
	\item[\assump{F0}]  $\vec{0}\in\Omega^0$,  $\f^\pm(\vec{0})=\vec{0}$, and the eigenvalues $\la_s^\pm$, $\la_u^\pm$ of
$\bs{f_x^\pm}(\vec{0})$ are such that
$\la_s^\pm<0<\la_u^\pm$.
\end{description}

Denote by $\vec{v}_s^{\pm}$, $\vec{v}_u^{\pm}$ the normalized eigenvectors of $\bs{f_x^\pm}(\vec{0})$ corresponding to $\la_s^\pm$,
$\la_u^\pm$.
 We assume that the eigenvectors $\vec{v}_s^{\pm}$, $\vec{v}_u^{\pm}$ are not orthogonal to $\na G(\vec{0})$. To fix the ideas we require
\begin{description}
	\item[\assump{F1}]  $[\na G(\vec{0})]^*\vec{v}^{-}_u<0<[\na G(\vec{0})]^*\vec{v}^{+}_u \, ,
\quad  [\na G(\vec{0})]^*\vec{v}^{-}_s <0< [\na G(\vec{0})]^*\vec{v}^{+}_s $.
\end{description}

 Moreover we require a further condition on the mutual positions
 of the
directions spanned by
$\vec{v}_s^{\pm}$, $\vec{v}_u^{\pm}$.

More precisely, set $\mathcal{T}_u^{\pm}:=\{ c\vec{v}_u^{\pm} \mid c \ge 0 \}$, and
denote by $\Pi_u^1$ and $\Pi_u^2$ the disjoint open sets
in which $\R^2$ is divided by the polyline $\mathcal{T}^u:=\mathcal{T}_u^+ \cup \mathcal{T}_u^-$.
We require that $\vec{v}_s^+$ and $\vec{v}_s^-$ lie on ``opposite sides'' with respect to
	$\mathcal{T}^u$.
Hence, to fix the ideas, we assume: \begin{description}
	\item[\assump{F2}]   $\vec{v}_s^+ \in \Pi_u^1$ and $\vec{v}_s^- \in \Pi_u^2$.
\end{description}

We emphasize that if \assump{F2} holds, there is no sliding on $\Om^0$ close to $\vec{0}$.
On the other hand,  sliding might occur  when both  $\vec{v}_s^{\pm}$ lie in
$\Pi_u^1$, or they both lie in  $ \Pi_u^2$, see \cite[\S 3]{FrPo}.

\begin{remark} \label{rem:settings}
We point out that it is the \emph{mutual position of the eigenvectors} that plays a role in the argument. In the paper we fix a particular
situation
for definiteness; however, by
reversing all the directions, one may obtain equivalent results.
Moreover, in the continuous case $\mathcal{T}^u$ is a line and $\Pi_u^1$, $\Pi_u^2$ are halfplanes.
In fact, all
smooth systems satisfy \assump{F2}.
On the other hand, if assumption \textbf{F2} is replaced with the opposite condition, that is $\vec{v}_s^+$ and $\vec{v}_s^-$ lie ``on the
same side'' with respect to
$\mathcal{T}^u$, then it was shown in \cite{FrPo} that  generically  chaos cannot occur,  while new bifurcation
phenomena, involving continua of sliding homoclinic trajectories, may arise.
\end{remark}
\begin{description}
	\item[\assump{K}] For $\ep=0$ there is  a unique solution $\ga(t)$ of \eqref{eq-disc} homoclinic to the origin such that
	$$\ga(t)\in\begin{cases}\Om^-,& t<0,\\\Om^0,&t=0,\\\Om^+,&t>0.\end{cases}$$
	Furthermore, $(\na G(\ga(0)))^*\f^\pm(\ga(0))>0$.
\end{description}

 Sometimes it will be convenient to denote by $\ga^-(s)=\ga(s)$ and  by $\ga^+(t)=\ga(t)$ when $s \le 0 \le t$ so that $\ga^{\pm}(\tau) \in
 (\Omega^\pm \cup
 \Omega^0)$ for any $\tau\in\R$.

Recalling  the orientation of $\vec{v}_s^{\pm}$, $\vec{v}_u^{\pm}$ chosen in \assump{F1}, we assume w.l.o.g.\  that
\begin{equation}\label{ass.scenario}
\lito\frac{\dot{\ga}(t)}{\|\dot{\ga}(t)\|}=\vec{v}_u^-
\quad \mbox{ and } \quad
\lit\frac{\dot{\ga}(t)}{\|\dot{\ga}(t)\|}=-\vec{v}_s^+.
\end{equation}

Concerning the perturbation term $\g$, we assume the following:
\begin{description}
	\item[\assump{G}]  $\g(t,\x, \ep)$ is bounded together with its derivatives for any $t \in \R$, $\x \in B(\bs{\Gamma}, 1)$,  $\ep \ge 0$
and
$\g(t,\vec{0},\ep)=\vec{0}$ for any $t\in\R$, $\ep \ge 0$.
\end{description}
Hence, the origin is a critical point for the perturbed problem too.

\begin{figure}[t]
\centerline{\epsfig{file=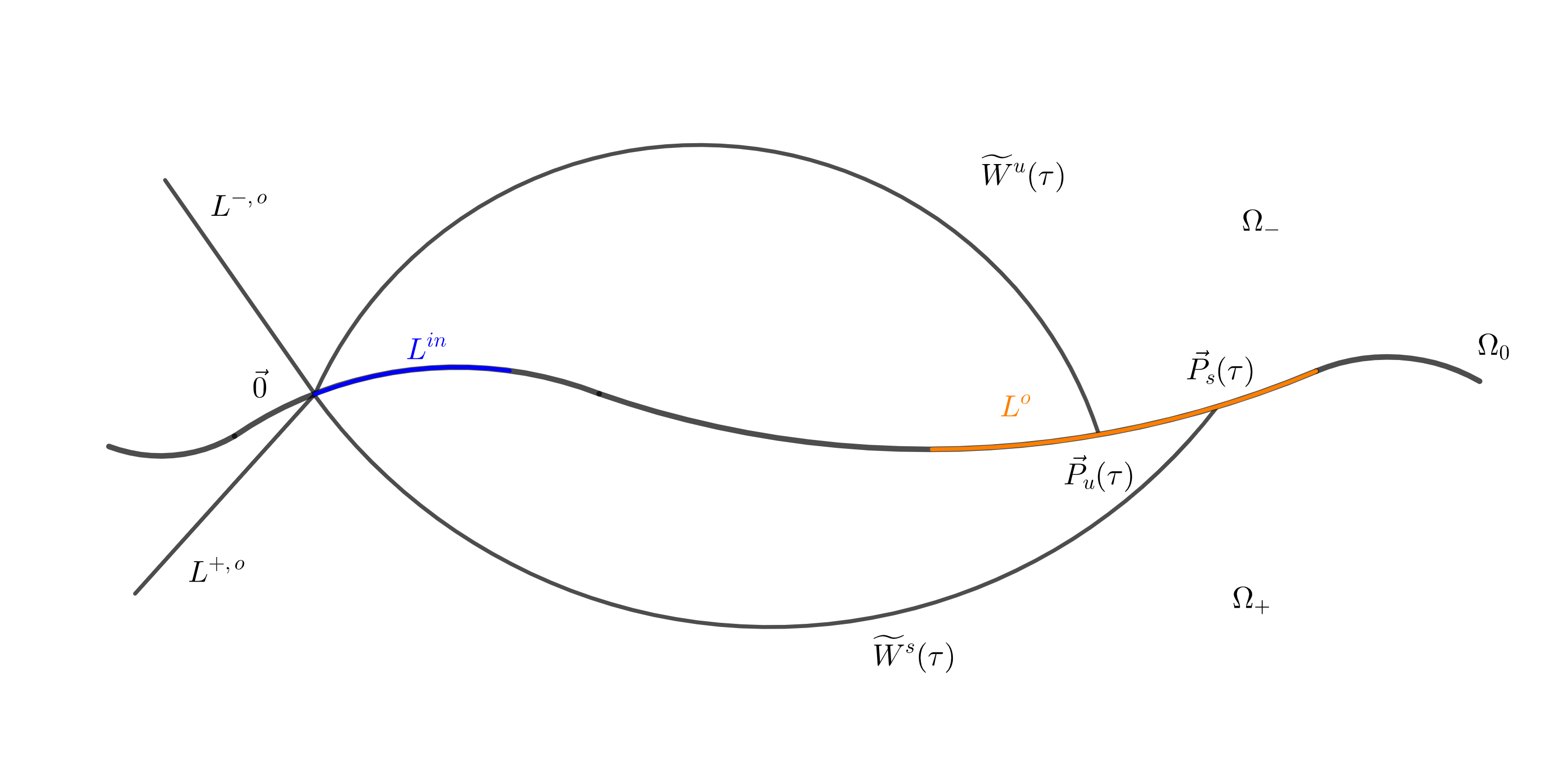, width = 10 cm} }
\caption{Stable and unstable leaves (the superscript ``$^{\textrm{out}}$"
	is denoted as ``$^{\textrm{o}}$" for short).}
\label{LinL0}
\end{figure}
We recall that
$\bs{\Gamma}:= \{\ga(t) \mid t \in \R \} \cup \{\vec{0}\}$; let us denote  by $E^{\textrm{in}}$ the open set enclosed by $\bs{\Gamma}$,
and by $E^{\textrm{out}}$ the open set complementary to $E^{\textrm{in}}\cup\bs{\Gamma}$.

Further, for any fixed $\delta>0$, we set
 \begin{equation}\label{L0}
 \begin{split}
   L^0 =  L^0(\delta) & := \{ \vec{Q} \in \Omega^0 \mid \|\vec{Q}- \ga(0) \| <\delta \}, \\
   L^{\textrm{in}}=  L^{\textrm{in}}(\delta) &:= \{ \vec{Q} \in (\Omega^0 \cap E^{\textrm{in}}) \mid \|\vec{Q} \| <\delta\}.
  \end{split}
\end{equation}
We denote by $\x(t,\tau;\P)$   the trajectory of \eqref{eq-disc} which is in $\P$ at $t=\tau$.
Now, we define  the stable and the unstable leaves $W^s(\tau)$ and $W^u(\tau)$ of \eqref{eq-disc}.

Assume first for simplicity that the system is
\textit{smooth} and suppose that \assump{F0} holds true.
 Then, following \cite[\S 2]{CFPRimut}, which is based on \cite[Theorem 2.16]{Jsell},  we can define the global stable and unstable
 leaves as follows:
\begin{equation}\label{mm}
\begin{array}{c}
W^u(\tau) := \{ \P \in \R^2 \mid    \lito \x(t,\tau;\P)=\vec{0} \}, \\
W^s(\tau) := \{ \P \in \R^2 \mid   \lit  \x(t,\tau;\P)=\vec{0} \}.
\end{array}
\end{equation}
In fact  $W^u(\tau)$ and $W^s(\tau)$ are $C^r$ immersed $1$-dimensional manifolds,
i.e., they are the image of $C^r$ curves;  they also have $C^r$ dependence on $\tau$ and $\ep$ but we leave this last dependence unsaid.
Notice that if $\P \in W^u(\tau)$ then $\x(t,\tau;\P)\in W^u(t)$
for any $t, \tau \in \R$. Analogously for $W^s(\tau)$.

Assume further \assump{F1}, \assump{K} and follow $W^u(\tau)$ (respectively $W^s(\tau)$) from the origin towards
$L^0(\sqrt{\ep})$: then it intersects $L^0(\sqrt{\ep})$ transversely in a point denoted by $\P_u(\tau)$
(respectively by $\P_s(\tau)$).
In fact, $\P_u(\tau)$ and $\P_s(\tau)$ are $C^r$ functions of $\ep$ and $\tau$; hence
$\P_u(\tau)=\P_s(\tau)= \ga(0)$ if $\ep=0$ for any $\tau \in \R$.

We denote by $\tilde{W}^u(\tau)$ the branch of $W^u(\tau)$ between the origin and $\P_u(\tau)$
(a path), and by $\tilde{W}^s(\tau)$ the branch of $W^s(\tau)$ between the origin and $\P_s(\tau)$, in both the cases including the endpoints.
Since $\tilde{W}^u(\tau)$ and $\tilde{W}^s(\tau)$ coincide with $\bs{\Gamma} \cap (\Om^- \cup \Om^0)$
and $\bs{\Gamma} \cap (\Om^+ \cup \Om^0)$ if $\ep=0$, respectively, and vary in a $C^r$ way
 we find $\tilde{W}^u(\tau)\subset  (\Om^- \cup \Om^0)$ and
$\tilde{W}^s(\tau)\subset  (\Om^+ \cup \Om^0)$, for any $\tau \in \R$ and any $0\le \ep \le \ep_0$, see again \cite{CFPRimut},
\cite[Theorem 2.16]{Jsell}
or  \cite[Appendix]{mFaS}.
Further
\begin{equation}\label{proprieta}
\begin{gathered}
 \Q_u \in \tilde{W}^u(\tau)   \Rightarrow  \x(t,\tau; \Q_u) \in \tilde{W}^u(t) \subset (\Om^- \cup \Om^0) \quad \textrm{for any $t\le
 \tau$},\\
\Q_s \in \tilde{W}^s(\tau) \Rightarrow  \x(t,\tau; \Q_s) \in \tilde{W}^s(t) \subset (\Om^+ \cup \Om^0) \quad \textrm{for any $t\ge \tau$}.
\end{gathered}
\end{equation}

Now, we go back to the general case where \eqref{eq-disc} is piecewise-smooth but discontinuous.

\begin{remark}\label{Wloc}
Consider \eqref{eq-disc} and assume
 \assump{F0}, \assump{F1},  \assump{F2}.
 Following \cite{CFPRimut} we can again define
$\tilde{W}^{u}(\tau)$ and  $\tilde{W}^{s}(\tau)$,
and  we get that  they are $C^r$ and
 have the property~\eqref{proprieta}, see \cite{CFPRimut} for details.

 Moreover, if \textbf{K} holds then
  $\P_u(\tau)$ and $\P_s(\tau)$ are again $C^r$ in $\ep$ and $\tau$,
 and $\P_u(\tau)=\P_s(\tau)= \ga(0)$ if $\ep=0$ for any $\tau \in \R$.
\end{remark}
We set
\begin{equation}\label{tildeW}
\tilde{W}(\tau):= \tilde{W}^u(\tau) \cup \tilde{W}^s(\tau).
\end{equation}


\begin{figure}[t]
\begin{center}
	\epsfig{file=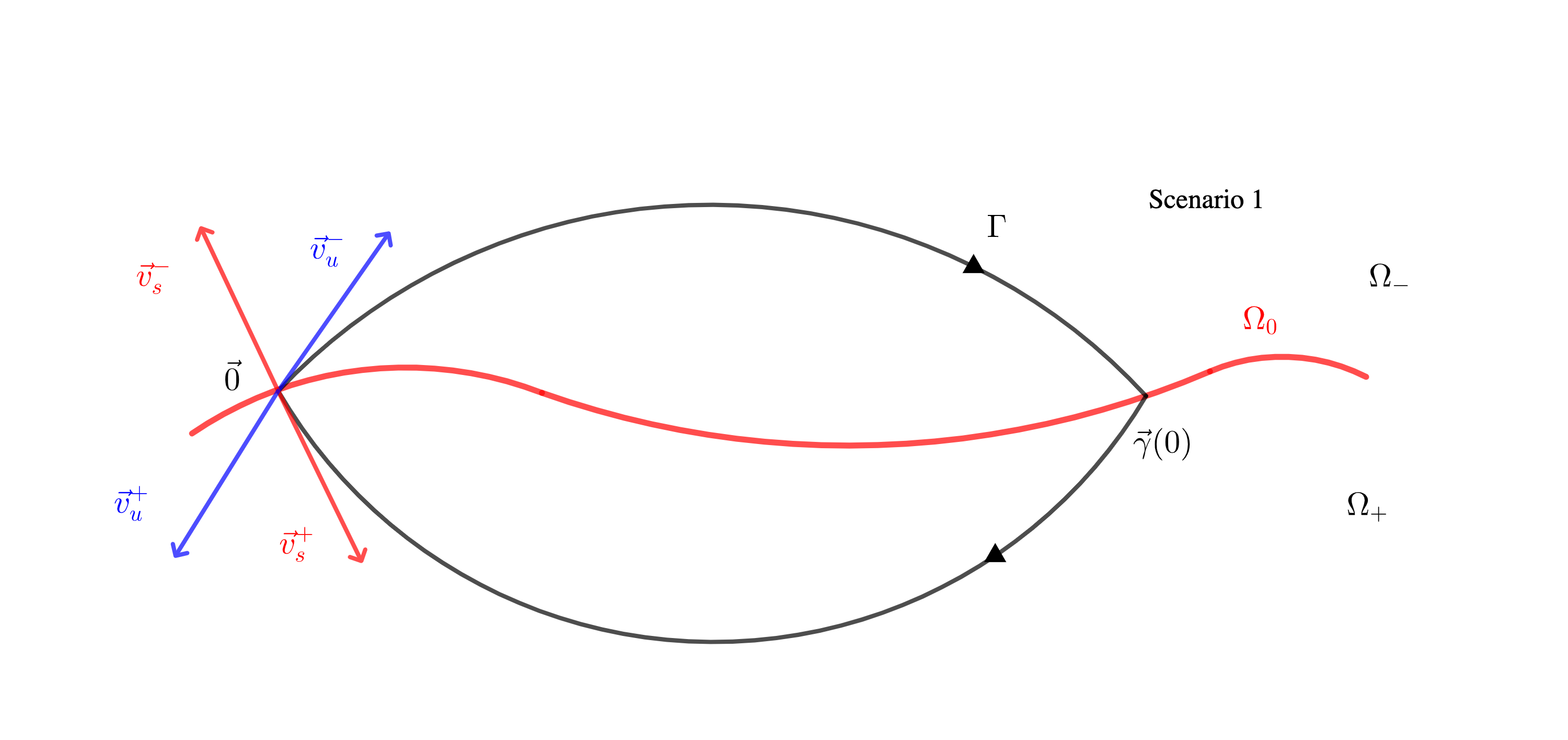, width = 8 cm}\\
	\epsfig{file=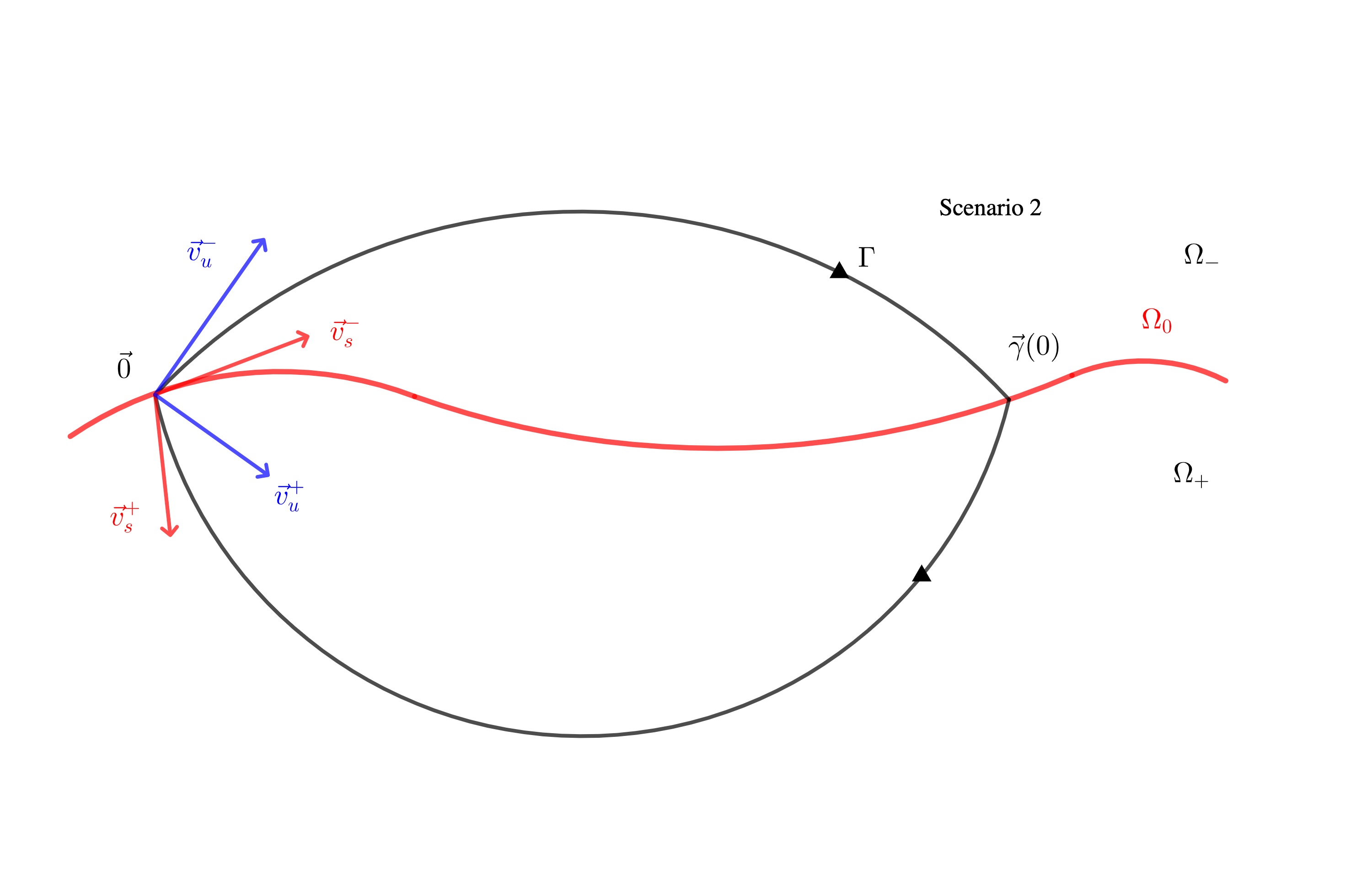, width = 8 cm}
\end{center}
\caption{Scenarios 1 and 2. In these two settings there is no sliding close to the origin. Further Melnikov theory guarantees persistence of
the homoclinic
trajectories
  \cite{CaFr}, and we conjecture that we may have chaotic phenomena.
}
\label{scenario123}
\end{figure}

At this point, we need to distinguish between four possible scenarios, see Figures~\ref{scenario123} and \ref{scenario45}.
 \begin{description}
\item[Scenario 1] Assume \assump{K} and that there is $\rho>0$ such that $d\vec{v}_u^+ \in E^{\textrm{out}}$, $d\vec{v}_s^- \in
    E^{\textrm{out}}$ for any $0<d<\rho$.
\item[Scenario 2] Assume \assump{K} and that there is $\rho>0$ such that $d\vec{v}_u^+ \in E^{\textrm{in}}$, $d\vec{v}_s^- \in
    E^{\textrm{in}}$ for any $0<d<\rho$.
\item[Scenario 3]
Assume \assump{K} and that there is $\rho>0$ such that $d\vec{v}_u^+ \in E^{\textrm{in}}$, $d\vec{v}_s^- \in E^{\textrm{out}}$ for any
$0<d<\rho$, so
    \assump{F2} does not hold.
\item[Scenario 4]
Assume \assump{K} and that there is $\rho>0$ such that $d\vec{v}_u^+ \in E^{\textrm{out}}$, $d\vec{v}_s^- \in E^{\textrm{in}}$ for any
$0<d<\rho$, so
    \assump{F2} does not hold.
\end{description}
Notice that  \assump{F2} holds in both Scenarios 1 and 2, and our results apply to both the cases. Further, let
$$W^{u,+}_{loc}(\tau) :=W^u(\tau)\cap\Omega^+\cap B(\vec{0},\rho),\quad  W^{s,-}_{loc}(\tau):=W^s(\tau)\cap\Omega^-\cap B(\vec{0},\rho)$$ for
some $\rho>0$;
in Scenario 1
 both $W^{u,+}_{loc}(\tau)$ and $W^{s,-}_{loc}(\tau)$ lie in $E^{\textrm{out}}$ for any $\tau \in \R$,
while in Scenario 2 $W^{u,+}_{loc}(\tau)$ and $W^{s,-}_{loc}(\tau)$ both lie in $E^{\textrm{in}}$ for any $\tau \in
    \R$.

    In Scenarios 3 and 4  sliding generically occurs in $\Om^0$ close to the origin, \assump{F2} does not hold,   and
    $W^{u,+}_{loc}(\tau)$ and  $W^{s,-}_{loc}(\tau)$ lie on the opposite sides with respect to $\bs{\Gamma}$.
    Notice that Scenarios 1 and 2 have a smooth counterpart while Scenarios 3 and 4 may take place just if the system is discontinuous.
     We recall once again that
    in all the four scenarios the existence of a non degenerate zero of the Melnikov function guarantees the persistence of the homoclinic
    trajectory, cf.~\cite{CaFr}, but  generically  chaos is not possible in Scenarios 3 and 4; we conjecture that chaos is still possible in
    Scenarios 1 and 2:
    this will
    be the object of future investigation which will use the results of this article.

    \emph{In this paper we will just consider Scenario 1 to fix the ideas although Scenario~2 can be handled in a similar way};
 accordingly we introduce the following notation, see Figure \ref{LinL0},
     \begin{equation*}\label{L+-}
 \begin{split}
   L^{-,\textrm{out}}=L^{-,\textrm{out}}(\delta) &:= \{ \vec{Q}= d (\vec{v}_u^-+  \vec{v}_s^-) \mid 0 \le d \le \delta \},\\
   L^{+,\textrm{out}}= L^{+,\textrm{out}}(\delta) &:= \{ \vec{Q}= d (\vec{v}_u^++  \vec{v}_s^+) \mid 0 \le d \le \delta \}.
  \end{split}
\end{equation*}

\begin{figure}[t]
\begin{center}
	\epsfig{file=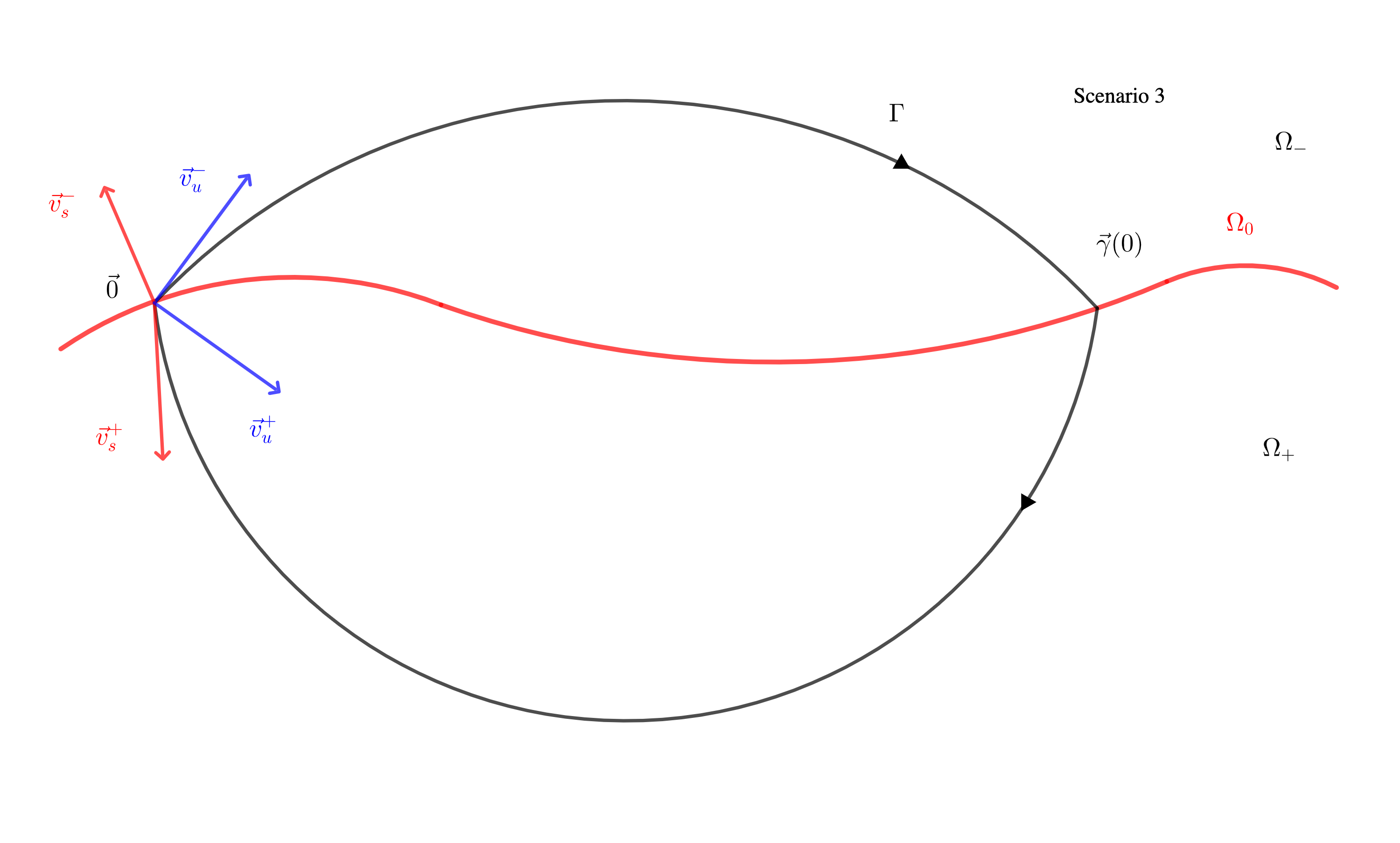, width = 8 cm}\\
	\epsfig{file=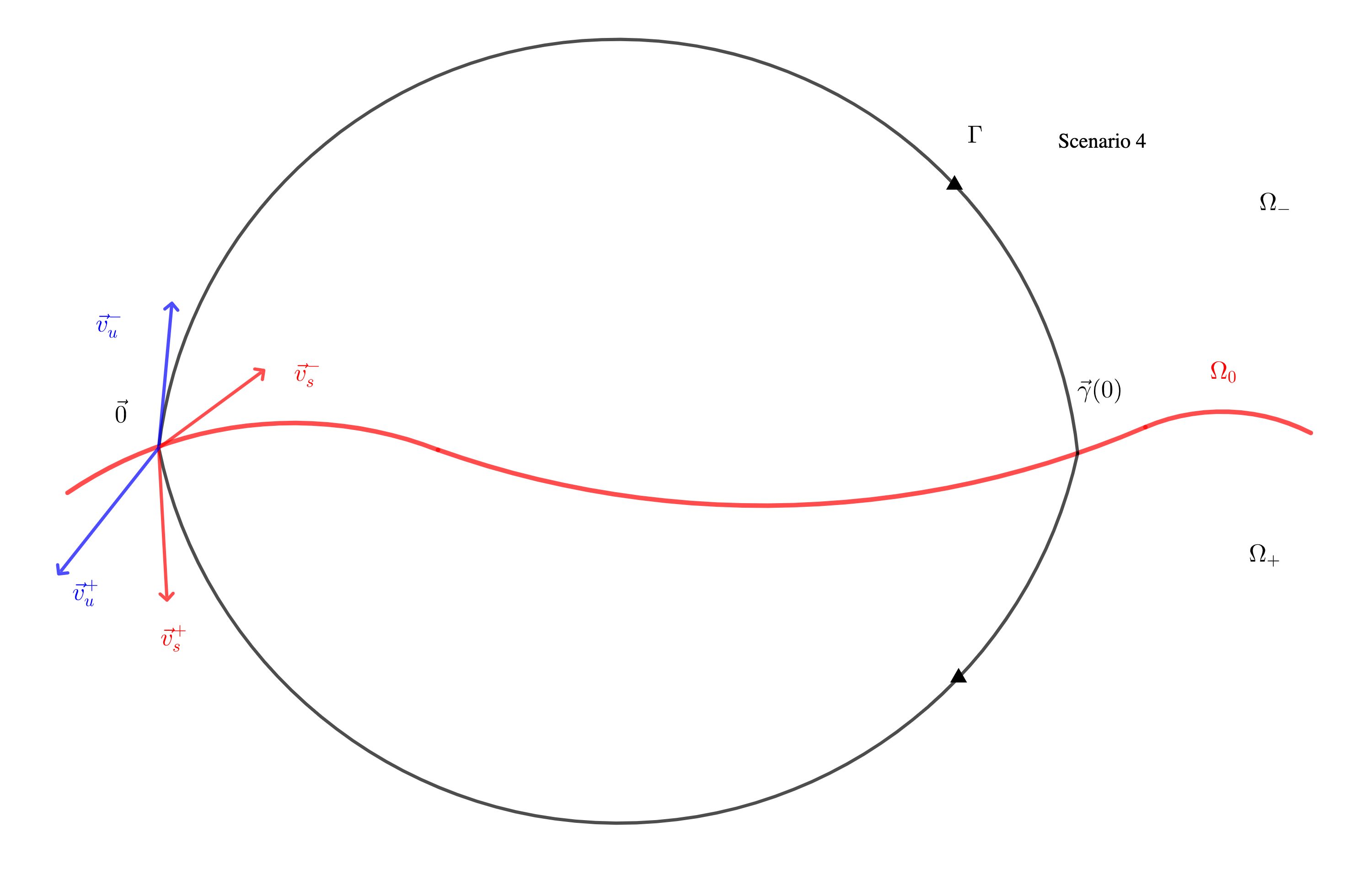, width = 8 cm}
\end{center}
\caption{Scenarios 3 and 4. In these settings we have persistence of the homoclinic trajectories  but  sliding might occur  close to the
origin. Here our
analysis does not apply directly. Further Melnikov theory guarantees persistence of the homoclinic trajectories,
  cf.\ \cite{CaFr}, but chaos is forbidden  in general, cf.~\cite{FrPo}.}
\label{scenario45}
\end{figure}

\section{Statement of the main results}\label{fractal.space}
In this section we state the main results proved in this article. First
 we collect here for convenience of the reader and future reference,  the main constants which
will play a role in our argument:
\begin{equation}\label{defsigma}
\displaystyle
\begin{array}{ccc}
\sfwd_+= \frac{|\la_s^+|}{\la_u^++|\la_s^+|}, & \sfwd_-= \frac{\la_u^-+|\la_s^-|}{\la_u^-}, &
\sfwd=  \sfwd_+ \sfwd_-,  \\
  \sbwd_+=   \frac{1}{\sfwd_+}, & \sbwd_- =\frac{1}{\sfwd_-}, &  \sbwd=   \sbwd_+ \sbwd_-, \\
 \underline{\sigma} = \min \{ \sfwd_+ , \sbwd_-  \} , &  & \overline{\sigma}= \max \{ \sfwd_+ , \sbwd_-  \},
  \end{array}
\end{equation}
 \begin{equation*}
\displaystyle
\begin{array}{ccc}
  \sTfwd_+= \frac{1}{\la_u^++|\la_s^+|}, & & \sTbwd_- = \frac{1}{\la_u^- +|\la_s^-|},
  \\
\sTfwd=\frac{\la_u^-+|\la_s^+|}{\la_u^-(\la_u^++|\la_s^+|)} ,&  & \sTbwd= \frac{\la_u^{-}+|\la_s^+|}{|\la_s^+|(\la_u^-+|\la_s^-|)},\\
\underline{\Sigma}= \min \{ \sTfwd , \sTbwd  \} , &  & \overline{\Sigma}= \max \{ \sTfwd , \sTbwd  \}.
\end{array}
\end{equation*}
 Notice that, in
particular,
$\underline{\sigma}\leq\overline{\sigma}<1$.

\begin{remark}
In the smooth setting  we have $\la_u^+=\la_u^-$, and $\la_s^+= \la_s^-$ so we have the following simplifications
\begin{equation}\label{defsigmabis}
\begin{array}{c}
\sfwd= \frac{|\la_s|}{\la_u}, \quad \sbwd= \frac{\la_u}{|\la_s|}; \qquad  \sTfwd= \frac{1}{\la_u} , \quad \sTbwd= \frac{1}{|\la_s|}.
  \end{array}
\end{equation}
\end{remark}

We denote by $\Z^-= \{k \in \Z \mid k \le -1\}$, $\Z^+= \{k \in \Z \mid k \ge 1\}$, and then by
$\EE^-= \{0,1 \}^{\Z^-}$, $\EE^+= \{0,1 \}^{\Z^+}$, respectively, the space of sequences   from   $\Z^-$,
$\Z^+$ to $ \{0,1 \}$.

  Let us define the Melnikov function $\mathcal{M}:\R\to\R$  which, for planar
 piecewise-smooth systems as \eqref{eq-disc}, takes the following form, see  Appendix, where we correct a small
 error appeared in \cite{CaFr}:

\begin{equation}\label{melni-disc}
\begin{split}
   \mathcal{M}(\al) &= c^-_{\perp}\int_{-\infty}^{0} \eu^{-\int_0^t
			\tr\boldsymbol{\f_x^-}(\ga^-(s))ds} \f^-(\ga(t))
		\wedge \g(t+\al,\ga^-(t),0) dt\\
		&\quad {}+ c^+_{\perp} \int_{0}^{+\infty} \eu^{-\int_0^t
			\tr\boldsymbol{\f_x^+}(\ga^+(s))ds} \f^+(\ga(t))
		\wedge \g(t+\al,\ga^+(t),0) dt,\\
 c_{\perp}^{\pm} &  =
         \frac{\|\na G(\ga(0))\| }{(\na G(\ga(0)))^*\f^\pm(\ga(0))} >0,
\end{split}
	\end{equation}
where ``$\wedge$" is the wedge product in $\R^2$ defined by $\vec{a} \wedge \vec{b}= a_1b_2-a_2b_1$
for any vectors $\vec{a}=(a_1,a_2)$, $\vec{b}=(b_1,b_2)$.
In fact, also in the piecewise-smooth case the function $\mathcal{M}$ is $C^r$.

  More precisely we will assume that the Melnikov function, defined in \eqref{melni-disc}, verifies the following hypothesis.
 \begin{description}
   \item[P1] There is a constant $\bar{c}>0$ and an increasing sequence $(b_i)$, $i \in \Z$, such that
   $b_{i+1}-b_i \ge 1/10$   and
   $$\MM(b_{2i})<-\bar{c}<0<\bar{c}<\MM(b_{2i+1}),$$
   for any $i \in \Z$.
 \end{description}
\begin{remark}
Notice that $b_i \to \pm \infty$ as $i \to \pm \infty$. Further the assumption $b_{i+1}-b_i \ge  1/10$ could be replaced by
$b_{i+1}-b_i \ge c$ where $c>0$ is a constant independent of $i$.
\end{remark}
If \assump{P1} holds, the intermediate value theorem implies that  $\MM(\tau)$ has at least one zero in $]b_k,b_{k+1}[$ for any $k
\in \Z$.

We begin by stating the consequences of our results for \eqref{eq-disc}, but replacing \assump{P1} by 
more restrictive
recurrence properties,
for clarity and in order to illustrate the novelties introduced by our approach   with more usual periodicity
assumptions.
 \begin{description}
   \item[P2] There are  $b_0$, $b_1$ such that $\MM(b_0)<0<\MM(b_1)$
    and $\g(t,\x,\ep)$ is almost periodic in $t$,  i.e., for any  $\varsigma>0$ there is  $\bar{L}=\bar{L}(\varsigma)>0$
   such that any interval of length $\bar{L}$ contains at least a, so called, quasi-period $\bar{N}$, such that
   $$ \|\g(t,\x,\ep)- \g(t+\bar{N},\x,\ep)\| \le \varsigma \,, \quad \textrm{for any } (t,\x,\ep) \in\R\times\Om\times\R.$$
     \item[P3]  There are $b_0$, $b_1$ such that $\MM(b_0)<0<\MM(b_1)$ and $\g(t,\x,\ep)$ is  $1$-periodic in $t$, i.e
   $$ \g(t,\x,\ep)= \g(t+1,\x,\ep)  \, ,\quad \textrm{for any } (t,\x,\ep) \in\R\times\Om\times\R.$$
 \end{description}

It is classically known that the periodicity and the almost periodicity of $\g$ imply the periodicity and the almost periodicity
of $\MM$, so   \assump{P3} implies \assump{P2}, which implies \assump{P1}.  Further we recall that quasi-periodicity implies
almost periodicity.

We want to consider an increasing sequence of times $\mathcal{T}=(T_j)$, $j \in \Z$ such that   $\MM(T_{2j})=0$
and  $T_{j+1}-T_j$ becomes larger and larger as $\ep \to 0$, see \eqref{TandKnu} and \eqref{TandKnunew} below.
Correspondingly, we find a subsequence $\beta_j:=b_{n_j}$ such that
\begin{equation}\label{defkj}
  \beta_j:=b_{n_j} <T_j < \beta'_j:= b_{n_j +1} \, , \qquad  B_{j}=\beta'_j- \beta_j= b_{n_j+1}-b_{n_j} >0,
\end{equation}
for any $j \in \Z$.

When dealing with assumption \assump{P1} we follow two different settings of assumptions;  the first is slightly more
restrictive but  includes the cases  where  $(B_j)$  is bounded, so in particular it is satisfied if we assume either  \assump{P2} or
\assump{P3}:  it allows
to obtain more precise
(and clearer) results,
i.e., Theorems \ref{restatefuture} and \ref{restatepast}; in the second approach we
ask for weaker assumptions but we obtain weaker results, i.e., Theorem \ref{restatebis} (where we have a very weak control of the size of
$\alpha^{\ee}_j(\ep)$ introduced below).

We consider now the first setting of assumptions.

We assume first that   there may be some $j \in \Z$ such that $T_{2j}$ is an accumulation point of the zeros of $\MM(\tau)$,
so there are  $0<\dd<1 $, $1/2>\l1 >2\lzero \ge 0$  and increasing sequences $(\aup_j)$, $(\adown_j)$,
$j\in \Z$   such that
$$\beta_{2j}<\aup_j<T_{2j}-\lzero<T_{2j}<T_{2j}+\lzero<\adown_{j}<\beta'_{2j}$$  and
\begin{equation}\label{minimal}
\begin{split}
 & |\MM(\aup_j)|=|\MM(\adown_j)|= \dd \bar{c}, \qquad \MM(\aup_j) \MM(\adown_j) <0, \\ & \MM(\tau) \ne 0 \quad \forall \tau \in [\aup_j,
  \adown_j] \setminus [T_{2j}-\lzero, T_{2j}+\lzero],\\
  & \textrm{$0<\adown_j-\aup_j \le \l1$,
for any $j \in \Z$.}
  \end{split}
\end{equation}
We emphasize that  both $\l1$ and $\lzero$ are independent of $j \in \Z$.

In fact \eqref{minimal} is enough to construct a chaotic pattern, but
we obtain sharper results  if we assume that there is a strictly monotone increasing and continuous function
$\omega_M(h):[0,\l1] \to [0,+\infty[$ (independent of $j$) such that
$\omega_M(0)=0$ and
\begin{equation}\label{isogen}
\begin{split}
      |\MM(T_{2j}- \lzero - h)| & \ge \omega_M(h)  \quad  \textrm{for any } \;  (T_{2j}- \lzero - h)\in  [\aup_j, T_{2j}- \lzero],  \\
  |\MM(T_{2j}+ \lzero + h)| & \ge \omega_M(h)  \quad  \textrm{for any } \;  (T_{2j}+ \lzero + h)\in  [ T_{2j}+ \lzero, \adown_j],
\end{split}
\end{equation}
and any $j \in \Z$.

Notice that \eqref{minimal} and \eqref{isogen} are always satisfied if \assump{P2} or \assump{P3} holds.
In fact \eqref{minimal} and \eqref{isogen} can be satisfied even if the sequence $(B_j)$  becomes unbounded, but they require  a control
from below on the ``slope" of the Melnikov function ``close to its zeros".

  We stress that in the easier and most significant case where $T_{2j}$ is an isolated zero for $\MM(\cdot)$ for any $j$ we
  can assume  $\lzero=0$ so that \eqref{minimal} simplifies as follows
  \begin{equation}\label{defbetaj}
\begin{split}
    &  |\MM(\aup_j)|=|\MM(\adown_j)|= \dd \bar{c}, \qquad \MM(\aup_j) \MM(\adown_j) <0, \\
     & \MM(\tau) \ne 0 \qquad \forall \tau \in [\aup_j,
  \adown_j] \setminus \{T_{2j} \}.
\end{split}
\end{equation}
  Further in this case   \eqref{isogen} reduces to
\begin{equation}\label{isolated}
  |\MM(T_{2j}+ h)| \ge \omega_M(|h|)  \quad  \textrm{for any }  (T_{2j}+ h)\in [\aup_j, \adown_j]\, , \quad h \ne 0,
\end{equation}
and any $j \in \Z$.

When we assume the classical hypothesis that
there is   $C >0$ such that
\begin{equation}\label{nondeg}
  \MM(T_{2j})=0   \quad   \text{and} \quad |\MM'(T_{2j})|>C  \;  \text{for any $j \in \Z$,}
\end{equation}
then \eqref{minimal} and \eqref{isolated} hold and we may assume  simply $ \omega_M(h) = \tfrac{C}{2} h$
 and $\l1 = 2\frac{\bar{c}}{C}  \delta<\tfrac{1}{2}$.

\begin{remark}\label{toaddinPrestate}
  Classically the recurrence conditions such as periodicity or almost periodicity, are required directly on $\g$ and
  then inherited by $\MM$. Here we prefer to ask for conditions on $\MM$ (which in general are more difficult to be verified on $\g$)
  in order to include the ``large perturbations" as $k(\tau)$ in \eqref{e.remP1}. We think this might be useful for application since
  it is possible that the perturbation $\g$ is made up by a periodic part of which we have a control and a ``noise", possibly not small, see
  Remark \ref{remP1} below
\end{remark}

Now we need to define the following absolute constants $K_0$ and $\nu_0$, which depend only on the eigenvalues in \assump{F0}:
  \begin{equation}\label{defK0-new}
  K_0:=    \frac{3\overline{\Sigma} }{2  \underline{\sigma}} , \qquad  \nu_0:= \max \left\{  3\overline{\sigma}-1 ; 1   \right\}.
\end{equation}
Following \cite{CFPRimut} we introduce  a further   parameter $\nu \ge \nu_0$  which is used to tune the distance of the points in
$\xxi \in \aleph^+$ from $\vec{P}_s(\tau)$, paying the price of a longer time needed by the trajectory
$\x(t,\tau; \xxi)$   to perform a loop in forward time (and analogously for $\xxi \in \aleph^-$ in backward time).

When  \assump{P1} and \eqref{minimal} hold we need the following condition concerning the sequence $(T_j)$ and the time gap
$T_{j+1}-T_j$:
  \begin{equation}\label{TandKnu}
\begin{aligned}
   & \MM(T_{2j})=0,  \qquad
  T_{j+1}-T_j >   \l1 + K_0(1+\nu)   |\ln(\ep)| , \quad \textrm{ for any $j \in \Z$}.
  \end{aligned}
\end{equation}

\begin{remark}
 If \assump{P3} holds and there is $t_0$ such that $\MM(t_0)=0 \ne \MM'(t_0)$,
  then we can choose  $\TT=(T_j)$, $T_j=t_0+j \ogap$, where $\ogap=     \left[  K_0(1+\nu) |\ln(\ep)|    +2 \right]$, where $[\cdot]$
  denotes the integer part,
  so that it satisfies \eqref{TandKnu} and \eqref{nondeg}.
\end{remark}

In the second approach we drop \eqref{minimal}, but
we need to ask for a slightly more restrictive condition on  the time gap
$T_{j+1}-T_j$, i.e.:
\begin{equation}\label{TandKnunew}
\begin{split}
  &  \MM(T_{2j})=0,
 \qquad  T_{j+1}-T_{j} >   \max\{  B_{j+1}; B_{j}   \} + K_0(1+\nu)   |\ln(\ep)| ,
  \end{split}
\end{equation}
for any $j \in \mathbb{Z}$ and $\nu \ge \nu_0$.

 Let us denote by $\tilde{V}(\tau)$   the compact connected set enclosed by
$\tilde{W}(\tau)$ and by the branch of $\Om^0$ between $\P_u(\tau)$ and $\P_s(\tau)$.
Now we are ready to state the main results of the paper, i.e., Theorems \ref{restatefuture}, \ref{restatepast},
 \ref{restatebis} and Corollaries  \ref{r.allbis}, \ref{c.localization},  \ref{localize1};
 the proofs of all these results are postponed to Section \ref{S.pf.main}. More precisely Theorem  \ref{restatefuture} is proved in \S
 \ref{proof.futurebis},
 Theorem  \ref{restatebis} \textbf{a)} is proved at the end of \S \ref{proof.future}, while Theorem  \ref{restatebis} \textbf{b)} together
 with all the other
 results
 are proved  in \S \ref{proof.past}.

\begin{theorem}\label{restatefuture}
	Assume  that  $\f^{\pm}$ and $\g$ are $C^r$, $r>1$ and that \assump{F0}, \assump{F1}, \assump{F2}, \assump{K} and \assump{G} hold true;
assume further
\assump{P1},
and fix $\nu \ge \nu_0$ for $\nu_0$ as in \eqref{defK0-new} and   $\tau \in [b_0, b_1]$.
Then we can choose $\ep_0$ small enough so that for any $0< \ep \le \ep_0$
and any increasing sequence $\mathcal{T}^+=(T_j)$ satisfying
 \eqref{TandKnu} for $j \in \Z^+$ and
\begin{equation}\label{TandKnu_tau+}
	 T_1-b_1 >   \l1 + K_0(1+\nu)   |\ln(\ep)| ,
\end{equation}
there is $c^*>0$ so that the following holds.
   For any sequence
   $\ee^+ \in \EE^+$ there is a compact set $X^+= X^+(\ee^+,\tau, \TT^+)$ and a sequence $\alpha_j(\ep)= \al_j^{\ee^+}(\ep,\tau,\TT^+)$
such that   for any $\xxi \in X^+$
the trajectory $\x(t,\tau ;  \xxi) \in \tilde{V}(t)$ for any $t \ge \tau$
  and satisfies the property $\bs{C_{\ee^+}^+}$, i.e.
 \begin{description}
  \item[$\bs{C_{\ee^+}^+}$]
	if $\ee_j=1$ then
	\begin{equation}\label{ej+=1w}
	\begin{split}
	&   \|\x(t,\tau; \xxi )-\ga(t-T_{2j}-\al_j(\ep)) \| \le c^*\ep \quad \textrm{when $t \in [T_{2j-1}, T_{2j+1}]$},
	\end{split}
	\end{equation}
	while if $\ee_j=0$ we have
	\begin{equation}\label{ej+=0w}
	\|\x(t,\tau; \xxi) \| \le c^*\ep  \quad \textrm{when $t \in [T_{2j-1}, T_{2j+1} ] $}
	\end{equation}
	for any $j \in \Z^+$. Further
	\begin{equation}\label{ej+=tau}
	\|\x(t,\tau; \xxi)-\ga(t-\tau) \| \le c^*\ep  \quad \textrm{when $t \in [\tau, T_1] $}.
	\end{equation}
\end{description}
  Moreover for any $j \in \mathbb{Z}^+$ we have the following estimates
    \begin{description}
    \item[a)] if \eqref{nondeg} holds and $r \ge 2$   there is $c_{\alpha}> 0$  such that $|\al_{j}^{\ee^+}(\ep)| \le c_{\alpha}\ep$;
    \item[b)] if \eqref{isolated} holds and $r>1$ then
    $$|\al_{j}^{\ee^+}(\ep)| \le \omega_{\alpha} (\ep) ,$$
    where $\omega_{\alpha}(\cdot)$ is an increasing continuous function such that $\omega_{\alpha}(0)=0$;
    \item[c)] if \eqref{minimal} holds and $r>1$ then $|\al_{j}^{\ee^+}(\ep)| \le \l1$.
  \end{description}
  The constants $\ep_0$, $c^*$, $c_{\alpha}$  and the function  $\omega_{\alpha}$ are  independent of
   $\ep$,
    $\nu$, $\tau$, $\TT^+$,
  $\ee^+$.
\end{theorem}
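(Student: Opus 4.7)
The plan is to build $X^+$ by a shadowing-type iterative scheme based on the Poincaré map from the transversal $L^0$ at $\ga(0)$ back to itself, using Theorems \ref{key} and \ref{keymissed} to quantify, over one loop, the spatial displacement on $L^0$ and the flight time. First I would fix the itinerary $\ee^+$ and seek the orbit as the concatenation of pieces separated by its intersections $\Q_j$ with $L^0$ (when $\ee_j=1$) or with $L^{\textrm{in}}$ (when $\ee_j=0$): each "$1$-piece" is a full loop close to $\bs{\Gamma}$ and spends time roughly $2|T_{2j}|\ln\ep$-size near $\vec{0}$, while each "$0$-piece" remains confined to an $O(\ep)$ neighborhood of $\vec{0}$ throughout $[T_{2j-1},T_{2j+1}]$. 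Thanks to \assump{F2} (no sliding close to the origin), both cases are handled by the local Poincaré analysis of \cite{CFPRimut}, which makes $\tilde{W}^u(\tau)$, $\tilde{W}^s(\tau)$ available as the reference objects throughout.

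Next I would cast the construction as a fixed point problem in a Banach space of bounded sequences of two "free" parameters per loop: a spatial parameter $q_j$ (the coordinate on $L^0$ of $\Q_j$ relative to $\P_u(T_j)$) and a time parameter $\alpha_j$ (the shift of the return time at $T_{2j}$). The space parameter is controlled by the hyperbolic contraction in the stable direction: the time gap condition $T_{j+1}-T_j\geq \l1+K_0(1+\nu)|\ln\ep|$, with $K_0$ as in \eqref{defK0-new}, forces the forward image from one cross-section to the next to contract by a factor of order $\ep^{(1+\nu)\overline{\Sigma}/\underline{\sigma}}$, strictly smaller than $\ep$. This is the estimate that eventually gives both the contraction needed to solve for $(q_j)$ uniformly in $j$ and the bound $\mathrm{diam}(X^+)=O(\ep^{(1+\nu)/\underline{\sigma}})$ announced in the introduction. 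The initial condition $\xxi$ at $t=\tau$ is then recovered by flowing $\Q_1$ backward to time $\tau$, which is well defined thanks to \eqref{TandKnu_tau+}.

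The time parameter $\alpha_j$ is determined by a bifurcation equation obtained by matching the unstable branch leaving $\Q_j$ with the stable branch arriving at $\Q_{j+1}$; expanding in $\ep$ the Poincaré displacement along $L^0$, using \eqref{melni-disc} and the $C^r$ regularity, this has the form
\[
\ep\,\MM(T_{2j}+\alpha_j)+O(\ep^{1+\kappa})=0
\]
for some $\kappa>0$, with $\alpha_j$ localized to $[\aup_j-T_{2j},\adown_j-T_{2j}]$. The three estimates on $|\alpha_j^{\ee^+}(\ep)|$ then follow directly from the structure of the zeros of $\MM$: under \eqref{nondeg} the implicit function theorem yields $|\alpha_j|\leq c_\alpha\ep$; under \eqref{isolated}, inverting the modulus of continuity gives $|\alpha_j|\leq \omega_M^{-1}(c\ep)=:\omega_\alpha(\ep)$; under \eqref{minimal} we only know that $\alpha_j$ lies in an interval of length at most $\l1$, hence the third bound. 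The sign change $\MM(\aup_j)\MM(\adown_j)<0$ together with \assump{P1} guarantees existence of such an $\alpha_j$ at each step.

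The main obstacle, in my view, is organizing the fixed point argument uniformly over $j\in\Z^+$ so that the iterative scheme closes to a nonempty compact $X^+$ while the sequence $\alpha_j^{\ee^+}(\ep)$ is determined \emph{independently} of the specific $\xxi\in X^+$, as required by the statement. To handle this I would either (i) solve the coupled $(q_j,\alpha_j)$ equations by contraction in a weighted $\ell^\infty$ norm whose weight absorbs the loss coming from the time parameter (this is clean but needs care in decoupling spatial contraction from the Melnikov equation), or (ii) truncate to finite itineraries of length $N$, obtain a nested family of compact sets $X^+_N$ parameterised by the free endpoint, and set $X^+=\bigcap_N X^+_N$; option (ii) makes compactness of $X^+$ immediate and leverages the $C^r$ regularity of the local Poincaré map to extract $\alpha_j^{\ee^+}(\ep)$ as the common limit.
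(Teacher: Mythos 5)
Your option (ii) is essentially the route the paper takes, with one structural simplification worth stressing: there is no sequence of free parameters $(q_j,\alpha_j)$ to solve for. The only unknown is a single scalar shooting parameter $d\in J_0=\,]0,\ep^{(1+\nu)/\und{\si}}]$, the arc-length coordinate of the initial point $\Q_s(d,\tau)$ on $L^0$ relative to $\P_s(\tau)$; the whole forward itinerary is then a function of $d$. Writing $\TTT_n(d,\tau)$ for the $n$-th return time and $d_n(d,\tau)$ for the signed displacement of the $n$-th return point from $\P_s(\TTT_n(d,\tau))$, Theorem \ref{key} gives that on a suitable subinterval the map $\TTT_n(\cdot,\tau)$ covers the window $[\aup_{k_n},\adown_{k_n}]$ around $T_{2k_n}$, while Lemma \ref{key1} (the sign change of $\MM$ guaranteed by \assump{P1}, which dominates the $O(\ep^{2+\bar\nu})$ size of the returned interval) gives that $d_n(\cdot,\tau)$ covers $[-\ep^{(1+\nu)/\und{\si}},\ep^{(1+\nu)/\und{\si}}]$. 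One then selects, by the intermediate value theorem, a compact subinterval $J_n^\Lambda\subset J_{n-1}^\Lambda$ on which both constraints are met, sets $J_{+\infty}^\Lambda=\cap_n J_n^\Lambda$ (nonempty by nested compactness) and $X^+=\{\Q_s(d,\tau)\mid d\in J_{+\infty}^\Lambda\}$; the $\ee_j=0$ stretches and the matching on $[T_{2j-1},T_{2j+1}]$ are handled by Proposition \ref{forPropC} and Lemmas \ref{forgottentimes}--\ref{forgotten.origin}. The estimates a)--c) on $\al_{k_n}=\TTT_n(d,\tau)-T_{2k_n}$ are obtained exactly as you describe, from $|\MM(\TTT_n(d,\tau))|\le\omega_T(\ep)$ combined with \eqref{nondeg}, \eqref{isolated} or \eqref{minimal} (Lemmas \ref{estimate.aln} and \ref{lemma.alphan}).

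Your option (i), a contraction/IFT argument for a coupled sequence $(q_j,\alpha_j)$ in a weighted $\ell^\infty$ space, has a genuine gap under the stated hypotheses: \assump{P1} only requires $\MM$ to change sign on each $[b_{2i},b_{2i+1}]$, and in cases b) and c) the zeros $T_{2j}$ may be degenerate (indeed $\MM$ may vanish identically on $[T_{2j}-\lzero,T_{2j}+\lzero]$). The bifurcation equation $\ep\,\MM(T_{2j}+\alpha_j)+O(\ep^{1+\kappa})=0$ then cannot be solved by the implicit function theorem, $\alpha_j$ is in general not unique, and no Lipschitz or contraction estimate in the $\alpha$-variables is available; this is precisely why the paper replaces the fixed-point scheme by the IVT on the scalar parameter $d$, and why it obtains a compact \emph{interval} $J_{+\infty}^\Lambda$ rather than a singleton (cf.\ Remark \ref{r.unique}). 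Relatedly, your requirement that $\alpha_j^{\ee^+}(\ep)$ be determined independently of $\xxi\in X^+$ overstates what is proved: in the construction $\al_{k_n}$ does depend on $d\in J_{+\infty}^\Lambda$ (see \eqref{defaln}); only the bounds in a)--c) are uniform in $d$, $j$, $\tau$, $\TT^+$ and $\ee^+$.
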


\begin{theorem}\label{restatepast}
	Assume that   $\f^{\pm}$ and $\g$ are $C^r$, $r>1$ and that \assump{F0}, \assump{F1}, \assump{F2}, \assump{K} and \assump{G} hold true;
assume further
\assump{P1}
and fix $\nu \ge \nu_0$ for $\nu_0$ as in \eqref{defK0-new} and $\tau \in [b_0 , b_1]$.
Then we can choose $\ep_0$ small enough so that for any $0< \ep \le \ep_0$
and any increasing sequence $\mathcal{T}^-=(T_j)$  satisfying
 \eqref{TandKnu} for $j \le -2$ and
\begin{equation}\label{TandKnu_tau-}
	  b_0 -T_{-1} >   \l1 + K_0(1+\nu)   |\ln(\ep)| ,
\end{equation}
there is $c^*>0$ so that the following holds.
   For any sequence
   $\ee^-\in \EE^-$ there is a compact set $X^-= X^-(\ee^-,\tau, \TT^-)$ and a sequence $\alpha_j(\ep)= \al_j^{\ee^-}(\ep,\tau,\TT^-)$
such that   for any $\xxi \in X^-$
   the trajectory $\x(t,\tau ;  \xxi) \in \tilde{V}(t)$ for any $t \le \tau$  and satisfies   the property $\bs{C_{\ee^-}^-}$, i.e.
 \begin{description}
  \item[$\bs{C_{\ee^-}^-}$]
	if $\ee_j=1$, then
	\begin{equation}\label{ej-=1w}
	\begin{split}
	&   \|\x(t,\tau; \xxi )-\ga(t-T_{2j}-\al_j(\ep)) \| \le c^*\ep \quad \textrm{when $t \in [T_{2j-1}, T_{2j+1} ]$},
	\end{split}
	\end{equation}
	while if $\ee_j=0$, we have
	\begin{equation}\label{ej-=0w}
	\|\x(t,\tau; \xxi) \| \le c^*\ep  \quad \textrm{when $t \in [T_{2j-1}, T_{2j+1} ] $}
	\end{equation}
	for any $j \in \Z^-$. Further
	\begin{equation}\label{ej-=tau}
	\|\x(t,\tau; \xxi)-\ga(t-\tau) \| \le c^*\ep  \quad \textrm{when $t \in [T_{-1}, \tau ] $}.
	\end{equation}
\end{description}
  Moreover for any $j \in \mathbb{Z}^-$ we have the following estimates
    \begin{description}
    \item[a)] if \eqref{nondeg} holds and $r \ge 2$,  there is $c_{\alpha}> 0$  such that $|\al_{j}^{\ee^-}(\ep)| \le
        c_{\alpha} \ep$;
    \item[b)] if \eqref{isolated} holds and $r>1$, then
    $$|\al_{j}^{\ee^-}(\ep)| \le \omega_{\alpha} (\ep),$$
    where $\omega_{\alpha}(\cdot)$ is an increasing continuous function such that $\omega_{\alpha}(0)=0$;
    \item[c)] if \eqref{minimal} holds and $r>1$ then $|\al_{j}^{\ee^-}(\ep)| \le  \l1 $.
  \end{description}
  The constants $\ep_0$, $c^*$, $c_{\alpha}$ and the function $\omega_{\alpha}$ are independent of
   $\ep$,
    $\nu$, $\tau$, $\TT^-$,
  $\ee^-$.
\end{theorem}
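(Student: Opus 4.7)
The plan is to reduce Theorem \ref{restatepast} to Theorem \ref{restatefuture} via a classical time-reversal argument, as announced at the end of the introduction. Consider the substitution $\tilde{\x}(t) := \x(-t)$, which transforms \eqref{eq-disc} into a new piecewise-smooth system with $\tilde{\f}^{\pm}(\y) := -\f^{\mp}(\y)$, $\tilde{\g}(t,\y,\ep) := -\g(-t,\y,\ep)$, $\tilde{G}(\y) := -G(\y)$, so that $\tilde{\Om}^{\pm} = \Om^{\mp}$ and $\tilde{\Om}^0 = \Om^0$. The spectrum of $\tilde{\f}_{\x}^{\pm}(\vec{0})$ has $\tilde\lambda_u^{\pm} = -\lambda_s^{\mp}>0$ and $\tilde\lambda_s^{\pm} = -\lambda_u^{\mp}<0$, with eigenvectors $\tilde{\vec{v}}_u^{\pm} = \vec{v}_s^{\mp}$ and $\tilde{\vec{v}}_s^{\pm} = \vec{v}_u^{\mp}$. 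A direct check shows that F0, F1, F2, K (with $\tilde{\ga}(t) := \ga(-t)$), and G are satisfied by the reversed system, and that Scenario 1 is preserved because $E^{\textrm{in}}$ and $E^{\textrm{out}}$ are defined intrinsically by the loop $\bs{\Gamma}$, only the orientation along $\bs{\Gamma}$ being reversed.

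Next, the substitution $u = -t$ in \eqref{melni-disc} yields a relation of the form $\tilde{\MM}(\alpha) = -\MM(-\alpha)$. From the sequence $(b_i)_{i \in \Z}$ of \assump{P1} I then define $\tilde{b}_i := -b_{1-i}$, an increasing sequence with $\tilde b_{i+1}-\tilde b_i \ge 1/10$ that satisfies \assump{P1} for $\tilde{\MM}$ with the same constant $\bar c$; in particular $\tilde b_0 = -b_1$ and $\tilde b_1 = -b_0$. The analogous relabellings preserve \eqref{minimal}, \eqref{isolated} and \eqref{nondeg} with the same constants $\delta$, $\l1$, $\lzero$, $C$ and the same modulus $\omega_M$. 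Given $\TT^- = (T_j)_{j \le -1}$, $\tau \in [b_0, b_1]$ and $\ee^- \in \EE^-$, set $\tilde T_j := -T_{-j}$ for $j \ge 1$, $\tilde\tau := -\tau \in [\tilde b_0, \tilde b_1]$ and $\tilde{\ee}_j := \ee_{-j}$, so that $\tilde\TT^+ = (\tilde T_j)_{j \ge 1}$ is increasing, \eqref{TandKnu} holds on $\Z^+$ for $\tilde\TT^+$ (being identical to \eqref{TandKnu} for $\TT^-$ on $j \le -2$), and \eqref{TandKnu_tau+} holds because $\tilde T_1 - \tilde b_1 = -T_{-1} + b_0 = b_0 - T_{-1}$, which is exactly \eqref{TandKnu_tau-}.

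Applying Theorem \ref{restatefuture} to the reversed system with data $(\tilde\tau, \tilde\TT^+, \tilde\ee^+)$ produces a compact set $\tilde X^+ \subset \R^2$ and a sequence $\tilde\alpha_j(\ep)$, $j \in \Z^+$, satisfying the properties \eqref{ej+=1w}, \eqref{ej+=0w}, \eqref{ej+=tau} for $\tilde\x$. Defining
\begin{equation*}
X^-(\ee^-,\tau,\TT^-) := \tilde X^+(\tilde\ee^+,\tilde\tau, \tilde\TT^+), \qquad \alpha_j^{\ee^-}(\ep) := -\tilde\alpha_{-j}(\ep) \quad \text{for } j \in \Z^-,
\end{equation*}
and using the identity $\x(t,\tau;\xxi) = \tilde\x(-t,\tilde\tau;\xxi)$ together with $\tilde\ga(t' - \tilde T_{2j} - \tilde\alpha_j) = \ga((-t') - T_{2(-j)} - \alpha_{-j}^{\ee^-}(\ep))$, each of \eqref{ej-=1w}, \eqref{ej-=0w}, \eqref{ej-=tau} becomes the image of the corresponding forward estimate for $\tilde\x$ under $t' \to -t$, with the interval $[\tilde T_{2j-1}, \tilde T_{2j+1}]$ mapping to $[T_{2k-1}, T_{2k+1}]$ for $k=-j \in \Z^-$. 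The invariance $\x(t,\tau;\xxi) \in \tilde V(t)$ for $t \le \tau$ follows in the same way, upon noting that the stable and unstable leaves of the reversed system coincide with the unstable and stable leaves of the original (cf.\ \eqref{mm} and Remark \ref{Wloc}), so that the corresponding $\tilde V$-set at time $t$ for the reversed system equals $\tilde V(-t)$ for the original. Finally, the estimates \textbf{a)}, \textbf{b)}, \textbf{c)} on $|\alpha_j^{\ee^-}(\ep)|$ transfer directly, with the same $c_\alpha$ and $\omega_\alpha$.

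The main obstacle is the careful bookkeeping of signs and index shifts: identifying the correct sign in the $\tilde{\MM}$--$\MM$ relation, the corresponding parity shift in $(\tilde b_i)$, the matching of the time sequences, and the compatibility of Scenario 1 together with the orientation conditions \eqref{ass.scenario} under reversal. None of these pose conceptual difficulty; once pinned down, the result is a genuine translation of Theorem \ref{restatefuture}.
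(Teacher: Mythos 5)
Your proposal is correct and follows essentially the same route as the paper: the paper proves Theorem \ref{restatepast} by the inversion-of-time argument $\und{\x}(t)=\x(-t)$, reducing to Theorem \ref{restatefuture} exactly as you do (your version merely relabels $\Om^{\pm}$ via $\tilde G=-G$ so that \assump{K} holds verbatim for the reversed system, whereas the paper keeps the labels and invokes the symmetry of Remark \ref{rem:settings}). The extra bookkeeping you supply --- the sign in the $\tilde{\MM}$--$\MM$ relation, the reindexing $\tilde b_i=-b_{1-i}$, and the transfer of \eqref{TandKnu}, \eqref{TandKnu_tau-} --- is exactly what the paper leaves implicit.
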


\begin{theorem}\label{restatebis}\
\begin{description}
  \item[\textbf{a)}]  Assume that the hypotheses of Theorem \ref{restatefuture} hold  but
replace \eqref{TandKnu}  by \eqref{TandKnunew}  and   \eqref{TandKnu_tau+} by
\begin{equation}\label{TandKnunew_tau+}
	T_1-b_1 >  \max\{  B_{1}; B_{0}   \} + K_0(1+\nu)   |\ln(\ep)| ,
\end{equation}
      then we obtain the same result as in Theorem \ref{restatefuture} but we just have
     $$|\al_{j}^{\ee^+}(\ep)| \le B_{2j}.$$
  \item[\textbf{b)}] Assume that the hypotheses of Theorem
    \ref{restatepast} hold but
replace \eqref{TandKnu}  by \eqref{TandKnunew}  and   \eqref{TandKnu_tau-} by
	\begin{equation}\label{TandKnunew_tau-}
		b_0-T_{-1} >  \max\{  B_{0}; B_{-1}   \} + K_0(1+\nu)   |\ln(\ep)| ,
\end{equation}
      then we obtain the same result as in Theorem \ref{restatepast} but we just have
     $$|\al_{j}^{\ee^-}(\ep)| \le B_{2j}.$$
\end{description}
     \end{theorem}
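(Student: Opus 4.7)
The plan is to adapt the iterative construction of Theorem \ref{restatefuture}\textbf{c)} to the setting where \eqref{minimal} is replaced by just \assump{P1}. The key observation is that in the proof of part \textbf{c)} the auxiliary pair $(\aup_j, \adown_j)$ is used only through two features: (i) $\MM$ takes values of opposite signs at these endpoints, with absolute value bounded away from $0$ uniformly in $j$; and (ii) the width $\adown_j - \aup_j$ is at most $\l1$. When \eqref{minimal} is dropped, \assump{P1} still guarantees (i) with the replacement pair $(\beta_{2j}, \beta'_{2j})$ from \eqref{defkj}, since $|\MM(\beta_{2j})|,|\MM(\beta'_{2j})|>\bar{c}$ with opposite signs; what is lost is (ii), because $B_{2j} = \beta'_{2j}-\beta_{2j}$ may grow with $j$.

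Concretely I would re-run the iterative scheme of \S \ref{proof.future} verbatim, with $(\aup_j, \adown_j)$ replaced throughout by $(\beta_{2j}, \beta'_{2j})$. The shadowing fixed point that pins down the shift $\alpha_j$ uses only the sign change of $\MM$ on this interval (via the intermediate value theorem) together with the Poincar\'e-map estimates of Theorems \ref{key} and \ref{keymissed}, which are uniform in the choice of search window. The resulting $\alpha_j$ lies in $[\beta_{2j}-T_{2j},\, \beta'_{2j}-T_{2j}]$, so that $|\al_{j}^{\ee^+}(\ep)| \le B_{2j}$, which is the advertised estimate. The bounds on $\|\x(t,\tau;\xxi)-\ga(t-T_{2j}-\al_j(\ep))\|$ and on $\|\x(t,\tau;\xxi)\|$ are insensitive to the replacement, since they follow from loop-by-loop Poincar\'e-map estimates, which only require the time between consecutive loops to exceed the minimal flight time $K_0(1+\nu)|\ln(\ep)|$ plus the slack within which $\alpha_j$ may vary. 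This slack is now $\max\{B_{j+1};B_{j}\}$ instead of $\l1$, which is precisely the enhancement enforced by \eqref{TandKnunew} and \eqref{TandKnunew_tau+}.

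For part \textbf{b)}, the classical change of variable $t \mapsto -t$ sends \eqref{eq-disc} into an equivalent piecewise-smooth problem for which the roles of $\Sigma^+$ and $\Sigma^-$ are swapped and under which both \assump{P1} and \eqref{TandKnunew} are preserved (up to reindexing of the sequence $(b_i)$ and of the eigenvalues). Applying part \textbf{a)} to the time-reversed system yields part \textbf{b)} verbatim; this is exactly the reduction used in \S \ref{proof.past} to derive Theorem \ref{restatepast} from Theorem \ref{restatefuture}.

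The main technical obstacle is verifying that the $\ep$-uniform estimates in Theorems \ref{key} and \ref{keymissed}, which drive the contraction in the iterative scheme, remain uniform when the search window is $[\beta_{2j},\beta'_{2j}]$ of width $B_{2j}$ rather than of width at most $\l1$. One must check that whatever dependence on window width enters the contraction ratio can be absorbed into the $K_0(1+\nu)|\ln(\ep)|$ term, so that the strengthened gap hypothesis \eqref{TandKnunew} suffices to preserve contraction uniformly in $j \in \Z$. Beyond this bookkeeping the argument reduces to the one in \S \ref{proof.future}, with the loss of the modulus of continuity $\omega_M$ reflected in the coarser localization $|\al_{j}^{\ee^\pm}(\ep)| \le B_{2j}$.
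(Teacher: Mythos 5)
Your proposal is correct and follows essentially the same route as the paper: in fact the paper proves part \textbf{a)} directly in \S\ref{proof.future} as the primary construction, running the nested-interval scheme on the windows $[\beta_{2k_n},\beta'_{2k_n}]$ (Lemmas \ref{key2}, \ref{key2-stepn} and Proposition \ref{p.fwdn}), with \eqref{TandKnunew} absorbing the window width $B_{2k_n}$ exactly as you indicate, and then obtains part \textbf{b)} by the same time-reversal reduction you describe. The only caveat is that the iteration is not a contraction/shadowing fixed-point argument but a nested compact-interval construction via the intermediate value theorem applied to $\TTT_n(\cdot,\tau)$ and $d_n(\cdot,\tau)$, so the ``uniform contraction ratio'' you flag as the main technical obstacle is not actually present; the uniformity in Theorems \ref{key} and \ref{keymissed} is in $\tau$ and $d$ only and is unaffected by the window width.
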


 In Theorems \ref{restatefuture} and \ref{restatepast} we can suppress the dependence on $\al_j^{\ee^{\pm}}(\ep)$  of the relations in
\eqref{ej+=1w},
\eqref{ej-=1w},
possibly paying the price of a further loss of precision of the estimates.
\begin{cor}\label{r.allbis}
Let the assumptions of Theorems \ref{restatefuture}  and \ref{restatepast} hold.

If \textbf{a)} holds then we can replace  \eqref{ej+=1w},
\eqref{ej-=1w}, by
\begin{equation*} 
	\begin{split}
	&   \|\x(t,\tau; \xxi)-\ga(t-T_{2j}) \| \le \tilde{c}^* \ep \quad \textrm{when $t \in [T_{2j-1}, T_{2j+1} ]$}
	\end{split}
	\end{equation*}
 where  $\tilde{c}^* $ is the constant in \eqref{deftildecstar}.

If \textbf{b)} holds then we can replace  \eqref{ej+=1w},
\eqref{ej-=1w}, by
	\begin{equation*} 
	\begin{split}
	&   \|\x(t,\tau; \xxi)-\ga(t-T_{2j}) \| \le \omega(\ep) \quad \textrm{when $t \in [T_{2j-1}, T_{2j+1} ]$}
	\end{split}
	\end{equation*}
where  $\omega(\ep)= c^*\ep+ \tilde{c}\omega_{\al} (\ep)$  and $\tilde{c} >0$ is a constant,  cf.~\eqref{tildecstarbis}.
\end{cor}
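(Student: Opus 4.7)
The plan is a direct application of the triangle inequality, using the estimates already provided by Theorems~\ref{restatefuture} and~\ref{restatepast} together with the global Lipschitz property of the unperturbed homoclinic trajectory $\ga$.

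First I would decompose, for $t \in [T_{2j-1}, T_{2j+1}]$,
\begin{equation*}
\|\x(t,\tau;\xxi) - \ga(t-T_{2j})\| \le \|\x(t,\tau;\xxi) - \ga(t-T_{2j}-\al_j(\ep))\| + \|\ga(t-T_{2j}-\al_j(\ep)) - \ga(t-T_{2j})\|.
\end{equation*}
The first summand is bounded by $c^*\ep$ by \eqref{ej+=1w} (resp.\ \eqref{ej-=1w}). For the second summand, observe that $\ga$ is the homoclinic of the unperturbed system, so $\dot{\ga}(t) = \f^{\pm}(\ga(t))$ a.e.\ and $\ga(t) \to \vec{0}$ as $t \to \pm\infty$; hence $\ga$ takes values in a compact subset of $\Om$ and, since $\f^{\pm} \in C^r_b$, the derivative $\dot{\ga}$ is uniformly bounded on $\R$ (with at most a single jump at $t=0$, where $\ga$ crosses $\Om^0$). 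Setting $M := \sup_{s \in \R}\|\dot{\ga}(s)\| < +\infty$, absolute continuity of $\ga$ yields
\begin{equation*}
\|\ga(s-\al_j(\ep)) - \ga(s)\| \le M\,|\al_j(\ep)| \qquad \text{for every } s \in \R.
\end{equation*}

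In case \textbf{a)}, Theorems~\ref{restatefuture}\textbf{a)} and~\ref{restatepast}\textbf{a)} give $|\al_j^{\ee^{\pm}}(\ep)| \le c_{\alpha}\ep$, so the second summand is bounded by $M c_{\alpha}\ep$ and one defines $\tilde{c}^* := c^* + M c_{\alpha}$. In case \textbf{b)} we only have $|\al_j^{\ee^{\pm}}(\ep)| \le \omega_{\alpha}(\ep)$, so the second summand is bounded by $M \omega_{\alpha}(\ep)$; setting $\tilde{c} := M$ yields $\omega(\ep) = c^*\ep + \tilde{c}\,\omega_{\alpha}(\ep)$, exactly as in the statement.

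No substantial obstacle is expected; the only point that deserves a line is that $M$ is independent of $\ep$, $\nu$, $\tau$, $\TT^{\pm}$, and $\ee^{\pm}$, which is clear because $\ga$ depends only on the unperturbed problem at $\ep=0$, and the fact that the same bound applies uniformly on $[T_{2j-1},T_{2j+1}]$ for every $j$ is automatic since the estimate is pointwise in $s \in \R$.
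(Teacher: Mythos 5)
Your proposal is correct and follows essentially the same route as the paper: a triangle inequality splitting off the $\al_j(\ep)$-shift, a uniform Lipschitz bound on $\ga$ (the paper's ``Claim 1'', which handles the corner at $t=0$ exactly as you do), and then the bounds $|\al_j(\ep)|\le c_\alpha\ep$ or $|\al_j(\ep)|\le\omega_\alpha(\ep)$ from Lemma \ref{lemma.alphan}. The only cosmetic difference is that the paper takes $\tilde c=2\sup\|\dot\ga^\pm\|$ rather than your $M=\sup\|\dot\ga\|$, which changes nothing.
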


\begin{remark}\label{r.unique}
  Notice that if we have  two different sequences $\hat{\ee}^{\pm}$ and $\check{\ee}^{\pm}$ in $\EE^\pm$ then
  $X^{\pm}(\hat{\ee}^{\pm}, \tau, \TT^{\pm}) \cap X^{\pm}(\check{\ee}^{\pm}, \tau, \TT^{\pm})= \emptyset$.
  We conjecture that in the setting of Theorem \ref{restatefuture}  \textbf{a)}  and Theorem \ref{restatepast} \textbf{a)}
   $X^{+}(\ee^+, \tau, \TT^{+})$ and  $X^{-}(\ee^-, \tau, \TT^{-})$ are singletons for any $\ee^+ \in \EE^+$ and any
   $\ee^- \in \EE^-$.
\end{remark}

  \begin{remark}\label{remP1}
We stress that \assump{P1} does not require an upper bound on $B_j$ when $|j| \to +\infty$.
Further, the self-similarity required on $\MM$ is very weak and we do not need any non-degeneracy of the zeros of the Melnikov function
$\MM(\tau)$, which may be null in some interval. Hence with our results we can deal with functions like, e.g.:
\begin{equation}\label{e.remP1}
\begin{split}
&     \MM_1(\tau)= 3\sin ( \tau)+ k(\tau), \\
    & \MM_2(\tau)= 3\sin (\sqrt[3]{1+\tau^2})+ k(\tau)
\end{split}
\end{equation}
where $|k(\tau)| \le 2$ is arbitrary.
\\
Notice that $\MM_1(\tau)$  satisfies \eqref{minimal} and \eqref{isogen},
  while
$\MM_2(\tau)$ does not even satisfy
\eqref{minimal}. So in the former case
we can apply Theorems \ref{restatefuture} and \ref{restatepast} while in the latter just Theorem \ref{restatebis}.

Further notice that if we assume $k(\tau) \equiv 0$, in the former case we can apply Corollary \ref{r.allbis} since
the hypotheses of Theorems \ref{restatefuture} \textbf{a)} and \ref{restatepast} \textbf{a)} are satisfied,
 while in the latter we can apply again just Theorem \ref{restatebis}, since we do not have a
uniform lower bound on     $|\MM'(T_j)|$.

 We remand the reader to \cite[\S 7]{CFPdiscAr} for the construction of examples of systems of type \eqref{eq-disc}
such that the corresponding Melnikov function $\MM$ is not even (a priori) almost periodic but has properties similar  to the functions
$\MM_1$ and $\MM_2$ in \eqref{e.remP1}.
\end{remark}

 Via Theorem \ref{restatefuture} or \ref{restatebis} \textbf{a)} we define  the sets
\begin{equation}\label{XandSigma+}
\begin{split}
    \aleph^+(\tau, \TT^+):= &\cup_{\ee^+ \in \EE^+}X^+( \ee^+, \tau, \TT^+).
\end{split}
\end{equation}
Analogously via Theorems \ref{restatepast} or \ref{restatebis} \textbf{b)}
we define  the sets
\begin{equation}\label{XandSigma-}
   \aleph^-(\tau, \TT^-):= \cup_{\ee^- \in \EE^-}X^-( \ee^-, \tau, \TT^-).
\end{equation}

\begin{remark}\label{r.spatial.mult1}
 Assume that we are in the setting either of Theorem \ref{restatefuture} or of Theorem \ref{restatebis} \textbf{a)} but
 assume that the system is smooth.
 Then we can replace $\Omega^0$ by any line (or curve) $\Xi$ transversal to $\bs{\Gamma} \setminus \{\vec{0} \}$ (cf. point 1 of
 Introduction),
 so for any fixed couple $(\tau, \TT^+)$ we find uncountably many distinct chaotic sets $\aleph^+(\tau, \TT^+)$,
 one for each transversal $\Xi$. The analogous result holds for $\aleph^-(\tau, \TT^-)$ when either the assumptions of Theorem
 \ref{restatepast} or of  Theorem  \ref{restatebis} \textbf{b)} are satisfied
\end{remark}
\begin{remark}\label{r.spatial.mult2}
 Assume that we are in the setting either of Theorem \ref{restatefuture} or of Theorem \ref{restatebis} \textbf{a)}, this time
 allow the system to be non-smooth.
 For any $T \in \R$ we can define
 $$\aleph^+_T(\tau, \TT^+):= \{ \x(T,\tau; \xi) \mid \xi \in \aleph^+(\tau, \TT^+)\}.$$
 Now fix $T \in \R$ and a point $\vec{\zeta}   \in \bs{\Gamma}$, say $\vec{\zeta}=\ga (s) $, $s \in \R$.
 Then the set $\aleph^+_T(T-s, \TT^+)$ is $O(\ep)$ close to $\vec{\zeta}$.
 This way for any fixed $T\in\R$ we construct uncountably many distinct sets $\aleph^+_T(T-s, \TT^+)$
 parametrized by $s$,
 each of them $O(\ep)$ close to  a point $\zeta= \ga(s) \in \bs{\Gamma}$.
 An analogous result holds for $\aleph^-$.
In this way we extend the results of Remark \ref{r.spatial.mult1} to the piecewise-smooth but discontinuous case.
\end{remark}

\begin{cor}\label{c.localization}
Let either the assumptions of Theorems \ref{restatefuture} and \ref{restatepast} or the assumptions of Theorem \ref{restatebis} hold. Then
  the sets $\aleph^{\pm}(\tau,\mathcal{T}^{\pm})= \cup_{\ee^{\pm} \in \EE^{\pm}}
  X^{\pm}(\ee^{\pm},\tau,\mathcal{T}^{\pm})$
  are compact   and $\aleph^{\pm}(\tau,\mathcal{T}^{\pm}) \subset L^0(c^* \ep)$. Further
\begin{equation}\label{eq.diam}
 \textrm{diam}(\aleph^{\pm}):= \sup \left\{ \|\Q_2- \Q_1 \| \mid \Q_1, \Q_2 \in \aleph^{\pm}(\tau, \TT^{\pm}) \right\} \le
\ep^{\frac{1+\nu}{\und{\sigma}}}.
\end{equation}
Moreover $\P_s(\tau)$    is an extremal point in $\aleph^+(\tau, \TT^+)$, while
$\P_u(\tau)$    is an extremal point in $\aleph^-(\tau, \TT^-)$, and   both  $\P_s(\tau)$ and $\P_u(\tau)$  correspond  to the null
sequence.
\end{cor}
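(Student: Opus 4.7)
The plan is to establish the four assertions in sequence: localization in $L^0(c^*\ep)$, compactness, the diameter bound, and the identification of $\P_s(\tau)$ (resp.\ $\P_u(\tau)$) as the extremal point corresponding to the null sequence. I will discuss $\Sigma^+$; the case of $\Sigma^-$ follows by the time-reversal argument developed in \S\ref{proof.past}, exchanging the roles of the stable and unstable manifolds. \emph{Localization} is immediate from the construction of $X^+(\ee^+,\tau,\TT^+)$ in \S\ref{S.pf.main}: candidates $\xxi$ are selected on the transversal $L^0$, and \eqref{ej+=tau} evaluated at $t=\tau$ yields $\|\xxi-\ga(0)\|\le c^*\ep$, hence $\xxi\in L^0(c^*\ep)$.

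For \emph{compactness}, I use that $\EE^+=\{0,1\}^{\Z^+}$ is compact in the product topology. Given a convergent sequence $\xxi_n\to\xxi$ with $\xxi_n\in X^+(\ee^+_n,\tau,\TT^+)$, extract a subsequence so that $\ee^+_n\to\ee^+$ coordinate-wise; since $c^*$ is uniform in $\ee^+$ and $\Phi(t,\tau)$ depends continuously on initial data on every bounded time interval, the inequalities \eqref{ej+=1w}, \eqref{ej+=0w} and \eqref{ej+=tau} pass to the limit, so $\xxi\in X^+(\ee^+)\subset\Sigma^+$. Combined with boundedness from the localization step, this gives compactness.

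For the \emph{diameter bound}, the key observation is that, for every $\xxi\in X^+(\ee^+,\tau,\TT^+)$, property $\bs{C^+_{\ee^+}}$ forces $\x(T_1,\tau;\xxi)$ into a small neighborhood of $\vec{0}$: directly if $\ee_1=0$ by \eqref{ej+=0w}, and if $\ee_1=1$ by \eqref{ej+=1w} together with the fact that $\|\ga(T_1-T_2-\alpha_1)\|$ is exponentially small, since $T_2-T_1\ge K_0(1+\nu)|\ln\ep|$ and $\ga(s)$ converges to $\vec{0}$ along $\vec v_u^-$ as $s\to-\infty$ (cf.\ \eqref{ass.scenario}). Because $L^0$ is transversal to $\ga$ at $\ga(0)$ and hence to the local stable manifold, the displacement of $\xxi$ along $L^0$ from $\P_s(\tau)=\tilde W^s(\tau)\cap L^0$ is comparable to its component along $\vec v_u^+$, which the forward flow amplifies by $\exp(\la_u^+(T_1-\tau))$. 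Feeding this into the space-displacement estimates of Theorems~\ref{key} and~\ref{keymissed} yields, uniformly in $\ee^+$,
\begin{equation*}
\|\xxi-\P_s(\tau)\|\;\le\; C\,\ep^{(1+\nu)/\und\sigma},
\end{equation*}
where the matching of the exponent relies on $\la_u^+K_0\ge 1/\und\sigma$, which is a direct check from \eqref{defsigma} and \eqref{defK0-new}. This gives both $\Sigma^+\subset B\bigl(\P_s(\tau),C\,\ep^{(1+\nu)/\und\sigma}\bigr)$ and \eqref{eq.diam}.

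For the \emph{extremal-point assertion}, take the null sequence $\ee^+\equiv 0$: by $\bs{C^+_{\ee^+}}$, $\|\x(t,\tau;\xxi)\|\le c^*\ep$ for all $t\ge\tau$, so the forward orbit stays in a small neighborhood of $\vec{0}$; by \eqref{mm} and Remark~\ref{Wloc} we conclude $\xxi\in\tilde W^s(\tau)$, and intersecting with $L^0$ forces $\xxi=\P_s(\tau)$. Extremality of $\P_s(\tau)$ in $\Sigma^+$ reflects the monotonicity of the nested-interval construction of $X^+(\ee^+)$ developed in \S\ref{S.pf.main}: at each step the two candidate sub-arcs of $L^0$ corresponding to $\ee_j=0$ and $\ee_j=1$ lie on opposite sides, the $\ee_j=0$ side being the one closer to $\tilde W^s(\tau)\cap L^0$, so iterating shows that the all-zeros sequence selects the endpoint $\P_s(\tau)$. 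The \emph{main obstacle} is making the diameter bound rigorous: the unstable-expansion heuristic above has to be combined with the precise space-displacement and flight-time estimates of Theorems~\ref{key} and~\ref{keymissed} in order to control the nonlinear part of the Poincar\'e return map and to match exactly the exponent $(1+\nu)/\und\sigma$; the remaining three assertions essentially repackage information already produced by the iterative construction together with the compactness of $\EE^{\pm}$.
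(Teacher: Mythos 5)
Your argument for the diameter bound is both unnecessary and, as stated, incorrect. The corollary is essentially a restatement of the construction in Section~\ref{S.pf.main}: every element of $X^{+}(\ee^{+},\tau,\TT^{+})$ is of the form $\Q_s(d,\tau)$ with $d\in J^{\ee^+}_{+\infty}\subset J_1\subset I_1\subset J_0$, and $J_0$ is \emph{defined} in \eqref{J0} as $\{d\mid 0<d\le \ep^{(1+\nu)/\und{\si}}\}$; since $d=\dist(\Q_s(d,\tau),\P_s(\tau))$ is the arc-length distance from $\P_s(\tau)$ along $\Omega^0$, one gets $\|\Q_s(d,\tau)-\P_s(\tau)\|\le d\le\ep^{(1+\nu)/\und{\sigma}}$ with no dynamics at all, and \eqref{eq.diam}, the inclusion in $L^0(c^*\ep)$, and the extremality of $\P_s(\tau)$ (which is $\Q_s(0,\tau)$, attained exactly by the null sequence since then $j^+_{\SS}=0$ and $d_0(d,\tau)=d$ must vanish) all follow at once. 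Your alternative derivation — propagating $\x(T_1,\tau;\xxi)\approx\vec 0$ backwards through an expansion factor $\eu^{\la_u^+(T_1-\tau)}$ — relies on the claimed inequality $\la_u^+K_0\ge 1/\und{\sigma}$, i.e.\ $\la_u^+\overline{\Sigma}\ge 2/3$, which is \emph{not} a consequence of \eqref{defsigma} and \eqref{defK0-new}: taking for instance $\la_u^+=1$, $\la_u^-=100$, $\la_s^{\pm}=-1000$ gives $\overline{\Sigma}\approx 0.011$, so the inequality fails (it holds in the smooth case but not in the piecewise-smooth one). Moreover the expansion near the origin is governed by both fields $\f^{\pm}$, not by $\la_u^+$ alone, so the heuristic would in any case have to be replaced by the estimates of Theorem~\ref{key}, at which point you would simply be re-deriving the constraint $d\in J_0$ that the construction already imposes.

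Two smaller points. Your compactness argument is also not what membership in $\Sigma^{+}$ means: $X^{+}(\ee^{+})$ is a \emph{specific} nested-intersection interval (the ``closest to $0$'' choices in Remark~\ref{closest}), not the set of all points satisfying $\bs{C^{+}_{\ee^{+}}}$, so showing that the inequalities \eqref{ej+=1w}--\eqref{ej+=tau} pass to a limit (which itself requires controlling the $\ee^{+}$-dependent shifts $\al_j^{\ee^+}(\ep)$ along the subsequence) does not place the limit point in $\cup_{\ee^{+}}X^{+}(\ee^{+})$. Finally, your description of the iteration as choosing ``two candidate sub-arcs on opposite sides'' for $\ee_j=0$ versus $\ee_j=1$ does not match the scheme: the symbols $0$ are handled by lengthening the return time within a single interval of $d$-values, not by a binary splitting of $L^0$.
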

From Corollary \ref{c.localization} we see that the sets $\aleph^{\pm}(\tau, \TT^{\pm})$ are located   in a one-dimensional set.
  Further they are contained in a set whose diameter   becomes arbitrarily small  as $\nu$ increases,
  leaving unaltered the size $\ep$ of the perturbation.  The drawback is that the minimum
  gap $T_{j+1}-T_j$ increases
  linearly with $\nu$.
However the positions of $\P_{s}(\tau)$ and $\P_{u}(\tau)$
are known just with a precision of order $O(\ep)$ since they both oscillate in $L^0(c^* \ep)$.

 Further, in the setting of Theorems \ref{restatefuture} and \ref{restatepast} the flow of \eqref{eq-disc} on $\aleph^+$ and on $\aleph^-$ is
 semi-conjugated
 to the Bernoulli shift on
$\EE^+$ and on $\EE^-$ respectively, see Theorem
\ref{thm.bernoulli}.

In fact we get an even better localization of the initial conditions giving rise to chaos if we evaluate them at $t=T_1$.

 \begin{cor}\label{localize1}
  Let either the assumptions of Theorem \ref{restatefuture} or the assumptions of Theorem \ref{restatebis} \textbf{a)} be satisfied;
   then
$$\aleph^{+}_{1} := \{\x(T_1, \tau ; \xxi) \mid \xxi \in \aleph^+(\tau, \TT^+) \}   \subset \left( B(\vec{0},  \ep^{ \frac{\nu+1}{2}})\cap
\tilde{V}(T_1)
\right).$$
Analogously
let either the assumptions of Theorem \ref{restatepast} or the assumptions of Theorem \ref{restatebis} \textbf{b)} be satisfied;
   then
$$\aleph^{-}_{-1} := \{\x(T_{-1}, \tau ; \xxi) \mid \xxi \in \aleph^-(\tau,\TT^-) \}   \subset \left( B(\vec{0},  \ep^{ \frac{\nu+1}{2}})\cap
\tilde{V}(T_{-1})
\right).$$
\end{cor}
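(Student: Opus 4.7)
The containment $\Sigma^+_1 \subset \tilde{V}(T_1)$ follows immediately from the property $\x(t,\tau;\xxi)\in\tilde{V}(t)$ for all $t\ge\tau$ stated in Theorem \ref{restatefuture}. The substantive claim is the ball bound $\Sigma^+_1\subset B(\vec{0},\ep^{(\nu+1)/2})$, which reflects the fact that at the transition times $T_j$, where the perturbed trajectory is meant either to ``leave the origin'' (if $\ee_{j}=1$) or to ``stay near the origin'' (if $\ee_j=0$), it is forced to sit in a very small neighborhood of $\vec 0$. The naive $O(\ep)$ bound from \eqref{ej+=tau} is not strong enough; one needs the finer estimates built into the iterative construction of \S \ref{proof.future}/\S \ref{proof.futurebis}.

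My plan is to combine two sources of smallness. Firstly, by Corollary \ref{c.localization}, $\P_s(\tau)$ is an extremal point of $\Sigma^+(\tau,\TT^+)$ and thus $\|\xxi-\P_s(\tau)\|\le\mathrm{diam}(\Sigma^+)\le\ep^{(1+\nu)/\underline\sigma}$. Since $\P_s(\tau)\in\tilde{W}^s(\tau)$, the invariance \eqref{proprieta} gives $\x(T_1,\tau;\P_s(\tau))\in\tilde{W}^s(T_1)$; linearization at the hyperbolic origin, together with the fact that $\g(t,\vec 0,\ep)=\vec 0$ by \assump{G} and $\la_s^+<0$, yields $\|\x(T_1,\tau;\P_s(\tau))\|\le C e^{\la_s^+(T_1-\tau)}$. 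Secondly, the displacement of $\xxi$ from $\P_s(\tau)$ lies in the transversal $L^0$, hence essentially in the unstable direction at the origin, and by the Poincar\'e--map / flow--box estimates of Theorem \ref{key} its magnitude at time $T_1$ is at most $C\ep^{(1+\nu)/\underline\sigma}e^{\la_u^+(T_1-\tau)}$. Combining these bounds,
\begin{equation*}
\|\x(T_1,\tau;\xxi)\|\le C\ep^{(1+\nu)/\underline\sigma}e^{\la_u^+(T_1-\tau)} + Ce^{\la_s^+(T_1-\tau)}.
\end{equation*}
Using the gap hypothesis $T_1-\tau\ge K_0(1+\nu)|\ln\ep|$ from \eqref{TandKnu_tau+} or \eqref{TandKnunew_tau+}, and unwinding $K_0=3\overline\Sigma/(2\underline\sigma)$ and $\nu_0=\max\{3\overline\sigma-1,1\}$ from \eqref{defK0-new} together with the explicit shape of the constants in \eqref{defsigma}, both terms acquire exponents at least $(1+\nu)/2$, which delivers the claim for $\ep$ sufficiently small to absorb the constant $C$.

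The backward case $\Sigma^-_{-1}$ is handled by the time-reversal argument of \S \ref{proof.past}: the stable leaf and $\P_s(\tau)$ are replaced by the unstable leaf and $\P_u(\tau)$, the forward invariance in \eqref{proprieta} by its backward analogue, and the gap inequality becomes \eqref{TandKnu_tau-}/\eqref{TandKnunew_tau-}; the rest of the computation is symmetric.

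The main obstacle I anticipate is the elementary but delicate algebraic verification that the two exponents produced above, namely $K_0(1+\nu)|\la_s^+|$ and $(1+\nu)\bigl(1/\underline\sigma - K_0\la_u^+\bigr)$, both exceed $(1+\nu)/2$: this is precisely the reason $K_0$ and $\nu_0$ were defined by the specific expressions in \eqref{defK0-new}, and in the piecewise-smooth setting the book-keeping has to juggle all of $\sfwd_\pm$, $\sbwd_\pm$, $\sTfwd$, $\sTbwd$ from \eqref{defsigma} since the forward and backward eigenvalues no longer coincide. Conceptually, however, the picture is the standard hyperbolic one: a point close to the stable leaf, flowing for a time larger than a definite multiple of $|\ln\ep|$, is forced exponentially close to the fixed point.
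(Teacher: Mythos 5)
Your reduction of the $\tilde V(T_1)$ containment to the invariance statement is exactly the paper's Remark \ref{r.in2}, and your first term (exponential decay of $\x(T_1,\tau;\P_s(\tau))$ along the stable leaf, forced by $T_1-\tau\ge K_0(1+\nu)|\ln\ep|$) matches the paper's Lemma \ref{est.mani}. The gap is in the displacement term. First, your bound $C\ep^{(1+\nu)/\und{\sigma}}\eu^{\la_u^+(T_1-\tau)}$ is increasing in $T_1-\tau$, so the gap hypothesis --- a \emph{lower} bound on $T_1-\tau$ --- cannot be used to bound it from above; and even substituting $T_1-\tau=K_0(1+\nu)|\ln\ep|$, the exponent $(1+\nu)\bigl(1/\und{\sigma}-K_0\la_u^+\bigr)$ you propose to check is in fact \emph{negative}: already in the smooth case $\la_u\ov{\Sigma}\ge\la_u\sTfwd=1$, so $K_0\la_u=\tfrac{3\la_u\ov{\Sigma}}{2\und{\sigma}}\ge\tfrac{3}{2\und{\sigma}}>\tfrac{1}{\und{\sigma}}$. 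The ``delicate algebraic verification'' you defer therefore fails. Second, and more structurally, the pure $\eu^{\la_u^+t}$ growth model is only meaningful while the trajectory sits near the origin in $\Om^+$, i.e.\ for $t\le\TTT_{\frac{1}{2}}(d,\tau)$; but $T_1$ may lie anywhere in $[\tau+T_b,\TTT_1(d,\tau)-T_a]$, in particular past $\TTT_{\frac{1}{2}}(d,\tau)$, where the trajectory has crossed into $\Om^-$, is governed by $\bs{f_x^-}$, and the comparison with the forward trajectory of $\P_s(\tau)$ is no longer available.

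The paper's argument (Lemma \ref{est.mani} and Proposition \ref{p.localize1}) repairs both points by splitting at $\TTT_{\frac{1}{2}}(d,\tau)$ and invoking the uniform shadowing estimates of Theorem \ref{keymissed} instead of a raw exponential bound: if $T_1\le\TTT_{\frac{1}{2}}(d,\tau)$, estimate \eqref{keymissed.es-} gives a displacement from $\x(t,\tau;\P_s(\tau))$ of at most $d^{\sfwd_+-\mu}\le\ep^{\frac{3(1+\nu)}{4}}$ \emph{uniformly on the whole half-loop}, with no exponential factor (the exponent $\sfwd_+$ already encodes the net stable/unstable balance); if $T_1>\TTT_{\frac{1}{2}}(d,\tau)$, one compares instead backward with the trajectory arriving at $\P_u(\TTT_1(d,\tau))$ via \eqref{keymissed.eu+}, whose norm at $T_1$ is again $O(\ep^{2(1+\nu)/3})$ because $\TTT_1(d,\tau)-T_1\ge T_2-B_2-T_1\ge K_0(1+\nu)|\ln\ep|$. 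To make your proof work you must incorporate this case distinction and replace the $\eu^{\la_u^+(T_1-\tau)}$ bound by the estimates of Theorem \ref{keymissed}.
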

The proof of this result is postponed to the end of Section \ref{S.pf.main}.

As far as we are aware these facts are new even when $\f$ is smooth
 and stress the sensitive dependence of this perturbed equation on initial conditions.

We close this section  by observing that, in the smooth context,   assumption \assump{G} is not
restrictive.
In fact we may replace it by
\begin{description}
	\item[\assump{G'}]
	there is $C_g>0$ such that $\|\g(t,\vec{0},\ep)\| \le C_g$ for any $t \in \R$ and any $0\le \ep \le 1$.
\end{description}
Note that, in the smooth case and under condition \assump{G'},
by standard arguments relying on the exponential
dichotomy theory, see e.g.\ \cite[\S 4.2]{CDFP},
it can be proved that
 \eqref{eq-smooth}  admits a unique solution, say $\x_b(t,\ep)$, which is bounded for any $t \in \R$ and such that $\|\x_b(t,\ep)\|=O(\ep)$
uniformly in $t \in\R$.
In fact $\x_b(t,\ep)$ emanates from the origin and,  roughly speaking, replaces its role; i.e., $\ga(t)$ is perturbed on a trajectory
homoclinic to
$\x_b(t,\ep)$ as $|t| \to \infty$. Further  we have the following.

\begin{remark} \label{r.translated}
In the smooth case, replace \assump{G} by \assump{G'}; then we can still apply our methods to  \eqref{eq-smooth}, obtaining a result
analogous
to
Corollary \ref{localize1}.
\end{remark}

\begin{proof}[Proof of Remark \ref{r.translated}]
If \assump{G'} holds we may set
$\y(t,\ep)=\x-\x_b(t,\ep)$ so that  \eqref{eq-smooth}
is changed into
\begin{equation}\label{eq-translated}
   \dot{\y}= \f(\y)+\ep\g_T(t,\y,\ep)
\end{equation}
where
\begin{equation*}
  \begin{split}
     \g_T(t,\y,\ep):= & \g(t,\y+ \x_b(t,\ep),\ep)- \g(t,\x_b(t,\ep),\ep) \\
       & + \frac{\f(\y+ \x_b(t,\ep))- \f(\y)- \f(\x_b(t,\ep))}{\ep}.
  \end{split}
\end{equation*}
Using the fact that $\x_b(t,\ep)/\ep$ is uniformly bounded, one can check that $\g_T(t,\y,\ep)$ is bounded
when $t  \in \R$, $\y$ is in a compact neighborhood of $\bs{\Gamma}$ and $\ep \in [0,1]$. Further $\g_T(t,\vec{0},\ep) \equiv 0$
so \eqref{eq-translated} satisfies \assump{G}. So we can apply the results of this section;  then going back to the original coordinates we
prove the
remark.
\end{proof}
 Classically with the change of variable used in Remark \ref{r.translated} we have a loss of regularity
 with respect to the $\ep$ variable, which is important to obtain $C^{r-1}$ functions $\alpha_j(\ep)$. However our approach  does not
 allow a good control on the functions $\alpha_j(\ep)$ which are at most continuous and in fact  usually they are not uniquely defined,
  see Theorems
 \ref{restatefuture},  \ref{restatepast} and  \ref{restatebis}; so we have no problems with the loss of regularity and we can ask for $r>1$
 instead of $r \ge
 2$.

\section{Construction of the  Poincar\'e  map}
In this section we borrow the results of \cite{CFPRimut} and we construct a  Poincar\'e  map from a subset of $L^0$ back to $L^0$
both   forward and backward in time, see Figure~\ref{nextstable}.

\begin{figure}[t]
\centerline{\epsfig{file=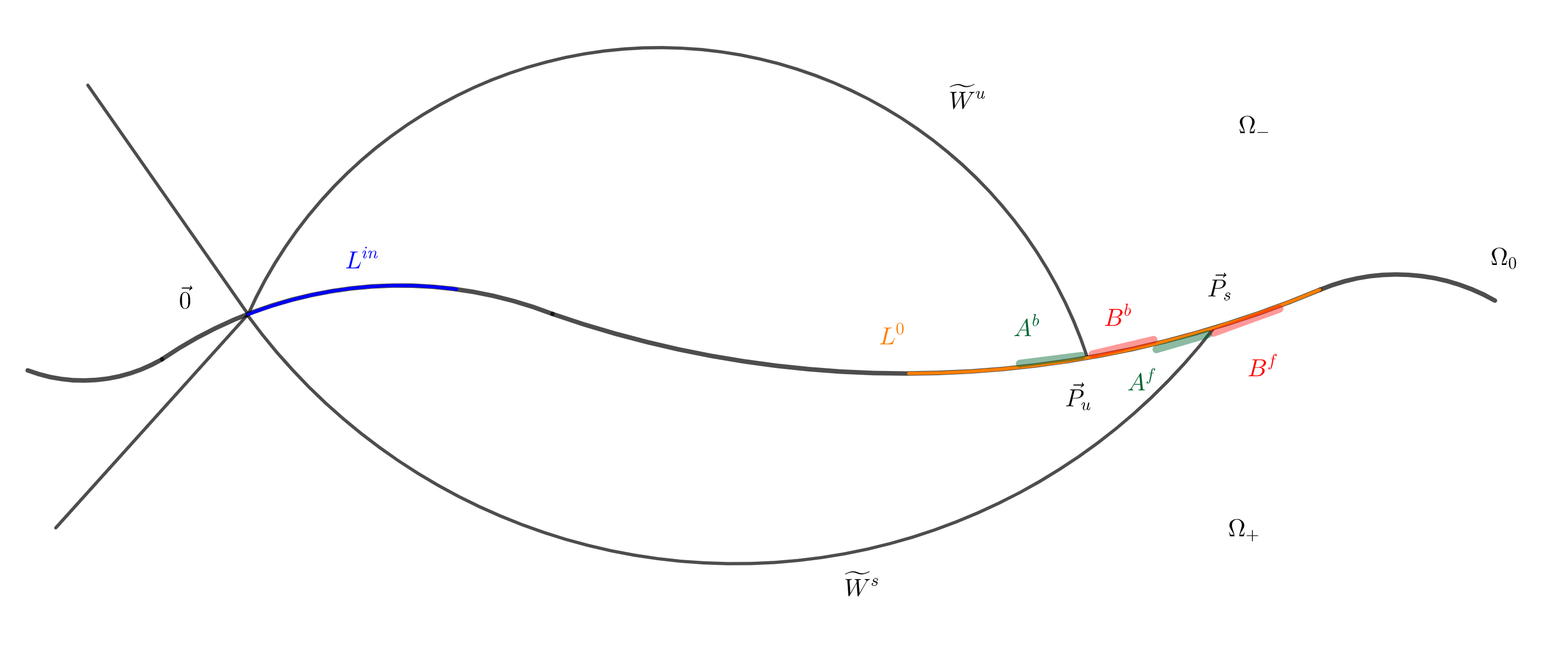, width = 10 cm}}
\caption{An explanation of the maps $\mathscr{P}^{\textrm{fwd}}(\cdot, \tau)$ and $\mathscr{P}^{\textrm{bwd}}(\cdot, \tau)$. Here the
dependence on $\tau$ of the points and sets is left unsaid.}
\label{nextstable}
\end{figure}

Fix $\tau \in \R$ and consider $L^0(\delta)$  given by \eqref{L0}, where $\delta>0$ is a small parameter, independent of $\ep>0$.

The point $\P_s(\tau)$ ($\P_u(\tau)$) splits $L^0$ in two parts, say $A^{\textrm{f}}(\tau)$ and $B^{\textrm{f}}(\tau)$
($A^{\textrm{b}}(\tau)$ and
$B^{\textrm{b}}(\tau)$), respectively
 ``inside'' and ``outside''.
Following \cite{CFPRimut} we define a $C^r$ Poincar\'e map
using the flow of \eqref{eq-disc} from $A^{\textrm{f}}(\tau)$ back to $L^0$ remaining close to $\bs{\Gamma}$; i.e.\
$\mathscr{P}^{\textrm{fwd}}(\cdot, \tau): A^{\textrm{f}}(\tau) \to L^0$ and a $C^r$ time map $\mathscr{T}^{\textrm{fwd}}(\cdot,\tau):
A^{\textrm{f}}(\tau) \to \R$ such that
for any $\P \in A^{\textrm{f}}(\tau)$ the trajectory
$\x(t,\tau; \P)$ will stay close to $\bs{\Gamma}$  (in fact close to $\tilde{W}(t)$) for any
$t \in [\tau, \mathscr{T}^{\textrm{fwd}}(\P,\tau)]$ and it will cross transversely
$L^0$ for the first time at $t=\mathscr{T}^{\textrm{fwd}}(\P,\tau)>\tau$ in the point $\mathscr{P}^{\textrm{fwd}}(\P, \tau)\in
A^{\textrm{b}}(\mathscr{T}^{\textrm{fwd}}(\P,\tau))$.
\\
Notice that if $\P \in B^{\textrm{f}}(\tau)$ then there is some
 $\mathscr{T}^{\textrm{out}}=\mathscr{T}^{\textrm{out}}(\P,\tau)>\tau$ such that the trajectory will leave a neighborhood of $\bs{\Gamma}$ at
 $t \ge
 \mathscr{T}^{\textrm{out}}$.

Using the flow of \eqref{eq-disc} (but now going backward in time) we   construct a  $C^r$ Poincar\'e map
$\mathscr{P}^{\textrm{bwd}}(\cdot, \tau): A^{\textrm{b}}(\tau) \to L^0$ and a $C^r$ time  map $\mathscr{T}^{\textrm{bwd}}(\cdot,\tau):
A^{\textrm{b}}(\tau) \to \R$
such that
for any $\P \in A^{\textrm{b}}(\tau)$ the trajectory
$\x(t,\tau; \P)$ will stay close to $\bs{\Gamma}$ (in fact  close to $\tilde{W}(t)$) for any
$t \in [\mathscr{T}^{\textrm{bwd}}(\P,\tau), \tau]$ and it will cross transversely
$L^0$ for the first time at $t=\mathscr{T}^{\textrm{bwd}}(\P,\tau)<\tau$ in the point $\mathscr{P}^{\textrm{bwd}}(\P, \tau)\in
A^{\textrm{f}}(\mathscr{T}^{\textrm{bwd}}(\P,\tau))$.
\\
Again if $\P \in B^{\textrm{b}}(\tau)$ then there is some $\mathscr{T}^{\textrm{out}}(\P,\tau)<\tau$
 such that the trajectory will leave a neighborhood of $\bs{\Gamma}$ at $t \le \mathscr{T}^{\textrm{out}}$.

    \begin{lemma}\label{L.loop}\cite[Lemma 3.6]{CFPRimut}
  Assume \assump{F0}, \assump{F1}, \assump{F2}, \assump{K}, \assump{G}.
    Let $\Q \in A^{\textrm{f}}(\tau)$. Then there are $\mathscr{T}^{\textrm{fwd}}(\Q,\tau)>\tau_1(\Q,\tau)>\tau$ such that
    the trajectory $\x(t,\tau; \Q) $
      crosses transversely $\Om^0$ at $t\in\{\tau, \tau_1(\Q,\tau), \mathscr{T}^{\textrm{fwd}}(\Q,\tau)\}$.
     Hence,
     $$\mathscr{P}^{\textrm{fwd}}_+(\Q,\tau):=\x(\tau_1(\Q,\tau),\tau; \Q) \in L^{\inn},$$
     $$\mathscr{P}^{\textrm{fwd}}(\Q,\tau):=\x(\mathscr{T}^{\textrm{fwd}}(\Q,\tau),\tau; \Q) \in
     A^{\textrm{b}}(\mathscr{T}^{\textrm{fwd}}(\Q,\tau)).$$

    Analogously, let $\Q \in A^{\textrm{b}}(\tau)$. Then there are $\mathscr{T}^{\textrm{bwd}}(\Q,\tau)<\tau_{-1}(\Q,\tau)<\tau$ such that
    $\x(t,\tau; \Q) $,
    crosses transversely $\Om^0$ at $t\in\{\tau, \tau_{-1}(\Q,\tau), \mathscr{T}^{\textrm{bwd}}(\Q,\tau)\}$. Hence
     $$ \mathscr{P}^{\textrm{bwd}}_-(\Q,\tau):=\x(\tau_{-1}(\Q,\tau),\tau; \Q) \in L^{\inn},$$
     $$ \mathscr{P}^{\textrm{bwd}}(\Q,\tau):=\x(\mathscr{T}^{\textrm{bwd}}(\Q,\tau),\tau; \Q) \in
     A^{\textrm{f}}(\mathscr{T}^{\textrm{bwd}}(\Q,\tau)).$$

     Further the functions $\mathscr{P}^{\textrm{fwd}}_+(\Q,\tau)$, $\mathscr{P}^{\textrm{fwd}}(\Q,\tau)$, $
     \mathscr{P}^{\textrm{bwd}}_-(\Q,\tau)$
     and $ \mathscr{P}^{\textrm{bwd}}(\Q,\tau)$ are $C^r$ in both the variables.
  \end{lemma}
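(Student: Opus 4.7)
The plan is to prove the forward statement in detail; the backward one then follows by a time-reversal argument, exchanging the roles of $\f^\pm$, of $\vec{v}_u^\pm$ and $\vec{v}_s^\pm$, and of $\tilde{W}^u$ and $\tilde{W}^s$. The strategy is to shadow the perturbed trajectory $\x(t,\tau;\Q)$, for $\Q$ close to $\P_s(\tau)$ and on the $E^{\textrm{in}}$ side of $\tilde{W}^s(\tau)$, along three successive regimes suggested by the unperturbed homoclinic $\ga$: first, it leaves $L^0$ into $\Om^+$ near $\ga(0)$; second, it shadows $\tilde{W}^s(\cdot)$ towards $\vec{0}$, performs a saddle transit near the origin, and exits along the $\vec{v}_u^+$ direction, crossing $\Om^0$ a second time close to $\vec{0}$; third, it enters $\Om^-$, tracks the backward arc $\{\ga(s)\mid s\le 0\}$ in forward time and meets $L^0$ again.

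Transversality of the first crossing at $t=\tau$ is immediate from \textbf{K}, which gives $(\na G(\ga(0)))^*\f^\pm(\ga(0))>0$; by continuity the same sign is preserved for all $\Q$ in a small neighborhood of $\P_s(\tau)$ in $L^0$ and for $\ep>0$ small. For the second crossing I would use that, since $\Q\in A^{\textrm{fwd},+}(\tau)$ is strictly on the $E^{\textrm{in}}$ side of $\tilde{W}^s(\tau)$, it cannot belong to $W^s(\tau)$; hence, by a Shil'nikov-type analysis in the hyperbolic chart around $\vec{0}$ guaranteed by \textbf{F0} together with the $C^r$ local leaves $\tilde{W}^{s,u}(\cdot)$ of Remark \ref{Wloc}, $\x(t,\tau;\Q)$ first approaches an arbitrarily small neighborhood of $\vec{0}$ shadowing $\tilde{W}^s$, and is then pushed away along $\vec{v}_u^+$. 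In Scenario 1 the ray $\{d\vec{v}_u^+\mid d>0\}$ lies in $E^{\textrm{out}}$, i.e., on the opposite side of $\bs{\Gamma}$ with respect to $\Q$, so the trajectory is forced to cross $\Om^0$ at some first time $\tau_1(\Q,\tau)>\tau$; transversality follows from \textbf{F1}, since $[\na G(\vec{0})]^*\vec{v}_u^+>0$ and the same sign is maintained close to $\vec{0}$ for $\ep>0$ small. This already yields $\mathscr{P}^{\textrm{fwd}}_+(\Q,\tau)=\x(\tau_1(\Q,\tau),\tau;\Q)\in L^{\textrm{in}}$.

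In the third regime $\x(t,\tau;\Q)$ lies in $\Om^-$ exponentially close to $\tilde{W}^u(\cdot)$ and thereafter shadows the backward arc of $\ga$ in forward time; this arc exits a neighborhood of $\vec{0}$ along $\vec{v}_u^-$ and crosses $L^0$ transversely at $\ga(0)$ by \textbf{K}. Propagating this by continuous dependence on bounded time intervals yields a first time $\mathscr{T}^{\textrm{fwd}}(\Q,\tau)>\tau_1(\Q,\tau)$ at which $\x(\cdot,\tau;\Q)$ crosses $L^0$ transversely at a point of $A^{\textrm{fwd},-}(\mathscr{T}^{\textrm{fwd}}(\Q,\tau))$. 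Finally, $C^r$ regularity of $\tau_1$, $\mathscr{T}^{\textrm{fwd}}$, $\mathscr{P}^{\textrm{fwd}}_+$ and $\mathscr{P}^{\textrm{fwd}}$ is obtained by applying the implicit function theorem to $G(\x(t,\tau;\Q))=0$ (resp.\ to the equation defining $L^0$) at each crossing: transversality guarantees that the $t$-derivative of the defining function is nonzero there, so the three crossing times are $C^r$ in $(\Q,\tau)$, and the Poincaré maps are $C^r$ as compositions of the $C^r$ flow with these $C^r$ times.

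The main obstacle is the middle regime near $\vec{0}$, where the flight time becomes unbounded as $\Q\to\P_s(\tau)$; standard continuous dependence on initial data is useless on unbounded time intervals. What is needed is a Shil'nikov-type lemma, as developed in \cite{CFPRimut}, controlling $\x(t,\tau;\Q)$ in the hyperbolic chart uniformly in $\mathrm{dist}(\Q,\tilde{W}^s(\tau))$ and producing the exit direction along $\tilde{W}^u$ with $C^r$ dependence on $(\Q,\tau,\ep)$. Once this estimate is available, the remaining pieces of the argument (local transversality, continuity on bounded intervals, and the implicit function theorem) become routine.
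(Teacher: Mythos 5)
The paper does not actually prove this lemma: it is imported verbatim from \cite[Lemma 3.6]{CFPRimut}, and the only supporting material supplied here is Remark \ref{data.smooth}, which records that the flow is a $C^r$ diffeomorphism across transversal crossings of $\Om^0$ and hence yields the $C^r$ claim once the crossings are known to be transversal. Your outline reconstructs the intended mechanism faithfully -- the three regimes, transversality at $t=\tau$ from \assump{K}, the forced second crossing near $\vec{0}$, the return along $\tilde{W}^u$, and the implicit-function-theorem argument for smoothness, which is exactly Remark \ref{data.smooth} -- and you correctly identify that the genuine content is the Shil'nikov-type control of the passage near the saddle over an unbounded time interval, which is precisely what is delegated to \cite{CFPRimut} and cannot be replaced by continuous dependence on bounded intervals. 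One point worth tightening: the trajectory is expelled along the unstable \emph{line} of $\bs{f_x^+}(\vec{0})$, and since the branch $\{d\vec{v}_u^+ \mid d>0\}$ lies in $E^{\textrm{out}}$ while the orbit is confined to the $E^{\textrm{in}}$ side of $\tilde{W}^s(t)$ by uniqueness, it must follow the opposite branch $-\vec{v}_u^+$, which by \assump{F1} points into $\Om^-$; this is what produces the transversal crossing at $\tau_1(\Q,\tau)$ through $L^{\inn}$, and your phrasing should make explicit that it is the $-\vec{v}_u^+$ branch that is followed. With that caveat, your sketch is consistent with, and no less complete than, what the paper itself offers for this statement.
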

The smoothness of the functions in Lemma \ref{L.loop} follows from the following observation, borrowed again from
\cite{CFPRimut}.

\begin{remark}\label{data.smooth}
  Let $A$ be an open, connected and bounded subset of $\Omega$, let $\tau_2>\tau_1$ and
  denote by
  $$B(t)= \{ \x(t,\tau_1; \Q) \mid \Q \in A \}.$$
  Assume that in $\Omega^0 \cap B(t)$ there are no sliding phenomena for any $t \in [\tau_1, \tau_2]$.
  Then the functions
  \begin{gather*}
  	\Phi_{\tau_2,\tau_1} : A \to B(\tau_2), \qquad \quad  \Phi_{\tau_1,\tau_2} :   B(\tau_2)\to A,\\
  	\Phi_{\tau_2,\tau_1}(\Q)= \x(\tau_2,\tau_1; \Q)
  ,\qquad \Phi_{\tau_1,\tau_2}=\Phi_{\tau_2,\tau_1}^{-1}
  \end{gather*}
  are  homeomorphisms.

  Assume further that $A \cap \Omega^0=\emptyset$, $B(\tau_2) \cap \Om^0= \emptyset$, and
  that for any $\Q\in A$, if $\x(\bar{t},\tau_1; \Q) \in \Om^0$ for some $\bar{t} \in (\tau_1, \tau_2)$, then it crosses $\Om^0$
  transversely. Then $\Phi_{\tau_2,\tau_1} $ and $\Phi_{\tau_1,\tau_2}$  are $C^r$ diffeomorphisms.
\end{remark}

\begin{remark}\label{r.in1}
Let us recall that $\tilde{V}(\tau)$ is the compact connected set enclosed by
$\tilde{W}(\tau)$ and by the branch of $\Om^0$ between $\P_u(\tau)$ and $\P_s(\tau)$, and denote by
$\tilde{V}^-(t)=\tilde{V}(t) \cap \Om^-$ and by $\tilde{V}^+(t)=\tilde{V}(t) \cap \Om^+$.
If $\Q \in A^{\textrm{f}}(\tau)$, then $\x(t,\tau; \Q) \in \tilde{V}^+(t)$ for any
$t \in  ]\tau, \tau_1(\Q,\tau)[$.
\\
Analogously if $\Q \in A^{\textrm{b}}(\tau)$, then $\x(t,\tau; \Q) \in \tilde{V}^-(t)$ for any
$t \in  ] \tau_{-1}(\Q,\tau), \tau[$.
\end{remark}

Now, following \cite[\S 4]{CFPRimut}, we estimate the space displacement with respect to $\tilde{W}(\cdot)$ and the fly time
of the maps introduced in Lemma \ref{L.loop}.

For this purpose we need to define a directed distance in $\Om^0$ using arc length, this is possible since $\Om^0$ is a regular curve.
So, for any $\Q\in L^0$ we define
$\ell(\Q)=\int_{\Om^0(\vec{0},\Q)}ds>0$ where $\Om^0(\vec{0},\Q)$ is the (oriented) path of $\Om^0$ connecting $\vec{0}$ with $\Q$,
and we define the directed distance
\begin{equation}\label{dist}
    \dist(\Q,\P):= \ell(\P)-\ell(\Q)
\end{equation}
for $\Q,\P\in L^0$.
Notice that $\dist(\Q,\P)>0$
 means that $\Q$ lies on $\Om^0$ between $\vec{0}$ and $\P$.
Now, we introduce some further crucial notation.

 \textbf{Notation.}
We denote by $\Q_s(d,\tau)$ the point in $A^{\textrm{f}}(\tau)$ such that
\[
\dist(\Q_s(d,\tau), \P_s(\tau))=d>0,
\]
and by $\Q_u(d,\tau)$ the point in $A^{\textrm{b}}(\tau)$ such that
\[
\dist(\Q_u(d,\tau), \P_u(\tau))=d>0.
\]

From Lemma \ref{L.loop}, we see that for any $\tau \in \R$ and any    $0<d \le \delta$,
we can define the maps
\begin{equation}\label{Q1T1}
\begin{gathered}
\mathscr{T}_1(d,\tau):=   \mathscr{T}^{\textrm{fwd}}(\Q_s(d,\tau),\tau) \, , \qquad \mathscr{P}_1(d,\tau):=
\mathscr{P}^{\textrm{fwd}}(\Q_s(d,\tau),\tau),\\
\mathscr{T}_{-1}(d,\tau):=   \mathscr{T}^{\textrm{bwd}}(\Q_u(d,\tau),\tau) \, , \qquad \mathscr{P}_{-1}(d,\tau):=
\mathscr{P}^{\textrm{bwd}}(\Q_u(d,\tau),\tau).
\end{gathered}
\end{equation}
Sometimes we will also make use of the maps
\begin{equation}\label{Q1T1half}
\begin{gathered}
\mathscr{T}_{\frac{1}{2}}(d,\tau):=   \tau_1(\Q_s(d,\tau),\tau) \, , \qquad \mathscr{P}_{\frac{1}{2}}(d,\tau):=
\mathscr{P}^{\textrm{fwd}}_+(\Q_s(d,\tau),\tau),\\
\mathscr{T}_{-\frac{1}{2}}(d,\tau):=   \tau_{-1}(\Q_u(d,\tau),\tau) \, , \qquad \mathscr{P}_{-\frac{1}{2}}(d,\tau):=
\mathscr{P}^{\textrm{bwd}}_-(\Q_u(d,\tau),\tau).
\end{gathered}
\end{equation}

Let us    define
\begin{equation}\label{mu0}
\mu_0= \frac{1}{4} \min \left\{ \sTfwd_+ , \sTbwd_- ,  \und{\sigma}^2    \right\}.
\end{equation}
We introduce a new parameter $\mu \in ]0, \mu_0]$, which gives an upper bound for the estimate of the errors
in the evaluations of the maps defined in \eqref{Q1T1} and \eqref{Q1T1half}.

 \begin{knowntheorem}\label{key}\cite[Theorem 4.2]{CFPRimut}
Assume \assump{F0}, \assump{F1}, \assump{F2}, \assump{K}, \assump{G}  and    let $\f^\pm$ and $\g$ be $C^r$ with $r>1$.
 There are $\ep_0>0$, $\delta>0$, such that for any $0<\ep \le \ep_0$, $0<d \le \delta$,  $\tau \in \R$,
	the functions $\TTT_{\pm 1}(d,\tau)$, $\PPP_{\pm 1}(d,\tau)$
	are $C^r$. Furthermore, for any $0<\mu<\mu_0$, we can find $\ep_0>0$, $\delta>0$, such that for any $0<\ep \le \ep_0$, $0<d \le \delta$,
$\tau \in \R$,
	\begin{equation}\label{D1T1}
	\begin{split}
	 	   d^{\sfwd +\mu} & \le \dist (\PPP_1(d,\tau),\P_u(\TTT_1(d,\tau)))
 \le d^{\sfwd-\mu}, \\
	 d^{\sbwd+\mu} & \le \dist (\PPP_{-1}(d,\tau), \P_s (\TTT_{-1}(d,\tau)))
 \le d^{\sbwd-\mu},\\
 \|\PPP_{\frac{1}{2}}(d,\tau)\| & \le d^{\sfwd_+ -\mu}     , \qquad    \|\PPP_{-\frac{1}{2}}(d,\tau)\| \le d^{\sbwd_- -\mu},
	\end{split}
	\end{equation}
	\begin{equation}\label{T1T-1}
	\begin{split}
	 	 \left[\sTfwd -\mu \right] |\ln(d)|& \le  (\TTT_1(d,\tau)-\tau)    \le   \left[\sTfwd+\mu \right] |\ln(d)|, \\
	 \left[\sTbwd -\mu \right] |\ln(d)|
& \le  \tau-\TTT_{-1}(d,\tau)    \le    \left[\sTbwd+\mu \right] |\ln(d)|, \\
 \left[\sTfwd_+ -\mu \right] |\ln(d)|& \le  (\TTT_\frac{1}{2}(d,\tau)-\tau)   \le   \left[\sTfwd_++\mu \right] |\ln(d)|, \\
	 \left[\sTbwd_- - \mu \right] |\ln(d)|
& \le  \tau -\TTT_{-\frac{1}{2}}(d,\tau)     \le    \left[\sTbwd_-+\mu \right] |\ln(d)|,
	\end{split}
	\end{equation}
	and all the expressions in \eqref{D1T1} are uniform with respect to any   $\tau \in \R$ and $0< \ep \le \ep_0$.
\end{knowntheorem}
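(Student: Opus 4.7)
The plan is to split the forward loop into three regimes, treat each separately, and then compose. Pick a small fixed box $B_{\delta_0}$ around $\vec{0}$. A trajectory starting at $\Q_s(d,\tau)\in A^{\textrm{fwd},+}(\tau)$ performs (a) a smooth prelude from $L^0$ into $B_{\delta_0}\cap\Om^+$; (b) a passage near $\vec{0}$ inside the box, consisting of an $\Om^+$-leg exiting through $\Om^0$ at $\PPP_{\frac{1}{2}}$ and, after a transverse instantaneous crossing, an $\Om^-$-leg; (c) a smooth coda from $B_{\delta_0}\cap\Om^-$ back to $L^0$. Phases (a) and (c) are $C^r$ diffeomorphisms between transverse sections (Remark~\ref{data.smooth}) on which the flow spends uniformly bounded time and preserves distances up to positive multiplicative constants uniformly in $\tau$ and $\ep$; hence all the nontrivial $d$-scaling must come from (b).

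Inside $B_{\delta_0}$ I would $C^1$-linearize $\f^\pm$ (Hartman--Grobman, or a finitely smooth Sternberg-type normal form at the cost of a loss absorbed into the $\mu$-slack) and work in coordinates $(u^\pm,s^\pm)$ along $\vec{v}_u^\pm,\vec{v}_s^\pm$, so that $u^\pm(t)=u^\pm_0 e^{\la_u^\pm t}(1+o(1))$ and $s^\pm(t)=s^\pm_0 e^{\la_s^\pm t}(1+o(1))$ uniformly in the box. By \assump{F1} and $\vec{0}\in\Om^0$ one writes $\Om^0$ locally as $s^+\approx a\,u^+$ with $a\neq 0$. The prelude turns an arc-length offset $d$ along $L^0$ into an unstable offset $u^+_0\approx cd$ at entry, with $s^+_0\approx\delta_0$. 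Solving $s^+_0 e^{\la_s^+ t}=a\,u^+_0 e^{\la_u^+ t}$ yields
\begin{equation*}
 t_+\;\sim\;\frac{|\ln d|}{\la_u^++|\la_s^+|}=\sTfwd_+|\ln d|,\qquad \|(u^+_*,s^+_*)\|\;\sim\;d^{|\la_s^+|/(\la_u^++|\la_s^+|)}=d^{\sfwd_+},
\end{equation*}
which is the third estimate in each of \eqref{D1T1} and \eqref{T1T-1}. After the transverse crossing of $\Om^0$ the trajectory re-enters $B_{\delta_0}\cap\Om^-$ at a point of size $\sim d^{\sfwd_+}$ from $\vec{0}$; the same linearization argument in $\Om^-$, now with exit through the face $|u^-|=\delta_0$, adds time $\sim \sfwd_+|\ln d|/\la_u^-$ and leaves distance from $W^u$ of order $(d^{\sfwd_+})^{1+|\la_s^-|/\la_u^-}=d^{\sfwd_+\sfwd_-}=d^{\sfwd}$. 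The coda transports this to $\dist(\PPP_1(d,\tau),\P_u(\TTT_1(d,\tau)))$ up to a positive constant, while the two linear times sum to $\bigl(\sTfwd_++\sfwd_+/\la_u^-\bigr)|\ln d|=\sTfwd|\ln d|$, matching \eqref{D1T1} and \eqref{T1T-1}. The backward-loop estimates for $\TTT_{-1},\PPP_{-1}$ follow by reversing time and exchanging the roles of $\Om^\pm$ and of stable/unstable directions, producing the exponents $\sbwd,\sTbwd$.

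The main obstacle is the uniform bookkeeping of error terms. The $o(1)$ corrections in the linearized flow, the multiplicative constants from the smooth prelude and coda, and the finite $r>1$ regularity of $\f^\pm,\g$ each produce multiplicative or exponent errors which must be collected into a single uniform $\mu$; this is precisely why \eqref{mu0} sets a threshold $\mu_0$ comfortably below the scale of the exponents $\sfwd_\pm,\sbwd_\pm$. Uniformity in $\tau\in\R$ and $0<\ep\le\ep_0$ comes from the $C^r$ dependence of the linearizing change of variables and of the prelude/coda maps on $\ep$, together with the fact that the $\tau$-dependence enters only through the $O(\ep)$ non-autonomous term $\g$, while \assump{G} keeps $\vec{0}$ a critical point for every $\ep$. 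Once the error terms are so collected, the monotonicity of the logarithm converts the asymptotic relations above into the two-sided bounds claimed in \eqref{D1T1} and \eqref{T1T-1}.
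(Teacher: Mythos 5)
This statement is not proved in the paper at all: it is imported verbatim as \cite[Theorem 4.2]{CFPRimut}, so there is no in-paper argument to compare yours against. Judged on its own, your sketch is the standard saddle-passage argument and your exponent arithmetic is correct: the $\Om^+$-leg gives time $\sTfwd_+|\ln d|$ and exit size $d^{\sfwd_+}$, the $\Om^-$-leg adds $\sfwd_+|\ln d|/\la_u^-$ and contracts the offset from $W^u$ to $d^{\sfwd_+\sfwd_-}=d^{\sfwd}$, and indeed $\sTfwd_++\sfwd_+/\la_u^-=\sTfwd$; the backward estimates follow by time reversal. The decomposition into bounded-time prelude/coda diffeomorphisms (legitimate by Remark~\ref{data.smooth}, since by \assump{F2} there is no sliding near $\vec 0$ and the crossings of $\Om^0$ are transverse) plus a local passage is exactly the right skeleton.

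The one place where your proposal is genuinely thin is the linearization step, which is where all the analytic work of the cited theorem lives. Hartman--Grobman gives only a $C^0$ conjugacy and cannot be used for quantitative two-sided time and distance estimates; a $C^1$ linearization of the planar saddle is available for $r>1$, but it applies to the \emph{autonomous} fields $\f^\pm$ separately, while the system you must track through the box is the non-autonomous perturbed one, and the two half-linearizations need not match along $\Om^0$. So the passage estimates have to be obtained for the nonlinear, non-autonomous, piecewise-defined flow directly (or via exponential-dichotomy/variation-of-constants estimates using \assump{G} to keep $\vec 0$ fixed), with errors controlled uniformly in $\tau\in\R$ and $\ep\le\ep_0$ and then absorbed into the $\mu$-slack of \eqref{D1T1}--\eqref{T1T-1}; note also that the lower bounds in \eqref{D1T1} require the exit offset not to be accidentally cancelled by the nonlinear corrections, which your ``$(1+o(1))$'' bookkeeping asserts but does not establish. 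You correctly identify this as the main obstacle, but in a complete proof it is the theorem, not a remark at the end.
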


\begin{knowntheorem}\label{keymissed}\cite[Theorem 4.3]{CFPRimut}
Assume \assump{F0}, \assump{F1}, \assump{F2}, \assump{K}, \assump{G}  and    let $\f^\pm$ and $\g$ be $C^r$ with $r>1$.
 For any $0< \mu <\mu_0$ we can find $\ep_0>0$, $\delta>0$,  such that for any $0<\ep \le \ep_0$,   $0<d \le \delta$,      and any $\tau
\in \R$
   we find
  \begin{equation}\label{keymissed.es-}
   \|\x(t,\tau; \Q_s(d,\tau))-\x(t,\tau; \P_s(\tau))\| \le  d^{\sfwd_+ - \mu }
  \end{equation}
  for any $\tau \le t \le \TTT_{\frac{1}{2}}(d,\tau)$,   and
  \begin{equation}\label{keymissed.es+}
  \|\x(t,\tau; \Q_s(d,\tau))-\x(t,\TTT_1(d,\tau); \P_u(\TTT_1(d,\tau)))\| \le  d^{\sfwd_+ - \mu }
  \end{equation}
    for any $\TTT_{\frac{1}{2}}(d,\tau) \le t \le \TTT_1(d,\tau)$.
    Further   $\x(t,\tau; \Q_s(d,\tau))$ is in  $\tilde{V}^+(t)$  for any $\tau<t<
\mathscr{T}_{\frac{1}{2}}(d,\tau)$ and it is
in
$\tilde{V}^-(t)$ for any $\mathscr{T}_{\frac{1}{2}}(d,\tau)<t< \mathscr{T}_{1}(d,\tau)$.

Similarly, for any $0<\ep \le \ep_0$,   $0<d \le \delta$,   $0< \mu <\mu_0$   and any $\tau \in \R$  we find
\begin{equation}\label{keymissed.eu+}
	\|\x(t,\tau; \Q_u(d,\tau))-\x(t,\tau; \P_u(\tau))\| \le  d^{\sbwd_- - \mu}
\end{equation}
  for any $   \TTT_{-\frac{1}{2}}(d,\tau)  \le t \le \tau$,    and
\begin{equation}\label{keymissed.eu-}
	\|\x(t,\tau; \Q_u(d,\tau))-\x(t,\TTT_{-1}(d,\tau); \P_s(\TTT_{-1}(d,\tau)))\| \le  d^{\sbwd_- - \mu}
\end{equation}
    for any $\TTT_{-1}(d,\tau) \le t \le \TTT_{-\frac{1}{2}}(d,\tau)$.
    Further $\x(t,\tau; \Q_u(d,\tau))$ is in  $\tilde{V}^-(t)$  for any $
\mathscr{T}_{-\frac{1}{2}}(d,\tau)<t<\tau$
and it is in  $\tilde{V}^+(t)$  for any $\mathscr{T}_{-1}(d,\tau)<t< \mathscr{T}_{-\frac{1}{2}}(d,\tau)$.
\end{knowntheorem}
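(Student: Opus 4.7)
The plan is to deduce Theorem \ref{keymissed} from the endpoint estimates already established in Theorem \ref{key}, combined with a variational (linearization-type) analysis of the flow along $\tilde W^s(\cdot)$ and $\tilde W^u(\cdot)$, carried out separately on each side of $\Om^0$. I treat only the forward trajectory starting from $\Q_s(d,\tau)$; the statements for $\Q_u(d,\tau)$ follow by the time-reversal symmetry that interchanges the roles of $(\la_u^+,\la_s^+,\sfwd_+)$ with $(|\la_s^-|,\la_u^-,\sbwd_-)$ and of the stable/unstable leaves.

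For $\tau\le t\le\TTT_{\frac12}(d,\tau)$ I would first observe that both orbits $\x(t,\tau;\Q_s(d,\tau))$ and $\x(t,\tau;\P_s(\tau))$ remain in $\tilde V^+(t)\cup\tilde W^s(t)\subset\Om^+\cup\Om^0$, by Remark \ref{r.in1}, the definition of $A^{\textrm{fwd},+}(\tau)$, and the transversal first-return time supplied by Lemma \ref{L.loop}. On $\Om^+$ the field $\f^+$ is $C^r$, so I linearize the variational equation along $\x(\cdot,\tau;\P_s(\tau))$; this linear system admits an exponential dichotomy with unstable rate $\la_u^+$, hence an initial displacement of magnitude $d$ transversal to $\tilde W^s(\tau)$ is amplified by at most $C\,d\,\ee^{\la_u^+(t-\tau)}$. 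Substituting the upper bound $t-\tau\le(\sTfwd_++\mu/2)|\ln d|$ from \eqref{T1T-1} together with the identity $\la_u^+\sTfwd_+=1-\sfwd_+$ gives the amplification $C\,d^{\sfwd_++O(\mu)}$; taking $d$ small enough absorbs $C$ and the $O(\mu)$ error into the exponent $\sfwd_+-\mu$, yielding \eqref{keymissed.es-}. The containment $\x(t,\tau;\Q_s(d,\tau))\in\tilde V^+(t)$ on $(\tau,\TTT_{\frac12})$ is then immediate from Lemma \ref{L.loop} and Remark \ref{r.in1}.

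For $\TTT_{\frac12}(d,\tau)\le t\le\TTT_1(d,\tau)$ I would change the reference to the unstable-leaf trajectory $\x(t,\TTT_1;\P_u(\TTT_1(d,\tau)))$, which lies in $\tilde W^u(t)\subset\Om^-\cup\Om^0$. Theorem \ref{key} already supplies the endpoint separation $\dist(\PPP_1(d,\tau),\P_u(\TTT_1(d,\tau)))\le d^{\sfwd-\mu}$, and since $\sfwd=\sfwd_+\sfwd_->\sfwd_+$ we have $d^{\sfwd-\mu}\le d^{\sfwd_+-\mu}$ for small $d$. Integrating the variational equation backward from $t=\TTT_1$ along the unstable leaf, the unstable direction contracts at rate $\ee^{-\la_u^-(\TTT_1-t)}$, so the separation does not exceed the endpoint value throughout $[\TTT_{\frac12},\TTT_1]$, proving \eqref{keymissed.es+}; the localization in $\tilde V^-(t)$ follows again from Lemma \ref{L.loop} and Remark \ref{r.in1}.

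The main obstacle I anticipate is handling the discontinuity of $\f$ across $\Om^0$ at $t=\TTT_{\frac12}(d,\tau)$: the variational analysis must be performed on each side separately and then matched across the crossing, which requires the transversality provided by \assump{F1}, \assump{K} and the absence of sliding guaranteed by \assump{F2}; moreover all constants in the dichotomy and Gronwall bounds must be shown uniform in $\tau\in\R$ and $0<\ep\le\ep_0$, so that the perturbation $\ep\g$ and the $\tau$-dependence are absorbed into the slack $\mu<\mu_0$. Finally, \eqref{keymissed.eu+}, \eqref{keymissed.eu-} and the dual $\tilde V^\pm$ localizations for $\Q_u(d,\tau)$ are obtained by running exactly the same scheme on the time-reversed system.
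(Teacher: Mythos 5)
This statement is Theorem~4.3 of \cite{CFPRimut}, imported here as a known result: the present paper offers no proof of it, so there is no internal argument to measure your proposal against. Judged on its own terms, your outline for the first half (the passage in $\Om^+$, estimate \eqref{keymissed.es-}) is the standard one and the exponent bookkeeping via $\la_u^+\sTfwd_+=1-\sfwd_+$ is correct; the real content, which you defer, is making the dichotomy and Gronwall constants uniform in $\tau\in\R$ and $0<\ep\le\ep_0$ and controlling the nonlinear remainder over a passage of length $O(|\ln d|)$, which is where the work in \cite{CFPRimut} actually lies.

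The second half contains a genuine error. You claim that, integrating backward from $t=\TTT_1(d,\tau)$, the separation ``does not exceed the endpoint value throughout $[\TTT_{\frac{1}{2}},\TTT_1]$'', i.e.\ that it stays below $d^{\sfwd-\mu}$. This is false: backward in time the component of the separation along $\vec{v}_u^-$ contracts, but the component along $\vec{v}_s^-$ \emph{expands} at rate $\eu^{|\la_s^-|(\TTT_1-t)}$, and since $\PPP_1(d,\tau)$ and $\P_u(\TTT_1(d,\tau))$ are separated along $\Om^0$, a direction transversal to both eigendirections by \assump{F1}, that component is genuinely of order $d^{\sfwd-\mu}$ at the endpoint. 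Over the backward passage time $\TTT_1-\TTT_{\frac{1}{2}}\approx\sTbwd_-\,|\ln (d^{\sfwd})|$ this inflates the separation to order $d^{\sfwd\la_u^-/(\la_u^-+|\la_s^-|)}=d^{\sfwd/\sfwd_-}=d^{\sfwd_+}$ at $t=\TTT_{\frac{1}{2}}$ --- which is precisely why the theorem asserts only $d^{\sfwd_+-\mu}$ on this interval rather than $d^{\sfwd-\mu}$. The repair is to track the two components separately (the stable one is maximal at $\TTT_{\frac{1}{2}}$, the unstable one at $\TTT_1$) and to match the entry value against the bound $\|\PPP_{\frac{1}{2}}(d,\tau)\|\le d^{\sfwd_+-\mu}$ already supplied by Theorem~\ref{key}; the same correction is needed in the time-reversed argument for \eqref{keymissed.eu-}.
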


 \begin{remark}\label{est.ga}
  Assume \assump{F0}, \assump{K}, then there is  a constant $c_0^*>0$ such that
  $\|\ga^-(t)\| \le \frac{c_0^*}{4} \eu^{\la_u^- t}$ for any $t \le 0$ and
  $\|\ga^+(t)\| \le \frac{c_0^* }{4}\eu^{\la_s^+ t}$ for any $t \ge 0$.
\end{remark}

We state now two classical results concerning the possibility to estimate the position of the trajectories of the unstable manifold
$\tilde{W}^u(\tau)$ and of the stable manifold $\tilde{W}^s(\tau)$ using   $\ga^-(t)$ and $\ga^+(t)$ respectively.
The proof is omitted, see, e.g., the nice introduction of \cite{JPY},  or \cite[\S 4.5]{GH}.
\begin{remark}\label{rMelni}
	Assume that $\f^{\pm}$ and $\g$ are $C^r$, $r>1$.
Observe that if $0<\ep \ll 1$ then,  for any fixed $\tau \in \R$,
	there are $c>0$ (independent of $\ep$ and $\tau$) and a monotone increasing and continuous function
$\bar{\omega}(\ep)$ such that $\bar{\omega}(0)=0$ and
	$$\dist(\P_s(\tau),\P_u(\tau))= c \ep \MM(\tau)+ \ep  \omega(\tau,\ep) \, , \qquad  |\omega(\tau,\ep)| \le
\bar{\omega}(\ep).$$
Further, if $r \ge 2$, we can find $C_M>0$ such that $\bar{\omega}(\ep) \le C_M \ep$.
\end{remark}

\begin{remark}\label{rMelnibis}
 Assume \assump{K} and \assump{F1}, then there is $\ep_0>0$ such that for any $0< \ep \le \ep_0$ we have the following. There is $\bar{c}^*>0$
 such that

 \begin{equation}\label{trajWuWs}
   \begin{split}
      \| \x(t,\tau ; \P_u(\tau)) -\ga^-(t-\tau) \| \le \bar{c}^* \ep & \qquad \qquad \textrm{for any $t \le \tau$}, \\
        \| \x(t,\tau ; \P_s(\tau)) -\ga^+(t-\tau) \| \le \bar{c}^* \ep & \qquad \qquad \textrm{for any $t \ge \tau$}.
   \end{split}
 \end{equation}
\end{remark}
Now we are ready to state the following result, which allows to locate the trajectories in an $\ep$-neighborhood of $\bs{\Gamma}$,
when $t$ is in an appropriate interval.

We introduce now two further time values, independent of $d$, $\tau$, $\ee$:
\begin{equation}\label{TaTb}
 T_a:=  \frac{1}{ \la_u^- }  | \ln(\ep)|  \, , \qquad
 T_b:=  \frac{1}{| \la_s^+ |}| \ln(\ep)|.
\end{equation}

  \begin{remark} \label{rem:TaTb}
    Using Remark \ref{est.ga} and \eqref{TaTb}
     we find
  $$\|\ga(t)\| \le\frac{c^*_0}{4} \ep  \quad  \textrm{for any $t \le -T_a$ and any $t\ge T_b$},$$
  Then, using the exponential behavior of trajectories converging to the origin we find $c^*_1,\kappa>0$ (cf. \cite[\S 4.1,
  \S 4.2]{CDFP}) such that
  $$\|\x(t,\tau; \P_u(\tau))\| \le \frac{c^*_1}{5} \eu^{( \la_u^- -\kappa\ep)(t-\tau)} \le \frac{c^*_1}{4}    \ep  \quad  \textrm{for
  any $t-\tau
  \le -T_a$},$$
  $$\|\x(t,\tau; \P_s(\tau))\| \le \frac{c^*_1}{5} \eu^{-(| \la_s^+ |-\kappa\ep)(t-\tau)} \le \frac{c^*_1}{4} \ep  \quad  \textrm{for
  any $t-\tau
  \ge T_b$}.$$
  Further from \eqref{defK0-new} we see that $\max \{T_a, T_b \} \le  2 K_{0}|\ln(\ep)| \le  K_{0}(1+\nu_0)|\ln(\ep)|$.
\end{remark}

Let us set
\begin{equation}
\begin{split}
     J_0  & :=  \left\{ d \in \R \mid 0< d \le \ep^{\frac{1+\nu}{\und{\si}}} \right\}.
  \label{J0}
\end{split}
\end{equation}

\begin{proposition}\label{forPropC}
  Assume \assump{F0}, \assump{F1}, \assump{F2}, \assump{K}, \assump{G}, then we can find $\ep_0$ such that for
  any $0< \ep \le \ep_0$ we have the following.

  Fix $\tau \in \R$, $d \in J_0$ and $\nu \ge \nu_0$, then there is $c^*>0$
  (independent of $d$, $\nu$ and $\ep$) such that
  $$
  \| \x(t,\tau; \Q_s(d,\tau))-\ga^+(t-\tau)\| \le \frac{c^*}{2} \ep $$
  for any $\tau \le t \le \TTT_{\frac{1}{2}}(d,\tau)$, and
   $$
  \| \x(t,\tau; \Q_s(d,\tau))-\ga^-(t-\TTT_1(d,\tau))\| \le \frac{c^*}{2} \ep $$
  for any $\TTT_{\frac{1}{2}}(d,\tau) \le t \le \TTT_1(d,\tau)$.

  Further,
  $$
  \| \x(t,\tau; \Q_u(d,\tau))-\ga^-(t-\tau)\| \le \frac{c^*}{2} \ep $$
  for any $  \TTT_{-\frac{1}{2}}(d,\tau)\le t \le \tau$, and
   $$
  \| \x(t,\tau; \Q_u(d,\tau))-\ga^+(t-\TTT_{-1}(d,\tau))\| \le \frac{c^*}{2} \ep $$
  for any $\TTT_{-1}(d,\tau) \le t \le \TTT_{-\frac{1}{2}}(d,\tau)$.
\end{proposition}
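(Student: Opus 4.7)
The plan is to prove each of the four inequalities by a triangle-inequality argument, comparing the trajectory starting at $\Q_s(d,\tau)$ (resp.\ $\Q_u(d,\tau)$) with the trajectory through the appropriate manifold point $\P_s$ or $\P_u$ by means of Theorem~\ref{keymissed}, and then comparing that latter trajectory with the branch $\ga^+$ or $\ga^-$ of the unperturbed homoclinic by means of Remark~\ref{rMelnibis}.

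Concretely, for the first bound, on the piece $\tau \le t \le \TTT_{\frac{1}{2}}(d,\tau)$, I would write
\[
\|\x(t,\tau; \Q_s(d,\tau)) - \ga^+(t-\tau)\|
\le \|\x(t,\tau; \Q_s(d,\tau)) - \x(t,\tau; \P_s(\tau))\| + \|\x(t,\tau; \P_s(\tau)) - \ga^+(t-\tau)\|
\]
and bound the first summand by $d^{\sfwd_+-\mu}$ using \eqref{keymissed.es-}, the second by $\bar c^*\ep$ using \eqref{trajWuWs}. The second bound, on $\TTT_{\frac{1}{2}}(d,\tau) \le t \le \TTT_1(d,\tau)$, is handled identically, this time routing through $\x(t,\TTT_1(d,\tau); \P_u(\TTT_1(d,\tau)))$: the first summand is controlled by \eqref{keymissed.es+}, and \eqref{trajWuWs} applied backward from $\TTT_1(d,\tau)$ controls the second. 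The two $\Q_u$ statements are symmetric: the first summand is bounded by $d^{\sbwd_- - \mu}$ via \eqref{keymissed.eu+}--\eqref{keymissed.eu-}, the second by $\bar c^*\ep$ via the unstable-leaf part of Remark~\ref{rMelnibis}.

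The main (and essentially only) obstacle is to verify that the polynomial error $d^{\sfwd_+-\mu}$, respectively $d^{\sbwd_--\mu}$, coming from Theorem~\ref{keymissed} is absorbed by $O(\ep)$; this is where the specific range $d \in J_0$ and the constraint $\nu \ge \nu_0$ become essential. I would fix $\mu := \mu_0/2$; since $\und\sigma < 1$, the bound $\mu_0 \le \und\sigma^2/4$ from \eqref{mu0} gives $\mu < \und\sigma/4$, whence $\sfwd_+ - \mu \ge \und\sigma - \und\sigma/4 = 3\und\sigma/4$ (using $\und\sigma = \min\{\sfwd_+, \sbwd_-\}$), and likewise $\sbwd_- - \mu \ge 3\und\sigma/4$. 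Using $d \le \ep^{(1+\nu)/\und\sigma}$ and $\nu \ge \nu_0 \ge 1$, it follows that
\[
d^{\sfwd_+ - \mu} \le \ep^{(1+\nu)(\sfwd_+ - \mu)/\und\sigma} \le \ep^{2\cdot 3/4} = \ep^{3/2} \le \ep
\]
for $\ep \le \ep_0$ small enough, and the identical computation works for $\sbwd_- - \mu$. Setting $c^* := 2(\bar c^* + 1)$ then produces the desired $\tfrac{c^*}{2}\ep$ bound in all four cases, with $c^*$ independent of $d$, $\nu$ and $\ep$ as claimed.
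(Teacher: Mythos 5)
Your proof is correct and follows essentially the same route as the paper's: a triangle inequality through $\x(t,\cdot\,;\P_s)$ or $\x(t,\cdot\,;\P_u)$, with Theorem~\ref{keymissed} controlling the first summand and Remark~\ref{rMelnibis} the second, and your explicit verification that $d^{\sfwd_+-\mu}\le\ep$ for $d\in J_0$ is exactly the step the paper leaves implicit. The only cosmetic difference is that the paper defines $c^*=\max\{2(\bar c^*+1),c_0^*\}$, folding in the constant $c_0^*$ of Remark~\ref{est.ga} for use in later lemmas; for the proposition itself your choice $c^*=2(\bar c^*+1)$ suffices.
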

\begin{proof}
Let $\tau \in \R$, $d \in J_0$ and $\nu \ge \nu_0$ be fixed, and let
$\bar{c}^*>0$ be as in Remark~\ref{rMelnibis}. Then,
 from \eqref{keymissed.es-} and Remark \ref{rMelnibis} we find
\begin{equation*}
\begin{split}
  & \| \x(t,\tau; \Q_s(d,\tau))-\ga^+(t-\tau)\| \le \| \x(t,\tau; \Q_s(d,\tau)) - \x(t,\tau; \P_s(\tau))\|
  \\
  & + \| \x(t,\tau; \P_s(\tau))-\ga^+(t-\tau)\| \le  d^{\sfwd_+-\mu} +  \bar{c}^* \ep \le
   \ep + \bar{c}^* \ep \le  \frac{c^*}{2} \ep
\end{split}
\end{equation*}
for any  $\tau \le t \le \TTT_{\frac{1}{2}}(d,\tau)$, and
where
\begin{equation}\label{cstar}
  c^*= \max\{2(\bar{c}^*+1);   c_0^* ;  c^*_1\}>0
\end{equation}
is a constant independent of $\ep$, $d$, $\nu$, $\tau$, and
 $ c_0^*,c_1^* >0$ are as in Remarks \ref{est.ga} and \ref{rem:TaTb}.

The other inequalities can be proved in the same way using  Remark \ref{rMelnibis}
together with, respectively,
\eqref{keymissed.es+}, \eqref{keymissed.eu+}, \eqref{keymissed.eu-}.
\end{proof}

\begin{lemma}\label{forgottentimes}
  Let  $d \in J_0$    and $c^*>0$ be as in Proposition \ref{forPropC}. Then
  \begin{equation}\label{T.half}
  \begin{split}
       & \tau +T_b  \le \TTT_{\frac{1}{2}}(d,\tau) \le \TTT_1(d,\tau)-T_a  \qquad  \textrm{and}   \\
       &  \|x(t,\tau ; \Q_s(d,\tau))\| \le  \frac{3}{4}  c^* \ep  \qquad  \textrm{for any $t \in [\tau +T_b ,\TTT_1(d,\tau)-T_a]$.}
  \end{split}
  \end{equation}
  Further,
   \begin{equation}\label{T.halfbis}
  \begin{split}
       & \TTT_{-1}(d,\tau) +T_b  \le \TTT_{-\frac{1}{2}}(d,\tau) \le \tau-T_a  \qquad  \textrm{and}   \\
       &  \|x(t,\tau ; \Q_u(d,\tau))\| \le   \frac{3}{4}  c^* \ep  \qquad  \textrm{for any $t \in [\TTT_{-1}(d,\tau) +T_b ,\tau-T_a]$.}
  \end{split}
  \end{equation}
\end{lemma}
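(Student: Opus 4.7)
The plan is to treat the forward case (estimates involving $Q_s(d,\tau)$, $\mathscr{T}_1$, and $\mathscr{T}_{\frac{1}{2}}$) in detail; the backward case is completely analogous by the parallel structure of Theorem~\ref{key}, Proposition~\ref{forPropC}, and the definitions of $\sigma^{\textrm{bwd}}_-$, $\Sigma^{\textrm{bwd}}_-$ versus their forward counterparts. The only two ingredients in play are the logarithmic time estimates of Theorem~\ref{key} and the triangle inequality, combined respectively with $d\le\epsilon^{(1+\nu)/\underline{\sigma}}$ and with the exponential decay of $\ga^{\pm}$ recalled in Remark~\ref{rem:TaTb}.

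For the two time bounds, first note that $d\in J_0$ means $|\ln d|\ge \frac{1+\nu}{\underline{\sigma}}|\ln\epsilon|$. Fix $\mu\in(0,\mu_0)$ to be chosen small later (depending only on the eigenvalues). From Theorem~\ref{key} one has $\mathscr{T}_{\frac{1}{2}}(d,\tau)-\tau \ge (\Sigma^{\textrm{fwd}}_+ - \mu)|\ln d|$. Using the identity $\Sigma^{\textrm{fwd}}_+/\sigma^{\textrm{fwd}}_+ = 1/|\la_s^+|$ together with $\underline{\sigma}\le \sigma^{\textrm{fwd}}_+$ and $\nu\ge\nu_0\ge 1$, one obtains for $\mu$ small enough that $(\Sigma^{\textrm{fwd}}_+-\mu)(1+\nu)/\underline{\sigma}\ge 1/|\la_s^+|$, which gives $\mathscr{T}_{\frac{1}{2}}(d,\tau)-\tau\ge T_b$. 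For the second inequality subtract the two bounds of \eqref{T1T-1}: $\mathscr{T}_1-\mathscr{T}_{\frac{1}{2}}\ge (\Sigma^{\textrm{fwd}}-\Sigma^{\textrm{fwd}}_+ - 2\mu)|\ln d|$. A direct computation using \eqref{defsigma} gives $\Sigma^{\textrm{fwd}}-\Sigma^{\textrm{fwd}}_+ = \sigma^{\textrm{fwd}}_+/\la_u^-$, and the same argument using $\underline{\sigma}\le\sigma^{\textrm{fwd}}_+$ and $\nu\ge 1$ yields $\mathscr{T}_1(d,\tau)-\mathscr{T}_{\frac{1}{2}}(d,\tau)\ge T_a$ provided $\mu$ is chosen small enough.

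For the pointwise bound on $\|\x(t,\tau;\Q_s(d,\tau))\|$, split the interval $[\tau+T_b,\mathscr{T}_1(d,\tau)-T_a]$ at $\mathscr{T}_{\frac{1}{2}}(d,\tau)$. On the first piece $t\in[\tau+T_b,\mathscr{T}_{\frac{1}{2}}]$, Proposition~\ref{forPropC} gives $\|\x(t,\tau;\Q_s(d,\tau))-\ga^+(t-\tau)\|\le \frac{c^*}{2}\epsilon$, while $t-\tau\ge T_b$ combined with Remark~\ref{rem:TaTb} yields $\|\ga^+(t-\tau)\|\le \frac{c^*}{4}\epsilon$; the triangle inequality then gives $\|\x(t,\tau;\Q_s(d,\tau))\|\le \frac{3}{4}c^*\epsilon$. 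On the second piece $t\in[\mathscr{T}_{\frac{1}{2}},\mathscr{T}_1-T_a]$, Proposition~\ref{forPropC} gives $\|\x(t,\tau;\Q_s(d,\tau))-\ga^-(t-\mathscr{T}_1(d,\tau))\|\le \frac{c^*}{2}\epsilon$, and $t-\mathscr{T}_1\le -T_a$ together with Remark~\ref{rem:TaTb} yields $\|\ga^-(t-\mathscr{T}_1)\|\le \frac{c^*}{4}\epsilon$; the triangle inequality closes the estimate.

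The backward estimates \eqref{T.halfbis} follow by exactly the same scheme, now using the bounds in \eqref{T1T-1} for $\mathscr{T}_{\pm\frac{1}{2}}$ and $\mathscr{T}_{-1}$, together with the two inequalities for $\Q_u(d,\tau)$ in Proposition~\ref{forPropC}, via the symmetric identities $\Sigma^{\textrm{bwd}}_-/\sigma^{\textrm{bwd}}_- = 1/\la_u^-$ and $\Sigma^{\textrm{bwd}}-\Sigma^{\textrm{bwd}}_- = \sigma^{\textrm{bwd}}_-/|\la_s^+|$, and the bound $\underline{\sigma}\le\sigma^{\textrm{bwd}}_-$. The main (and only) obstacle is the bookkeeping of which eigenvalue governs which half-loop and verifying that the factor $(1+\nu)/\underline{\sigma}$ is indeed large enough to absorb both a $\mu$-error from Theorem~\ref{key} and the desired $1/|\la_s^+|$ or $1/\la_u^-$ prefactor; but this reduces to the two elementary identities above, both immediate from \eqref{defsigma}, so $\mu$ can be chosen sufficiently small once and for all as a function only of the eigenvalues.
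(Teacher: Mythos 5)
Your proposal is correct and follows essentially the same strategy as the paper: the lower bound $\TTT_{\frac{1}{2}}(d,\tau)-\tau\ge T_b$ via \eqref{T1T-1} and $\sTfwd_+/\sfwd_+=1/|\la_s^+|$ with $\und{\sigma}\le\sfwd_+$ and $\nu\ge 1$, and the pointwise bound by splitting at $\TTT_{\frac{1}{2}}$ and combining Proposition \ref{forPropC}, Remark \ref{rem:TaTb} and the triangle inequality, are exactly the paper's argument. The only (harmless) deviation is in proving $\TTT_1-\TTT_{\frac{1}{2}}\ge T_a$: the paper rewrites this difference as the backward half-time $\TTT_1(d,\tau)-\TTT_{-\frac{1}{2}}(D_1,\TTT_1(d,\tau))$ with $D_1\le d^{\sfwd-\mu}$ and uses $\sTbwd_-\sfwd_-=1/\la_u^-$, so that $\mu\le\mu_0$ already suffices, whereas your direct subtraction of the two bounds in \eqref{T1T-1} via $\sTfwd-\sTfwd_+=\sfwd_+/\la_u^-$ is equally valid but may force $\mu$ strictly smaller than $\mu_0$ (still depending only on the eigenvalues, which Theorem \ref{key} permits).
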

\begin{proof}
Using \eqref{J0}, \eqref{T1T-1} and \eqref{TaTb},   for any $d \in  J_0$ we get
\begin{equation*}
  \begin{split}
     \TTT_{\frac{1}{2}}& (d,\tau)-\tau \ge  (\sTfwd_+ -\mu) |\ln(d)| \ge \frac{\sTfwd_+ -\mu}{\und{\sigma} }(1+ \nu)|\ln(\ep)| \\
     &   \ge   \frac{\sTfwd_+}{\und{\sigma}} |\ln(\ep)|
     \ge T_b,
  \end{split}
\end{equation*}
 since $\mu \le \mu_0  \leq \frac{1}{2}\sTfwd_+$ by \eqref{mu0} and $\nu \ge \nu_0 \ge 1$.

 Let us set $D_1=\DDD (\PPP_1(d,\tau),\P_u(\TTT_1(d,\tau)))$ for short; since  $0<d \le \ep^{(1+\nu)/\und{\si}}$,
 from Theorem \ref{key} we see that
\begin{equation}\label{twice}
\begin{split}
    &   D_1 \le d^{ \sfwd-\mu} \le \ep^{\frac{1+\nu}{\und{\si}}  \cdot \frac{3\sfwd}{4}} \le
    \ep^{ \frac{3\sfwd_-}{4}(1+\nu)}.
\end{split}
\end{equation}
 Then,
  using the last line in \eqref{T1T-1}, we find
  \begin{equation*}
  \begin{split}
  & \TTT_1(d,\tau)-\TTT_{-\frac{1}{2}}(D_1, \TTT_1(d,\tau))
  \ge (\sTbwd_-- \mu) |\ln( D_1)|   \\
   & \ge   \sTbwd_- \sfwd_-\frac{9(1+ \nu)}{16}|\ln(\ep)| = \frac{9(1+ \nu)}{16  \la_u^- }|\ln(\ep)|  \ge   \frac{9}{8  \la_u^- }|\ln(\ep)|
   \ge
   T_a.
  \end{split}
\end{equation*}

Then, using  the fact  that $\TTT_{\frac{1}{2}}(d,\tau) = \TTT_{-\frac{1}{2}}(D_1, \TTT_1(d,\tau))$, we conclude the proof of the
first line in \eqref{T.half}.

Now, from Remark \ref{rem:TaTb} we see that
$\| \x(t,\tau; \P_s(\tau))   \| \le \frac{c^*}{4} \ep $ for any $t \ge T_b+\tau$ and
$\|\x(t, \TTT_1(d,\tau); \P_u(\TTT_1(d,\tau)))\| \le \frac{c^*}{4} \ep $ for any $t \le \TTT_1(d,\tau)-T_a$.
Hence, arguing as in the proof of Proposition~\ref{forPropC} for any $t \in [T_b+\tau , \TTT_{\frac{1}{2}}(d,\tau)]$ we find
\begin{equation*}
\begin{split}
    &
  \|\x(t,\tau; \Q_s(d,\tau)) \| \le  \|\x(t,\tau; \Q_s(d,\tau))- \x(t,\tau; \P_s(\tau))\| +\|\x(t,\tau; \P_s(\tau))\| \le  \frac{3}{4} c^*
  \ep,
\end{split}
\end{equation*}
and similarly for any  $t \in [\TTT_{\frac{1}{2}}(d,\tau),\TTT_1(d,\tau)-T_a]$
we find
\begin{equation*}
\begin{aligned}
  \|\x(t,\tau; \Q_s(d,\tau)) \| &\le \|\x(t,\tau; \Q_s(d,\tau))- \x(t,\TTT_1(d,\tau); \P_u(\TTT_1(d,\tau)))\|\\
  & \quad {}+\|\x(t,\TTT_1(d,\tau); \P_u(\TTT_1(d,\tau)))\| \le
  \frac{3}{4}   c^* \ep.
\end{aligned}
\end{equation*}
So, the second inequality in \eqref{T.half} is proved.

 The proof of \eqref{T.halfbis} is analogous and it is omitted.
\end{proof}

\begin{lemma}\label{forgotten.origin}
  Let  $d \in J_0$   and $c^*>0$ be as in Proposition \ref{forPropC}.
  If $t \in [\tau +T_b ,\TTT_1(d,\tau)-T_a]$ we get
  \begin{eqnarray}
   &  \|x(t,\tau ; \Q_s(d,\tau))\| \le  c^* \ep \label{est.origin},  \\
  &  \|x(t,\tau ; \Q_s(d,\tau))- \ga(t-\tau)\| \le  c^* \ep  \label{est.origin-}, \\
  &  \|x(t,\tau ; \Q_s(d,\tau))- \ga(t-\TTT_1(d,\tau))\| \le  c^* \ep \label{est.origin+}.
  \end{eqnarray}
\end{lemma}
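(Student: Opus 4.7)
The plan is to derive the three inequalities as essentially immediate corollaries of Lemma \ref{forgottentimes} combined with the decay estimate on $\ga$ recorded in Remark \ref{rem:TaTb}. The first estimate \eqref{est.origin} is essentially already established: the second line of \eqref{T.half} in Lemma \ref{forgottentimes} yields $\|\x(t,\tau;\Q_s(d,\tau))\| \le \tfrac{3}{4}c^*\ep$ on the interval $[\tau+T_b,\TTT_1(d,\tau)-T_a]$, and this bound trivially implies \eqref{est.origin}.

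For \eqref{est.origin-}, I would observe that if $t \in [\tau+T_b,\TTT_1(d,\tau)-T_a]$ then $t-\tau \ge T_b$, so Remark \ref{rem:TaTb} gives $\|\ga(t-\tau)\| \le \tfrac{c^*}{4}\ep$. The triangle inequality then yields
\begin{equation*}
\|\x(t,\tau;\Q_s(d,\tau))-\ga(t-\tau)\| \le \|\x(t,\tau;\Q_s(d,\tau))\|+\|\ga(t-\tau)\| \le \tfrac{3}{4}c^*\ep + \tfrac{1}{4}c^*\ep = c^*\ep.
\end{equation*}

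For \eqref{est.origin+} the argument is symmetric: for $t$ in the same interval, $t-\TTT_1(d,\tau) \le -T_a$, so again by Remark \ref{rem:TaTb} we have $\|\ga(t-\TTT_1(d,\tau))\| \le \tfrac{c^*}{4}\ep$, and the triangle inequality combined with \eqref{T.half} finishes the argument in exactly the same manner.

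There is no serious obstacle here; the lemma is really a bookkeeping consequence of the two crucial facts established earlier, namely that on the ``middle'' part of a loop the trajectory is close to the origin (Lemma \ref{forgottentimes}), and that on this same part both $\ga^+(t-\tau)$ and $\ga^-(t-\TTT_1(d,\tau))$ have already decayed to within $\tfrac{c^*}{4}\ep$ of the origin (Remark \ref{rem:TaTb}, which is itself a direct consequence of Remark \ref{est.ga} and the definition \eqref{TaTb} of $T_a,T_b$). The role of this lemma in the sequel is to provide a single bound which is simultaneously comparable with ``the trajectory near the origin'' and ``the trajectory shadowing either the incoming or the outgoing branch of $\bs{\Gamma}$'', which is precisely what is needed when stitching together consecutive loops in the iterative scheme of Section \ref{S.pf.main}.
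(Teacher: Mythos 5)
Your proof is correct and rests on exactly the same two ingredients as the paper's: the bound $\|\x(t,\tau;\Q_s(d,\tau))\|\le\tfrac34 c^*\ep$ from \eqref{T.half} and the decay $\|\ga(s)\|\le\tfrac{c^*}{4}\ep$ for $s\ge T_b$ or $s\le -T_a$ from Remark \ref{rem:TaTb}. The only difference is organizational: the paper splits the interval at $\TTT_{\frac12}(d,\tau)$ and, on each half, obtains one of \eqref{est.origin-}, \eqref{est.origin+} directly from Proposition \ref{forPropC} (with the sharper constant $\tfrac{c^*}{2}$) and the other by the triangle inequality, whereas you apply the triangle inequality uniformly on the whole interval for both estimates. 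Your version is slightly more economical, since the observation that $t-\tau\ge T_b$ and $t-\TTT_1(d,\tau)\le -T_a$ hold simultaneously for every $t$ in $[\tau+T_b,\TTT_1(d,\tau)-T_a]$ makes the case split unnecessary; the paper's route records in passing the sharper $\tfrac{c^*}{2}\ep$ bound near the endpoints, but that is not needed for the statement. No gap either way.
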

\begin{proof}
Let $t \in [\tau +T_b ,\TTT_1(d,\tau)-T_a]$; then \eqref{est.origin} follows directly from  \eqref{T.half} in Lemma \ref{forgottentimes}.

Further, from the same result we see that $\TTT_{\frac{1}{2}}(d,\tau) \in [\tau +T_b ,\TTT_1(d,\tau)-T_a]$. Assume first
$t \in [\tau +T_b , \TTT_{\frac{1}{2}}(d,\tau)]$, then \eqref{est.origin-} follows from Proposition \ref{forPropC}.
Moreover
 $$\|\ga(t-\TTT_1(d,\tau))\| \le \frac{c^*}{4} \ep$$
by Remark \ref{rem:TaTb}.
Hence using  again Lemma \ref{forgottentimes} we find
\begin{equation*}
  \begin{split}
     \|x(t,\tau ; \Q_s(d,\tau))- \ga(t-\TTT_1(d,\tau))\| & \le  \|x(t,\tau ; \Q_s(d,\tau))\|+ \|\ga(t-\TTT_1(d,\tau))\|    \\
       & \le \frac{3c^*}{4} \ep + \frac{c^*}{4} \ep = c^* \ep.
  \end{split}
\end{equation*}
So \eqref{est.origin+} is proved.
Assume now
$t \in [\TTT_{\frac{1}{2}}(d,\tau), \TTT_1(d,\tau)-T_a]$, then \eqref{est.origin+} follows from Proposition \ref{forPropC}.
Moreover $t-\tau \ge \TTT_{\frac{1}{2}}(d,\tau)-\tau \ge T_b$ hence
 $$\|\ga(t-\tau)\| \le  \frac{c^*}{4} \ep$$ by Remark \ref{rem:TaTb}.
So again from  Lemma \ref{forgottentimes} we find
\begin{equation*}
  \begin{split}
     \|x(t,\tau ; \Q_s(d,\tau))- \ga(t-\tau)\| & \le  \|x(t,\tau ; \Q_s(d,\tau))\|+ \|\ga(t-\tau)\|    \\
       & \le \frac{3c^*}{4} \ep + \frac{c^*}{4} \ep = c^* \ep
  \end{split}
\end{equation*}
and the proof is concluded.
\end{proof}

The following is a consequence of Theorem \ref{key}.

\begin{lemma}\label{key1}
Assume \assump{F0}, \assump{F1}, \assump{F2}, \assump{K}, \assump{G}.
	Fix $\nu \ge \nu_0$ and let $d\in J_0$;
	fix $\tau \in \R$.
If there is $c_1>0$ such that
	$\MM(\TTT_{\pm 1}(d,\tau))\le - 3c_1$ then
 \begin{equation*}
  \begin{split}
     d_1 (d,\tau) & :=\dist (\PPP_1(d,\tau),\P_s(\TTT_1(d,\tau))) >  cc_1 \ep>0, \\
        d_{-1} (d,\tau) & :=\dist (\PPP_{-1}(d,\tau),\P_u(\TTT_{-1}(d,\tau))) <-cc_1 \ep <0 ,
  \end{split}
\end{equation*}
 while if  $\MM(\TTT_{\pm 1}(d,\tau))\ge
3c_1$ then
$\pm d_{\pm1}(d,\tau)<- cc_1\ep<0$.
\end{lemma}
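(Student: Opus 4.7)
The strategy is to decompose the directed distance $d_1(d,\tau)$ into a ``Melnikov piece'' (which controls the sign) and a ``Poincar\'e remainder'' (which by Theorem \ref{key} is much smaller than $\ep$ because $d\in J_0$). Using the additivity of the arc-length functional $\ell$ on $\Om^0$, write
\begin{equation*}
 d_1(d,\tau)=\dist\bigl(\PPP_1(d,\tau),\P_s(\TTT_1(d,\tau))\bigr)
 =\dist\bigl(\PPP_1(d,\tau),\P_u(\TTT_1(d,\tau))\bigr)-\dist\bigl(\P_s(\TTT_1(d,\tau)),\P_u(\TTT_1(d,\tau))\bigr).
\end{equation*}
Theorem \ref{key} gives $0<d^{\sfwd+\mu}\le\dist(\PPP_1,\P_u)\le d^{\sfwd-\mu}$, so the first term is positive; Remark \ref{rMelni} gives $\dist(\P_s,\P_u)=c\ep\MM(\TTT_1(d,\tau))+\ep\omega(\TTT_1(d,\tau),\ep)$ with $|\omega|\le\bar{\omega}(\ep)\to 0$.

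Fix $\ep_0$ so small that $\bar{\omega}(\ep)\le cc_1$ for all $\ep\le\ep_0$. If $\MM(\TTT_1(d,\tau))\le -3c_1$, then $\dist(\P_s,\P_u)\le -3cc_1\ep+cc_1\ep=-2cc_1\ep$, and since $\dist(\PPP_1,\P_u)>0$ we get $d_1\ge 2cc_1\ep>cc_1\ep$. If $\MM(\TTT_1(d,\tau))\ge 3c_1$, then symmetrically $\dist(\P_s,\P_u)\ge 2cc_1\ep$, and it suffices to control the remainder: using $d\le\ep^{(1+\nu)/\und{\sigma}}$,
\begin{equation*}
 \dist(\PPP_1,\P_u)\le d^{\sfwd-\mu}\le \ep^{(1+\nu)(\sfwd-\mu)/\und{\sigma}}.
\end{equation*}
Since $\sfwd=\sfwd_+\sfwd_-\ge\und{\sigma}\sfwd_->\und{\sigma}$ and $\mu<\mu_0$, the choice $\nu\ge\nu_0\ge 1$ ensures $(1+\nu)(\sfwd-\mu)/\und{\sigma}>1$, hence $\dist(\PPP_1,\P_u)\le cc_1\ep$ after possibly shrinking $\ep_0$. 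Thus $d_1\le cc_1\ep-2cc_1\ep=-cc_1\ep$, i.e.\ $d_1<-cc_1\ep$.

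The estimate for $d_{-1}$ follows by the same argument, decomposing
\begin{equation*}
 d_{-1}(d,\tau)=\dist\bigl(\PPP_{-1},\P_u(\TTT_{-1})\bigr)=\dist\bigl(\PPP_{-1},\P_s(\TTT_{-1})\bigr)+\dist\bigl(\P_s(\TTT_{-1}),\P_u(\TTT_{-1})\bigr),
\end{equation*}
and using the second line of \eqref{D1T1} for the remainder and Remark \ref{rMelni} for the Melnikov term, with the sign of the inequality reversed. The principal obstacle I anticipate is verifying that the orientation conventions on $\Om^0$ (fixed through \eqref{dist} and the positivity statements in Theorem \ref{key}) make the decomposition come out with the correct signs; once this bookkeeping is in place, the estimate reduces to the smallness of $d^{\sfwd\mp\mu}$ on $J_0$, which is guaranteed by $\nu\ge\nu_0$ and $\mu<\mu_0$.
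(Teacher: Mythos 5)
Your proof is correct and takes essentially the same route as the paper's: both decompose $d_{\pm1}$ via additivity of $\ell$ into the Melnikov term $\dist(\P_u,\P_s)$ (controlled by Remark \ref{rMelni}) plus the loop displacement $\dist(\PPP_{\pm1},\P_{u/s})$ (controlled by Theorem \ref{key} together with $d\in J_0$, $\nu\ge\nu_0$ and $\mu<\mu_0$, giving a remainder of order $\ep^{1+\mu_r}$ with $\mu_r>0$). The only blemish is the final step ``$d_1\le -cc_1\ep$, i.e.\ $d_1<-cc_1\ep$'': to get the strict inequality of the statement you should note that the remainder is \emph{strictly} less than $cc_1\ep$ for $\ep\le\ep_0$ small (as the paper does via $2cc_1\ep-\ep^{1+\mu_r}>cc_1\ep$).
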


\begin{proof}
We just prove the assertions concerning $\MM(\TTT_1(d,\tau))$,  the other case being analogous.
So we assume $\MM(\TTT_1(d,\tau))\le  - 3c_1$.
Then by Theorem \ref{key} and Remark \ref{rMelni},
\begin{equation*}
  \begin{split}
     d_1(d,\tau) & = \dist( \PPP_1(d,\tau), \P_s(\TTT_1(d,\tau))) \\
       & \ge \dist(\P_u(\TTT_1(d,\tau)),\P_s(\TTT_1(d,\tau))) - |\dist(\PPP_1(d,\tau), \P_u(\TTT_1(d,\tau)))| \\
       & \ge c\ep  3c_1 +o(\ep) -  d^{\sfwd-\mu}  \ge  2c c_1\ep-  \ep^{1+\mu_r}> c c_1 \ep
  \end{split}
\end{equation*}
 with  $1+\mu_r= \frac{1+\nu}{\und{\sigma}} (\sfwd - \mu)   \ge
\frac{3}{4}\frac{1+\nu_0}{\und{\sigma}}\sfwd  >1$, see \eqref{mu0}.

Analogously, assuming $\MM(\TTT_1(d,\tau))\ge  3c_1$ we get
\begin{equation*}
	\begin{split}
		d_1(d,\tau) & = \dist(\PPP_1(d,\tau), \P_s(\TTT_1(d,\tau))) \\
		& \le \dist(\P_u(\TTT_1(d,\tau)),\P_s(\TTT_1(d,\tau))) + |\dist( \PPP_1(d,\tau), \P_u(\TTT_1(d,\tau)))| \\
		& \le -c\ep  3c_1 +o(\ep) + d^{\sfwd-\mu} \le  - 2 c c_1 \ep+ \ep^{1+\mu_r}  <- c c_1 \ep .
	\end{split}
\end{equation*}
\end{proof}

\section{Proof of the main results} \label{S.pf.main}

Before giving the proofs in all the (lengthy) details, we sketch the argument and we sum up the main ideas.

 We assume that $0<\ep\leq \ep_0$ is sufficiently small, we fix
 $\tau \in [b_0, b_1]$; we develop the argument twice: in Section \ref{proof.future}
 we let $\TT^+= (T_j)$, $j \in \Z^+$
 be a sequence satisfying \eqref{TandKnunew} for $j \in \Z^+$  and \eqref{TandKnunew_tau+},
 and we develop in details an iterative argument to prove   Theorem \ref{restatebis} \textbf{a)}.
The idea is to find, for any fixed
 $\ee^+ \in \EE^+$, a compact interval $J_{+\infty}:= J_{+\infty}^{\ee^+}(\tau,\TT^+) \subset  J_0$
such that the trajectory $\x(t,\tau ; \Q_s(d,\tau))$ has property   $\bs{C_{\ee^+}^+}$
whenever $d \in  J_{+\infty}$
(we conjecture that if the zeros of $\MM$ are non-degenerate then $J_{+\infty}$ should reduce to a singleton).
Then for any $\ee^+ \in \EE^+$ and any $\TT^+$ as above we define the compact set of initial conditions
$$X^+(\ee^+,\tau,\TT^+)= \{ \Q_s(d,\tau) \mid d \in J^{\ee^+}_{+\infty}(\tau, \TT^+) \}.$$

 In Section \ref{proof.futurebis}  we adapt the argument to the case where
  $\TT^+$ satisfies \eqref{TandKnu} for $j \in \Z^+$  and \eqref{TandKnu_tau+},
  and we show that if \eqref{minimal} or a stronger non-degeneracy condition holds
  then
  we get better estimates on  ``$\alpha_j$", and we prove Theorem~\ref{restatefuture}.

In Section \ref{proof.past} we use an inversion of time argument and we prove   Theorem~\ref{restatebis}  \textbf{b)}
  and     Theorem
\ref{restatepast}. So for any $\ee^- \in \EE^-$
we construct a  compact interval  $J_{-\infty}:= J_{-\infty}^{\ee^-}(\tau,\TT^-)\subset  J_0$
such that the trajectory $\x(t,\tau ; \Q_u(d,\tau))$ has property  $\bs{C_{\ee^-}^-}$
 whenever $d \in  J_{-\infty}$.
Then for any $\ee^- \in \EE^-$ and any $\TT^-$  satisfying \eqref{TandKnu} for $j \le -2$ and \eqref{TandKnu_tau-}  we
define the compact sets of initial conditions
$$X^-(\ee^-,\tau,\TT^-)= \{ \Q_u(d,\tau) \mid d \in  J^{\ee^-}_{-\infty}(\tau, \TT^-) \}.$$

\subsection{Construction of the chaotic patterns in forward time: setting \eqref{TandKnunew}}\label{proof.future}
The aim of this section is to  prove  Theorem  \ref{restatebis} \textbf{a)}, so we aim to show the existence of a chaotic pattern in forward
time: for this
purpose we rely on a  constructive iterative
argument based on Theorem \ref{key}
and Lemma \ref{key1}.

We split $\EE^+= \{0,1\} ^{\Z^+}$ in two subsets $\hat{\EE}^+$ and $\EE^+_0$:
 \begin{equation}\label{efuture}
 \begin{array}{c}
   j^+_*(\ee^+)=\sup\{ j \mid \ee^+_j=1  \},\\
\hat{\EE}^+= \{ \ee^+=(\ee^+_j) \in \EE^+ \mid j^+_*(\ee^+)= +\infty \} ,\\
\EE^+_0= \{ \ee^+=(\ee^+_j) \in \EE^+ \mid  j^+_*(\ee^+)< +\infty \},\\
 j_\SS^+(\ee^+)=\begin{cases}\#\{j\mid 1\leq j\leq j^+_*(\ee^+),\,\ee_j^+=1\},&\text{if }j^+_*(\ee^+)<\infty,\\ \infty,& \text{if }
j^+_*(\ee^+)=\infty,\end{cases}
\end{array}
\end{equation}
 where $\#$ denotes the number of elements. So, $j^+_*(\ee^+)$ is ``the index of the last $1$ of $\ee^+$'' and $j_\SS^+(\ee^+)$ is
``the number of $1$s in $\ee^+$''.

Let us fix $\tau \in [b_0, b_1]$, $\TT^+=(T_j)$, $j \in \Z^+$ satisfying \eqref{TandKnunew},  \eqref{TandKnunew_tau+}
and $\ee^+ \in \EE^+$; adapting
\cite{BF11} we introduce a new sequence
$\SS=(\SS_j)$,  $j=0,1,\dots,j_\SS^+(\ee^+)$, which is a subsequence of $\TT^+$ depending also on $\ee^+$, and we define $(\Delta_j)$,
 $j=1,2,\dots,j_\SS^+(\ee^+)$ as
follows:
\begin{equation}\label{defSS}
\begin{split}
  \SS_0= & \tau \,, \qquad \qquad  \SS_j= \min \{T_{2k} > \SS_{j-1} \mid \ee_k=1 \}  , \qquad 1 \le j \le j^+_{\SS}(\ee^+),\\
  \Delta_j= & \SS_j-\SS_{j-1}  \, , \quad \textrm{ if  $1 \le j \le j^+_{\SS}(\ee^+)$} \,.
\end{split}
\end{equation}
Clearly,  $\SS_j$ has finitely many values if $\ee^+ \in \EE^+_0$, while if $\ee^+ \in \hat{\EE}^+$,   then $\SS_j$ and $\Delta_j$ are
defined for any $j \in \N$ and in $\Z^+$, respectively.\\
We denote by $k_j$ the subsequence such that $\SS_j=T_{2k_j}$, $1 \le j  \le j^+_{\SS}(\ee^+)$, and we set
$\beta_{2k_0}=b_0$, $ \beta'_{2k_0}=b_1$, so that
\begin{equation}\label{defik}
\begin{split}
 & \beta_{2k_j} < \SS_j < \beta'_{2k_j}, \qquad 0 \le j \le j^+_{\SS}(\ee^+)
\end{split}
\end{equation}
where  $\beta_j$  and $\beta'_j$ are the ones defined in \assump{P1} and \eqref{defkj}.
Notice that there is a subsequence $n_k$ such that $\beta_{k}=b_{n_k}$, so
 $\beta_{2k_j} = b_{n_{2k_j}}$ and  $\beta'_{2k_j} = b_{n_{2k_j}+1}$.
Further, setting $B_{2k_0}=B_0=b_1-b_0$, by \eqref{TandKnunew} for all $j \geq 1$  we find the estimate
\begin{equation}\label{defDej}
\Delta_j \ge 2 K_0(1+\nu) |\ln(\ep)| + B_{2k_j}+ B_{2k_{j-1}} .
\end{equation}

 From now until the end of the subsection, we consider $\tau$, $\TT^+$ and $\ee^+ \in \EE^+$ fixed, so we usually leave this dependence
 unsaid.
In fact, we will rely mainly on the sequences $(\SS_j)$ and $(\Delta_j)$.

We start our argument with the more involved case of $\ee^+ \in \hat{\EE}^+$,  then the case where $\ee^+ \in  \EE^+_0$ will follow more
easily.

Let us observe that from  \eqref{defDej},  \eqref{TandKnunew}  and \eqref{defK0-new} we find
\begin{equation}\label{Delta1}
\begin{split}
  & \textrm{exp}\left(-\frac{5(\Delta_1+ B_{2k_1})}{ 2\sTfwd} \right) < \textrm{exp}\left(-\frac{\Delta_1- B_{2k_1}}{ 3\sTfwd}
  \right)
      \\
     &
     <  \eu^{\frac{2(1+\nu)K_0}{3\sTfwd} \ln(\ep)}=
    \ep^{\frac{(1+\nu)\overline{\Sigma}}{\underline{\sigma}\sTfwd}}
    \leq \ep^{\frac{1+\nu}{\und{\si}}}.
\end{split}
\end{equation}
Let us define
\begin{equation}\label{I1}
 I_1=  I_1^{\ee^+}(\tau, \TT^+) := \left[ \textrm{exp}\left(-\frac{5(\Delta_1+ B_{2k_1})}{2 \sTfwd}     \right) ,
 \ep^{(1+\nu)/ \und{\si}}\right].
\end{equation}
Then, from Theorem \ref{key} we obtain the following.
\begin{lemma}\label{key2}
  Let $\tau \in [b_0, b_1]$. Then there are $D^1_a, D^1_b \in   I_1$ such that
 \begin{equation*}
   \begin{split}
    \TTT_1(D^1_a,\tau) =     T_{2k_1}+B_{2k_1} ,  \qquad  \TTT_1(D^1_b,\tau) =     T_{2k_1}-B_{2k_1}.
   \end{split}
 \end{equation*}
 Hence the image of the function $\TTT_1(\cdot, \tau) :  I_1  \to \R$
 contains the closed interval $[\SS_1-B_{2k_1} , \SS_1+B_{2k_1}]$.
\end{lemma}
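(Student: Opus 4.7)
The plan is to prove Lemma~\ref{key2} by a direct application of the Intermediate Value Theorem to the continuous function $\TTT_1(\cdot,\tau)$ on the interval $I_1$. Since Theorem~\ref{key} guarantees that $\TTT_1(\cdot,\tau)$ is $C^r$ on $(0,\delta]$, it is in particular continuous on $I_1$; it then suffices to evaluate $\TTT_1$ at the two endpoints of $I_1$ and show that the values straddle the target interval $[\SS_1-B_{2k_1},\SS_1+B_{2k_1}]$.

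First I would look at the right endpoint $d_R:=\ep^{(1+\nu)/\und{\si}}$, where $|\ln(d_R)|=\tfrac{1+\nu}{\und{\si}}|\ln(\ep)|$. The upper bound in \eqref{T1T-1} gives
\[
\TTT_1(d_R,\tau)-\tau\le(\sTfwd+\mu)\,\tfrac{1+\nu}{\und{\si}}\,|\ln(\ep)|.
\]
Using the definition $K_0=\tfrac{3\overline{\Sigma}}{2\und{\si}}$ from \eqref{defK0-new}, together with $\sTfwd\le\overline{\Sigma}$ and the fact that $\mu<\mu_0$ is small (cf.~\eqref{mu0}), one gets $\sTfwd+\mu\le 3\overline{\Sigma}$, whence $(\sTfwd+\mu)/\und{\si}\le 2K_0$. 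Combining with the lower estimate on $\Delta_1$ from \eqref{defDej} (which yields $\Delta_1-B_{2k_1}\ge 2K_0(1+\nu)|\ln(\ep)|$) this shows $\TTT_1(d_R,\tau)\le\tau+\Delta_1-B_{2k_1}=\SS_1-B_{2k_1}$.

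Next I would handle the left endpoint $d_L:=\exp\!\bigl(-\tfrac{5(\Delta_1+B_{2k_1})}{2\sTfwd}\bigr)$. Here $|\ln(d_L)|=\tfrac{5(\Delta_1+B_{2k_1})}{2\sTfwd}$, and the lower bound in \eqref{T1T-1} gives
\[
\TTT_1(d_L,\tau)-\tau\ge(\sTfwd-\mu)\tfrac{5(\Delta_1+B_{2k_1})}{2\sTfwd}=\tfrac{5}{2}\bigl(1-\tfrac{\mu}{\sTfwd}\bigr)(\Delta_1+B_{2k_1}).
\]
Since $\mu<\mu_0\le\tfrac{1}{4}\sTfwd_+<\tfrac{1}{4}\sTfwd$ (using $\sTfwd_+<\sTfwd$, which follows from the explicit formulas in \eqref{defsigma}), one has $\tfrac{5}{2}(1-\mu/\sTfwd)\ge 1$, and therefore $\TTT_1(d_L,\tau)\ge\tau+\Delta_1+B_{2k_1}=\SS_1+B_{2k_1}$. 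Also, \eqref{Delta1} already verifies that $d_L<\ep^{(1+\nu)/\und{\si}}=d_R$, so $I_1$ is a non-empty interval on which Theorem~\ref{key} applies.

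The continuous function $\TTT_1(\cdot,\tau)$ therefore satisfies $\TTT_1(d_L,\tau)\ge\SS_1+B_{2k_1}$ and $\TTT_1(d_R,\tau)\le\SS_1-B_{2k_1}$, so by the Intermediate Value Theorem its image contains the entire interval $[\SS_1-B_{2k_1},\SS_1+B_{2k_1}]$. In particular there exist $D_a^1,D_b^1\in I_1$ realizing the two boundary values $\SS_1\pm B_{2k_1}=T_{2k_1}\pm B_{2k_1}$ required in the statement. There is no real obstacle here beyond bookkeeping with the constants; the only delicate point is checking that the small parameter $\mu<\mu_0$ is indeed dominated by $\sTfwd$ (not merely by $\sTfwd_+$), which is ensured by the explicit inequality $\sTfwd_+<\sTfwd$ coming from \eqref{defsigma}.
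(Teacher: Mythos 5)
Your proposal is correct and follows essentially the same route as the paper: evaluate $\TTT_1(\cdot,\tau)$ at the two endpoints of $I_1$ using the flight-time bounds \eqref{T1T-1}, check via \eqref{defDej} and \eqref{defK0-new} that the values straddle $[\SS_1-B_{2k_1},\SS_1+B_{2k_1}]$, and conclude by continuity. The only differences are cosmetic constant bookkeeping (the paper bounds $\sTfwd+\mu\le\tfrac32\sTfwd$ via $\mu\le\mu_0\le\sTfwd/2$, arriving at $K_0(1+\nu)|\ln\ep|$ rather than your $2K_0(1+\nu)|\ln\ep|$; both suffice).
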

\begin{proof}
Let us set $D^1_A=\textrm{exp}\left(-\frac{5(\Delta_1+ B_{2k_1})}{ 2\sTfwd} \right)$;
  from Theorem \ref{key} we find
  $$ \TTT_1(D^1_A,\tau)-\tau  \ge  |\ln(D^1_A)|(\sTfwd- \mu) \ge \frac{5}{4} (\Delta_1+ B_{2k_1}) >\Delta_1+B_{2k_1}$$
  if $\mu \le \mu_0 \le\sTfwd/2$,  i.e. $\TTT_{1}(D_A^1,\tau)\geq T_{2k_1}+B_{2k_1}$.
  Further, again from Theorem \ref{key},  \eqref{defDej} and \eqref{defK0-new}, we find that
   $D^1_B= \ep^{(1+\nu)/\und{\si}} $ is such that
  $$ \TTT_1(D^1_B,\tau)-\tau \le  |\ln(D^1_B)| (\sTfwd+ \mu) =  |\ln(\ep^{(1+\nu)/ \und{\si}} )| (\sTfwd+ \mu)\le
     \Delta_1- B_{2k_1} ,    $$
     if $\mu \le \mu_0 \le\sTfwd/2$,  i.e. $\TTT_{1}(D_B^1,\tau)\leq T_{2k_1}-B_{2k_1}$.
     The assertion follows using the continuity of $\TTT_1(\cdot, \tau)$.
\end{proof}

Now we set
\begin{equation}\label{I1hat}
\begin{gathered}
\hat{I}_1 =\hat{I}_1^{\ee^+}(\tau, \TT^+):=   \left\{ d \in I_1  \mid  \beta_{2k_1} \le \TTT_1(d,\tau) \le \beta'_{2k_1} \right\}.
\end{gathered}
\end{equation}
Notice that $[\beta_{2k_1} , \beta'_{2k_1}] \subset [\SS_1-B_{2k_1}, \SS_1+B_{2k_1}]$,
so from the continuity of  $\TTT_1(\cdot,\tau)$ and from Lemma \ref{key2} we deduce that    $\hat{I}_1$ is non-empty and closed.
\begin{remark}\label{closest}
  A priori $\hat{I}_1$ may be disconnected, however we can find a closed interval $\check{I}_1^{\ee^+}(\tau, \TT^+)=\check{I}_1\subset
  \hat{I}_1$
   with the following property
   \begin{equation}\label{prop1}
     \begin{split}
 &   \TTT_1(\cdot,\tau):  \check{I}_1 \to  [\beta_{2k_1} , \beta'_{2k_1}] \qquad \textrm{is surjective}.
                                 \end{split}
   \end{equation}
    Further, we can assume w.l.o.g. that $\check{I}_1$ is the ``interval closest to $0$ satisfying property
   \eqref{prop1}''. Namely notice that by construction there is $a<b$, $a,b \in \hat{I}_1$  such that $\TTT_1(a,\tau)=\beta'_{2k_1}$ and
   $\TTT_1(b,\tau)=\beta_{2k_1}$, then we set
$$b'_1= \min \{d \in  \hat{I}_1 \mid  \TTT_1(d,\tau)=\beta_{2k_1}   \}, \qquad   a'_1= \max \{ d  \in \hat{I}_1 \mid d< b'_1, \,
\TTT_1(d,\tau)=\beta'_{2k_1}  \}$$  and we define $\check{I}_1=[a_1', b_1']$
(so it is the   smallest   interval with this property).
\end{remark}
\begin{lemma}\label{ep1}
  There are $A^-, A^+ \in \check{I}_1 $ such that $d_1(A^-,\tau)=-\ep^{(1+\nu)/\und{\si}}$ and $d_1(A^+,\tau)=\ep^{(1+\nu)/\und{\si}}$.
\end{lemma}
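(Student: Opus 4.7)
The plan is to apply the intermediate value theorem to $d_1(\cdot,\tau)$ on the connected set $\check{I}_1$, after identifying the signs of $d_1$ at its two endpoints via Lemma \ref{key1}. First I would unpack the construction in Remark \ref{closest}: the endpoints $a_1',b_1'$ of $\check{I}_1$ are chosen precisely so that $\TTT_1(a_1',\tau)=\beta'_{2k_1}=b_{n_{2k_1}+1}$ and $\TTT_1(b_1',\tau)=\beta_{2k_1}=b_{n_{2k_1}}$. Since the two indices $n_{2k_1}$ and $n_{2k_1}+1$ have opposite parity, assumption \assump{P1} forces $\MM$ to be $\le -\bar c$ at one of these endpoints and $\ge \bar c$ at the other; the two values of $d_1$ at $a_1'$ and $b_1'$ will therefore have opposite signs.

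Next I would apply Lemma \ref{key1} with the choice $c_1=\bar c/3$, so that $3c_1=\bar c$. This yields, at the endpoint $d\in\{a_1',b_1'\}$ with $\MM(\TTT_1(d,\tau))\le -\bar c$, the inequality $d_1(d,\tau)\ge cc_1\ep=c\bar c\ep/3$, and at the other endpoint $d_1(d,\tau)\le -c\bar c\ep/3$. The remaining ingredient is to compare these $O(\ep)$ bounds with the target value $\ep^{(1+\nu)/\und{\si}}$: since $\und{\si}<1$ and $\nu\ge \nu_0\ge 1$ by \eqref{defK0-new}, one has $(1+\nu)/\und{\si}>2$, hence $\ep^{(1+\nu)/\und{\si}}=o(\ep)$ as $\ep\to 0$. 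Thus shrinking $\ep_0$ if necessary we may assume
\[
 \ep^{(1+\nu)/\und{\si}} \;<\; \frac{c\bar c}{3}\,\ep \qquad \text{for every } 0<\ep\le\ep_0,
\]
so the values $\pm\ep^{(1+\nu)/\und{\si}}$ lie strictly between the two opposite-sign values of $d_1$ at $a_1'$ and $b_1'$.

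Finally I would invoke the continuity of $d_1(\cdot,\tau)$ on $\check{I}_1$. This continuity is inherited from Theorem \ref{key}, which gives $C^r$ dependence of $\PPP_1(\cdot,\tau)$ and $\TTT_1(\cdot,\tau)$ on $d$, combined with the continuity of $\P_s(\cdot)$ in $\tau$ recalled in Remark \ref{Wloc} and the definition of $\dist$ in \eqref{dist}. Since $\check{I}_1$ is a closed (hence connected) interval, the intermediate value theorem produces $A^-, A^+\in\check{I}_1$ with $d_1(A^\pm,\tau)=\pm\ep^{(1+\nu)/\und{\si}}$, which is exactly the statement of the lemma.

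The only genuine subtlety, as I see it, is the bookkeeping of signs: one must confirm that the two endpoint values of $\MM\circ\TTT_1$ really have opposite signs with magnitude $\ge \bar c$, which depends on the parity of the index $n_{2k_1}$ and on the fact that $\TTT_1(\cdot,\tau)$ realizes the full interval $[\beta_{2k_1},\beta'_{2k_1}]$ on $\check{I}_1$ (guaranteed by \eqref{prop1}); the rest is a routine continuity plus order-of-magnitude argument.
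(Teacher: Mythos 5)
Your proof is correct and follows exactly the route the paper intends: the paper's own proof is a one-line citation of property \eqref{prop1}, \assump{P1} and Lemma \ref{key1}, which is precisely the combination you spell out (opposite signs of $\MM$ at $\beta_{2k_1}=b_{n_{2k_1}}$ and $\beta'_{2k_1}=b_{n_{2k_1}+1}$ by parity, the $O(\ep)$ lower bounds from Lemma \ref{key1}, the comparison $\ep^{(1+\nu)/\und{\si}}=o(\ep)$, and the intermediate value theorem on the interval $\check{I}_1$). No gaps.
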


\begin{proof}
The assertion follows from property \eqref{prop1},  \assump{P1} and   Lemma \ref{key1}.
\end{proof}

Let us set
\begin{equation}\label{hatA}
  \begin{array}{l}
    \check{A}^-:= \min \{ d \in \check{I}_1^{\ee^+} \mid  d_1(d,\tau)= -\ep^{(1+\nu)/\und{\si}}\}, \\
    \check{A}^+:= \min \{ d \in \check{I}_1^{\ee^+} \mid  d_1(d,\tau)= \ep^{(1+\nu)/\und{\si}}\} ,\\
    \underline{D}_1= \min \{ \check{A}^-; \check{A}^+ \}, \qquad \overline{D}_1= \max \{ \check{A}^-; \check{A}^+ \}
  \end{array}
\end{equation}
and
\begin{equation}\label{J1e}
  J_1=J_1^{\ee^+}(\tau,\TT^+):=[\underline{D}_1,\overline{D}_1] \subset I_1.
\end{equation}
Further, we set
 \begin{equation}\label{defalpha1}
   \al_{k_1}=\al_{k_1}^{\ee^+}(\ep,\tau, \TT^+)= \TTT_1(d,\tau)- \SS_1.
 \end{equation}
 Then we have the following.

 \begin{remark}\label{time1.missed}
   Let $d \in J_1$ and $\tau  \in [b_0,b_1]$, then
   $$ \{ T_1; T_{2k_1-1}  \} \subset [\tau+ T_b ,  \TTT_1(d,\tau)-T_a].  $$
 \end{remark}
\begin{proof}[Proof of Remark \ref{time1.missed}]
Notice that, by \eqref{TandKnunew},  \eqref{TandKnunew_tau+}, \eqref{defK0-new} and  \eqref{TaTb},
$$T_1 \ge  \tau+ B_0 + K_0|\ln(\ep)| \ge  b_1 + \frac{|\ln(\ep)|}{| \la_s^+ |} \ge \tau + T_b,$$
$$ T_1 \le T_{2} - B_{2} - K_0|\ln(\ep)| \le \beta_2 - \frac{|\ln(\ep)|}{ \la_u^- } \le \TTT_1(d,\tau)- T_a,$$
i.e.,  $T_1  \in[\tau+ T_b ,  \TTT_1(d,\tau)-T_a]$.

  Observe now that $B_{2k_1} = \beta'_{2k_1}-\beta_{2k_1}$, see
  \eqref{defik}; then (recalling that $\beta'_0=b_1$ and $B_0=b_1-b_0$)
   again by  \eqref{TandKnunew}, \eqref{defK0-new} and  \eqref{TaTb}
  we find
 \begin{equation*}
   \begin{split}
&T_{2k_1-1} \ge T_{2k_1-2} + B_{2k_1-2} +  K_0|\ln(\ep)| \ge  \beta'_{2k_1-2} + \frac{|\ln(\ep)|}{| \la_s^+ |} \ge \tau + T_b,
   \end{split}
 \end{equation*}
and
 \begin{equation*}
   \begin{split}
& T_{2k_1-1}
\le T_{2k_1} - B_{2k_1} -  K_0|\ln(\ep)| \le \beta_{2k_1} - \frac{|\ln(\ep)|}{| \la_u^- |} \le \TTT_1(d,\tau)- T_a,
   \end{split}
 \end{equation*}
i.e.,  $ T_{2k_1-1}   \in[\tau+ T_b ,  \TTT_1(d,\tau)-T_a]$.
\end{proof}

\begin{lemma}\label{l.step1}
  Whenever $d \in J_1$ the trajectory $\x(t,\tau; \Q_s(d,\tau))$ satisfies $\bs{C^+_{\ee^+}}$ for any $\tau \le t \le \TTT_1(d,\tau)$,
  i.e. for any $\tau \le t \le T_{2 k_1} +\al_{k_1}$.
\end{lemma}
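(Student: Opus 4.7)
The plan is to verify, on the interval $[\tau,\TTT_1(d,\tau)]$, each of the three bullets that make up the property $\bs{C^+_{\ee^+}}$, by partitioning this interval at the auxiliary times $\tau+T_b$ and $\TTT_1(d,\tau)-T_a$ and invoking Proposition \ref{forPropC} near the endpoints together with Lemma \ref{forgotten.origin} in the long middle segment near the origin. The structural input is the observation that, by the definition \eqref{defSS} of $\SS_1=T_{2k_1}$ and of $k_1$, one has $\ee_j^+=0$ for every $1\le j<k_1$ and $\ee_{k_1}^+=1$, while by Remark \ref{time1.missed} and monotonicity the times $T_1,T_3,\ldots,T_{2k_1-1}$ all lie in the ``central'' window $[\tau+T_b,\TTT_1(d,\tau)-T_a]$.

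First, since $d\in J_1\subset I_1\subset J_0$, Proposition \ref{forPropC} provides the bound $\|\x(t,\tau;\Q_s(d,\tau))-\ga^+(t-\tau)\|\le\tfrac{c^*}{2}\ep$ on $[\tau,\TTT_{\frac{1}{2}}(d,\tau)]$ and $\|\x(t,\tau;\Q_s(d,\tau))-\ga^-(t-\TTT_1(d,\tau))\|\le\tfrac{c^*}{2}\ep$ on $[\TTT_{\frac{1}{2}}(d,\tau),\TTT_1(d,\tau)]$; in addition Lemma \ref{forgottentimes} ensures that $\tau+T_b\le\TTT_{\frac{1}{2}}(d,\tau)\le\TTT_1(d,\tau)-T_a$. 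Second, on the central window Lemma \ref{forgotten.origin} yields simultaneously the three bounds $\|\x(t)\|\le c^*\ep$, $\|\x(t)-\ga(t-\tau)\|\le c^*\ep$ and $\|\x(t)-\ga(t-\TTT_1(d,\tau))\|\le c^*\ep$ for every $t\in[\tau+T_b,\TTT_1(d,\tau)-T_a]$.

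With these ingredients in hand, each required estimate follows by a direct case split. For the interval $[\tau,T_1]\subset[\tau,\TTT_1(d,\tau)-T_a]$, combining Proposition \ref{forPropC} on $[\tau,\tau+T_b]$ (recalling $\ga^+(t-\tau)=\ga(t-\tau)$ for $t\ge\tau$) with \eqref{est.origin-} on $[\tau+T_b,T_1]$ gives \eqref{ej+=tau}. For any $1\le j<k_1$ the whole interval $[T_{2j-1},T_{2j+1}]$ lies in the central window, so \eqref{est.origin} gives \eqref{ej+=0w} (which matches $\ee_j^+=0$). For the final interval $[T_{2k_1-1},\TTT_1(d,\tau)]$, split at $\TTT_1(d,\tau)-T_a$: on $[T_{2k_1-1},\TTT_1(d,\tau)-T_a]$ use \eqref{est.origin+}, while on $[\TTT_1(d,\tau)-T_a,\TTT_1(d,\tau)]\subset[\TTT_{\frac{1}{2}}(d,\tau),\TTT_1(d,\tau)]$ use Proposition \ref{forPropC} together with the identity $\ga^-(s)=\ga(s)$ for $s\le 0$; this yields $\|\x(t)-\ga(t-\TTT_1(d,\tau))\|\le c^*\ep$, and since \eqref{defalpha1} gives $\TTT_1(d,\tau)=T_{2k_1}+\al_{k_1}$, this is precisely \eqref{ej+=1w} for $j=k_1$.

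The main (mild) obstacle is bookkeeping: one must make sure that the partition of $[\tau,\TTT_1(d,\tau)]$ by $\tau+T_b$ and $\TTT_1(d,\tau)-T_a$ is consistent with Remark \ref{time1.missed} so that every $T_{2j\pm 1}$ relevant to $\bs{C^+_{\ee^+}}$ up to time $\TTT_1(d,\tau)$ sits inside the central window where Lemma \ref{forgotten.origin} applies, and that the two halves of the Proposition \ref{forPropC} bound are correctly matched to the two ``homoclinic anchor'' functions $\ga(\cdot-\tau)$ (entry) and $\ga(\cdot-\TTT_1(d,\tau))$ (exit). Once this is in place, no estimate ever accumulates worse than $c^*\ep$ thanks to the choice of the constant $c^*$ in \eqref{cstar}, and the lemma is established.
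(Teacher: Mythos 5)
Your proposal is correct and follows essentially the same route as the paper's proof: the same partition of $[\tau,\TTT_1(d,\tau)]$ at $\tau+T_b$ and $\TTT_1(d,\tau)-T_a$, with Proposition \ref{forPropC} near the endpoints, Lemma \ref{forgotten.origin} on the central window, and Remark \ref{time1.missed} to place the relevant $T_{2j\pm1}$ inside that window. The only difference is organizational — the paper splits into the cases $k_1=1$ and $k_1\ge 2$ while you treat all intermediate indices $1\le j<k_1$ uniformly — which changes nothing of substance.
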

\begin{proof}
 Let $d \in J_1$. Assume first $k_1=1$ so that $\ee^+_1=1$  and $\TTT_1(d,\tau)=T_2+\al_{1}$. Then applying twice Proposition
 \ref{forPropC}, we see that
  \begin{eqnarray}
    \| \x(t,\tau; \Q_s(d,\tau))- \ga(t-\tau) \|  \le \frac{c^*}{2} \ep, & \forall  t \in [\tau, \TTT_{\frac{1}{2}}(d,\tau)], \label{firsthalf}
    \\
    \| \x(t,\tau; \Q_s(d,\tau))- \ga(t-T_2-\al_{1} )\|  \le \frac{c^*}{2} \ep, & \forall  t \in [\TTT_{\frac{1}{2}}(d,\tau), T_2+\al_{1}].
    \label{secondhalf}
  \end{eqnarray}
  From Lemma  \ref{forgottentimes} and Remark \ref{time1.missed} we see that $\{\TTT_{\frac{1}{2}}(d,\tau), T_1 \} \subset [\tau+ T_b ,
  \TTT_1(d,\tau)-T_a]$, hence
  from \eqref{firsthalf} we find that
  \begin{equation}\label{eq.goal}
    \| \x(t,\tau; \Q_s(d,\tau))- \ga(t-\tau) \|  \le c^* \ep
  \end{equation}
  holds for any $\tau \le t \le \tau+ T_b$, and from
  Lemma \ref{forgotten.origin} we see that \eqref{eq.goal} holds for any  $t \in [\tau+T_b, T_1]$, too.
   So \eqref{eq.goal} holds for any $t \in [\tau, T_1]$.

  Now, from
  Lemma \ref{forgotten.origin}, for any   $t \in  [T_1, \TTT_1(d,\tau)-T_a] \subset [\tau+T_b, \TTT_1(d,\tau)-T_a]$
  we find
    \begin{equation}\label{eq.goal2}
     \| \x(t,\tau; \Q_s(d,\tau))- \ga(t-T_2-\al_{1} )\|  \le c^* \ep,
  \end{equation}
  and from \eqref{secondhalf} we see that \eqref{eq.goal2} holds for any $t \in [\TTT_1(d,\tau)-T_a,  \TTT_1(d,\tau) ]$.
  So  \eqref{eq.goal2}   holds for any $t \in [T_1,  \TTT_1(d,\tau)]$ and
  when $k_1=1$ the lemma is proved.

   Now assume $k_1 \ge 2$ so that $\ee^+_j=0$ for any $j \in \{1, \ldots, k_1-1\}$.
  Hence
  $$\tau+T_b<T_1<T_2 < \ldots < T_{2k_1-1}< \TTT_1(d,\tau)-T_a,$$
  $$\{\TTT_{\frac{1}{2}}(d,\tau), T_1, T_{2k_1-1} \} \subset [\tau+ T_b ,  \TTT_1(d,\tau)-T_a]$$
  by Remark \ref{time1.missed}.
   So, again from Proposition \ref{forPropC} it follows that \eqref{firsthalf} holds, thus  \eqref{eq.goal} holds
  when $t \in  [\tau, \tau+T_b]\subset  [\tau, \TTT_{\frac{1}{2}}(d,\tau)]$, and from   Lemma \ref{forgotten.origin}
  we see that \eqref{eq.goal} holds
  when $t \in  [ \tau+T_b, T_1]\subset  [\tau+T_b, \TTT_1(d,\tau)-T_a]$.
   Hence \eqref{eq.goal} holds for any $t \in [\tau, T_1]$. Analogously,
   Proposition~\ref{forPropC} implies that
    \begin{equation}
    \| \x(t,\tau; \Q_s(d,\tau))- \ga(t-T_{2k_1}-\al_{k_1} )\|  \le \frac{c^*}{2} \ep, \forall  t \in [\TTT_{\frac{1}{2}}(d,\tau),
    T_{2k_1}+\al_{k_1}], \label{secondhalf.secondcase}
  \end{equation}
so  we see that
    \begin{equation}\label{eq.goal2.2}
     \| \x(t,\tau; \Q_s(d,\tau))- \ga(t-T_{2k_1}-\al_{k_1} )\|  \le c^* \ep
  \end{equation}
 holds
  when $t \in  [\TTT_1(d,\tau)-T_a, \TTT_1(d,\tau)]\subset  [  \TTT_{\frac{1}{2}}(d,\tau), \TTT_1(d,\tau)]$, and from
Lemma~\ref{forgotten.origin}
  we see that  \eqref{eq.goal2.2} holds
  when $t \in  [  T_{2k_1-1}, \TTT_1(d,\tau)-T_a]\subset     [\tau+T_b, \TTT_1(d,\tau)-T_a]$.
    Thus, \eqref{eq.goal2.2} holds for any $t \in [T_{2k_1-1},  \TTT_1(d,\tau)]$.

Finally  from Lemma \ref{forgotten.origin} we see that
  $$\| \x(t,\tau; \Q_s(d,\tau))\| \le c^* \ep  \qquad  \textrm{for any $t \in [T_1, T_{2k_1-1}] \subset  [\tau+T_b,\TTT_1(d,\tau)-T_a]$}.  $$
This shows that $\x(t,\tau; \Q_s(d,\tau))$ has property $\bs{C^+_{\ee^+}}$
 for any $t \in [\tau, \TTT_1(d,\tau)]$ and the lemma is proved.
\end{proof}

Summing up we have shown the following.
\begin{proposition}\label{p.step1}
  Let the assumptions of Theorem  \ref{restatebis} \textbf{a)}  be satisfied, and let
  $J_1$ be as in \eqref{J1e}.
  Fix $\tau\in[b_0,b_1]$. Then, the function
  $$ \TTT_1(\cdot ,\tau): J_1 \to  [\beta_{2k_1}  , \beta'_{2k_1}]$$
is well defined and $C^r$, while the function
$$ d_1(\cdot ,\tau): J_1 \to  \R$$
is  $C^r$ and its image contains $[-\ep^{(1+\nu)/ \und{\si}} , \ep^{(1+\nu)/ \und{\si}}]$.

Further,   the trajectory $\x(t,\tau; \Q_s(d,\tau))$ satisfies $\bs{C^+_{\ee^+}}$
for $t \in [\tau,  \TTT_1(d,\tau)]\supset [\tau, \beta_{2k_1}]$.  Finally, by construction,
$|\al_{k_1}| \le B_{2k_1}$.
\end{proposition}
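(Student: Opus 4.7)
The proposition is a packaging statement that collects the content of Lemmas \ref{key2}, \ref{ep1}, \ref{l.step1} together with the construction \eqref{I1}--\eqref{J1e}, so my plan is to assemble those ingredients and verify each claim in turn. The first thing I will observe is the chain of inclusions $J_1 = [\underline D_1, \overline D_1] \subset \check I_1 \subset \hat I_1 \subset I_1 \subset J_0$, which is immediate from \eqref{I1}, \eqref{I1hat}, \eqref{hatA} and \eqref{J1e}. Since Theorem \ref{key} supplies the $C^r$ regularity of $\TTT_1(\cdot,\tau)$ and $\PPP_1(\cdot,\tau)$ on all of $J_0$, the restrictions to $J_1$ are automatically $C^r$. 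The defining property \eqref{I1hat} of $\hat I_1$ then forces $\TTT_1(d,\tau) \in [\beta_{2k_1}, \beta'_{2k_1}]$ for every $d \in J_1 \subset \hat I_1$, so $\TTT_1(\cdot,\tau): J_1 \to [\beta_{2k_1},\beta'_{2k_1}]$ is well defined.

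Next, for $d_1(\cdot,\tau) = \DDD(\PPP_1(\cdot,\tau),\P_s(\TTT_1(\cdot,\tau)))$, the $C^r$ regularity follows by composition once I invoke Remark \ref{Wloc} for the $C^r$ dependence of $\P_s$ on $\tau$. For the image statement, by the choice of $\underline D_1, \overline D_1$ in \eqref{hatA} (whose existence is exactly Lemma \ref{ep1}) one has $\{d_1(\underline D_1,\tau), d_1(\overline D_1,\tau)\} = \{-\ep^{(1+\nu)/\und{\si}}, +\ep^{(1+\nu)/\und{\si}}\}$. Applying the intermediate value theorem to the continuous function $d_1(\cdot,\tau)$ on the connected interval $J_1$ then gives the desired inclusion of $[-\ep^{(1+\nu)/\und{\si}}, \ep^{(1+\nu)/\und{\si}}]$ in the image.

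The property $\bs{C^+_{\ee^+}}$ on the time window $[\tau, \TTT_1(d,\tau)]$ is then a direct quotation of Lemma \ref{l.step1}, which applies to every $d \in J_1$. The inclusion $[\tau,\TTT_1(d,\tau)] \supset [\tau, \beta_{2k_1}]$ follows once more from $J_1 \subset \hat I_1$ together with \eqref{I1hat}, since $\TTT_1(d,\tau) \ge \beta_{2k_1}$ for any $d \in \hat I_1$. Finally, the estimate $|\al_{k_1}| \le B_{2k_1}$ is an unwinding of the definitions: \eqref{defalpha1} gives $\al_{k_1} = \TTT_1(d,\tau) - \SS_1$, while \eqref{defik} and \eqref{defkj} give $\SS_1 \in (\beta_{2k_1},\beta'_{2k_1})$ and $B_{2k_1} = \beta'_{2k_1} - \beta_{2k_1}$, so that $\TTT_1(d,\tau) \in [\beta_{2k_1},\beta'_{2k_1}]$ forces $|\TTT_1(d,\tau) - \SS_1| \le B_{2k_1}$.

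Since the heavy lifting (the existence and localisation arguments based on Theorem \ref{key} plus the Melnikov sign information of \assump{P1}) is already done in Lemmas \ref{key2}--\ref{l.step1}, I do not anticipate any genuine obstacle here. The only point I will be careful about is checking that $J_1$ is actually a (connected) interval rather than an arbitrary compact set, so that the intermediate value argument for $d_1$ goes through; this is built into the definition \eqref{J1e} as $[\underline D_1, \overline D_1]$, but relies on $\check I_1$ itself having been chosen as an interval in Remark \ref{closest}, a point I will flag explicitly when the proof is written out.
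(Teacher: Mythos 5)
Your proposal is correct and matches the paper's treatment: the paper introduces Proposition \ref{p.step1} with ``Summing up we have shown the following,'' i.e.\ it is exactly the packaging of Lemma \ref{key2}, Remark \ref{closest}, Lemma \ref{ep1}, the definitions \eqref{hatA}--\eqref{defalpha1}, Remark \ref{time1.missed} and Lemma \ref{l.step1} that you describe. Your explicit checks (the inclusion chain $J_1\subset\check I_1\subset\hat I_1\subset I_1\subset J_0$, the intermediate value argument for the image of $d_1$, and the unwinding of $|\al_{k_1}|\le B_{2k_1}$) are all consistent with the construction in the text.
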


\subsubsection{Iteration of the scheme: trajectories performing a prescribed number of loops}\label{s.iteration}
Our goal now is  to perform an iterative scheme in order to define  a family of
nested compact intervals  $J_n= J^{\ee^+}_n(\tau,\TT^+)$
such that  $J_n \supset J_{n+1} \ne  \emptyset$ for any $n>0$ and having the following properties.

For any  $i=1,\dots,n$ we can define $C^r$ functions
$$
  \TTT_i(\cdot ,\tau): J_i \to  [\beta_{2k_i}, \beta_{2k_{i}}'] \, , \qquad   d_i(\cdot ,\tau): J_i \to  \R,
$$
\begin{equation}\label{Tn}
\begin{split}
d_{i}(d,\tau):= & d_{1}(D,A), \quad \textrm{where $D=d_{i-1}(d,\tau)$, and $A=\TTT_{i-1}(d,\tau)$},\\
\TTT_{i}(d,\tau):= & \TTT_{1}(D,A), \quad \textrm{where $D=d_{i-1}(d,\tau)$, and $A=\TTT_{i-1}(d,\tau)$},\\
\TTT_{i-\frac{1}{2}}(d,\tau):= & \TTT_{\frac{1}{2}}(D,A), \quad \textrm{where $D=d_{i-1}(d,\tau)$, and $A=\TTT_{i-1}(d,\tau)$}
\end{split}
\end{equation}
 with $d_0(d,\tau):=d$, $\TTT_0(d,\tau):=\tau$,
such that, if
$d \in J_n$, the trajectory $\x (t,\tau; \vec{Q}_s(d,\tau))$ performs $n$ loops close to $\bs{\Gamma}$ when $t \in [\tau,  \TTT_n(d,\tau)]$,
and intersects (transversely) $L^0$ exactly at $t=\tau$ and at $t= \TTT_i(d ,\tau)$ and $L^{\inn}$ at $\TTT_{i-\frac{1}{2}}(d,\tau)$ for $i=1,
\ldots, n$.

\begin{proposition}\label{p.fwdn}
Let $\tau \in [b_0,b_1]$,  $\TT^+=(T_j^+) $ and $\ee^+ \in \EE^+$ be fixed as in  Theorem \ref{restatebis} \textbf{a)};
  then for any $n \le j^+_\SS(\ee^+)$ (resp.\ for any $n \in \N$ if $\ee^+ \in \hat{\EE}^+$, see \eqref{efuture}),
there is a compact interval $J_n= J_n^{\ee^+}(\tau,\TT^+)$ such that the function
$$ \TTT_n(\cdot ,\tau): J_n \to  [\beta_{2k_n}, \beta'_{2k_n}]$$
is well defined and $C^r$, while the function
$$ d_n(\cdot ,\tau): J_n \to  \R$$
is $C^r$ and its image contains $[-\ep^{(1+\nu)/ \und{\si}} , \ep^{(1+\nu)/ \und{\si}}]$.

Further, if
  \begin{equation}\label{defaln}
    \al_{k_n}=\al_{k_n}^{\ee^+}(\ep,d,\tau, \TT^+)= \TTT_n(d,\tau)- \SS_n,
  \end{equation}
the trajectory $\x(t,\tau; \Q_s(d,\tau))$ satisfies $\bs{C^+_{\ee^+}}$
for any $t \in [\tau,  \TTT_n(d,\tau)]\supset [\tau, \beta_{2k_n}]$,  and it crosses
$L^{\inn}$ at $\TTT_{i-\frac{1}{2}}(d,\tau)$ for $i=1, \ldots, n$.   Finally, by construction,
$|\al_{k_n}| \le B_{2k_n}$.
\end{proposition}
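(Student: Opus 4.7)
The plan is to prove Proposition \ref{p.fwdn} by induction on $n$, using Proposition \ref{p.step1} as the base case and a ``shifted'' repetition of the one-loop construction of Subsection \ref{proof.future} as the inductive engine. Fix $\tau \in [b_0,b_1]$, $\TT^+$, and $\ee^+ \in \EE^+$ as in the statement; assume we have already built the compact interval $J_n$ and the $C^r$ functions $\TTT_i(\cdot,\tau)$, $d_i(\cdot,\tau)$, $\TTT_{i-\frac{1}{2}}(\cdot,\tau)$ for $i=1,\dots,n$, with $d_n(\cdot,\tau)$ having image containing $[-\ep^{(1+\nu)/\und{\si}},\ep^{(1+\nu)/\und{\si}}]$, and with $\x(t,\tau;\Q_s(d,\tau))$ satisfying $\bs{C^+_{\ee^+}}$ for $t\in[\tau,\TTT_n(d,\tau)]$ whenever $d\in J_n$.

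To construct step $n+1$, the point is that at time $A:=\TTT_n(d,\tau)\in[\beta_{2k_n},\beta'_{2k_n}]$ the trajectory crosses $L^0$ at $\PPP_n(d,\tau)=\Q_s(D,A)$ with $D:=d_n(d,\tau)$, so we can start over at $(D,A)$ and apply Lemma \ref{key2} and Lemma \ref{key1} verbatim (using their uniformity in the base time). First I would define the analogue
$$ I_{n+1} := \left[\exp\Big(-\tfrac{5(\Delta_{n+1}+B_{2k_{n+1}})}{2\sTfwd}\Big),\ \ep^{(1+\nu)/\und{\si}}\right]\subset(0,\ep^{(1+\nu)/\und{\si}}], $$
where the inclusion follows from an estimate identical to \eqref{Delta1} with $\Delta_1,k_1$ replaced by $\Delta_{n+1},k_{n+1}$ and using \eqref{defDej}. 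Since the image of $d_n(\cdot,\tau)$ already covers $[-\ep^{(1+\nu)/\und{\si}},\ep^{(1+\nu)/\und{\si}}] \supset I_{n+1}$, continuity yields a closed subinterval $K_n\subset J_n$ on which $d_n(\cdot,\tau)$ surjects onto $I_{n+1}$, and on $K_n$ the composition
$$ \TTT_{n+1}(d,\tau)=\TTT_1(d_n(d,\tau),\TTT_n(d,\tau)) $$
is well defined and $C^r$ by Theorem \ref{key} and Remark \ref{data.smooth}.

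Next, exactly as in Lemma \ref{key2}, the image of $\TTT_{n+1}(\cdot,\tau)|_{K_n}$ contains $[\SS_{n+1}-B_{2k_{n+1}},\SS_{n+1}+B_{2k_{n+1}}]$ and hence the interval $[\beta_{2k_{n+1}},\beta'_{2k_{n+1}}]$. Setting
$$ \hat I_{n+1}:=\{d\in K_n\mid \beta_{2k_{n+1}}\le\TTT_{n+1}(d,\tau)\le\beta'_{2k_{n+1}}\}, $$
and selecting a subinterval $\check I_{n+1}$ on which $\TTT_{n+1}(\cdot,\tau)$ surjects onto $[\beta_{2k_{n+1}},\beta'_{2k_{n+1}}]$, as in Remark \ref{closest}, I would invoke \assump{P1} together with Lemma \ref{key1} to find $\check A^\pm\in\check I_{n+1}$ with $d_{n+1}(\check A^\pm,\tau)=\pm\ep^{(1+\nu)/\und{\si}}$, and define $J_{n+1}:=[\underline D_{n+1},\overline D_{n+1}]\subset\check I_{n+1}$ in complete analogy with \eqref{hatA}--\eqref{J1e}. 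The bound $|\al_{k_{n+1}}|\le B_{2k_{n+1}}$ is then immediate from $\TTT_{n+1}(J_{n+1},\tau)\subset[\beta_{2k_{n+1}},\beta'_{2k_{n+1}}]$, and the $C^r$ regularity of $\TTT_{n+1},d_{n+1},\TTT_{(n+1)-\frac{1}{2}}$ follows from composing the $C^r$ maps of Theorem \ref{key} via Remark \ref{data.smooth}.

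Finally, to verify property $\bs{C^+_{\ee^+}}$ on $[\TTT_n(d,\tau),\TTT_{n+1}(d,\tau)]$ for $d\in J_{n+1}$, I would apply Lemma \ref{l.step1} with reference time $A=\TTT_n(d,\tau)$ and initial point $\Q_s(D,A)$: the proof of that lemma uses only tools (Proposition \ref{forPropC}, Lemmas \ref{forgottentimes}, \ref{forgotten.origin}) uniform in the base time. The intermediate instants $T_{2k_n+1},\dots,T_{2k_{n+1}-1}$ all correspond to indices $j$ with $\ee^+_j=0$ by the definition \eqref{defSS} of $\SS$, and the same computation as in Remark \ref{time1.missed}, using \eqref{TandKnunew} and the fact that the gap between consecutive relevant times exceeds $\max\{B_{j+1};B_j\}+K_0(1+\nu)|\ln\ep|$, places every such $T_j$ inside the safe window $[A+T_b,\TTT_{n+1}(d,\tau)-T_a]$ where Lemma \ref{forgotten.origin} gives $\|\x(t,\tau;\Q_s(d,\tau))\|\le c^*\ep$. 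Concatenating with the inductive hypothesis on $[\tau,\TTT_n(d,\tau)]$ closes the induction. For $\ee^+\in\EE^+_0$ the construction stops at $n=j^+_\SS(\ee^+)$; for $\ee^+\in\hat\EE^+$ it produces the nested sequence $(J_n)_{n\in\N}$ needed later for $J_{+\infty}=\bigcap_n J_n$.

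The main obstacle is purely bookkeeping: propagating through the induction both the surjectivity of $d_n(\cdot,\tau)$ onto $[-\ep^{(1+\nu)/\und{\si}},\ep^{(1+\nu)/\und{\si}}]$ and the nested inclusion $J_{n+1}\subset K_n\subset J_n$. The surjectivity is the crucial ingredient that allows $I_{n+1}$ to always be located inside the image of $d_n(\cdot,\tau)$, and it is guaranteed at the next level exactly because $d_{n+1}$ is defined via the composition with $d_1$, to which Lemma \ref{key1} (and therefore the argument of Lemma \ref{ep1}) can be applied unchanged.
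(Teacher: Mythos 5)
Your plan is essentially the paper's own proof: induction with Proposition \ref{p.step1} as base case, pulling the admissible displacement interval back through $d_n(\cdot,\tau)$, re-running the surjectivity argument for $\TTT_{n+1}(\cdot,\tau)$ onto $[\beta_{2k_{n+1}},\beta'_{2k_{n+1}}]$, invoking \assump{P1} and Lemma \ref{key1} to produce the points where $d_{n+1}=\pm\ep^{(1+\nu)/\und{\si}}$, selecting the minimal closed subinterval as in Remark \ref{closest}, and verifying $\bs{C^+_{\ee^+}}$ on the new loop via Proposition \ref{forPropC} and Lemmas \ref{forgottentimes}, \ref{forgotten.origin}. The only point needing correction is quantitative: your lower endpoint $\exp\bigl(-5(\Delta_{n+1}+B_{2k_{n+1}})/(2\sTfwd)\bigr)$ for $I_{n+1}$ is not small enough in general, because unlike the base case the $(n+1)$-st loop starts at $\TTT_n(d,\tau)$, which is only known to lie in $[\beta_{2k_n},\beta'_{2k_n}]$, i.e.\ possibly $B_{2k_n}$ earlier than $\SS_n$. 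To force $\TTT_{n+1}\ge \SS_{n+1}+B_{2k_{n+1}}$ at that endpoint one must absorb this extra uncertainty, which is exactly why the paper's threshold $D^n_\#$ in \eqref{In} contains \emph{both} $B_{2k_n}$ and $B_{2k_{(n-1)}}$ (in its indexing); since under \assump{P1} and \eqref{TandKnunew} the sequence $(B_j)$ may be unbounded, the missing term is not dominated by $\tfrac14\Delta_{n+1}$ via \eqref{defDej}, so Lemma \ref{key2} does not apply ``verbatim'' as you claim. With the corrected threshold (the paper's Lemma \ref{key2-stepn}) the rest of your argument goes through unchanged.
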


The proof of the proposition will follow from several lemmas.

We proceed by induction: the $n=1$ case follows from Proposition \ref{p.step1}, so now we show that the step $n$  follows
from the  step $n-1$.
Assume that $J_{n-1}$ is well defined.
Then we set  $D^n_\# := \textrm{exp}\left(\textstyle\frac{-5 (\Delta_n+B_{2k_n}+B_{2k_{(n-1)}}) }{2\sTfwd}\right)$  and
\begin{equation}\label{In}
    I_n= I^{\ee^+}_n(\tau,\TT^+):=
  \left\{ d \in J_{n-1} \mid    D^n_\#   \le d_{n-1}(d,\tau)    \le \ep^{(1+\nu)/ \und{\si}} \right\}.
\end{equation}
 For any $d \in I^{\ee^+}_n$ we define $d_{n}(d,\tau)$, $\TTT_{n}(d,\tau)$  and $\TTT_{n-\frac{1}{2}}(d,\tau)$ according to \eqref{Tn} with
 $i=n$,
so that $\TTT_n(d,\tau)>\TTT_{n-\frac{1}{2}}(d,\tau)>\TTT_{n-1}(d,\tau)$.

 \begin{lemma}\label{key2-stepn}
  Let $\tau \in\R$. Then there are $D_a^n, D_b^n \in   I_n$ such that
 \begin{equation*}
   \begin{split}
    \TTT_n(D_a^n,\tau) =     \SS_n+B_{2k_n} \,,\qquad \qquad  \TTT_n(D_b^n,\tau) =    \SS_n-B_{2k_n}.
   \end{split}
 \end{equation*}
 Hence the image of the function $\TTT_n(\cdot, \tau) :  I_n \to \R$
 contains the closed interval $[\beta_{2k_n} , \beta'_{2k_n}]$.
\end{lemma}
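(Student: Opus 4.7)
The plan is to imitate the proof of Lemma \ref{key2} one step deeper in the iteration, exploiting the logarithmic bounds on the flight time $\TTT_1$ from Theorem \ref{key}. Since by \eqref{Tn} we have $\TTT_n(d,\tau) = \TTT_1(d_{n-1}(d,\tau), \TTT_{n-1}(d,\tau))$, the idea is to probe the two endpoints of the interval $[D_\#^n, \ep^{(1+\nu)/\und{\si}}]$ through which $d_{n-1}(d,\tau)$ ranges as $d$ varies in $I_n$, and to show that these two extremes push $\TTT_n$ past $\SS_n+B_{2k_n}$ and below $\SS_n-B_{2k_n}$, respectively. Because $[\beta_{2k_n},\beta'_{2k_n}] \subset [\SS_n-B_{2k_n},\SS_n+B_{2k_n}]$, the intermediate value theorem will then close the argument.

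First I would check that $I_n$ is nonempty: the inequality $D_\#^n \le \ep^{(1+\nu)/\und{\si}}$ follows from the lower bound $\Delta_n \ge 2 K_0(1+\nu)|\ln(\ep)|$ given by \eqref{defDej}, together with the definition \eqref{defK0-new} of $K_0$ and $\ov{\Sigma} \ge \sTfwd$, by a computation parallel to \eqref{Delta1}; then the inductive hypothesis on $J_{n-1}$, asserting that $d_{n-1}(\cdot,\tau)$ surjects onto an interval containing $[-\ep^{(1+\nu)/\und{\si}},\ep^{(1+\nu)/\und{\si}}]$, supplies the needed preimages. Next, choosing $\mu \le \mu_0 \le \sTfwd/2$, I would apply the lower bound in the first line of \eqref{T1T-1} at $(D,A)=(D_\#^n,\TTT_{n-1}(d,\tau))$ to obtain
$$ \TTT_n(d,\tau) - \TTT_{n-1}(d,\tau) \;\ge\; (\sTfwd-\mu)\,|\ln(D_\#^n)| \;\ge\; \tfrac{5}{4}\bigl(\Delta_n + B_{2k_n} + B_{2k_{n-1}}\bigr). $$
Combined with the inductive bound $\TTT_{n-1}(d,\tau) \ge \beta_{2k_{n-1}} \ge \SS_{n-1}-B_{2k_{n-1}}$ and the identity $\SS_n = \SS_{n-1}+\Delta_n$, this gives $\TTT_n(d,\tau) \ge \SS_n + B_{2k_n}$, yielding $D_a^n \in I_n$. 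Symmetrically, the upper bound in \eqref{T1T-1} at $(D,A)=(\ep^{(1+\nu)/\und{\si}},\TTT_{n-1}(d,\tau))$, together with $\TTT_{n-1}(d,\tau) \le \beta'_{2k_{n-1}} \le \SS_{n-1}+B_{2k_{n-1}}$ and \eqref{defDej}, delivers $\TTT_n(d,\tau) \le \SS_n - B_{2k_n}$, hence $D_b^n$.

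The last step is just continuity plus the intermediate value theorem: $\TTT_n(\cdot,\tau)$ is $C^r$ on $I_n$ as a composition of $C^r$ maps (Theorem \ref{key} and the inductive regularity of $d_{n-1}$ and $\TTT_{n-1}$), so its image contains $[\SS_n-B_{2k_n},\SS_n+B_{2k_n}] \supset [\beta_{2k_n},\beta'_{2k_n}]$. I do not expect a genuine obstacle here: the argument is essentially formal once the two extremal estimates are in place, and any issue with possible disconnectedness of $I_n$ (or of the level sets of $\TTT_n$) is irrelevant at this stage, since it will be handled in the sequel by extracting a suitable subinterval $\check{I}_n \subset I_n$ exactly as in Remark \ref{closest} and in the construction \eqref{hatA}.
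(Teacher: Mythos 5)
Your proposal is correct and follows essentially the same route as the paper's proof: probe the two preimages $D^n_A$, $D^n_B$ of the endpoints $D^n_\#$ and $\ep^{(1+\nu)/\und{\si}}$ under $d_{n-1}(\cdot,\tau)$, use the logarithmic flight-time bounds of Theorem \ref{key} together with \eqref{defDej}, $\mu\le\sTfwd/2$ and the inductive localization $\TTT_{n-1}(d,\tau)\in[\beta_{2k_{(n-1)}},\beta'_{2k_{(n-1)}}]$ to overshoot $\SS_n+B_{2k_n}$ and undershoot $\SS_n-B_{2k_n}$, and conclude by continuity. The only (immaterial) difference is that you bound $\TTT_{n-1}$ via $\SS_{n-1}\pm B_{2k_{(n-1)}}$ where the paper uses $\beta_{2k_{(n-1)}}$ and $\beta'_{2k_{(n-1)}}$ directly.
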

\begin{proof}
  Let $D^n_A\in  J_{n-1} $ be such that $d_{n-1}(D^n_A,\tau)=D^n_\#$.
  By construction   $D^n_A\in I_{n} \subset J_{n-1}$, hence $\TTT_{n-1}(D^n_A,\tau) \in [\beta_{2k_{(n-1)}}, \beta'_{2k_{(n-1)}}]$.
Using this fact and Theorem \ref{key},  we find
  \begin{gather*}
  	\TTT_n(D^n_A,\tau)= \TTT_1(d_{n-1}(D^n_A,\tau),\TTT_{n-1}(D^n_A,\tau))-\TTT_{n-1}(D^n_A,\tau)+\TTT_{n-1}(D^n_A,\tau) \\
  \ge \sTfwd\frac{|\ln(d_{n-1}(D^n_A,\tau))|}{2}+ \beta_{2k_{(n-1)}} = \frac{5}{4}  (\Delta_n+B_{2k_n}+B_{2k_{(n-1)}}) +\beta_{2k_{(n-1)}}
  \\ >
  \SS_{n}+B_{2k_n} .
  \end{gather*}

  Now, let $D^n_B \in  J_{n-1} $ be such that $d_{n-1}(D^n_B,\tau)= \ep^{(1+\nu)/\und{\si}}$.
  Since $D^n_B \in J_{n-1}$, we get  $\TTT_{n-1}(D^n_B,\tau) \in [\beta_{2k_{(n-1)}} , \beta'_{2k_{(n-1)}}]$;
     using this fact and   Theorem \ref{key},  with \eqref{defDej} and \eqref{defK0-new}, since  $ \mu \le \mu_0 \le \sTfwd/2$
  we find
  \begin{gather*}
  	\TTT_n(D^n_B,\tau) = \TTT_1(d_{n-1}(D^n_B,\tau),\TTT_{n-1}(D^n_B,\tau))-\TTT_{n-1}(D^n_B,\tau)+\TTT_{n-1}(D^n_B,\tau)\\
    \le \frac{3}{2} \sTfwd |\ln(d_{n-1}(D^n_B,\tau))| +\beta'_{2k_{(n-1)}} < K_0(1+\nu) |\ln(\ep)|+ \beta'_{2k_{(n-1)}}\\ < \Delta_n
    -B_{2k_n}-B_{2k_{(n-1)}}+\beta'_{2k_{(n-1)}}<
    \SS_{n}-B_{2k_n}.
    \end{gather*}
  So, the lemma follows from the  continuity of $\TTT_n(\cdot,\tau)$.
\end{proof}

Then we define
\begin{equation}\label{Inhat}
\begin{split}
\hat{I}_n= \hat{I}_n^{\ee^+}(\tau,\TT^+):= &   \left\{ d \in I_{n}  \mid
  \beta_{2k_n} \le \TTT_{n}(d,\tau) \le \beta'_{2k_n}   \right\},
\end{split}
\end{equation}
which is closed, non-empty and
 \begin{equation}\label{propn}
 \begin{split}
 &  \TTT_n(\cdot,\tau): \hat{I}_n \to [\beta_{2k_n} , \beta'_{2k_n}]   \qquad \text{is surjective.}
                                 \end{split}
 \end{equation}
Reasoning as in Remark \ref{closest} we denote by $\check{I}_n=\check{I}_n^{\ee^+}(\tau,\TT^+)$ the
 ``closed interval closest to $0$'' having  property \eqref{propn}.

 Arguing as in Lemma \ref{ep1}, we obtain the following.
\begin{remark}\label{epn}
  There are $A^-_n, A^+_n \in \check{I}_n $ such that $d_n(A^-_n,\tau)=-\ep^{(1+\nu)/\und{\si}}$ and
  $d_n(A^+_n,\tau)=\ep^{(1+\nu)/\und{\si}}$.
\end{remark}
Then we set
\begin{equation}\label{hatAn}
  \begin{array}{l}
    \check{A}^-_n:= \min \{ d \in \check{I}_n^{\ee^+} \mid  d_n(d,\tau)= -\ep^{(1+\nu)/\und{\si}}\}, \\
    \check{A}^+_n:= \min \{ d \in \check{I}_n^{\ee^+} \mid  d_n(d,\tau)= \ep^{(1+\nu)/\und{\si}}\} ,\\
    \underline{D}_n= \min \{ \check{A}^-_n; \check{A}^+_n \}, \qquad \overline{D}_n= \max \{ \check{A}^-_n; \check{A}^+_n \},
  \end{array}
\end{equation}
and   we denote by
 \begin{equation}\label{Jne}
  J_n=J_n^{\ee^+}(\tau,\TT^+):=[\underline{D}_n;\overline{D}_n]  \subset \check{I}_n   \subset J_{n-1}.
\end{equation}

Further,  applying  Lemma \ref{forgottentimes}, we get the following.
\begin{lemma}\label{forgottentimesn}
  Let $d \in J_n$  and $c^*>0$ be as in Proposition \ref{forPropC}. Then
  \begin{eqnarray}
    &    \TTT_{n-1}(d,\tau) +T_b  \le  \TTT_{\frac{1}{2}}(d_{n-1}(d,\tau),\TTT_{n-1}(d,\tau)) \le \TTT_n(d,\tau)-T_a,  \label{Thalfna}   \\
    &     \|x(t,\tau ; \Q_s(d,\tau))\| \le  \frac{3}{4}  c^* \ep, \qquad  \forall t \in [\TTT_{n-1}(d,\tau) +T_b
    ,\TTT_n(d,\tau)-T_a].\label{Thalfnb}
  \end{eqnarray}
\end{lemma}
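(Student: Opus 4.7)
The plan is to reduce this to the one-loop statement Lemma \ref{forgottentimes} via the iterative construction \eqref{Tn}. Introduce the shorthand $D:=d_{n-1}(d,\tau)$ and $A:=\TTT_{n-1}(d,\tau)$, so that by definition $\TTT_n(d,\tau)=\TTT_1(D,A)$ and $\TTT_{\frac{1}{2}}(d_{n-1}(d,\tau),\TTT_{n-1}(d,\tau))=\TTT_{\frac{1}{2}}(D,A)=\TTT_{n-\frac{1}{2}}(d,\tau)$. The first step is to check that the pair $(D,A)$ is an admissible input for Lemma \ref{forgottentimes}: since $d\in J_n\subset \check I_n\subset I_n$, the defining inequality in \eqref{In} gives $0<D^n_\#\le D\le \ep^{(1+\nu)/\und{\sigma}}$, hence $D\in J_0$ in the sense of \eqref{J0}, while $A$ is an arbitrary real number, which is fine since Lemma \ref{forgottentimes} is uniform in the base time.

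Next I would exploit the iterative meaning of the construction to translate the trajectory started at $(\tau,\Q_s(d,\tau))$ into one started at $(A,\Q_s(D,A))$. Because $d\in J_n\subset J_{n-1}$, the induction hypothesis (Proposition \ref{p.fwdn} at step $n-1$) guarantees that the trajectory $\x(\cdot,\tau;\Q_s(d,\tau))$ reaches $L^0$ transversely at $t=A$ at the point $\PPP_{n-1}(d,\tau)\in A^{\textrm{fwd},+}(A)$; by the very definition of $d_{n-1}$ as the directed distance \eqref{dist} from $\P_s(A)$, one has $\PPP_{n-1}(d,\tau)=\Q_s(D,A)$. Uniqueness of solutions then yields the concatenation identity
\begin{equation*}
\x(t,\tau;\Q_s(d,\tau))=\x(t,A;\Q_s(D,A)) \qquad \text{for all } t\ge A.
\end{equation*}

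With this identification, both assertions of the lemma follow by directly applying Lemma \ref{forgottentimes} to $(D,A)$ in place of $(d,\tau)$. The first line of \eqref{T.half} reads $A+T_b\le \TTT_{\frac{1}{2}}(D,A)\le \TTT_1(D,A)-T_a$, which is precisely \eqref{Thalfna} once $\TTT_1(D,A)$ and $A$ are rewritten as $\TTT_n(d,\tau)$ and $\TTT_{n-1}(d,\tau)$. The second line gives $\|\x(t,A;\Q_s(D,A))\|\le \tfrac{3}{4}c^*\ep$ for every $t\in[A+T_b,\TTT_1(D,A)-T_a]$, and the concatenation identity above transforms this into \eqref{Thalfnb}.

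The only step with any real content is the verification that $D\in J_0$; everything else is a matter of unwinding the recursive definitions \eqref{Tn}--\eqref{Jne} and invoking the previously established uniform one-loop estimate. There is no analytic difficulty, and in particular the constant $c^*$ is the same as in Proposition \ref{forPropC} because Lemma \ref{forgottentimes} gives a bound that does not deteriorate under the change of base time.
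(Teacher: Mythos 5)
Your proof is correct and follows essentially the same route as the paper: the authors likewise dispose of \eqref{Thalfna} by a direct appeal to Lemma \ref{forgottentimes} applied at the base time $\TTT_{n-1}(d,\tau)$, and obtain \eqref{Thalfnb} by repeating the second half of that lemma's argument for the shifted trajectory. Your explicit verification that $d_{n-1}(d,\tau)\in J_0$ via \eqref{In} and your use of the concatenation identity $\x(t,\tau;\Q_s(d,\tau))=\x(t,\TTT_{n-1}(d,\tau);\Q_s(d_{n-1}(d,\tau),\TTT_{n-1}(d,\tau)))$ simply make precise what the paper leaves implicit.
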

\begin{proof}
  The inequality \eqref{Thalfna} follows directly from Lemma \ref{forgottentimes}, while \eqref{Thalfnb} can be obtained by
  repeating the argument in the second part of the proof of Lemma~\ref{forgottentimes}.
\end{proof}

Now, for any $d \in J_n$ and $j=1, \ldots, n$, let  $\al_{k_j}$ be as in \eqref{defaln}.
Notice that $|\al_{k_j}|\le \beta'_{2k_j}-\beta_{2k_j}=B_{2k_j}$.

  Repeating the argument of Lemma \ref{time1.missed} we prove the following.
   \begin{remark}\label{timen.missed}
   Observe that $T_{2k_{(n-1)}+1}  \le  T_{2k_n-1}$ and they are equal
  when $k_n=k_{(n-1)}+1$, i.e.~when we have two consecutive 1s in $\ee^+$.   Let $d \in J_n$ and $\tau \in [b_0, b_1]$, then
   $$  [ T_{2k_{(n-1)}+1} , T_{2k_n-1}  ] \subset [\TTT_{n-1}(d,\tau)+ T_b ,  \TTT_n(d,\tau)-T_a].$$
 \end{remark}
\begin{proof}[Proof of Remark \ref{timen.missed}]
 The first part is obvious. Now
since $|\alpha_{k_{(n-1)}}| \le B_{2k_{(n-1)}}$, then by \eqref{TandKnunew},
$K_0>\frac{1}{|\la_s^+|}$, and $K_0>\frac{1}{\la_u^-}$
we find
$$  T_{2k_{(n-1)}+1}  \ge  T_{2k_{(n-1)}} + B_{2k_{(n-1)}} + K_0(1+\nu)|\ln(\ep)| $$
$$\ge  \beta'_{2k_{(n-1)}} + \frac{|\ln(\ep)|}{| \la_s^+|} \ge \TTT_{n-1}(d,\tau) + T_b.$$

  Analogously   we find
$$   T_{2k_n-1}  \le T_{2k_n} - B_{2k_n} - K_0(1+\nu)|\ln(\ep)|$$
$$ \le \beta_{2k_n} - \frac{|\ln(\ep)|}{ \la_u^-} \le \TTT_n(d,\tau)- T_a,$$
i.e.,  $ [T_{2k_{(n-1)}+1} , T_{2k_n-1}] \subset [\TTT_{n-1}(d,\tau)+ T_b ,  \TTT_n(d,\tau)-T_a]$, where the first interval may reduce to a
singleton.
\end{proof}

  \begin{lemma}\label{l.stepn}
  Whenever $d \in J_n$ the trajectory $\x(t,\tau; \Q_s(d,\tau))$ satisfies $\bs{C^+_{\ee^+}}$ for any $\tau \le t \le \TTT_n(d,\tau)$,
  i.e. for any $\tau \le t \le \SS_n +\al_{k_n}$.
\end{lemma}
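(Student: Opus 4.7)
The plan is to argue by induction on $n$, with the base case $n=1$ settled by Lemma \ref{l.step1}. The key observation driving the inductive step is a semigroup property of the iterated Poincar\'e data: since $J_n \subset I_n \subset J_{n-1}$, for any $d \in J_n$ the value $D := d_{n-1}(d,\tau)$ lies in $[D^n_\#, \ep^{(1+\nu)/\und{\si}}] \subset J_0$, while $A := \TTT_{n-1}(d,\tau) \in [\beta_{2k_{n-1}}, \beta'_{2k_{n-1}}]$. By the very definition \eqref{Tn} of $\TTT_n$, $d_n$ and $\TTT_{n-\frac{1}{2}}$, the point $\PPP_{n-1}(d,\tau)$ coincides with $\Q_s(D, A)$, so uniqueness of solutions yields
\[
\x(t,\tau;\Q_s(d,\tau)) \;=\; \x(t, A; \Q_s(D, A)) \qquad \text{for } t \in [A, \TTT_n(d,\tau)].
\]

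First I would invoke the inductive hypothesis at step $n-1$ (which applies because $J_n \subset J_{n-1}$) to obtain $\bs{C^+_{\ee^+}}$ on $[\tau, A]$. What remains is to verify $\bs{C^+_{\ee^+}}$ on $[A, \TTT_n(d,\tau)]$. The only indices of $\ee^+$ whose windows intersect this new segment are: (i) $j = k_{n-1}$, with $\ee^+_{k_{n-1}}=1$, contributing the tail $[A, T_{2k_{n-1}+1}]$ of the window $[T_{2k_{n-1}-1}, T_{2k_{n-1}+1}]$; (ii) the intermediate indices $k_{n-1} < j < k_n$, all with $\ee^+_j = 0$ (this collection is empty precisely when $k_n = k_{n-1}+1$); and (iii) $j = k_n$, with $\ee^+_{k_n}=1$, contributing the head $[T_{2k_n-1}, \TTT_n(d,\tau)]$ of the window $[T_{2k_n-1}, T_{2k_n+1}]$.

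To discharge these three cases, the strategy is to apply Proposition \ref{forPropC} and Lemmas \ref{forgottentimes}, \ref{forgotten.origin} to the translated single-loop problem $\x(t, A; \Q_s(D, A))$, reading their statements with ``$d$'' replaced by $D$ and ``$\tau$'' replaced by $A$. Remark \ref{timen.missed} is the crucial geometric input: it places the times $T_{2k_{n-1}+1}, T_{2k_{n-1}+3}, \dots, T_{2k_n-1}$ inside $[A+T_b, \TTT_n(d,\tau)-T_a]$, precisely the regime where Lemma \ref{forgotten.origin} supplies the three bounds $\|\x\| \le c^*\ep$, $\|\x - \ga(t-A)\| \le c^*\ep$ and $\|\x - \ga(t-\TTT_n(d,\tau))\| \le c^*\ep$ simultaneously. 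The short boundary layers $[A, A+T_b]$ and $[\TTT_n(d,\tau)-T_a, \TTT_n(d,\tau)]$ are handled by Proposition \ref{forPropC}, giving closeness to $\ga^+(t-A)$ and $\ga^-(t-\TTT_n(d,\tau))$ respectively with constant $c^*/2$. Using the identities $A = T_{2k_{n-1}} + \al_{k_{n-1}}$ (from \eqref{defaln} at step $n-1$) and $\TTT_n(d,\tau) = T_{2k_n} + \al_{k_n}$, the $\ga$-translates produced by these estimates match exactly those demanded by (i) and (iii), while (ii) follows from the ``$\|\x\| \le c^*\ep$'' bound. The bound $|\al_{k_n}| \le B_{2k_n}$ is automatic from $\TTT_n(d,\tau) \in [\beta_{2k_n}, \beta'_{2k_n}]$ together with \eqref{defik}.

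The main obstacle I anticipate is purely administrative: cleanly partitioning $[A, \TTT_n(d,\tau)]$ into the three regimes (takeoff, ``origin'' zone, landing) and matching them against the $T_j$-indexing of $\bs{C^+_{\ee^+}}$. In particular one must verify that the window $[T_{2k_{n-1}-1}, T_{2k_{n-1}+1}]$ splits exactly at $A$ between the inductive hypothesis and the new estimate, and handle the degenerate case $k_n = k_{n-1}+1$ where the collection of intermediate $0$-windows collapses. No analytic estimate beyond those already established in Section 4 should be required.
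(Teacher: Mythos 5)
Your proposal is correct and follows essentially the same route as the paper's proof: induction on $n$, the semigroup identity $\x(t,\tau;\Q_s(d,\tau))\equiv\x(t,\TTT_{n-1}(d,\tau);\Q_s(d_{n-1}(d,\tau),\TTT_{n-1}(d,\tau)))$, Remark \ref{timen.missed} to place $T_{2k_{(n-1)}+1},\dots,T_{2k_n-1}$ in $[\TTT_{n-1}(d,\tau)+T_b,\TTT_n(d,\tau)-T_a]$, Proposition \ref{forPropC} on the boundary layers and Lemma \ref{forgotten.origin} in between. The paper likewise splits into the two cases $k_n=k_{(n-1)}+1$ and $k_n>k_{(n-1)}+1$ that you anticipate.
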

\begin{proof}
 We prove the lemma by induction in $n$. The $n=1$ case follows from Lemma \ref{l.step1}.
 Let $n>1$ and $d \in J_n \subset J_{n-1}$; since $d \in J_{n-1}$ we know from the inductive assumption that
 $\x(t,\tau; \Q_s(d,\tau))$ satisfies $\bs{C^+_{\ee^+}}$ for any $\tau \le t \le \TTT_{n-1}(d,\tau)$.
 From Lemma \ref{forgottentimesn} and Remark \ref{timen.missed}   we see that
\begin{equation}\label{timesinclusion}
    \{ \TTT_{n-\frac{1}{2}}(d,\tau); T_{2k_{(n-1)}+1}; T_{2k_{n}-1}  \} \subset [\TTT_{n-1}(d,\tau)+ T_b  ,
  \TTT_{n}(d,\tau)-T_a].
\end{equation}

  Assume first $k_n=k_{(n-1)}+1$ so that we have two consecutive $1$s in
 $\EE^+$, i.e. $\ee^+_{k_{(n-1)}}= \ee^+_{k_{(n-1)}+1}=1$.
   Note that
  \begin{gather*}
  	x(t,\tau;\Q_s(d,\tau)) \equiv x(t,\TTT_1(d,\tau);\Q_s(d_1(d,\tau),\TTT_1(d,\tau)))
  	\equiv \cdots\\
  	\equiv x(t,\TTT_{n-1}(d,\tau);\Q_s(d_{n-1}(d,\tau),\TTT_{n-1}(d,\tau)))
  \end{gather*}
  for any $t \in \R$.

  Then applying twice  Proposition \ref{forPropC}  we find
  \begin{equation}\label{firsthalfn}
    \| \x(t,\tau; \Q_s(d,\tau))- \ga(t-\SS_{n-1}-\al_{k_{(n-1)}}) \|  \le c^*  \ep
  \end{equation}
for any $t \in [\SS_{n-1}+\al_{k_{(n-1)}},\TTT_{n-\frac{1}{2}}(d,\tau)]$,
  \begin{equation}    \label{secondhalfn}
    \| \x(t,\tau; \Q_s(d,\tau))- \ga(t-\SS_{n}-\al_{k_n} )\|  \le c^*  \ep
  \end{equation}
for any $t \in [\TTT_{n-\frac{1}{2}}(d,\tau), \SS_{n}+\al_{k_n}]$.
From \eqref{timesinclusion} it follows that
 $\x(t,\tau; \Q_s(d,\tau))$ has property
  $\bs{C_{\ee^+}^+}$  whenever $t \in [\TTT_{n-1}(d,\tau),  T_{2k_{(n-1)}+1} ]$.

   Now assume $k_n>k_{(n-1)}+1$ so that $\ee^+_j=0$ for any $j \in \{k_{(n-1)}+1, \ldots, k_{n}-1\}$;
 from \eqref{timesinclusion}   we find
  \begin{gather*}
  	 \TTT_{n-1}(d,\tau)+T_b<T_{2k_{(n-1)}+1}<T_{2k_{(n-1)}+2} < \ldots < T_{2k_n-1}\\
  	< \TTT_1(d_{n-1}(d,\tau), \TTT_{n-1}(d,\tau))-T_a.
  \end{gather*}
  Now, from Proposition \ref{forPropC}
  and Lemma \ref{forgotten.origin} we see   that  \eqref{firsthalfn} holds
  respectively  for any  $t \in[\TTT_{ n-1}(d,\tau) , \TTT_{ n-1}(d,\tau)+ T_b]$ and for any $t \in
  [\TTT_{n-1}(d,\tau)+ T_b, T_{2k_{(n-1)}+1} ]\subset [\TTT_{n-1}(d,\tau)+ T_b, \TTT_{n}(d,\tau)-T_a] $.
  So property $\bs{C^+_{\ee^+}}$ holds for any $t \in  [\TTT_{n-1}(d,\tau), T_{2k_{(n-1)}+1}  ]$.

  Then
  from Lemma \ref{forgotten.origin} we see that
  $$\| \x(t,\tau; \Q_s(d,\tau))\| \le c^* \ep  \qquad  \textrm{for any
  $t \in [\TTT_{n-1}(d,\tau)+T_b, \TTT_{n}(d,\tau)-T_a]$},  $$
  so in particular also when  $t \in [T_{2k_{(n-1)}+1} , T_{2k_{n}-1} ]$.
   So property $\bs{C^+_{\ee^+}}$ holds for any $t \in  [T_{2k_{(n-1)}+1} , T_{2k_{n}-1} ]$.

 Now again from Proposition \ref{forPropC}  and  Lemma \ref{forgotten.origin}
 we see   that  \eqref{secondhalfn} holds
  respectively  for any  $t \in
  	[\TTT_{ n}(d,\tau)-T_a, \TTT_{ n}(d,\tau)] $       and for any  $t \in[ T_{2k_{n}-1} ,\TTT_{ n}(d,\tau)-T_a]
  \subset [ \TTT_{ n-1}(d,\tau)+ T_b, \TTT_{ n}(d,\tau)-T_a ]$.
  So we have shown property $\bs{C^+_{\ee^+}}$  for any $t \in  [T_{2k_{n}-1},\TTT_n(d,\tau) ]$
  and the proof is concluded.
\end{proof}

\begin{proof}[\textbf{Proof of Proposition \ref{p.fwdn}}]
 Notice that by construction $|\alpha_{k_n}| \le B_{2k_n}$, then the proof follows from  Lemma  \ref{l.stepn}.
\end{proof}

Let $\ee^+ \in \hat{\EE}^+$, we denote by
\begin{equation}\label{Jinfty}
\begin{split}
  J_{+\infty}= J_{+\infty}^{\ee^+}(\tau, \TT^+):= \cap_{n=1}^{+\infty} J_{n} , \qquad d_*=d_*^{\ee^+}(\tau, \TT^+)= \min \{d \in
  J_{+\infty}\}.
  \end{split}
\end{equation}
 Notice that   $J_{+\infty} $
is a non-empty compact connected interval since it is the countable intersection of non-empty compact intervals, one contained in
the other. So the minimum $d_*$ exists.

 Now we are in the position to prove the following.
 \begin{proposition}\label{newhalffuture}
  Theorem \ref{restatebis} part \textbf{a)} holds true.
\end{proposition}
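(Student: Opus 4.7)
The plan is to combine the iterative construction from Proposition \ref{p.fwdn} with a limiting argument for infinitely many loops, and with a stable-manifold trapping argument when the sequence has only finitely many $1$s. Throughout I fix $\ee^+\in\EE^+$, $\tau\in[b_0,b_1]$ and $\TT^+$ as in the statement, and I define the candidate set
\[
X^+(\ee^+,\tau,\TT^+):=\{\Q_s(d,\tau)\mid d\in J^{\ee^+}_{+\infty}(\tau,\TT^+)\},
\]
with $J^{\ee^+}_{+\infty}$ to be defined below in each of the two cases.

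\textbf{Case $\ee^+\in\hat\EE^+$.} Here I take $J_{+\infty}$ as in \eqref{Jinfty}, the nested intersection of the compact intervals $J_n$ produced by Proposition \ref{p.fwdn}; since each $J_n$ is non-empty and compact and $J_n\supset J_{n+1}$, the intersection $J_{+\infty}$ is a non-empty compact interval. For any $d\in J_{+\infty}$ and any $n\in\N$ we have $d\in J_n$, so by Proposition \ref{p.fwdn} the trajectory $\x(t,\tau;\Q_s(d,\tau))$ satisfies $\bs{C^+_{\ee^+}}$ on $[\tau,\TTT_n(d,\tau)]$ with $\TTT_n(d,\tau)\in[\beta_{2k_n},\beta'_{2k_n}]$, so in particular $\TTT_n(d,\tau)\ge \beta_{2k_n}\ge T_{2k_n}-B_{2k_n}$, and the sequence $(T_{2k_n})$ diverges because $k_n$ is strictly increasing and $T_j\to+\infty$. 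Hence $\TTT_n(d,\tau)\to+\infty$ as $n\to\infty$ and $\bs{C^+_{\ee^+}}$ is satisfied on the whole half line $[\tau,+\infty)$. Proposition \ref{p.fwdn} also gives the bound $|\alpha_{k_n}|\le B_{2k_n}$ for every $n$, and the inclusion $\x(t,\tau;\xxi)\in\tilde V(t)$ follows from Remark \ref{r.in1} applied loop by loop together with the fact that during the ``stay near the origin'' portions of each loop the trajectory is trapped in $\tilde V$ by Lemma \ref{forgottentimesn}.

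\textbf{Case $\ee^+\in\EE^+_0$.} Let $N:=j^+_\SS(\ee^+)<\infty$. I apply Proposition \ref{p.fwdn} with $n=N$ to get the compact interval $J_N$ on which the map $d_N(\cdot,\tau)\colon J_N\to\R$ is continuous and covers $[-\ep^{(1+\nu)/\und\sigma},\ep^{(1+\nu)/\und\sigma}]$. By the intermediate value theorem there exists at least one $d^\star\in J_N$ with $d_N(d^\star,\tau)=0$, i.e.\ $\mathscr P_N(d^\star,\tau)=\P_s(\TTT_N(d^\star,\tau))$. I define $J^{\ee^+}_{+\infty}:=\{d^\star\}$ (or, more generally, the closed set of zeros of $d_N(\cdot,\tau)$ in $J_N$). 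Then Proposition \ref{p.fwdn} gives property $\bs{C^+_{\ee^+}}$ on $[\tau,\TTT_N(d^\star,\tau)]$, while for $t\ge \TTT_N(d^\star,\tau)$ the trajectory coincides with the stable-leaf trajectory $\x(t,\TTT_N(d^\star,\tau);\P_s(\TTT_N(d^\star,\tau)))$, which by Remark \ref{rMelnibis} is $\bar c^\star\ep$-close to $\ga^+(t-\TTT_N(d^\star,\tau))$. Using Remark \ref{est.ga} and the fact that, for every $j>k_N$, the hypothesis \eqref{TandKnunew} forces $T_{2j-1}-\TTT_N(d^\star,\tau)\ge T_b$, I obtain $\|\x(t,\tau;\xxi)\|\le c^\star\ep$ for all $t\in[T_{2j-1},T_{2j+1}]$ with $j>k_N$, which is exactly $\bs{C^+_{\ee^+}}$ in these intervals. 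The containment in $\tilde V(t)$ for $t\ge\TTT_N(d^\star,\tau)$ follows from $\tilde W^s(t)\subset\tilde V(t)$.

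\textbf{Estimates and Bernoulli-style labeling.} In both cases, the family $(\alpha_{k_n})_n$ produced by Proposition \ref{p.fwdn} satisfies $|\alpha_{k_n}|\le B_{2k_n}$, and for indices $j\notin\{k_n\}$ (those with $\ee_j=0$) I set $\alpha_j:=0$, which is consistent with the null bound since $B_{2j}\ge0$. The estimate \eqref{ej+=tau} for the initial segment $t\in[\tau,T_1]$ follows from the $n=1$ step of the iteration combined with Proposition \ref{forPropC} and Lemma \ref{forgotten.origin}, exactly as in the proof of Lemma \ref{l.step1}. The main obstacle I foresee is the bookkeeping in Case $\ee^+\in\EE^+_0$ to ensure that the switch from the finite iterative scheme to the stable-manifold trajectory at time $\TTT_N(d^\star,\tau)$ produces no gap where the bound $\|\x(t,\tau;\xxi)\|\le c^\star\ep$ could fail; this is resolved by checking that $T_{2k_N+1}\ge\TTT_N(d^\star,\tau)+T_b$, which is forced by \eqref{TandKnunew} together with $|\alpha_{k_N}|\le B_{2k_N}$ and the definition \eqref{TaTb} of $T_b$.
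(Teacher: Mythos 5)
Your proposal is correct and follows essentially the same route as the paper: for $\ee^+\in\hat\EE^+$ one takes the nested intersection $J_{+\infty}=\cap_n J_n$ from Proposition \ref{p.fwdn}, and for $\ee^+\in\EE^+_0$ one selects a zero of $d_{j^+_\SS}(\cdot,\tau)$ so that the trajectory lands on $\P_s(\TTT_{j^+_\SS}(d,\tau))$ and then rides the stable leaf, with Remark \ref{rMelnibis}, Remark \ref{est.ga} and the gap condition \eqref{TandKnunew} closing the estimate $\|\x(t,\tau;\xxi)\|\le c^*\ep$ for the subsequent intervals. The only cosmetic difference is that the paper takes $J_{+\infty}$ to be the connected component of the zero set containing its minimum rather than a single zero, which does not affect the validity of the argument.
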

\begin{proof}
Let $\ee^+ \in \hat{\EE}^+$, then the result is proved by choosing
$\xxi =\Q_s(d,\tau)$ and $d \in J_{+\infty}$.

Assume
 now that $\ee^+ \in \EE^+_0$, then we can apply  Proposition \ref{p.fwdn} for any $1 \le n \le j^+_{\SS}= j^+_{\SS}(\ee^+)$
 see \eqref{efuture}.
In particular we find a compact interval  $J_{j^+_{\SS}}:=J_{j^+_{\SS}}^{\ee^+}(\tau, \TT^+)$ such that
the function
$$ d_{j^+_{\SS}}(\cdot,\tau): J_{j^+_{\SS}} \to \R$$
is $C^r$ and its image contains the interval $[-\ep^{(1+\nu)/\und{\si}} , \ep^{(1+\nu)/\und{\si}}]$;
further the function
$$ \TTT_{j^+_{\SS}} (\cdot,\tau): J_{j^+_{\SS}}  \to [\beta_{2k_{j^+_{\SS}}}, \beta'_{2k_{j^+_{\SS}}}] $$
is well defined and $C^r$.
Hence
$$I_{+\infty}= I_{+\infty}^{\ee^+}(\tau, \TT^+):= \{d \in J_{j^+_{\SS}}    \mid d_{j^+_{\SS}} (d,\tau)=0 \}$$
is a closed non-empty set. Let us denote by $d_*= \min I_{+\infty}$ and by $J_{+\infty} =J_{+\infty}^{\ee^+}(\tau, \TT^+)$ the largest
connected component
of $I_{+\infty}$ containing $d_*$.
Then $\x(t,\tau; \Q_s(d,\tau))$ has property $\bs{C^+_{\ee^+}}$    for any $t \in [\tau,
\TTT_{j^+_{\SS}}(d,\tau)]$, whenever  $d \in
J_{+\infty}$. Further  $\x(\TTT_{j^+_{\SS}}(d,\tau),\tau; \Q_s(d,\tau))= \P_s(\TTT_{j^+_{\SS}}(d,\tau))$, hence
$\x(t,\tau; \Q_s(d,\tau)) \in \tilde{W}^s(t) \subset \Om^+$ for any $t \ge \TTT_{j^+_{\SS}}(d,\tau)$.
So from Proposition~\ref{forPropC}, recalling that
$\alpha_j= \TTT_{j}(d,\tau)-T_{2j}^{\ee^+}$ for any $1 \le j \le j^+_{\SS}$,
 we get
$$ \|\x(t,\tau; \Q_s(d,\tau))-      \ga^+(t-T_{2j^+_{\SS}}(d,\tau)-\alpha_{j^+_{\SS}}) \| \le \frac{c^*}{2} \ep$$
for any $t \ge T_{2j^+_{\SS}}(d,\tau)+\alpha_{j^+_{\SS}}$.
Moreover from Lemma \ref{forgottentimes} we have
$$\|\x(t,\tau; \Q_s(d,\tau))\| \le c^* \ep$$
for any
 $t \ge T_{2j^+_{\SS}+1}(d,\tau)  \ge  \TTT_{j^+_{\SS}}(d,\tau) + T_b$.
Hence we see that $ \x(t,\tau; \Q_s(d,\tau))$ has property $\bs{C^+_{\ee^+}}$ for any $t \ge \tau$, whenever
$d \in J_{+\infty}$ and we conclude the proof.
\end{proof}

  From Remark \ref{r.in1} we immediately find the following.
\begin{remark}\label{r.in2}
Let $\tilde{V}(t)$ be as in Remark \ref{r.in1};
let the assumptions of Theorem \ref{restatebis} \textbf{a)} be satisfied, then
 for any $d \in J_{+\infty}$ we have $\x(t,\tau; \Q_s(d,\tau)) \in \tilde{V}(t)$ whenever
$t \ge \tau$.
\end{remark}

\subsection{Chaotic patterns in forward time: setting \eqref{TandKnu}
}\label{proof.futurebis}
In this section we adapt the argument of Section \ref{proof.future} to reconstruct the set
$J_{+\infty}^{\ee^+}(\tau, \TT^+)$ when $\TT^+$ satisfies \eqref{TandKnu}, \eqref{TandKnu_tau+} instead of \eqref{TandKnunew},
\eqref{TandKnunew_tau+},
then we prove the estimates concerning
the
$\alpha_j$ to conclude the proof of Theorem \ref{restatefuture}.

In the whole subsection  we always assume
 that the hypotheses of Theorem~\ref{restatefuture} are satisfied in the weaker setting of case \textbf{c)} unless differently stated;
in fact
we simply need to repeat all the estimates performed in Section \ref{proof.future} involving $\Delta_j$, defined in \eqref{defSS}, by
replacing
$B_{j}$ by $\l1$
for any $j$,
so we will be rather sketchy.
{}From \eqref{TandKnu} we see that   $\Delta_j$ satisfies
\begin{equation}\label{defDejbis}
\Delta_j \ge 2 K_0(1+\nu) |\ln(\ep)| + 2 \l1 .
\end{equation}
 Repeating \eqref{Delta1} we find that
$\textrm{exp}\left(-\frac{5(\Delta_1+ \l1)}{ 2\sTfwd} \right) \le  \ep^{\frac{1+\nu}{\und{\si}}}$, so we can define
\begin{equation}\label{I1bis}
 I^{\Lambda}_1=  I_1^{\Lambda, \ee^+}(\tau, \TT^+) := \left[ \textrm{exp}\left(-\frac{5(\Delta_1+ \l1)}{2 \sTfwd}     \right) ,
 \ep^{(1+\nu)/\und{\si}}\right].
\end{equation}
Then from Theorem \ref{key} we obtain the analogue of Lemma \ref{key2}.
\begin{lemma}\label{key2bis}
  Let $\tau \in [b_0, b_1]$. Then there are $D^{\Lambda,1}_a, D^{\Lambda,1}_b \in   I^{\Lambda}_1$ such that
 \begin{equation*}
   \begin{split}
    \TTT_1(D^{\Lambda,1}_a,\tau) =     T_{2k_1}+\l1 ,  \qquad  \TTT_1(D^{\Lambda,1}_b,\tau) =     T_{2k_1}-\l1.
   \end{split}
 \end{equation*}
 Hence the image of the function $\TTT_1(\cdot, \tau) :  I^{\Lambda}_1  \to \R$
 contains the closed interval $[\SS_1-\l1 , \SS_1+\l1]$.
\end{lemma}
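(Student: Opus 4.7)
The plan is to mimic the proof of Lemma \ref{key2} essentially verbatim, simply substituting the role of $B_{2k_1}$ by the uniform upper bound $\l1$ for the width of the intervals $[\aup_j, \adown_j]$ guaranteed by \eqref{minimal}. Since $\TTT_1(\cdot,\tau)$ is continuous on $J_{n-1}$ (here $n=1$), it suffices to locate two values $D^{\Lambda,1}_A, D^{\Lambda,1}_B \in I^{\Lambda}_1$ on which $\TTT_1(\cdot,\tau)$ takes values $\ge \SS_1+\l1$ and $\le \SS_1-\l1$ respectively, and then invoke the intermediate value theorem to extract the desired points $D^{\Lambda,1}_a$, $D^{\Lambda,1}_b$ whose images are exactly $\SS_1 \pm \l1 = T_{2k_1}\pm\l1$. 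The inclusion $[\SS_1-\l1,\SS_1+\l1]\supset[\beta_{2k_1},\beta'_{2k_1}]$ is automatic (since $B_{2k_1}\le\l1$ by \eqref{minimal}), so the image of $\TTT_1(\cdot,\tau)$ on $I_1^\Lambda$ will contain $[\SS_1-\l1,\SS_1+\l1]$.

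For the lower endpoint I take $D^{\Lambda,1}_A := \exp\!\left(-\tfrac{5(\Delta_1+\l1)}{2\sTfwd}\right)$, the left end of $I^{\Lambda}_1$. The second estimate in \eqref{T1T-1} from Theorem \ref{key}, together with $\mu\le\mu_0\le\sTfwd/2$ from \eqref{mu0}, gives
\[
\TTT_1(D^{\Lambda,1}_A,\tau)-\tau \ge (\sTfwd-\mu)|\ln(D^{\Lambda,1}_A)| \ge \tfrac{5}{4}(\Delta_1+\l1) > \Delta_1+\l1,
\]
so $\TTT_1(D^{\Lambda,1}_A,\tau) \ge T_{2k_1}+\l1$. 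For the upper endpoint I take $D^{\Lambda,1}_B := \ep^{(1+\nu)/\und{\si}}$. Again by Theorem~\ref{key},
\[
\TTT_1(D^{\Lambda,1}_B,\tau)-\tau \le (\sTfwd+\mu)|\ln(D^{\Lambda,1}_B)| \le \tfrac{3\sTfwd}{2}\cdot\tfrac{1+\nu}{\und{\sigma}}|\ln(\ep)|,
\]
and since $\sTfwd\le\overline{\Sigma}$ while $K_0=\tfrac{3\overline{\Sigma}}{2\und{\sigma}}$ by \eqref{defK0-new}, the right-hand side is bounded above by $K_0(1+\nu)|\ln(\ep)|$. Comparing with \eqref{defDejbis}, namely $\Delta_1\ge 2K_0(1+\nu)|\ln(\ep)|+2\l1$, we get $\TTT_1(D^{\Lambda,1}_B,\tau)-\tau \le K_0(1+\nu)|\ln(\ep)| \le \Delta_1-\l1$, i.e.\ $\TTT_1(D^{\Lambda,1}_B,\tau)\le T_{2k_1}-\l1$.

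By the intermediate value theorem applied to the continuous function $\TTT_1(\cdot,\tau)$ on the compact interval $I^{\Lambda}_1=[D^{\Lambda,1}_A,D^{\Lambda,1}_B]$, there exist $D^{\Lambda,1}_a,D^{\Lambda,1}_b\in I^{\Lambda}_1$ with $\TTT_1(D^{\Lambda,1}_a,\tau)=T_{2k_1}+\l1$ and $\TTT_1(D^{\Lambda,1}_b,\tau)=T_{2k_1}-\l1$, and hence its image on $I^{\Lambda}_1$ contains $[\SS_1-\l1,\SS_1+\l1]$. There is no real obstacle: the only thing that needs tracking is the cascade of constants (in particular that $\mu\le\mu_0\le\sTfwd/2$ and that $\tfrac{3\sTfwd}{2\und{\sigma}}\le K_0$), and that the reinforced time-gap hypothesis \eqref{TandKnu} together with \eqref{defDejbis} is strong enough to absorb $2\l1$ on top of the $2K_0(1+\nu)|\ln(\ep)|$ budget needed to reach the target interval.
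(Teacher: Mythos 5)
Your argument is correct and is essentially the paper's own proof: the paper proves Lemma \ref{key2bis} by repeating the proof of Lemma \ref{key2} with $B_{2k_1}$ replaced by $\l1$, using exactly your two test points $D^{\Lambda,1}_A=\exp\bigl(-\tfrac{5(\Delta_1+\l1)}{2\sTfwd}\bigr)$ and $D^{\Lambda,1}_B=\ep^{(1+\nu)/\und{\si}}$, the bounds $\mu\le\mu_0\le\sTfwd/2$ and $\tfrac{3\sTfwd}{2\und{\si}}\le K_0$, the gap estimate \eqref{defDejbis}, and the intermediate value theorem. One immaterial slip in your preamble: under \assump{P1} alone one does not have $B_{2k_1}\le\l1$ (the sequence $(B_j)$ may be unbounded); what \eqref{minimal} gives is $\adown_{k_1}-\aup_{k_1}\le\l1$, so the interval absorbed into $[\SS_1-\l1,\SS_1+\l1]$ is $[\aup_{k_1},\adown_{k_1}]$ rather than $[\beta_{2k_1},\beta'_{2k_1}]$ — but this aside plays no role in the proof of the stated lemma.
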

Then, we define the closed (possibly disconnected) set
\begin{equation}\label{I1hatbis}
\begin{gathered}
\hat{I}^{\Lambda}_1 =\hat{I}_1^{\Lambda, \ee^+}(\tau, \TT^+):=   \left\{ d \in I_1^\Lambda  \mid  T_{2k_1}-\l1 \le \TTT_1(d,\tau) \le
T_{2k_1}+\l1
\right\}.
\end{gathered}
\end{equation}
 Arguing as in Remark \ref{closest} we can select ``the compact interval closest to the origin"
$\check{I}^{\Lambda,\ee^+}_1(\tau, \TT^+)=\check{I}^{\Lambda}_1\subset \hat{I}^{\Lambda}_1$
   with the following property
   \begin{equation*}\label{prop1lambda}
     \begin{split}
 &   \TTT_1(\cdot,\tau):  \check{I}^{\Lambda}_1 \to  [T_{2k_1}-\l1 , T_{2k_1}+\l1] \qquad \textrm{is surjective}.
                                 \end{split}
   \end{equation*}
   Then, repeating word by word the proof of Lemma \ref{ep1} we obtain the following.
\begin{lemma}\label{ep1bis}
  There are $A^{\Lambda,-}, A^{\Lambda,+} \in \check{I}^{\Lambda}_1 $ such that $d_1(A^{\Lambda,-},\tau)=-\ep^{(1+\nu)/\und{\si}}$ and
  $d_1(A^{\Lambda,+},\tau)=\ep^{(1+\nu)/\und{\si}}$.
\end{lemma}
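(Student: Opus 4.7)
My plan is to adapt the proof of Lemma \ref{ep1} essentially verbatim: the task reduces to producing two elements of $\check{I}^{\Lambda}_1$ on which $d_1(\cdot,\tau)$ takes values of opposite sign and of magnitude dominating $\ep^{(1+\nu)/\und{\sigma}}$, after which an intermediate value argument on the connected compact interval $\check{I}^{\Lambda}_1$ closes the proof.

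First I would observe that $[\aup_{k_1}, \adown_{k_1}] \subset [T_{2k_1}-\l1, T_{2k_1}+\l1]$. Indeed, from \eqref{minimal} we have $\aup_{k_1} < T_{2k_1}-\lzero$ and $\adown_{k_1} > T_{2k_1}+\lzero$, while $\adown_{k_1}-\aup_{k_1} \le \l1$; combining these two gives $T_{2k_1}-\aup_{k_1} \le \l1$ and $\adown_{k_1}-T_{2k_1} \le \l1$. Since $\TTT_1(\cdot,\tau)$ is surjective from $\check{I}^{\Lambda}_1$ onto $[T_{2k_1}-\l1, T_{2k_1}+\l1]$, I can select $a,b \in \check{I}^{\Lambda}_1$ with $\TTT_1(a,\tau) = \aup_{k_1}$ and $\TTT_1(b,\tau) = \adown_{k_1}$.

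Next, by \eqref{minimal}, $|\MM(\aup_{k_1})| = |\MM(\adown_{k_1})| = \delta\bar c$ and the two values have opposite signs. Setting $c_1 := \delta\bar c/3$, Lemma \ref{key1} then yields $d_1(a,\tau)$ and $d_1(b,\tau)$ of opposite signs and both of magnitude at least $c\,c_1\,\ep$ for some absolute $c>0$. Since $\und{\sigma} < 1$ and $\nu \ge \nu_0 \ge 1$ we have $(1+\nu)/\und{\sigma} > 1$, so shrinking $\ep_0$ if necessary we get $\ep^{(1+\nu)/\und{\sigma}} < c\,c_1\,\ep$ for every $0 < \ep \le \ep_0$.

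Finally, $\check{I}^{\Lambda}_1$ is a connected compact interval by its very construction (as in Remark \ref{closest}), and $d_1(\cdot,\tau)$ is continuous there; the intermediate value theorem produces the desired $A^{\Lambda,-}, A^{\Lambda,+} \in \check{I}^{\Lambda}_1$ with $d_1(A^{\Lambda,\pm},\tau) = \pm\,\ep^{(1+\nu)/\und{\sigma}}$. The only point that requires care—and the only genuine difference from Lemma \ref{ep1}, where the role of $[T_{2k_1}-\l1,T_{2k_1}+\l1]$ was played by $[\beta_{2k_1},\beta'_{2k_1}]$—is the verification that $\aup_{k_1}$ and $\adown_{k_1}$ lie in the target interval of the surjection given by Remark \ref{closest} (as adapted to the present $\Lambda$-setting); everything else is a direct transcription.
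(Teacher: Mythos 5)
Your proof is correct and follows essentially the same route as the paper's: the paper proves this lemma by ``repeating word by word the proof of Lemma \ref{ep1}'', i.e.\ by combining the surjectivity of $\TTT_1(\cdot,\tau)$ onto the target interval, a sign condition on $\MM$ at two points of that interval (here supplied by \eqref{minimal} at $\aup_{k_1},\adown_{k_1}$ rather than by \assump{P1} at $\beta_{2k_1},\beta'_{2k_1}$), Lemma \ref{key1}, and the intermediate value theorem on the connected interval $\check{I}^{\Lambda}_1$. Your explicit verification that $[\aup_{k_1},\adown_{k_1}]\subset[T_{2k_1}-\Lambda^{1},T_{2k_1}+\Lambda^{1}]$, so that $\aup_{k_1}$ and $\adown_{k_1}$ are indeed attained by $\TTT_1(\cdot,\tau)$ on $\check{I}^{\Lambda}_1$, is precisely the small adaptation the paper leaves implicit.
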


Let us set
\begin{equation}\label{hatAbis}
  \begin{array}{l}
    \check{A}^{\Lambda,-}:= \min \{ d \in \check{I}_1^{\Lambda,\ee^+} \mid  d_1(d,\tau)= -\ep^{(1+\nu)/\und{\si}}\}, \\
    \check{A}^{\Lambda,+}:= \min \{ d \in \check{I}_1^{\Lambda,\ee^+} \mid  d_1(d,\tau)= \ep^{(1+\nu)/\und{\si}}\} ,\\
    \underline{D^\Lambda_1}= \min \{ \check{A}^{\Lambda,-}; \check{A}^{\Lambda,+} \}, \qquad \overline{D^\Lambda_1}= \max \{
    \check{A}^{\Lambda,-}; \check{A}^{\Lambda,+} \}
  \end{array}
\end{equation}
and
\begin{equation}\label{J1ebisla}
  J_1^{\Lambda}=J_1^{\Lambda,\ee^+}(\tau,\TT^+):=[\underline{D^{\Lambda}_1}, \overline{D^{\Lambda}_1}] \subset I^{\Lambda}_1.
\end{equation}
Then we define $\alpha_{k_1}$ as in \eqref{defalpha1}, i.e.
 \begin{equation*}
   \al_{k_1}=\al_{k_1}^{\ee^+}(\ep,\tau, \TT^+)= \TTT_1(d,\tau)- \SS_1.
 \end{equation*}

Repeating the argument of Remark \ref{time1.missed} and the proof of Lemma \ref{l.step1} with no changes, we
find the following.
\begin{lemma}\label{l.step1bis}
  Whenever $d \in J^{\Lambda}_1$ and $\tau \in [b_0,b_1]$, then
   $$ [ T_1, T_{2k_1-1}  ]  \subset [\tau+ T_b ,  \TTT_1(d,\tau)-T_a].  $$
    Further, the trajectory $\x(t,\tau; \Q_s(d,\tau))$ satisfies $\bs{C^+_{\ee^+}}$ for any
     $\tau \le t \le T_{2 k_1} +\al_{k_1}$.
\end{lemma}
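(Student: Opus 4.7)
The plan is to observe that essentially nothing new is required: the bound $|\alpha_{k_1}| \le \Lambda^1$ now plays the role that $|\alpha_{k_1}| \le B_{2k_1}$ played in Section~\ref{proof.future}, and the hypothesis \eqref{TandKnu} on the gap $T_{j+1}-T_j$ provides exactly the slack needed so that the arithmetic of Remark~\ref{time1.missed} still goes through.

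First, I would verify the time inclusion $[T_1, T_{2k_1-1}] \subset [\tau + T_b, \TTT_1(d,\tau) - T_a]$. For the left endpoint, combine $\tau \le b_1$ with \eqref{TandKnu_tau+} to get $T_1 \ge \tau + \Lambda^1 + K_0(1+\nu)|\ln(\ep)|$, and since $K_0 \ge 1/|\la_s^+|$ by \eqref{defK0-new}--\eqref{TaTb}, this is at least $\tau + T_b$. For the upper endpoint, use $J^{\Lambda}_1 \subset \hat{I}^{\Lambda}_1$ (see \eqref{I1hatbis}), which yields $\TTT_1(d,\tau) \ge T_{2k_1} - \Lambda^1$; then \eqref{TandKnu} gives $T_{2k_1-1} \le T_{2k_1} - \Lambda^1 - K_0(1+\nu)|\ln(\ep)| \le \TTT_1(d,\tau) - T_a$ since $K_0 \ge 1/\la_u^-$. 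The intermediate inequalities $T_{2k_1-1} \ge \tau + T_b$ and $T_1 \le \TTT_1(d,\tau) - T_a$ follow from the monotonicity $T_1 \le T_{2k_1-1}$ together with the two bounds just established.

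Next, I would mimic the proof of Lemma~\ref{l.step1} to establish $\bs{C^+_{\ee^+}}$. Split into the cases $k_1 = 1$ and $k_1 \ge 2$. In both cases, two applications of Proposition~\ref{forPropC} to the trajectory $\x(t,\tau;\Q_s(d,\tau))$ give the estimate $\|\x(t,\tau;\Q_s(d,\tau)) - \ga(t-\tau)\| \le \tfrac{c^*}{2}\ep$ on $[\tau, \TTT_{\frac{1}{2}}(d,\tau)]$ and $\|\x(t,\tau;\Q_s(d,\tau)) - \ga(t - T_{2k_1} - \alpha_{k_1})\| \le \tfrac{c^*}{2}\ep$ on $[\TTT_{\frac{1}{2}}(d,\tau), T_{2k_1}+\alpha_{k_1}]$. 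Using the time inclusion from the previous step and Lemma~\ref{forgottentimes}, the relevant times $T_1$, $T_{2k_1-1}$, $\TTT_{\frac{1}{2}}(d,\tau)$ all lie in $[\tau + T_b, \TTT_1(d,\tau) - T_a]$, so Lemma~\ref{forgotten.origin} upgrades the estimates on the portions near the origin; then the inequalities \eqref{ej+=1w}, \eqref{ej+=0w}, \eqref{ej+=tau} defining $\bs{C^+_{\ee^+}}$ hold piece by piece on $[\tau, T_1]$, $[T_1, T_{2k_1-1}]$, and $[T_{2k_1-1}, T_{2k_1}+\alpha_{k_1}]$, exactly as in Lemma~\ref{l.step1}.

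There is no real obstacle: the only point to watch is that the ``slack'' $\Lambda^1 + K_0(1+\nu)|\ln(\ep)|$ in \eqref{TandKnu} is at least as large as $B_{2k_1} + K_0(1+\nu)|\ln(\ep)|$ whenever $B_{2k_1} \le \Lambda^1$ (which is guaranteed here by the bound $|\alpha_{k_1}| \le \Lambda^1$ built into $\check{I}^{\Lambda}_1$), so every inequality from Section~\ref{proof.future} transfers without change. The whole argument is a bookkeeping translation: replace $B_{2k_1}$ by $\Lambda^1$, keep the same use of Proposition~\ref{forPropC}, Lemma~\ref{forgottentimes}, and Lemma~\ref{forgotten.origin}, and conclude.
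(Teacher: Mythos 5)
Your proposal is correct and follows exactly the route the paper intends: the paper's own ``proof'' of Lemma \ref{l.step1bis} is literally the instruction to repeat Remark \ref{time1.missed} and Lemma \ref{l.step1} with $B_{2k_1}$ replaced by $\l1$, and your two paragraphs carry out precisely that bookkeeping (the time inclusion via \eqref{TandKnu_tau+}, \eqref{TandKnu}, \eqref{I1hatbis} and $K_0>\max\{1/|\la_s^+|,1/\la_u^-\}$, then the $\bs{C^+_{\ee^+}}$ estimates via Proposition \ref{forPropC} and Lemmas \ref{forgottentimes}, \ref{forgotten.origin}). The only blemish is the closing aside about needing $B_{2k_1}\le\l1$: no comparison between $B_{2k_1}$ and $\l1$ is required (indeed $[\aup_{k_1},\adown_{k_1}]$ is typically the \emph{shorter} window), but since your actual argument never uses that comparison, this does not affect the proof.
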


\begin{proposition}\label{p.step1bis}
  Let the assumptions of Theorem \ref{restatefuture} be satisfied, and let
  $J^{\Lambda}_1$ be as in \eqref{J1ebisla}.
  Fix $\tau\in[b_0,b_1]$. Then, the function
  $$ \TTT_1(\cdot ,\tau): J_1^{\Lambda} \to  [\aup_{k_1} , \adown_{k_1}]$$
is well defined and $C^r$, while the function
$$ d_1(\cdot ,\tau): J_1^{\Lambda} \to  \R$$
is  $C^r$ and its image contains $[-\ep^{(1+\nu)/ \und{\si}} , \ep^{(1+\nu)/ \und{\si}}]$.

Further,   the trajectory $\x(t,\tau; \Q_s(d,\tau))$ satisfies $\bs{C^+_{\ee^+}}$
for $t \in [\tau,  \TTT_1(d,\tau)]\supset [\tau, T_{2k_1}-\l1]$  and $|\al_{k_1}^{\ee^+}| \le \l1$.
\end{proposition}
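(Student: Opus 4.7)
The argument runs exactly parallel to that of Proposition \ref{p.step1}, but with the sequence of gaps $B_{2k_j}$ replaced throughout by the uniform length $\l1$, and with the additional information from \eqref{minimal} on the sign and magnitude of $\MM$ at $\aup_{k_1}$ and $\adown_{k_1}$ exploited in place of \assump{P1}. The plan is to start from the interval $I_1^\Lambda$ of \eqref{I1bis} and Lemma \ref{key2bis}, carve out of it a closed subinterval $\check I_1^\Lambda\subset I_1^\Lambda$ on which the time map $\TTT_1(\cdot,\tau)$ surjects onto $[\aup_{k_1},\adown_{k_1}]$, and then trim it further to $J_1^\Lambda$ in such a way that $d_1(\cdot,\tau)$ covers $[-\ep^{(1+\nu)/\und{\si}},\ep^{(1+\nu)/\und{\si}}]$.

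\textbf{Control of the time map.} Lemma \ref{key2bis} furnishes $D_a^{\Lambda,1}, D_b^{\Lambda,1}\in I_1^\Lambda$ with $\TTT_1(D_a^{\Lambda,1},\tau)=T_{2k_1}+\l1$ and $\TTT_1(D_b^{\Lambda,1},\tau)=T_{2k_1}-\l1$. Since \eqref{minimal} together with $\l1>2\lzero$ and $\adown_{k_1}-\aup_{k_1}\le\l1$ forces $[\aup_{k_1},\adown_{k_1}]\subset(T_{2k_1}-\l1,T_{2k_1}+\l1)$, the intermediate value theorem applied to the continuous function $\TTT_1(\cdot,\tau)$ produces $d^{\aup},d^{\adown}\in I_1^\Lambda$ with $\TTT_1(d^{\aup},\tau)=\aup_{k_1}$ and $\TTT_1(d^{\adown},\tau)=\adown_{k_1}$. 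Exactly as in Remark \ref{closest}, I would then define $\check I_1^\Lambda=[a_1',b_1']$ as the closed subinterval of $I_1^\Lambda$ closest to $0$ on which $\TTT_1(\cdot,\tau)$ surjects onto $[\aup_{k_1},\adown_{k_1}]$, so that by construction $\TTT_1(\check I_1^\Lambda)\subset[\aup_{k_1},\adown_{k_1}]$.

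\textbf{Range of $d_1$ and definition of $J_1^\Lambda$.} Next I would invoke Lemma \ref{key1} with $c_1=\dd\bar c/3$: by \eqref{minimal} one has $\MM(\TTT_1(d^{\aup},\tau))=\pm\dd\bar c$ and $\MM(\TTT_1(d^{\adown},\tau))=\mp\dd\bar c$ (opposite signs), so $d_1(d^{\aup},\tau)$ and $d_1(d^{\adown},\tau)$ have opposite signs and magnitudes at least $cc_1\ep$. Since $(1+\nu)/\und{\si}>1$, for $\ep$ small enough one has $cc_1\ep\ge\ep^{(1+\nu)/\und{\si}}$, and the intermediate value theorem applied to the continuous function $d_1(\cdot,\tau)$ on $\check I_1^\Lambda$ shows that its image contains $[-\ep^{(1+\nu)/\und{\si}},\ep^{(1+\nu)/\und{\si}}]$. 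Hence the points $\check A^{\Lambda,\pm}$ and the compact interval $J_1^\Lambda=[\underline{D_1^\Lambda},\overline{D_1^\Lambda}]$ of \eqref{hatAbis}--\eqref{J1ebisla} are well defined, $J_1^\Lambda\subset\check I_1^\Lambda$, and the image of $d_1(\cdot,\tau)$ on $J_1^\Lambda$ still contains $[-\ep^{(1+\nu)/\und{\si}},\ep^{(1+\nu)/\und{\si}}]$.

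\textbf{Regularity, chaos property, and the size of $\al_{k_1}$.} Both $\TTT_1(\cdot,\tau)$ and $d_1(\cdot,\tau)$ are $C^r$ by Theorem \ref{key}, hence so are their restrictions to $J_1^\Lambda$. The inclusion $\TTT_1(J_1^\Lambda)\subset[\aup_{k_1},\adown_{k_1}]$ follows from $J_1^\Lambda\subset\check I_1^\Lambda$, and together with $\aup_{k_1}>T_{2k_1}-\l1$ and $\adown_{k_1}<T_{2k_1}+\l1$ it yields $|\al_{k_1}^{\ee^+}|=|\TTT_1(d,\tau)-T_{2k_1}|\le\l1$. The chaos property $\bs{C^+_{\ee^+}}$ on $[\tau,\TTT_1(d,\tau)]\supset[\tau,T_{2k_1}-\l1]$ is then exactly the content of Lemma \ref{l.step1bis}. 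The step I expect to require the most care is the inclusion $\TTT_1(J_1^\Lambda)\subset[\aup_{k_1},\adown_{k_1}]$: if $\TTT_1(\cdot,\tau)$ is non-monotone on $\check I_1^\Lambda$, one has to choose $\check I_1^\Lambda$ as the innermost component on which surjectivity onto $[\aup_{k_1},\adown_{k_1}]$ holds, and verify, using the minimality clause in \eqref{hatAbis}, that the points $\check A^{\Lambda,\pm}$ actually lie between $d^{\aup}$ and $d^{\adown}$; this is precisely the analogue, in the present weaker setting, of the bookkeeping already carried out in Remark \ref{closest} and Lemma \ref{ep1}.
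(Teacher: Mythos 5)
Your proposal is correct and follows essentially the same route as the paper: Lemma \ref{key2bis} plus the intermediate value theorem to pin down the time map, Lemma \ref{key1} with $3c_1=\dd\bar c$ applied at the preimages of $\aup_{k_1}$ and $\adown_{k_1}$ (where \eqref{minimal} gives opposite signs of $\MM$) to cover $[-\ep^{(1+\nu)/\und{\si}},\ep^{(1+\nu)/\und{\si}}]$ with $d_1$, then the trimming of \eqref{hatAbis}--\eqref{J1ebisla} and Lemma \ref{l.step1bis}. The one point where you depart from the text is that you require $\check I_1^\Lambda$ to surject onto $[\aup_{k_1},\adown_{k_1}]$ rather than onto the larger window $[T_{2k_1}-\l1,T_{2k_1}+\l1]$ used in \eqref{I1hatbis}; this tighter choice is in fact what the stated codomain $\TTT_1(J_1^\Lambda)\subset[\aup_{k_1},\adown_{k_1}]$ requires, and it is consistent with the step-$n$ construction \eqref{Inhatbis}, so your bookkeeping here is the right one.
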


Iterating the argument we obtain the following.
\begin{proposition}\label{p.fwdnbis}
Let $\tau \in [b_0,b_1]$,  $\TT^+=(T_j^+) $ and $\ee^+ \in \EE^+$ be fixed as in Theorem \ref{restatefuture};
  then for any $n \le  j^+_\SS(\ee^+)$ (resp.\ for any $n \in \N$ if $\ee^+ \in \hat{\EE}^+$, see \eqref{efuture}),
there is a compact interval $J_n^{\Lambda}= J_n^{\Lambda,\ee^+}(\tau,\TT^+)$ such that the function
$$ \TTT_n(\cdot ,\tau): J_n^{\Lambda}\to  [\aup_{k_n}, \adown_{k_n}]$$
is well defined and $C^r$, while the function
$$ d_n(\cdot ,\tau): J^{\Lambda}_n \to  \R$$
is $C^r$ and its image contains $[-\ep^{(1+\nu)/ \und{\si}} , \ep^{(1+\nu)/ \und{\si}}]$.

Further, if we set    $\al_{k_n}^{\ee^+}(\ep)=\TTT_n(d,\tau)-\SS_{n}$,  the trajectory $\x(t,\tau; \Q_s(d,\tau))$ satisfies $\bs{C^+_{\ee^+}}$
for any $t \in [\tau,  \TTT_n(d,\tau)]$  and $|\al_{k_n}^{\ee^+}| \le \l1$.
\end{proposition}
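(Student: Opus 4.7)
The plan is to proceed by induction on $n$, following the blueprint of the argument for Proposition \ref{p.fwdn} in the previous subsection, but replacing the role of the gap $B_{2k_j}$ by the uniform constant $\l1$ provided by \eqref{minimal}. The base case $n=1$ is precisely Proposition \ref{p.step1bis}. For the inductive step, suppose $J^{\Lambda}_{n-1}$ has already been constructed with all the stated properties; in particular $d_{n-1}(\cdot,\tau):J^{\Lambda}_{n-1}\to\R$ is $C^r$ with image containing $[-\ep^{(1+\nu)/\und{\si}},\ep^{(1+\nu)/\und{\si}}]$ and $\TTT_{n-1}(\cdot,\tau):J^{\Lambda}_{n-1}\to[\aup_{k_{(n-1)}},\adown_{k_{(n-1)}}]$ is well-defined and $C^r$.

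First I would introduce the analogue of \eqref{In} with $B_{2k_n}$, $B_{2k_{(n-1)}}$ both replaced by $\l1$, setting
$$I_n^{\Lambda}:=\Bigl\{d\in J^{\Lambda}_{n-1}\mid \exp\bigl(-\tfrac{5(\Delta_n+2\l1)}{2\sTfwd}\bigr)\le d_{n-1}(d,\tau)\le \ep^{(1+\nu)/\und{\si}}\Bigr\}.$$
Using \eqref{defDejbis} in place of \eqref{defDej}, the very same computation as in Lemma \ref{key2-stepn} shows that $\TTT_n(\cdot,\tau)$ takes values both below $\SS_n-\l1$ and above $\SS_n+\l1$ on $I_n^{\Lambda}$. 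Hence the closed subset
$$\hat I_n^{\Lambda}:=\{d\in I_n^{\Lambda}\mid \aup_{k_n}\le \TTT_n(d,\tau)\le \adown_{k_n}\}$$
is nonempty (since $[\aup_{k_n},\adown_{k_n}]\subset[\SS_n-\l1,\SS_n+\l1]$ by \eqref{minimal}), and exactly as in Remark \ref{closest} I select the compact subinterval $\check I_n^{\Lambda}$ closest to $0$ on which $\TTT_n(\cdot,\tau)$ surjects onto $[\aup_{k_n},\adown_{k_n}]$. By \assump{P1} and \eqref{minimal} the Melnikov function $\MM$ attains values of both signs, with absolute value at least $\dd\bar c$, at the endpoints $\aup_{k_n}$ and $\adown_{k_n}$, so Lemma \ref{key1} applied to the two extreme points of $\check I_n^{\Lambda}$ yields points where $d_n(\cdot,\tau)=\pm\ep^{(1+\nu)/\und{\si}}$, exactly as in Lemma \ref{ep1bis}/Remark \ref{epn}. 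I then take $J_n^{\Lambda}$ to be the subinterval of $\check I_n^{\Lambda}$ analogous to \eqref{Jne}, so that $d_n(\cdot,\tau)$ is $C^r$ on $J_n^{\Lambda}$ and its image contains $[-\ep^{(1+\nu)/\und{\si}},\ep^{(1+\nu)/\und{\si}}]$, while $\TTT_n(\cdot,\tau)$ remains surjective onto $[\aup_{k_n},\adown_{k_n}]$.

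Finally, to verify property $\bs{C^+_{\ee^+}}$ on $[\tau,\TTT_n(d,\tau)]$ for $d\in J^{\Lambda}_n$, I would invoke the inductive hypothesis on $[\tau,\TTT_{n-1}(d,\tau)]$ and then repeat verbatim the argument of Lemma \ref{l.stepn} on the remaining interval $[\TTT_{n-1}(d,\tau),\TTT_n(d,\tau)]$: the two cases $k_n=k_{(n-1)}+1$ and $k_n>k_{(n-1)}+1$ are handled by combining Proposition \ref{forPropC} on the ``entering'' and ``leaving'' portions with Lemma \ref{forgotten.origin} in the middle, using the analogue of Remark \ref{timen.missed} to place $T_{2k_{(n-1)}+1}$ and $T_{2k_n-1}$ inside $[\TTT_{n-1}(d,\tau)+T_b,\TTT_n(d,\tau)-T_a]$. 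This analogue is where the sharper bound enters: since by induction $|\alpha_{k_{(n-1)}}|\le\l1$, and \eqref{TandKnu} gives $T_{j+1}-T_j>\l1+K_0(1+\nu)|\ln(\ep)|$, the same chain of estimates as in Remark \ref{timen.missed} (with $B_{2k_{(n-1)}}$ replaced by $\l1$) goes through. The bound $|\alpha_{k_n}^{\ee^+}|\le\l1$ is then immediate from $\TTT_n(d,\tau)\in[\aup_{k_n},\adown_{k_n}]$ together with the third line of \eqref{minimal}.

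The main obstacle, if any, is really bookkeeping: ensuring that at each induction step the ``closest-to-$0$'' selection in the style of Remark \ref{closest} is done consistently and that the image of $\TTT_n(\cdot,\tau)$ is contained in the genuinely small interval $[\aup_{k_n},\adown_{k_n}]$ rather than merely in $[\beta_{2k_n},\beta'_{2k_n}]$, because this is precisely what yields the uniform bound $|\alpha_{k_n}|\le\l1$ independent of $j$. No new analytic estimate is needed beyond Theorem \ref{key}, Lemma \ref{key1}, Proposition \ref{forPropC} and Lemma \ref{forgotten.origin}; the whole argument is a clean adaptation of \S\ref{proof.future} with $B_{2k_j}\leadsto\l1$.
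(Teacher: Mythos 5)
Your proposal is correct and follows essentially the same route as the paper: induction with base case Proposition \ref{p.step1bis}, the inductive step built on $I_n^{\Lambda}$ with lower endpoint $\exp\bigl(-\tfrac{5(\Delta_n+2\l1)}{2\sTfwd}\bigr)$, the surjectivity of $\TTT_n(\cdot,\tau)$ onto $[\aup_{k_n},\adown_{k_n}]$ via the analogue of Lemma \ref{key2-stepn}, the ``closest-to-$0$'' selection of $\check I_n^{\Lambda}$, Lemma \ref{key1} to produce $d_n=\pm\ep^{(1+\nu)/\und{\si}}$, and the verification of $\bs{C^+_{\ee^+}}$ by the argument of Lemma \ref{l.stepn} with $B_{2k_j}$ replaced by $\l1$. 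This matches the paper's Lemmas \ref{key2-stepnbis}, \ref{forgottentimesnbis} and \ref{l.stepnbis} and its concluding proof of the proposition, including the final bound $|\al_{k_n}^{\ee^+}|\le\adown_{k_n}-\aup_{k_n}\le\l1$.
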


For the proof of Proposition \ref{p.fwdnbis} we proceed again by induction: the $n=1$ case
 follows from Proposition \ref{p.step1bis}, so we show that the step $n$  follows from the step $n-1$.
Assume that $J^{\Lambda}_{n-1}$ is well defined.
Then we set $D^{\Lambda,n}_\# := \textrm{exp}\left(\textstyle\frac{-5 (\Delta_n+2 \l1) }{2\sTfwd}\right)$ and
\begin{equation}\label{Inbis}
    I^{\Lambda}_n= I^{\Lambda, \ee^+}_n(\tau,\TT^+):=
  \left\{ d \in J^{\Lambda}_{n-1} \mid    D^{\Lambda, n}_\#   \le d_{n-1}(d,\tau)    \le \ep^{(1+\nu)/ \und{\si}} \right\}.
\end{equation}
For any $d \in I^\Lambda_n$ we define $d_{n}(d,\tau)$, $\TTT_{n}(d,\tau)$, and $\TTT_{n-\frac{1}{2}}(d,\tau)$  as in
 \eqref{Tn},
so that $\TTT_n(d,\tau)>\TTT_{n-\frac{1}{2}}(d,\tau)>\TTT_{n-1}(d,\tau)$.

Then, as in Lemma \ref{key2-stepn} we prove the following.

 \begin{lemma}\label{key2-stepnbis}
  Let $\tau \in\R$. Then there are $D_a^{\Lambda,n}, D_b^{\Lambda,n} \in   I^{\Lambda}_n$ such that
 \begin{equation*}
   \begin{split}
    \TTT_n(D_a^{\Lambda,n},\tau) =     \SS_n+\l1 \,,\qquad \qquad  \TTT_n(D_b^{\Lambda,n},\tau) =    \SS_n-\l1.
   \end{split}
 \end{equation*}
 Hence the image of the function $\TTT_n(\cdot, \tau) :  I^{\Lambda}_n \to \R$
 contains the closed interval $[\aup_{k_n} , \adown_{k_n}]$.
\end{lemma}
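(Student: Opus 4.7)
The argument is the exact analog of the proof of Lemma \ref{key2-stepn}, with the sequence $B_{2k_j}$ replaced systematically by the constant $\l1$ and using the lower bound \eqref{defDejbis} on $\Delta_n$ in place of \eqref{defDej}. I will follow the same three-step template: produce two distinguished points $D^{\Lambda,n}_A, D^{\Lambda,n}_B \in I^{\Lambda}_n$, estimate $\TTT_n$ on them from opposite sides of $\SS_n$, and then close the proof by continuity.

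First, using the inductive version of Proposition \ref{p.fwdnbis}, the map $d_{n-1}(\cdot,\tau):J^{\Lambda}_{n-1}\to\R$ is continuous with image containing $[-\ep^{(1+\nu)/\und{\si}},\ep^{(1+\nu)/\und{\si}}]$. A quick check—identical to \eqref{Delta1} but using $\l1$ in place of $B_{2k_n}$ and invoking \eqref{defDejbis} with $K_0=3\overline{\Sigma}/(2\und{\si})$—shows that $D^{\Lambda,n}_{\#}\le \ep^{(1+\nu)/\und{\si}}$. Hence I can pick $D^{\Lambda,n}_A$ with $d_{n-1}(D^{\Lambda,n}_A,\tau)=D^{\Lambda,n}_{\#}$ and $D^{\Lambda,n}_B$ with $d_{n-1}(D^{\Lambda,n}_B,\tau)=\ep^{(1+\nu)/\und{\si}}$. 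Both points lie in $I^{\Lambda}_n$ by construction, and for both we have $\TTT_{n-1}(\cdot,\tau)\in[\aup_{k_{n-1}},\adown_{k_{n-1}}]$, which by \eqref{minimal} satisfies $\SS_{n-1}-\l1<\aup_{k_{n-1}}$ and $\adown_{k_{n-1}}<\SS_{n-1}+\l1$.

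Next, I apply Theorem \ref{key} to the representation $\TTT_n(d,\tau)=\TTT_1(d_{n-1}(d,\tau),\TTT_{n-1}(d,\tau))$ as in the smooth case. For $D^{\Lambda,n}_A$, since $\mu\le\mu_0\le\sTfwd/2$, the lower bound reads
\[
\TTT_n(D^{\Lambda,n}_A,\tau)-\TTT_{n-1}(D^{\Lambda,n}_A,\tau)\ge (\sTfwd-\mu)|\ln D^{\Lambda,n}_{\#}|\ge \tfrac{5}{4}(\Delta_n+2\l1),
\]
which combined with $\TTT_{n-1}(D^{\Lambda,n}_A,\tau)>\SS_{n-1}-\l1$ gives $\TTT_n(D^{\Lambda,n}_A,\tau)>\SS_n+\tfrac{\Delta_n}{4}+\tfrac{3}{2}\l1>\SS_n+\l1$. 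For $D^{\Lambda,n}_B$, the upper bound in Theorem \ref{key} and the inequality $K_0\ge 3\sTfwd/(2\und{\si})$ yield
\[
\TTT_n(D^{\Lambda,n}_B,\tau)-\TTT_{n-1}(D^{\Lambda,n}_B,\tau)\le (\sTfwd+\mu)\tfrac{1+\nu}{\und{\si}}|\ln\ep|\le K_0(1+\nu)|\ln\ep|,
\]
so together with $\TTT_{n-1}(D^{\Lambda,n}_B,\tau)<\SS_{n-1}+\l1$ and \eqref{defDejbis} we conclude $\TTT_n(D^{\Lambda,n}_B,\tau)<\SS_{n-1}+\l1+K_0(1+\nu)|\ln\ep|<\SS_n-\l1$.

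Finally, the existence of $D^{\Lambda,n}_a, D^{\Lambda,n}_b\in I^{\Lambda}_n$ with $\TTT_n(D^{\Lambda,n}_a,\tau)=\SS_n+\l1$ and $\TTT_n(D^{\Lambda,n}_b,\tau)=\SS_n-\l1$, together with the claim that the image of $\TTT_n(\cdot,\tau)$ on $I^{\Lambda}_n$ contains the whole closed interval $[\aup_{k_n},\adown_{k_n}]\subset[\SS_n-\l1,\SS_n+\l1]$, follows by continuity of $\TTT_n(\cdot,\tau)$ combined with the intermediate value theorem. The only subtle point—the possible disconnectedness of $I^{\Lambda}_n$—is handled exactly as in Lemma \ref{key2-stepn}: one restricts to a connected sub-arc of $J^{\Lambda}_{n-1}$ along which $d_{n-1}(\cdot,\tau)$ varies monotonically between $D^{\Lambda,n}_{\#}$ and $\ep^{(1+\nu)/\und{\si}}$, which exists by the intermediate value theorem applied to the continuous map $d_{n-1}(\cdot,\tau)$ on the interval $J^{\Lambda}_{n-1}$. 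This is the only real technical wrinkle; otherwise the proof is a mechanical transcription of Lemma \ref{key2-stepn} with the substitution $B_{2k_j}\leadsto\l1$.
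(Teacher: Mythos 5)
Your proposal is correct and follows exactly the route the paper intends: the paper gives no separate proof of Lemma \ref{key2-stepnbis} but simply invokes the argument of Lemma \ref{key2-stepn}, and your transcription with $B_{2k_j}\leadsto\Lambda^1$, the bound \eqref{defDejbis} in place of \eqref{defDej}, and the inclusion $[\aup_{k_n},\adown_{k_n}]\subset[\SS_n-\Lambda^1,\SS_n+\Lambda^1]$ reproduces it faithfully (your explicit treatment of the connected sub-arc for the intermediate value theorem is, if anything, slightly more careful than the original).
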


 Then, similarly to \eqref{Inhat} we define
\begin{equation}\label{Inhatbis}
\begin{split}
\hat{I}^{\Lambda}_n= \hat{I}_n^{\Lambda,\ee^+}(\tau,\TT^+):= &   \left\{ d \in I^{\Lambda}_{n}  \mid
  \aup_{k_n} \le \TTT_{n}(d,\tau) \le \adown_{k_n}   \right\},
\end{split}
\end{equation}
which is closed, non-empty and
 \begin{equation}\label{propnbis}
 \begin{split}
 &  \TTT_n(\cdot,\tau): \hat{I}^{\Lambda}_n \to [\aup_{k_n} , \adown_{k_n}]   \qquad \textrm{is surjective.}
\end{split}
 \end{equation}
Reasoning as in Remark \ref{closest} we denote by $\check{I}^{\Lambda}_n=\check{I}_n^{\Lambda,\ee^+}(\tau,\TT^+)$ the
 ``closed interval closest to $0$'' having  property  \eqref{propnbis}.

 Arguing as in Lemma \ref{ep1bis}, we obtain the following.
\begin{remark}\label{epnbis}
  There are $A^{\Lambda,-}_n, A^{\Lambda,+}_n \in \check{I}^{\Lambda}_n $ such that $d_n(A^{\Lambda,-}_n,\tau)=-\ep^{(1+\nu)/\und{\si}}$ and
  $d_n(A^{\Lambda,+}_n,\tau)=\ep^{(1+\nu)/\und{\si}}$.
\end{remark}
Then we set
\begin{equation}\label{hatAnbis}
  \begin{array}{l}
    \check{A}^{\Lambda,-}_n:= \min \{ d \in \check{I}_n^{\Lambda,\ee^+} \mid  d_n(d,\tau)= -\ep^{(1+\nu)/\und{\si}}\}, \\
    \check{A}^{\Lambda,+}_n:= \min \{ d \in \check{I}_n^{\Lambda,\ee^+} \mid  d_n(d,\tau)= \ep^{(1+\nu)/\und{\si}}\} ,\\
    \underline{D^\Lambda_n}= \min \{ \check{A}^{\Lambda,-}_n; \check{A}^{\Lambda,+}_n \},
     \qquad \overline{D^\Lambda_n}= \max \{ \check{A}^{\Lambda,-}_n; \check{A}^{\Lambda,+}_n \},
  \end{array}
\end{equation}
and   we denote by
 \begin{equation}\label{Jnebis}
  J_n^{\Lambda}=J_n^{\Lambda, \ee^+}(\tau,\TT^+):=[\underline{D^\Lambda_n},\overline{D^\Lambda_n}]  \subset \check{I}^{\Lambda}_n
  \subset J_{n-1}^{\Lambda}.
\end{equation}

Further, applying Lemma \ref{forgottentimes}, we get the following.
\begin{lemma}\label{forgottentimesnbis}
  Let $d \in J_n^{\Lambda}$  and $c^*>0$ be as in Proposition \ref{forPropC}. Then
  \begin{eqnarray}
    &    \TTT_{n-1}(d,\tau) +T_b  \le  \TTT_{\frac{1}{2}}(d_{n-1}(d,\tau),\TTT_{n-1}(d,\tau)) \le \TTT_n(d,\tau)-T_a,  \label{Thalfnabis}   \\
    &     \|x(t,\tau ; \Q_s(d,\tau))\| \le  \frac{3}{4}  c^* \ep, \quad  \forall t \in [\TTT_{n-1}(d,\tau) +T_b
    ,\TTT_n(d,\tau)-T_a],\label{Thalfnbbis}\\
   & \{ T_{2k_{(n-1)}+1} , T_{2k_n-1}  \} \subset [\TTT_{n-1}(d,\tau)+ T_b ,  \TTT_n(d,\tau)-T_a].
  \end{eqnarray}
\end{lemma}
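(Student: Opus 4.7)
The plan is to mirror the proof of Lemma \ref{forgottentimesn}, simply replacing the bound $B_{2k_j}$ by the uniform value $\l1$ throughout, and then to handle the third (new) inclusion by combining the inductive bound $|\TTT_m(d,\tau)-\SS_m|\le \l1$ supplied by Proposition \ref{p.fwdnbis} with the gap condition \eqref{TandKnu}.

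First I would invoke the flow identity
$$\x(t,\tau;\Q_s(d,\tau))\equiv \x(t,\TTT_{n-1}(d,\tau);\Q_s(d_{n-1}(d,\tau),\TTT_{n-1}(d,\tau)))$$
which holds by the very definition of $d_{n-1}$ and $\TTT_{n-1}$. Combined with the definitions in \eqref{Tn}, this reduces the inequalities \eqref{Thalfnabis} and \eqref{Thalfnbbis} to Lemma \ref{forgottentimes} applied at the starting time $\TTT_{n-1}(d,\tau)$ and radius $d_{n-1}(d,\tau)$. Since $d\in J_n^{\Lambda}\subset I_n^{\Lambda}$, the definition \eqref{Inbis} gives $d_{n-1}(d,\tau)\le \ep^{(1+\nu)/\und{\sigma}}$, so $d_{n-1}(d,\tau)\in J_0$ and Lemma \ref{forgottentimes} applies verbatim, yielding both the position of $\TTT_{\frac{1}{2}}(d_{n-1}(d,\tau),\TTT_{n-1}(d,\tau))$ inside the interval $[\TTT_{n-1}(d,\tau)+T_b,\TTT_n(d,\tau)-T_a]$ and the uniform bound $\|\x(t,\tau;\Q_s(d,\tau))\|\le \tfrac{3}{4}c^*\ep$ on that interval.

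For the third inclusion I would use the inductive bounds $|\TTT_{n-1}(d,\tau)-\SS_{n-1}|\le \l1$ and $|\TTT_n(d,\tau)-\SS_n|\le \l1$ together with $\SS_{n-1}=T_{2k_{(n-1)}}$, $\SS_n=T_{2k_n}$ and the gap bound \eqref{TandKnu}, which gives $T_{2k_{(n-1)}+1}-T_{2k_{(n-1)}}\ge \l1+K_0(1+\nu)|\ln(\ep)|$. Hence
$$T_{2k_{(n-1)}+1}\ge T_{2k_{(n-1)}}+\l1+K_0(1+\nu)|\ln(\ep)|\ge \TTT_{n-1}(d,\tau)+K_0(1+\nu)|\ln(\ep)|,$$
and the lower endpoint follows from Remark \ref{rem:TaTb} and \eqref{defK0-new}, which give $K_0(1+\nu)|\ln(\ep)|\ge T_b$. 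The symmetric argument applied to $T_{2k_n-1}\le T_{2k_n}-\l1-K_0(1+\nu)|\ln(\ep)|$ yields $T_{2k_n-1}\le \TTT_n(d,\tau)-T_a$. The only mild point to watch — not a real obstacle — is that the estimate $|\TTT_m-\SS_m|\le \l1$ is itself part of the conclusion of Proposition \ref{p.fwdnbis}, so one must invoke the present lemma strictly at step $n$ of that induction, relying only on the step $n-1$ information; since Lemma \ref{forgottentimes} and Proposition \ref{forPropC} are the only other ingredients used, no circularity arises.
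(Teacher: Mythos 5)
Your proposal is correct and follows essentially the same route as the paper, which proves this lemma only implicitly by the substitution $B_{2k_j}\mapsto\Lambda^1$ in the proofs of Lemma \ref{forgottentimesn} and Remark \ref{timen.missed}, exactly as you describe. Your remark on avoiding circularity is also sound, since the bound $|\TTT_m(d,\tau)-\SS_m|\le\Lambda^1$ used for the third inclusion comes directly from the membership $\TTT_m(d,\tau)\in[\aup_{k_m},\adown_{k_m}]$ built into the definitions of $\hat{I}_m^{\Lambda}$ and $J_m^{\Lambda}$, not from the full conclusion of Proposition \ref{p.fwdnbis}.
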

Then, arguing as in Lemma \ref{l.stepn} we find the following.
  \begin{lemma}\label{l.stepnbis}
  Whenever $d \in J_n^{\Lambda}$ the trajectory $\x(t,\tau; \Q_s(d,\tau))$ satisfies $\bs{C^+_{\ee^+}}$ for any $\tau \le t \le
  \TTT_n(d,\tau)$,
  i.e. for any $\tau \le t \le \SS_n +\al_{k_n}$.
\end{lemma}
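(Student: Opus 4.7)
The plan is to mimic the proof of Lemma \ref{l.stepn} essentially verbatim, with the substitution of $\l1$ for $B_{2k_j}$ in all the time-gap bounds, exploiting the fact that Lemma \ref{forgottentimesnbis} has already packaged all the key timing information needed. I will proceed by induction on $n$, with the base case $n=1$ furnished by Lemma \ref{l.step1bis} (equivalently, Proposition \ref{p.step1bis}).

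For the inductive step, I fix $n \ge 2$ and $d \in J_n^{\Lambda} \subset J_{n-1}^{\Lambda}$. The inductive hypothesis yields that $\x(t,\tau;\Q_s(d,\tau))$ satisfies $\bs{C^+_{\ee^+}}$ for every $\tau \le t \le \TTT_{n-1}(d,\tau)$, so I only need to extend the property to the interval $[\TTT_{n-1}(d,\tau),\TTT_n(d,\tau)]$. By Lemma \ref{forgottentimesnbis}, both $T_{2k_{(n-1)}+1}$ and $T_{2k_n-1}$ lie in $[\TTT_{n-1}(d,\tau)+T_b,\TTT_n(d,\tau)-T_a]$, and the same holds for $\TTT_{n-\frac{1}{2}}(d,\tau)$ by \eqref{Thalfnabis}. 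Using the group property $\x(t,\tau;\Q_s(d,\tau))=\x(t,\TTT_{n-1}(d,\tau);\Q_s(d_{n-1}(d,\tau),\TTT_{n-1}(d,\tau)))$ (which follows by construction of $d_{n-1}$, $\TTT_{n-1}$ and the semigroup property of the flow of \eqref{eq-disc}), I can apply Proposition \ref{forPropC} centered at $\TTT_{n-1}(d,\tau)$ to obtain
\begin{equation*}
\|\x(t,\tau;\Q_s(d,\tau))-\ga(t-\SS_{n-1}-\al_{k_{(n-1)}})\| \le \tfrac{c^*}{2}\ep
\end{equation*}
for $t\in[\TTT_{n-1}(d,\tau),\TTT_{n-\frac{1}{2}}(d,\tau)]$, and the analogous estimate centered at $\SS_n+\al_{k_n}$ for $t\in[\TTT_{n-\frac{1}{2}}(d,\tau),\TTT_n(d,\tau)]$.

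I then split into the same two cases as in Lemma \ref{l.stepn}. If $k_n=k_{(n-1)}+1$ (two consecutive $1$s), the two Proposition \ref{forPropC} estimates directly give $\bs{C^+_{\ee^+}}$ on $[\TTT_{n-1}(d,\tau),\TTT_n(d,\tau)]$, since there are no ``null'' indices to handle in between. If instead $k_n>k_{(n-1)}+1$, then $\ee_j^+=0$ for $j\in\{k_{(n-1)}+1,\dots,k_n-1\}$, and I chain together: Proposition \ref{forPropC} on $[\TTT_{n-1}(d,\tau),\TTT_{n-1}(d,\tau)+T_b]$ combined with Lemma \ref{forgotten.origin} on $[\TTT_{n-1}(d,\tau)+T_b, T_{2k_{(n-1)}+1}]$ to get the approximation by $\ga(\cdot-\SS_{n-1}-\al_{k_{(n-1)}})$; the bound $\|\x(t,\tau;\Q_s(d,\tau))\|\le c^*\ep$ on the whole middle window $[T_{2k_{(n-1)}+1},T_{2k_n-1}]$ via \eqref{Thalfnbbis} in Lemma \ref{forgottentimesnbis}; and symmetrically the approximation by $\ga(\cdot-\SS_n-\al_{k_n})$ on $[T_{2k_n-1},\TTT_n(d,\tau)]$ using Lemma \ref{forgotten.origin} and Proposition \ref{forPropC}.

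The only ``obstacle'' is bookkeeping: ensuring that each pair of adjacent subintervals overlaps inside the common window $[\TTT_{n-1}(d,\tau)+T_b,\TTT_n(d,\tau)-T_a]$ where Lemma \ref{forgotten.origin} applies, and that the endpoints $T_b+\TTT_{n-1}(d,\tau)$, $T_{2k_{(n-1)}+1}$, $\TTT_{n-\frac{1}{2}}(d,\tau)$, $T_{2k_n-1}$, $\TTT_n(d,\tau)-T_a$ appear in the correct order. All of this is guaranteed by Lemma \ref{forgottentimesnbis}, so no new estimate is required; the proof reduces to invoking Lemma \ref{l.stepn}'s argument word for word with $B_{2k_j}$ replaced by $\l1$, which is legitimate because in the current setting \eqref{TandKnu} provides exactly the time-gap lower bound $\Delta_j \ge 2K_0(1+\nu)|\ln\ep|+2\l1$ needed to reproduce the Lemma \ref{forgottentimes}-type conclusions.
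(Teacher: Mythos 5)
Your proposal is correct and follows essentially the same route as the paper, which proves Lemma \ref{l.stepnbis} simply by the remark ``arguing as in Lemma \ref{l.stepn}'': your induction, the use of Lemma \ref{forgottentimesnbis} together with Proposition \ref{forPropC} and Lemma \ref{forgotten.origin}, and the case split on $k_n=k_{(n-1)}+1$ versus $k_n>k_{(n-1)}+1$ are exactly the intended adaptation, with $B_{2k_j}$ replaced by $\Lambda^1$ throughout.
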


If  $\ee^+ \in \hat{\EE}^+$ we set
\begin{equation}\label{Jinftyla}
\begin{split}
  &J_{+\infty}^{\Lambda} = J^{\Lambda,\ee^+}_{+\infty}(\tau, \TT^+):= \cap_{n=1}^{+\infty}J_n^{\Lambda,
  \ee^+}(\tau, \TT^+) , \\
   & a_*^\Lambda=a_*^{\Lambda,\ee^+}(\tau, \TT^+)= \min \{a \in J_{+\infty}^{\Lambda}\}.
  \end{split}
\end{equation}
Notice that  we are intersecting infinitely many nested compact intervals, so $J_{+\infty}^{\Lambda}$
is a  non-empty compact and connected set, so the minimum $a_*^{\Lambda}$ exists.

\begin{proof}[\textbf{Proof of Proposition \ref{p.fwdnbis}}]
By construction the functions  $ \TTT_n(\cdot ,\tau): J^{\Lambda}_n \to  [\aup_{k_n} , \adown_{k_n}]  $
 and
$ d_n^{\Lambda}(\cdot ,\tau): J_n^{\Lambda} \to  \R$
are $C^r$  and the image of the latter contains $[-\ep^{(1+\nu)/ \und{\si}} , \ep^{(1+\nu)/ \und{\si}}]$.

Further from Lemma \ref{l.stepnbis} we know that $\x(t,\tau; \Q_s(d,\tau))$ has property $\bs{C^+_{\ee^+}}$,
 and by construction $|\al_{k_n}^{\ee^+} | \le |a^{\downarrow}_{k_n}-a^{\uparrow}_{k_n}| \le \l1$.
\end{proof}

\begin{remark}
  We think that, by asking for some non-degeneracy of the zeros of the Melnikov function, it might be possible to show
  that $J_{+\infty}^{\Lambda}= \{d_* \}$ is a singleton if $\ee^+ \in \hat{\EE}^+$, as in the smooth case.
\end{remark}

We emphasize that Remark \ref{r.in2} holds in this setting, too.

 \begin{lemma}\label{estimate.aln}
Let  $\f^{\pm}$ and $\g$ be $C^r$ with $r   >1$ and $d \in J^{\Lambda}_{+\infty}=J^{\Lambda, \ee^+}_{+\infty}(\tau,\TT^+)$,
then $|\MM(\TTT_n(d,\tau))|
\le \omega_T(\ep)$ where $\omega_T(\cdot)$ is a continuous and strictly increasing
function such that
$\omega_T(0)=0$.

Further, if $r \ge 2$ then we can find $C_M>1$ such that $\omega_T(\ep)= C_M  \ep $.
\end{lemma}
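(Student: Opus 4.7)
The plan is to isolate $c\ep\MM(\TTT_n(d,\tau))$ from the recursive relation defining $d_n(d,\tau)$ and bound everything else by quantities vanishing with $\ep$. The two ingredients will be Remark~\ref{rMelni}, which rewrites $\dist(\P_s(\tau),\P_u(\tau))$ as $c\ep\MM(\tau)+\ep\omega(\tau,\ep)$ with $|\omega|\le\bar{\omega}(\ep)$, and Theorem~\ref{key}, which pins the Poincar\'e image $\PPP_1(d,\tau)$ at distance $\le d^{\sfwd-\mu}$ from $\P_u(\TTT_1(d,\tau))$.

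The first structural input I would exploit is that, for $d\in J^{\Lambda}_{+\infty}$ and every $n\ge 1$, the chain of inclusions $J^{\Lambda}_{+\infty}\subset J^{\Lambda}_n\subset I^{\Lambda}_n$ together with the defining constraints \eqref{I1bis}, \eqref{Inbis} force the positive upper bounds $d_{n-1}(d,\tau),\,d_n(d,\tau)\le \ep^{(1+\nu)/\und{\si}}$ (taking $d_0(d,\tau):=d$; the bound on $d_n$ uses membership in $I^{\Lambda}_{n+1}$). Writing $\PPP_n:=\PPP_1(d_{n-1}(d,\tau),\TTT_{n-1}(d,\tau))$, additivity of the directed distance along $\Om^0$ yields
$$d_n(d,\tau) \;=\; \DDD(\PPP_n,\P_u(\TTT_n(d,\tau)))\;+\;\DDD(\P_u(\TTT_n(d,\tau)),\P_s(\TTT_n(d,\tau))).$$
Theorem~\ref{key} bounds the first summand in absolute value by $d_{n-1}^{\sfwd-\mu}$, while Remark~\ref{rMelni} evaluates the second summand as $-c\ep\MM(\TTT_n(d,\tau))-\ep\omega(\TTT_n(d,\tau),\ep)$. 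Solving for the Melnikov term and inserting all bounds gives
$$c\ep|\MM(\TTT_n(d,\tau))|\;\le\; d_{n-1}^{\sfwd-\mu}+|d_n(d,\tau)|+\ep\bar{\omega}(\ep)\;\le\; \ep^{(1+\nu)(\sfwd-\mu)/\und{\si}}+\ep^{(1+\nu)/\und{\si}}+\ep\bar{\omega}(\ep).$$

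I would then set
$$\omega_T(\ep):=\tfrac{1}{c}\bigl(\ep^{(1+\nu)(\sfwd-\mu)/\und{\si}-1}+\ep^{(1+\nu)/\und{\si}-1}+\bar{\omega}(\ep)\bigr),$$
which is manifestly continuous, strictly increasing, and vanishes at $\ep=0$ once both exponents are strictly positive. The main obstacle will be this positivity check. The exponent $(1+\nu)/\und{\si}-1$ is $\ge 1$ because $\und{\si}<1$ and $\nu\ge\nu_0\ge 1$. For the other exponent, writing $\sfwd=\sfwd_+/\sbwd_-$ and $\und{\si}=\min\{\sfwd_+,\sbwd_-\}$, one obtains $\sfwd/\und{\si}\ge 1/\sbwd_->1$ irrespective of which term realizes $\und{\si}$, so shrinking $\mu\le\mu_0$ further if necessary guarantees $(1+\nu)(\sfwd-\mu)/\und{\si}>1$ for every $\nu\ge\nu_0$. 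The $r\ge 2$ refinement is then a corollary: Remark~\ref{rMelni} yields $\bar{\omega}(\ep)\le C_M\ep$, both power terms are of order $\ep^2$, and the estimate collapses to $|\MM(\TTT_n(d,\tau))|\le C_M'\ep$ after enlarging the constant, which is the claimed sharper bound.
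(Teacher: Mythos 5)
Your proof follows the paper's own route exactly: decompose $d_n(d,\tau)=\dist(\PPP_n,\P_s(\TTT_n(d,\tau)))$ by additivity of the directed distance into the Poincar\'e-map error $\dist(\PPP_n,\P_u(\TTT_n(d,\tau)))$ (controlled by Theorem~\ref{key} and the constraint $d_{n-1}(d,\tau)\le\ep^{(1+\nu)/\und{\si}}$ coming from $J^{\Lambda}_{+\infty}\subset I^{\Lambda}_n$) plus the Melnikov splitting (Remark~\ref{rMelni}), then solve for $c\ep\MM(\TTT_n(d,\tau))$. For the first claim ($\omega_T$ continuous, increasing, vanishing at $0$) your exponent checks suffice and the argument is correct.

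The one step that does not follow from what you establish is in the $r\ge2$ refinement: you assert that \emph{both} power terms are of order $\ep^2$, but for $\ep^{(1+\nu)(\sfwd-\mu)/\und{\si}}$ you only proved the exponent exceeds $1$. Your chain ($1+\nu\ge2$, $\sfwd/\und{\si}\ge\sfwd_-$, $\mu\le\tfrac14\und{\si}^2$) yields only $(1+\nu)(\sfwd-\mu)/\und{\si}\ge 2\sfwd_- -\tfrac12\und{\si}$, which can drop below $2$ when $\sfwd_-$ is close to $1$; then dividing by $c\ep$ leaves a term like $\ep^{1/2}$, not $O(\ep)$. The paper closes this by exploiting the precise definition \eqref{defK0-new}, namely $1+\nu\ge1+\nu_0\ge3\ov{\si}$, together with $\ov{\si}\,\sfwd_-\ge1$ and $\ov{\si}\,\und{\si}\le1$, which gives the exponent $\ge 9/4$; alternatively, your own device of shrinking $\mu$ (to below $\und{\si}(\sfwd_- -1)$) would also deliver exponent $\ge2$, but you invoked it only to reach $>1$. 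A second, cosmetic point: your $\omega_T$ depends on $\nu$ through the exponents, whereas the later results (e.g.\ the $\nu$-independence of $\omega_\alpha$ in Theorem~\ref{restatefuture}) need a $\nu$-free bound; since the exponents are $\ge2$ for all $\nu\ge\nu_0$ you should simply majorize both power terms by $\ep^2$, as the paper does.
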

\begin{proof}
Let $n \in \mathbb{N}$; since  $d \in J_{n}^\Lambda$ from Remark \ref{rMelni} we see that
\begin{equation}\label{dis.a}
\dist (\P_s(\TTT_n(d,\tau)), \P_u(\TTT_n(d,\tau)))= c \ep [\MM(\TTT_n(d,\tau))+\omega(\TTT_n(d,\tau),\ep)]
\end{equation}
where   $|\omega(\tau,\ep)| \le \bar{\omega}(\ep)$, and   $\bar{\omega}(\cdot)$ is a continuous and monotone increasing
function such that $\bar{\omega}(0)=0$.
Recall that, see \eqref{Q1T1},
\begin{gather*}
	\x(\TTT_n(d,\tau),\tau; \Q_s(d,\tau))
	=\x(\TTT_n(d,\tau),\TTT_1(d,\tau); \Q_s(d_1(d,\tau),\TTT_1(d,\tau)))
	=\cdots\\
	=\x(\TTT_n(d,\tau),\TTT_{n-1}(d,\tau); \Q_s(d_{n-1}(d,\tau),\TTT_{n-1}(d,\tau)))
	=\PPP_1(d_{n-1}(d,\tau), \TTT_{n-1}(d,\tau))
\end{gather*}
 and
$0<d_{n-1}(d,\tau) \le \ep^{\frac{1+\nu}{\und{\si}}}$ since $d \in J_n^{\Lambda} \subset I^{\Lambda}_{n}$ (see  \eqref{Inbis}).
Thus by Theorem \ref{key}, we find
\begin{equation}\label{dis.b}
\begin{split}
 & \dist (\x(\TTT_n(d,\tau),\tau; \Q_s(d,\tau)), \P_u(\TTT_n(d,\tau)))  \le   d_{n-1}(d,\tau)^{\sfwd - \mu}  \\ &
 \le\ep^{\frac{1+\nu}{\und{\si}}(\sfwd - \mu)} \le\ep^{\frac{1+\nu_0}{\und{\si}}(\sfwd - \mu_0)} = \ep^{2+\bar{\nu}}
\end{split}
\end{equation}
where $\bar{\nu}=\frac{1+\nu_0}{\und{\si}}(\sfwd - \mu_0)-2\geq
	\frac{3\bar{\si}}{\und{\si}}(\sfwd_+\sfwd_--\frac{1}{4}\und{\si}^2)-2
	\geq 3\bar{\si}(\sfwd_--\frac{1}{4}\und{\si})-2\geq 3(1-\frac{1}{4})-2=\frac{1}{4}>0$
cf.~\eqref{mu0} and \eqref{defsigma}.

Further by construction
\begin{equation}\label{dis.c}
\dist (\x(\TTT_n(d,\tau),\tau; \Q_s(d,\tau)), \P_s(\TTT_n(d,\tau))) = |d_n(d,\tau)| \le  \ep ^{(1+\nu)/\und{\si}}\le \ep^{1+\nu}.
\end{equation}
Hence, from \eqref{dis.a}, \eqref{dis.b}, \eqref{dis.c}, we find
\begin{equation*}
\begin{split}
   & c \ep| \MM(\TTT_n(d,\tau))+\omega(\TTT_n(d,\tau),\ep)|
   =  | \dist (\P_u(\TTT_n(d,\tau)),  \x(\TTT_n(d,\tau),\tau; \Q_s(d,\tau)))-\\
   &  -\dist (\P_s(\TTT_n(d,\tau)),  \x(\TTT_n(d,\tau),\tau; \Q_s(d,\tau)))|  \le  2 \ep^{2}.
   \end{split}
\end{equation*}
Thus we conclude setting $\omega_T(\ep)=   \bar{\omega}(\ep)+ \frac{2}{c} \ep$.

If $r \ge 2$ the lemma follows observing that there is $\bar{C}_M \ge 1$ such that $\bar{\omega}(\ep) \le \bar{C}_M \ep$
and setting $C_M=\bar{C}_M+ 2 /c$.
\end{proof}

 \begin{remark}
  From an analysis of the latter proof it follows that Lemma \ref{estimate.aln} applies not only to the setting of Theorem
  \ref{restatefuture}, but also to the setting of Theorem~\ref{restatebis}.
\end{remark}

 Now we proceed to estimate $|\al_{k_n}(\ep)|$,  see \eqref{defaln}. We recall that if  \eqref{TandKnunew} holds and we only
assume \assump{P1}, then we are just able to show that $|\al_{k_n}(\ep)|< B_{ 2k_n}$,  and the sequence $B_n$ might be unbounded.
However assuming some very weak non-degeneracy condition, even relaxing slightly \eqref{TandKnunew} in \eqref{TandKnu}, we can get  better
estimates.
In particular if we assume \eqref{minimal} then $|\al_{k_n}(\ep)|$ is bounded, and if we assume \eqref{isogen} and $\lzero=0$ then
$|\al_{k_n}(\ep)| \to 0$ as $\ep \to 0$.

\begin{lemma}\label{lemma.alphan}
 Assume \assump{P1} and consider a sequence $\TT=(T_n)$ satisfying \eqref{TandKnu}, \eqref{TandKnu_tau+}. Let $d \in
 J_{+\infty}=J_{+\infty}^{\ee^+}(\tau,\TT^+)$,
 then
 for any $n \in \mathbb{N}$ we have the following:
  \begin{enumerate}
   \item if \eqref{minimal} holds and $r>1$ then $|\al_{k_n}^{\ee^+}(\ep)| \le \l1$;
      \item if \eqref{isogen} holds and $r>1$ then there is a monotone increasing and continuous function $\omega_{\alpha}(\ep)$ such that
    $\omega_{\alpha}(0)=0$ and
    $|\al_{k_n}^{\ee^+}(\ep)| \le \lzero+\omega_{\alpha}(\ep)$. In particular if $\lzero=0$, i.e. \eqref{isolated}
    holds, then $|\al_{k_n}^{\ee^+}(\ep)| \to 0$ as $\ep \to 0$;
     \item if \eqref{nondeg} holds and $r \ge 2$ then $|\al_{k_n}^{\ee^+}(\ep)| \le
        c_{\alpha}\, \ep$, where   $c_{\alpha}:=\frac{2 C_M}{C}> 0$ and $C_M>1$ is as in Lemma \ref{estimate.aln}.
  \end{enumerate}
\end{lemma}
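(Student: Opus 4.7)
The plan is to exploit two inputs provided by the preceding results: the localisation $\TTT_n(d,\tau) \in [\aup_{k_n}, \adown_{k_n}]$ from Proposition \ref{p.fwdnbis}, and the smallness $|\MM(\TTT_n(d,\tau))| \le \omega_T(\ep)$ from Lemma \ref{estimate.aln}. Since by definition $\alpha_{k_n}^{\ee^+}(\ep) = \TTT_n(d,\tau) - T_{2k_n}$, controlling $|\alpha_{k_n}^{\ee^+}(\ep)|$ amounts to inverting the lower bounds that \eqref{isogen} and \eqref{nondeg} impose on $|\MM|$ close to each zero $T_{2k_n}$.

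Case 1 is immediate: since $\aup_{k_n} < T_{2k_n} < \adown_{k_n}$ and $\adown_{k_n} - \aup_{k_n} \le \l1$ by \eqref{minimal}, the inclusion $\TTT_n(d,\tau) \in [\aup_{k_n}, \adown_{k_n}]$ yields $|\alpha_{k_n}^{\ee^+}(\ep)| \le \l1$ at once.

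For case 2 assume \eqref{isogen}. If $|\alpha_{k_n}^{\ee^+}(\ep)| \le \lzero$ there is nothing to prove, so suppose $|\alpha_{k_n}^{\ee^+}(\ep)| = \lzero + h$ for some $h > 0$. Then $\TTT_n(d,\tau)$ lies in $[\aup_{k_n}, T_{2k_n} - \lzero]$ or in $[T_{2k_n} + \lzero, \adown_{k_n}]$, so \eqref{isogen} combined with Lemma \ref{estimate.aln} gives $\omega_M(h) \le |\MM(\TTT_n(d,\tau))| \le \omega_T(\ep)$. Since $\omega_M$ is continuous and strictly increasing on $[0,\l1]$ with $\omega_M(0)=0$, its inverse $\omega_M^{-1}$ is well defined on $[0,\omega_M(\l1)]$; shrinking $\ep_0$ so that $\omega_T(\ep_0) \le \omega_M(\l1)$, we set $\omega_\alpha(\ep) := \omega_M^{-1}(\omega_T(\ep))$ and obtain $h \le \omega_\alpha(\ep)$, hence $|\alpha_{k_n}^{\ee^+}(\ep)| \le \lzero + \omega_\alpha(\ep)$. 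The function $\omega_\alpha$ is continuous, monotone, vanishes at $0$, and is independent of $n$ because both $\omega_M$ (from \eqref{isogen}) and $\omega_T$ (from Lemma \ref{estimate.aln}) are; when $\lzero = 0$ this immediately gives $|\alpha_{k_n}^{\ee^+}(\ep)| \to 0$ as $\ep \to 0^+$.

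Case 3 follows from case 2 by specialising $\omega_M$ and $\omega_T$. Under \eqref{nondeg}, a Taylor expansion of $\MM$ around $T_{2j}$ combined with $|\MM'(T_{2j})| > C$ and the boundedness of $\MM''$ on $\R$ (which follows from the $C^r_b$ bounds on $\f^\pm, \g$ entering the definition \eqref{melni-disc}) produces $|\MM(T_{2j} + h)| \ge \tfrac{C}{2}|h|$ for $|h|$ smaller than some $h_0 > 0$ independent of $j$; thus \eqref{isolated} holds with $\omega_M(h) = \tfrac{C}{2}h$. Since $r \ge 2$, Lemma \ref{estimate.aln} further refines $\omega_T$ to $\omega_T(\ep) = C_M \ep$. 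Applying case 2 with these explicit functions yields
\[
|\alpha_{k_n}^{\ee^+}(\ep)| \le \omega_M^{-1}(C_M \ep) = \tfrac{2 C_M}{C}\ep = c_\alpha \ep,
\]
provided $\ep_0$ is taken so small that $C_M \ep_0 \le \tfrac{C}{2} h_0$. The only subtle point across the three cases is to make sure all the error bounds are uniform in $n$, but this uniformity is automatic since $\omega_M$, $\omega_T$, $C$, $C_M$ and $h_0$ are all independent of $n$; I do not expect any serious technical obstacle beyond bookkeeping these constants and the threshold $\ep_0$.
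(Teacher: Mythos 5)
Your proposal is correct and follows essentially the same route as the paper: case 1 is the immediate consequence of $\TTT_n(d,\tau)\in[\aup_{k_n},\adown_{k_n}]$, and case 2 combines \eqref{isogen} with Lemma \ref{estimate.aln} and inverts $\omega_M$ exactly as the paper does (you are in fact slightly more careful in checking that $\omega_T(\ep)$ lies in the range of $\omega_M$). For case 3 the paper expands $\MM$ to first order around $T_{2k_n}$ and absorbs the remainder directly, whereas you first upgrade \eqref{nondeg} to the uniform bound $|\MM(T_{2j}+h)|\ge \tfrac{C}{2}|h|$ and then invoke case 2 — the same idea the paper itself states after \eqref{nondeg}, so this is only a cosmetic difference.
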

\begin{proof}
In setting 1 by construction we see that $\TTT_n(d,\tau) \in [ \aup_{k_n}, \adown_{k_n}]$ hence
$|\al_{k_n}^{\ee^+}(\ep)| \le  \adown_{k_n}-\aup_{k_n} \le \l1$.

Now we consider
 setting 2;  assume first $\TTT_n(d,\tau) \in [T_{2 k_n}+\lzero, \adown_{k_n}]$ so that $\al_{k_n}^{\ee^+}(\ep)-\lzero\geq 0$.
 Then, from   \eqref{isogen}, we find
 $$|\MM(\TTT_n(d,\tau)  )|=|\MM(T_{2 k_n}+ \al_{k_n}^{\ee^+}(\ep))| \ge \omega_M(  \al_{k_n}^{\ee^+}(\ep)-\lzero ).$$
Moreover from Lemma \ref{estimate.aln} we find
$$|\MM(T_{2 k_n}+ \al_{k_n}^{\ee^+}(\ep))|= |\MM(T_{2 k_n}+
 \al_{k_n}^{\ee^+}(\ep))- \MM (T_{2 k_n})|  \le   \omega_T(\ep).$$
So, setting $ \omega_{\alpha}(\ep)=\omega_M^{-1}[\omega_T(\ep)]$ we find
$|\al_{k_n}^{\ee^+}(\ep)| \le \lzero+\omega_{\alpha}(\ep)$.
  Since
$\omega_T(\ep)$ is independent of $n$,   the same holds for $\omega_{\alpha}(\ep)$ as well.
The proof when  $\TTT_n(d,\tau) \in [\aup_n, T_{2 k_n}-\lzero]$ is analogous and it is omitted.
 Further  when $\TTT_n(d,\tau) \in [ T_{2 k_n}-\lzero , T_{2 k_n}+\lzero]$ we get $|\al_{k_n}^{\ee^+}(\ep)| \le \lzero$,
and the claim follows.

In setting 3, using Lemma \ref{estimate.aln} we find
$$
\left|\MM'(T_{2 k_n}) \frac{\al_{k_n}^{\ee^+}(\ep)}{2}  \right|\le |\MM'(T_{2 k_n}) \al_{k_n}^{\ee^+}(\ep) + o( \al_{k_n}^{\ee^+}(\ep))|
=|\MM(\TTT_n(d,\tau))| \le C_M\ep.
$$
Since $|\MM'(T_{2 k_n})|>C$, the claim follows  setting  $c_{\alpha}=\frac{2 C_M}{C}$.
\end{proof}

 From the argument of the proof of Lemma \ref{lemma.alphan} we also get the following.
 \begin{remark}
          Assume that there is $k$ such that $|\MM'(T_{2k})|=C>0$, then there is $c_{\al}^k>0$ (independent of $\ee^+$ and $\ep$)
          such that
          $|\al_{k}^{\ee^+}| \le c_{\al}^k \ep$ for any $\ee^+$ such that $\ee^+_k=1$.
          \\
          Further if there is $j$ such that  $2j+1\leq r$,  $\MM^{(l)}(T_{2k})=0$ for any $l=1, \ldots, 2j$ and
          $|\MM^{(2j+1)}(T_{2k})|=C>0$
          then there is $c_{\al}^k>0$ (independent of $\ee^+$ and $\ep$)
          such that
          $|\al_{k}^{\ee^+}| \le c_{\al}^k \ep^{1/(2j+1)}$ for any $\ee^+$ such that $\ee^+_k=1$.
        \end{remark}

\begin{proposition}\label{est.al}
 Theorem \ref{restatefuture} holds for any $\ee^+ \in \hat{\EE}^+$.
\end{proposition}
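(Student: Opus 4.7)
The plan is to pass to the limit in the finite-$n$ construction of Proposition \ref{p.fwdnbis} and combine it with the quantitative estimates of Lemma \ref{lemma.alphan}. Fix $\ee^+\in\hat{\EE}^+$, $\tau\in[b_0,b_1]$ and a sequence $\TT^+=(T_j)$ satisfying \eqref{TandKnu}, \eqref{TandKnu_tau+}. As in \eqref{Jinftyla}, set $J^\Lambda_{+\infty}:=\bigcap_{n\ge 1}J^{\Lambda,\ee^+}_n(\tau,\TT^+)$. By Proposition \ref{p.fwdnbis} each $J^\Lambda_n$ is a non-empty compact interval with $J^\Lambda_n\supset J^\Lambda_{n+1}$, so $J^\Lambda_{+\infty}$ is a non-empty compact interval. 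Define
\begin{equation*}
X^+(\ee^+,\tau,\TT^+):=\{\Q_s(d,\tau)\mid d\in J^\Lambda_{+\infty}\},
\end{equation*}
which is compact by continuity of $\Q_s(\cdot,\tau)$.

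Next I would verify that $\TTT_n(d,\tau)\to+\infty$ as $n\to\infty$ for every $d\in J^\Lambda_{+\infty}$. This is immediate from Proposition \ref{p.fwdnbis}, because $\TTT_n(d,\tau)\in[\aup_{k_n},\adown_{k_n}]\subset[\beta_{2k_n},\beta'_{2k_n}]$ and $k_n\to+\infty$ (since $\ee^+\in\hat{\EE}^+$), while $\beta_{2k_n}=b_{n_{2k_n}}\to+\infty$ by \assump{P1}. Consequently, Lemma \ref{l.stepnbis} applied to every $n$ yields that $\x(t,\tau;\Q_s(d,\tau))$ satisfies property $\bs{C^+_{\ee^+}}$ on the whole half-line $t\ge\tau$: the relations \eqref{ej+=1w} and \eqref{ej+=0w} are produced, with $\alpha_j=\alpha_j^{\ee^+}(\ep)$ coinciding, for $j=k_n$, with $\TTT_n(d,\tau)-\SS_n$, while for indices $j\notin\{k_n\}_n$ (i.e.\ $\ee^+_j=0$) the origin-closeness \eqref{ej+=0w} follows from Lemma \ref{forgotten.origin} applied on each interval $[\TTT_{n-1}(d,\tau)+T_b,\TTT_n(d,\tau)-T_a]$, combined with Remark \ref{timen.missed} which guarantees $[T_{2j-1},T_{2j+1}]$ sits inside this interval whenever $k_{n-1}<j<k_n$. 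The initial segment estimate \eqref{ej+=tau} follows from Proposition \ref{forPropC} on $[\tau,\TTT_{1/2}(d,\tau)]$ together with Lemma \ref{forgotten.origin} on $[\tau+T_b,T_1]\subset[\tau+T_b,\TTT_1(d,\tau)-T_a]$ (using Remark \ref{time1.missed} to locate $T_1$). The invariance $\x(t,\tau;\xxi)\in\tilde V(t)$ is then the piecewise-smooth analogue of Remark \ref{r.in2}, which holds here because each $d\in J^\Lambda_{+\infty}\subset A^{\textrm{fwd},+}(\tau)$ produces a trajectory that returns to $L^0$ at every $\TTT_n$ and stays in $\tilde V^\pm(t)$ in between (Remark \ref{r.in1}, Theorem \ref{keymissed}).

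To close the proposition I would then invoke Lemma \ref{lemma.alphan} to obtain the three quantitative alternatives for $|\alpha_j^{\ee^+}(\ep)|$: under \eqref{nondeg} and $r\ge 2$ one gets $|\alpha_j|\le c_\alpha\ep$, giving case \textbf{a)}; under \eqref{isolated} and $r>1$ one gets $|\alpha_j|\le\omega_\alpha(\ep)$ with $\omega_\alpha$ continuous, increasing and $\omega_\alpha(0)=0$, giving case \textbf{b)}; and under \eqref{minimal} one has $|\alpha_j|\le\Lambda^1$, giving case \textbf{c)}. Uniformity of the constants $\ep_0,c^*,c_\alpha$ and the modulus $\omega_\alpha$ in $\ep,\nu,\tau,\TT^+,\ee^+$ is built into Propositions \ref{forPropC}, \ref{p.fwdnbis} and Lemma \ref{lemma.alphan} (since $\omega_T(\ep)$ in Lemma \ref{estimate.aln} depends only on the eigenvalue data and on the modulus in Remark \ref{rMelni}).

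The one place that requires genuine care, rather than a bookkeeping check, is the passage $n\to\infty$: one must ensure that for a fixed $d\in J^\Lambda_{+\infty}$ the three ingredients (Proposition \ref{forPropC} on each loop, Lemma \ref{forgotten.origin} in the quiescent windows, and the ``inside'' information of Remark \ref{r.in1}) glue together into a bound valid for all $t\ge\tau$ with a single constant $c^*$. This is precisely what the uniform $c^*$ in Proposition \ref{forPropC} (independent of $d\in J_0$, $\tau$, $\nu$, $\ep$) accomplishes: the per-loop estimate is already uniform, so passage to the limit requires only that the discrete cover $\bigcup_n[\TTT_{n-1}(d,\tau),\TTT_n(d,\tau)]$ exhausts $[\tau,+\infty)$, which is the $\TTT_n\to+\infty$ observation above. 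Combining with $|\alpha_j|\le\Lambda^1$ (case \textbf{c)}), $|\alpha_j|\le\omega_\alpha(\ep)$ (case \textbf{b)}) or $|\alpha_j|\le c_\alpha\ep$ (case \textbf{a)}) concludes the proof of Proposition \ref{est.al}, hence of Theorem \ref{restatefuture} for $\ee^+\in\hat{\EE}^+$.
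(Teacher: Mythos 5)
Your proposal is correct and follows essentially the same route as the paper: the paper's proof of Proposition \ref{est.al} simply takes $d\in J^{\Lambda}_{+\infty}$, invokes Proposition \ref{p.fwdnbis} (equivalently Lemma \ref{l.stepnbis}) for every $n$ to get property $\bs{C^+_{\ee^+}}$, and then cites Lemma \ref{lemma.alphan} for the three estimates on $\al_{k_n}^{\ee^+}(\ep)$. Your write-up merely spells out the details the paper leaves implicit (non-emptiness and compactness of the nested intersection, $\TTT_n\to+\infty$, and the gluing of the per-loop bounds via the uniform constant $c^*$), all of which are consistent with the constructions in \S\ref{proof.futurebis}.
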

\begin{proof}
Let $d \in J^{\Lambda}_{+\infty}$; applying Proposition \ref{p.fwdnbis}   we obtain
that $\x(t,\tau; \Q_s(d,\tau))$ has property $\bs{C^+_{\ee^+}}$ for any $n \in \N$.
Further from Lemma \ref{lemma.alphan} we obtain the estimates on $\al_n^{\ee^+}(\ep)$.
\end{proof}

\begin{proof}[\textbf{Proof of Theorem \ref{restatefuture}}]
The proof of Theorem \ref{restatefuture} follows the lines of the proof of Proposition \ref{newhalffuture}, profiting also of the estimates
for the $\al_{k_n}^{\ee^+}(\ep)$ given in Lemma \ref{lemma.alphan}.
\end{proof}

\subsection{Construction of the chaotic patterns in backward time}\label{proof.past}

In this section we deduce    Theorem \ref{restatebis} part \textbf{b)} from  Theorem \ref{restatebis} part \textbf{a)}
and Theorem \ref{restatepast} from Theorem \ref{restatefuture}
using a standard inversion of time argument.

\begin{proof}[\textbf{Proof of Theorem \ref{restatebis} part \textbf{b)}}]
Let us set $\underline{\f}^{\pm}(\x)=- \f^{\pm}(\x)$ and $\underline{\g}(t,\x,\ep)=-\g(-t,\x,\ep)$.
Let $\x(t)$ be a solution of \eqref{eq-disc} then $\underline{\x}(t)=\x(-t)$ is a solution of
\begin{equation}\label{eq.mod}
  \dot{\x}=\underline{\f}^\pm(\x)+\ep\underline{\g}(t,\x,\ep),\quad \x\in\Om^\pm.
\end{equation}
  Further, if $\f^{\pm}$ and $\g$ satisfy the assumptions of  Theorem \ref{restatebis} part \textbf{b)}  then
  $\underline{\f}^{\pm}$ and $\underline{\g}$ satisfy the assumptions of  Theorem \ref{restatebis} part \textbf{a)}.

  Let $\TT^-=(T^-_j)$, $j\in\Z^-$ be a sequence satisfying \eqref{TandKnunew}
  for $j\in \Z^-$, $j \le -2$ and \eqref{TandKnunew_tau-} for some $\tau\in[b_0,b_1]$, and let $\ee^-=(\ee_j)$, $j\in \Z^-$; we set
  $\underline{\TT^+}=(\underline{T^+_j})$, $j \in \Z^+$, where $\underline{T^+_j}= -T^-_{-j}$, so
   that   $\underline{\TT^+}$ satisfies  \eqref{TandKnunew}  for $j \in \Z^+$ where $B_j=B_{-j}$ and \eqref{TandKnunew_tau+} with
   $\tau\in[-b_1,-b_0]$.
   Further we set $\underline{\ee^+}=(\ee_{-j})$, $j \in \Z^-$ so that $\underline{\ee^+} \in \EE^+$.

   Now we can apply Theorem \ref{restatebis}   \textbf{a)} to \eqref{eq.mod} with the sequences $\underline{\TT^+}$
   and $\underline{\ee^+} \in \EE^+$, and we get the existence of the
    compact set $X^+= X^+(\underline{\ee^+},\tau, \underline{\TT^+})$ and the sequence
     $\underline{\alpha_j}(\ep)= \underline{\al_j}^{\underline{\ee^+}}(\ep,\tau,\underline{\TT^+})$
such that   for any $\underline{\xxi}\in X^+$  the trajectory $\x(t,\tau ;  \underline{\xxi})$ has the   property
 $\bs{C_{\ee^+}^+}$.

Let us set $X^-(\ee^-,\tau, \TT^-) = X^+(\underline{\ee^+},\tau, \underline{\TT^+})$
and $\alpha_j(\ep)=-\underline{\alpha_{-j}}(\ep)$.
Going back to    the original system \eqref{eq-disc}, we see that   the trajectory $\x(t, \tau ; \underline{\xxi})$
has property  $\bs{C_{\ee^-}^-}$, so Theorem \ref{restatebis}  \textbf{b)} follows.
\end{proof}

 \begin{proof}[\textbf{Proof of Theorem \ref{restatepast}}]
In order to prove Theorem \ref{restatepast}   we can simply perform an inversion of time
argument exactly as in the proof of Theorem \ref{restatebis}    \textbf{b)} just above, and then apply   Theorem \ref{restatefuture}.
\end{proof}

 \begin{proof}[\textbf{Proof of Corollary \ref{r.allbis}}]
Let us set
\begin{equation}\label{tildecstarbis}
\tilde{c} = 2 \sup \{\|\dot{\ga}^+(t)\|; \|\dot{\ga}^-(s)\| \mid s<0<t \},
\end{equation}
and observe that $\tilde{c}>0$ is finite.
We start with the following observation.
\\ $\bullet$ \ \emph{Claim 1: for any $j \in \Z$,  $t\in\R$,  we get}
  \begin{equation}\label{easy}
     \| \ga(t+ \alpha_j(\ep))- \ga(t)\| \le  \tilde{c} |\alpha_j(\ep)|.
   \end{equation}
  In fact, assume first that $t+ \alpha_j(\ep)$ and $t$ are both negative; then
  $$ \| \ga(t+ \alpha_j(\ep))- \ga(t)\|=\| \ga^-(t+ \alpha_j(\ep))- \ga^-(t)\| \le  \tilde{c} |\alpha_j(\ep)|.$$
  In the same way we prove \eqref{easy} when   $t+ \alpha_j(\ep)$ and $t$ are both positive.

  Now assume $t<0<t+ \alpha_j(\ep)$. Then
  \begin{gather*}
  	\| \ga(t+ \alpha_j(\ep))- \ga(t)\|=\| \ga^+(t+ \alpha_j(\ep))- \ga^+(0) \| + \| \ga^-(0)-\ga^-(t)\| \\
  \leq \tilde{c}(\alpha_j(\ep)+t) +\tilde{c} |t| = \tilde{c}[\alpha_j(\ep)-|t| +|t|]
  \le  \tilde{c} |\alpha_j(\ep)|.
  \end{gather*}
  The case where $t+ \alpha_j(\ep)<0<t$ can be handled in the same way, so the claim is proved.

Now from Claim 1 we easily get Claim 2
\\
  $\bullet$ \ \emph{Claim 2:
 Let the assumptions Corollary \ref{r.allbis} be satisfied;
 \begin{itemize}
   \item assume \eqref{nondeg}, then   for any $j \in \Z$ we find
   \begin{equation}\label{easier}
 \sup_{t \in \R} \| \ga(t-T_{2j}- \alpha_j(\ep))- \ga(t-T_{2j})\|  \le  \tilde{c}   c_{\alpha} \ep
     \end{equation}
  where $c_{\alpha}>0$ is given in  Lemma \ref{lemma.alphan} point 3  and it is independent of $j$;
   \item assume
   \eqref{isolated}, then   for any $j \in \Z$ we find
 $$  \sup_{t \in \R} \| \ga(t-T_{2j}- \alpha_j(\ep))- \ga(t-T_{2j})\|   \le  \tilde{c}   \omega_{\alpha} (\ep ) =:
 \bar{\omega}_{\alpha} (\ep )$$
  where  $\omega_{\alpha} (\ep )$ is given in   Lemma \ref{lemma.alphan} point 2
  and it is independent of $j$.
 \end{itemize}}
Corollary \ref{r.allbis} follows from  Theorems \ref{restatefuture} and \ref{restatepast}
respectively,
 simply using Claim 2, the triangular inequality and choosing
 \begin{equation}\label{deftildecstar}
 \tilde{c}^*=c^*+ \tilde{c}   c_{\alpha}.
 \end{equation}
\end{proof}

\begin{proof}[\textbf{Proof of Corollary \ref{c.localization}}]
 The set $\aleph^+$ described in Theorem \ref{restatefuture} (see \eqref{XandSigma+}) is defined   by
 \begin{equation}\label{defSigma+}
   \aleph^+:= \{  \Q_s(d,\tau) \mid d \in J^{\Lambda}_{+\infty}, \, \ee \in \EE^+ \}.
 \end{equation}
 By construction $\aleph^+ \subset L^0(c^* \ep)$ and
 $$\|\Q_s(d,\tau)-  \P_s(\tau)\| \le \DDD(\Q_s(d,\tau), \P_s(\tau)) \le \ep^{\frac{1+\nu}{\und{\sigma}}},$$
 so the part of  Corollary  \ref{c.localization} concerning $\aleph^+$   immediately follows;
 the part concerning $\aleph^-$ is analogous.
\end{proof}

In fact from the argument of the proof of Proposition \ref{p.fwdn} we also get the following result.
\begin{proposition}\label{closeW}
  The trajectories $\x(t,\tau; \xxi)$ constructed via Theorems \ref{restatefuture} and \ref{restatebis} \textbf{a)}  are such that
  \begin{equation}\label{e.closeW}
    \x(t,\tau; \xxi) \in B(\tilde{W}(t), \ep^{(1+\nu)/2})
  \end{equation}
  for any $t \ge \tau$.

  Analogously the  trajectories $\x(t,\tau; \xxi)$ constructed via Theorems \ref{restatepast} and \ref{restatebis} \textbf{b)}
  are such that \eqref{e.closeW} holds for any $t \le \tau$.
\end{proposition}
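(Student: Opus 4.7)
The plan is to exploit the iterative construction of $\Sigma^+$ together with the sharp pointwise estimates \eqref{keymissed.es-}--\eqref{keymissed.es+} of Theorem~\ref{keymissed}, which bound the deviation of $\x(t,\tau;\Q_s(d,\tau))$ from a trajectory lying on $\tilde W^s$ (first half of each loop) or on $\tilde W^u$ (second half), and then to sum over the loops.

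First I would fix $\xxi=\Q_s(d,\tau)$ with $d\in J_{+\infty}^{\ee^+}(\tau,\TT^+)$ (respectively $J_{+\infty}^{\Lambda,\ee^+}$) and $i\ge 1$. By the semigroup property and \eqref{Tn}, on $[\TTT_{i-1}(d,\tau),\TTT_i(d,\tau)]$ the trajectory coincides with $\x(\cdot,\TTT_{i-1}(d,\tau);\Q_s(d_{i-1}(d,\tau),\TTT_{i-1}(d,\tau)))$. Applying Theorem~\ref{keymissed} with $d$ replaced by $d_{i-1}(d,\tau)$ and $\tau$ by $\TTT_{i-1}(d,\tau)$, one gets
\begin{equation*}
\|\x(t,\tau;\xxi)-\x(t,\TTT_{i-1}(d,\tau);\P_s(\TTT_{i-1}(d,\tau)))\|\le d_{i-1}(d,\tau)^{\sfwd_+-\mu}
\end{equation*}
on $[\TTT_{i-1}(d,\tau),\TTT_{i-\frac12}(d,\tau)]$, and the analogous bound with $\P_u(\TTT_i(d,\tau))$ on $[\TTT_{i-\frac12}(d,\tau),\TTT_i(d,\tau)]$. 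Since $\P_s(\TTT_{i-1})\in\tilde W^s(\TTT_{i-1})$ and $\P_u(\TTT_i)\in\tilde W^u(\TTT_i)$, property \eqref{proprieta} guarantees that the two reference trajectories remain in $\tilde W(t)$ throughout the corresponding half-loops, so $\x(t,\tau;\xxi)$ is within $d_{i-1}(d,\tau)^{\sfwd_+-\mu}$ of $\tilde W(t)$.

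Next I would absorb this bound into $\ep^{(1+\nu)/2}$. By construction $d\in J_i\subset I_i$ (see \eqref{In} and \eqref{Inbis}), whence $0<d_{i-1}(d,\tau)\le\ep^{(1+\nu)/\und\sigma}$ for every $i\ge 1$. Using $\und\sigma\le\sfwd_+$ and $\mu\le\mu_0\le\und\sigma^2/4$ from \eqref{mu0}, one obtains
\begin{equation*}
\frac{\sfwd_+-\mu}{\und\sigma}\ge 1-\frac{\und\sigma}{4}\ge\frac{3}{4}>\frac{1}{2},
\end{equation*}
hence $d_{i-1}(d,\tau)^{\sfwd_+-\mu}\le\ep^{(1+\nu)(\sfwd_+-\mu)/\und\sigma}\le\ep^{(1+\nu)/2}$ for all sufficiently small $\ep$, proving \eqref{e.closeW} on each loop interval and therefore on $[\tau,+\infty)$ when $\ee^+\in\hat\EE^+$ (because $\TTT_i(d,\tau)\to+\infty$). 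For $\ee^+\in\EE_0^+$ the definition of $I_{+\infty}$ in the proof of Proposition~\ref{newhalffuture} forces $d_{j^+_\SS}(d,\tau)=0$, so $\x(\TTT_{j^+_\SS}(d,\tau),\tau;\xxi)=\P_s(\TTT_{j^+_\SS}(d,\tau))$ lies on $\tilde W^s$, and \eqref{proprieta} ensures that the trajectory stays in $\tilde W^s(t)\subset\tilde W(t)$ for every $t\ge\TTT_{j^+_\SS}(d,\tau)$, so \eqref{e.closeW} holds trivially on the tail.

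For the backward statement I would invoke the time-reversal argument already employed in the proofs of Theorems~\ref{restatepast} and \ref{restatebis} \textbf{b)}: the auxiliary system \eqref{eq.mod} satisfies the hypotheses of the forward case, so applying the forward part of the proposition just proved and reverting $t\mapsto -t$ yields \eqref{e.closeW} for all $t\le\tau$. The only step that requires genuine checking is the exponent inequality $(\sfwd_+-\mu)/\und\sigma\ge 1/2$, which follows routinely from the definitions \eqref{defsigma} and \eqref{mu0}; I do not anticipate any substantive obstacle beyond carefully matching the loop-by-loop estimates with the iterative sequence $d_{i-1}(d,\tau)$.
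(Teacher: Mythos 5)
Your proposal is correct and follows essentially the same route as the paper's proof: it applies Theorem~\ref{keymissed} loop by loop starting at $\TTT_{i-1}(d,\tau)$ with parameter $d_{i-1}(d,\tau)\le\ep^{(1+\nu)/\und{\sigma}}$, uses that the reference trajectories through $\P_s$ and $\P_u$ lie on $\tilde W(t)$, and closes with the exponent estimate $d_{i-1}(d,\tau)^{\sfwd_+-\mu}\le\ep^{(1+\nu)/2}$ via \eqref{mu0}. The explicit treatment of the tail for $\ee^+\in\EE_0^+$ and the time-reversal for the backward case are details the paper leaves as ``analogous,'' and your handling of them is fine.
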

\begin{proof}
   Let us focus on the trajectories $\x(t,\tau; \xxi)$ which have property $\bs{C_{\ee^+}^+}$ which have been constructed via   Theorem
   \ref{restatebis}
   \textbf{a)}, the other cases being analogous.
   These trajectories
   are in fact built up by  Proposition \ref{p.fwdn}
    by choosing $\xxi= \Q_s(d,\tau)$ and then observing
   that
   $$ \x(\TTT_n(d,\tau),\tau; \Q_s(d,\tau))=    \Q_s(d_n(d,\tau), \TTT_n(d,\tau)),$$
   where $d \in [0, \ep^{(1+\nu)/\und{\si}}]$ and $d_n(d,\tau) \in [0, \ep^{(1+\nu)/\und{\si}}]$, for any $n \le j_\SS^+(\ee^+)$.
   Then  from Theorem \ref{keymissed} we find
    \begin{equation*}
      \begin{split}
          &   \|\x(t, \TTT_n(d,\tau); \Q_s(d_n(d,\tau), \TTT_n(d,\tau)))-\x(t, \TTT_n(d,\tau); \P_s(\TTT_n(d,\tau)))\| \\
           &  \le  |d_n(d,\tau)|^{\sfwd_+ - \mu }
   \le \ep^{1+\nu -\mu (1+\nu)/\und{\si}} \le  \ep^{(1+\nu)/2}
      \end{split}
    \end{equation*}
  for any $\TTT_n(d,\tau) \le t \le \TTT_{n+1/2}(d,\tau)$,  and any   $n < j_\SS^+(\ee^+)$. Analogously
    \begin{equation*}
      \begin{split}
 &  \|\x(t, \TTT_n(d,\tau); \Q_s(d_n(d,\tau), \TTT_n(d,\tau)))-\x(t, \TTT_{n+1}(d,\tau);  \P_u(\TTT_{n+1}(d,\tau)))\| \\
 & \le  |d_n(d,\tau)|^{\sfwd_+ - \mu }
   \le \ep^{1+\nu -\mu (1+\nu)/\und{\si}} \le  \ep^{(1+\nu)/2}
  \end{split}
    \end{equation*}
    for any $\TTT_{n+1/2}(d,\tau)  \le t \le     \TTT_{n+1}(d,\tau)$, and any   $n < j_\SS^+(\ee^+)$.
   Then \eqref{e.closeW} follows.
\end{proof}

\label{proof.local}
Now we turn to consider the proof of Corollary \ref{localize1}; we
   develop the argument in the setting
 of Theorem \ref{restatebis}, but the proof works in the setting of Theorems \ref{restatefuture} and \ref{restatepast} with no  changes.

Using known arguments from exponential dichotomy theory, see e.g.\ \cite[\S 6.2]{CDFP}, we see that   there is $c_{\gamma}>1$ such that
\begin{equation}\label{est.stableunstablebis}
\begin{split}
   &   \|\x(\theta_s+\tau,\tau; \P_s(\tau))\| < c_{\gamma} \eu^{-\frac{2 | \la_s^+|}{3}\theta_s}, \\
  &   \|\x(\theta_u+\tau,\tau; \P_u(\tau))\| < c_{\gamma} \eu^{\frac{2 \la_u^- }{3}\theta_u}
\end{split}
\end{equation}
whenever $\theta_u \le 0 \le \theta_s$, for any $\tau \in \R$.

\begin{lemma}\label{est.mani}
 Let the assumptions of Theorem \ref{restatebis} be satisfied,
 let $d \in J_1$, $\tau \in [b_0, b_1]$; then
 $$ \|\x(T_1,\tau; \P_s(\tau))\| \le c_{\gamma} \ep^{\frac{2(1+\nu)}{3}},$$
 $$ \|\x(T_1,\TTT_1(d,\tau); \P_u(\TTT_1(d,\tau)))\| \le c_{\gamma} \ep^{\frac{2(1+\nu)}{3}}.$$
\end{lemma}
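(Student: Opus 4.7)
The plan is to apply the exponential dichotomy estimates \eqref{est.stableunstablebis} to each of the two trajectories, after checking that the relevant elapsed time is of order $K_0(1+\nu)|\ln(\ep)|$.

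For the first inequality, since $\tau\in[b_0,b_1]$ the time gap $T_1-\tau\ge T_1-b_1$ is controlled directly by \eqref{TandKnunew_tau+} (or \eqref{TandKnu_tau+}), giving $T_1-\tau\ge K_0(1+\nu)|\ln(\ep)|$. Choosing $\theta_s=T_1-\tau\ge 0$ in \eqref{est.stableunstablebis} yields
$$\|\x(T_1,\tau;\P_s(\tau))\|\le c_{\gamma}\exp\!\left(-\tfrac{2|\lambda_s^+|}{3}K_0(1+\nu)|\ln(\ep)|\right)= c_\gamma \ep^{\frac{2K_0|\lambda_s^+|(1+\nu)}{3}}.$$

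For the second inequality, I would use that $d\in J_1$ forces $\TTT_1(d,\tau)\in[\beta_{2k_1},\beta'_{2k_1}]$ (see \eqref{I1hat} and \eqref{J1e}), so $\TTT_1(d,\tau)\ge \beta_{2k_1}\ge T_{2k_1}-B_{2k_1}$. Applying \eqref{TandKnunew} between $T_{2k_1-1}$ and $T_{2k_1}$ gives $T_{2k_1}-B_{2k_1}-T_{2k_1-1}\ge K_0(1+\nu)|\ln(\ep)|$, and since $T_{2k_1-1}\ge T_1$ (with equality when $k_1=1$) we obtain $\TTT_1(d,\tau)-T_1\ge K_0(1+\nu)|\ln(\ep)|$. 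Applying \eqref{est.stableunstablebis} with $\tau$ replaced by $\TTT_1(d,\tau)$ and $\theta_u=T_1-\TTT_1(d,\tau)\le 0$ gives
$$\|\x(T_1,\TTT_1(d,\tau);\P_u(\TTT_1(d,\tau)))\|\le c_\gamma \ep^{\frac{2K_0\lambda_u^-(1+\nu)}{3}}.$$
(Essentially the same argument works under the weaker \eqref{TandKnu}, replacing $B_{2k_1}$ by $\l1$ and using $\TTT_1(d,\tau)\ge \aup_{k_1}\ge T_{2k_1}-\l1$.)

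The final and most delicate step is to verify that both exponents dominate $\tfrac{2(1+\nu)}{3}$, i.e.\ that $K_0|\lambda_s^+|\ge 1$ and $K_0\lambda_u^-\ge 1$. With $K_0=\tfrac{3\overline{\Sigma}}{2\underline{\sigma}}$ from \eqref{defK0-new}, I would split into the two cases $\underline{\sigma}=\sfwd_+$ and $\underline{\sigma}=\sbwd_-$, and in each case use the appropriate bound $\overline{\Sigma}\ge \sTfwd$ or $\overline{\Sigma}\ge \sTbwd$. A direct computation using the definitions in \eqref{defsigma} shows that both ratios $\frac{\sTfwd|\lambda_s^+|}{\sfwd_+}$, $\frac{\sTbwd|\lambda_s^+|}{\sbwd_-}$, $\frac{\sTfwd\lambda_u^-}{\sfwd_+}$, $\frac{\sTbwd\lambda_u^-}{\sbwd_-}$ simplify to $\frac{\lambda_u^-+|\lambda_s^+|}{\lambda_u^-}$ or $\frac{\lambda_u^-+|\lambda_s^+|}{|\lambda_s^+|}$, both $\ge 1$, so $K_0|\lambda_s^+|,\,K_0\lambda_u^-\ge \tfrac{3}{2}>1$. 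The claimed bounds follow. The main obstacle is precisely this arithmetic verification: the constant $K_0$ is tuned in \eqref{defK0-new} exactly so the $|\ln(\ep)|$ gap absorbs the hyperbolic contraction/expansion rates and leaves a usable polynomial decay in $\ep$.
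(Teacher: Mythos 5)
Your proposal is correct and follows essentially the same route as the paper: bound the elapsed times $T_1-\tau$ and $\TTT_1(d,\tau)-T_1$ from below by $K_0(1+\nu)|\ln(\ep)|$ using \eqref{TandKnunew_tau+} and \eqref{TandKnunew}, then apply \eqref{est.stableunstablebis}. The only difference is that you spell out the arithmetic check $K_0|\la_s^+|\ge 1$, $K_0\la_u^-\ge 1$ (and treat general $k_1$), which the paper leaves implicit by citing \eqref{defK0-new}; your verification is right.
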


\begin{proof}  From
\eqref{TandKnunew},
we find
$$T_1-\tau =T_1-T_0+ T_0-\tau \ge K_0(1+\nu)|\ln(\ep)| +B_0+ T_0-\tau  \ge  K_0(1+\nu)|\ln(\ep)| .$$
Hence from \eqref{est.stableunstablebis}  and \eqref{defK0-new}  we find
$$\|\x(T_1,\tau; \P_s(\tau))\| \le c_{\gamma} \eu^{- \frac{2 |\la_s^+|}{3}(T_1-\tau)} \le  c_{\gamma} \ep^{\frac{2(1+\nu)}{3}}. $$
Analogously, by \eqref{TandKnunew},
$$\TTT_1(d,\tau)-   T_1  \ge T_2 - B_2 -T_1\ge K_0(1+\nu)|\ln(\ep)|    ; $$
then, from \eqref{est.stableunstablebis}  and \eqref{defK0-new}  we get
$$\|\x(T_1,\TTT_1(d,\tau); \P_u(\TTT_1(d,\tau)))\| \le c_{\gamma} \eu^{-\frac{ 2\la_u^-}{3}(\TTT_1(d,\tau)-   T_1)} \le  c_{\gamma}
\ep^{\frac{2(1+\nu)}{3}}.$$
\end{proof}

\begin{proposition}\label{p.localize1}
   Let the assumptions of Theorem \ref{restatebis} be satisfied; let $d \in J_1$, $\tau \in [b_0, b_1]$; then
   $$\|\x(T_1,\tau; \Q_s(d,\tau))\| \le    \ep^{\frac{1+\nu}{2}}.$$
\end{proposition}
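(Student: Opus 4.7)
The plan is to combine three ingredients already available: the localization of $T_1$ in the ``flat portion'' of the loop given by Remark \ref{time1.missed}, the closeness of $\x(t,\tau;\Q_s(d,\tau))$ to either $\x(t,\tau;\P_s(\tau))$ or $\x(t,\TTT_1(d,\tau);\P_u(\TTT_1(d,\tau)))$ from Theorem \ref{keymissed}, and the exponential decay on the stable/unstable leaves quantified in Lemma \ref{est.mani}. Using these, the proof will reduce to a triangle-inequality estimate whose two summands each decay faster than $\ep^{(1+\nu)/2}$ as $\ep\to 0$.

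First, I would split the analysis according to the position of $T_1$ relative to $\TTT_{\frac{1}{2}}(d,\tau)$. By Remark \ref{time1.missed} we have $T_1\in[\tau+T_b,\TTT_1(d,\tau)-T_a]$, and by Lemma \ref{forgottentimes} also $\TTT_{\frac{1}{2}}(d,\tau)\in[\tau+T_b,\TTT_1(d,\tau)-T_a]$, so $T_1$ falls either in $[\tau,\TTT_{\frac{1}{2}}(d,\tau)]$ (Case A) or in $[\TTT_{\frac{1}{2}}(d,\tau),\TTT_1(d,\tau)]$ (Case B).

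In Case A, applying the triangle inequality and combining \eqref{keymissed.es-} with the first estimate of Lemma \ref{est.mani} gives
\begin{equation*}
\|\x(T_1,\tau;\Q_s(d,\tau))\|\le \|\x(T_1,\tau;\Q_s(d,\tau))-\x(T_1,\tau;\P_s(\tau))\|+\|\x(T_1,\tau;\P_s(\tau))\|\le d^{\sfwd_+-\mu}+c_\gamma\ep^{\frac{2(1+\nu)}{3}}.
\end{equation*}
Case B is identical, using \eqref{keymissed.es+} and the second estimate of Lemma \ref{est.mani}, with $\P_u(\TTT_1(d,\tau))$ playing the role of $\P_s(\tau)$.

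Since $d\in J_1\subset I_1$ satisfies $d\le\ep^{(1+\nu)/\und{\sigma}}$, I get $d^{\sfwd_+-\mu}\le \ep^{(1+\nu)(\sfwd_+-\mu)/\und{\sigma}}$. Because $\und{\sigma}=\min\{\sfwd_+,\sbwd_-\}\le \sfwd_+$ and $\mu\le\mu_0\le\tfrac{1}{4}\und{\sigma}^2\le\tfrac{1}{4}\und{\sigma}$ by \eqref{mu0}, the exponent satisfies $(\sfwd_+-\mu)/\und{\sigma}\ge 1-\tfrac{1}{4}=\tfrac{3}{4}$. Hence the first summand is bounded by $\ep^{3(1+\nu)/4}$, which is $o(\ep^{(1+\nu)/2})$ as $\ep\to 0$. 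The second summand is $c_\gamma\ep^{2(1+\nu)/3}$, which is likewise $o(\ep^{(1+\nu)/2})$ since $2/3>1/2$. Thus for $\ep_0$ sufficiently small (the smallness depending only on $c_\gamma$ and on the gap between $3/4$, $2/3$ and $1/2$) one has $d^{\sfwd_+-\mu}+c_\gamma\ep^{2(1+\nu)/3}\le \ep^{(1+\nu)/2}$, which concludes the proof.

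The only delicate point is verifying that the exponents truly exceed $1/2$; this is handled above by unpacking the definitions \eqref{defsigma} and \eqref{mu0}, so no new analytic work is required and the estimate is purely algebraic. All dependence on $\nu\ge\nu_0$, $\tau\in[b_0,b_1]$ and $d\in J_1$ is uniform, as Theorem \ref{keymissed} and Lemma \ref{est.mani} provide uniform bounds on their respective domains.
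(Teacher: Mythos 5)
Your proof is correct and follows essentially the same route as the paper's: the same case split on the position of $T_1$ relative to $\TTT_{\frac{1}{2}}(d,\tau)$, the same triangle inequality combining Theorem \ref{keymissed} with Lemma \ref{est.mani}, and the same exponent bookkeeping based on \eqref{defsigma} and \eqref{mu0}. The only (harmless) difference is in the second case, where you invoke \eqref{keymissed.es+} directly to get the bound $d^{\sfwd_+-\mu}$, whereas the paper re-reads the trajectory as emanating backward from $\Q_u(D_1,\TTT_1(d,\tau))$ and applies \eqref{keymissed.eu+} with $D_1^{\sbwd_--\mu}$; your version is slightly more direct and the resulting exponents still comfortably exceed $(1+\nu)/2$.
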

\begin{proof}
  By construction $\TTT_{\frac{1}{2}}(d,\tau) \in [T_0, T_{2k_1}]$. Assume first that $\TTT_{\frac{1}{2}}(d,\tau) \ge T_1$ so that
  $[T_0,T_1] \subset [T_0, \TTT_{\frac{1}{2}}(d,\tau)]$.
  Then, from  estimate \eqref{keymissed.es-}  in Theorem \ref{keymissed}, using the fact that $d \in J_1 \subset J_0$ and Lemma
  \ref{est.mani}, we find
  \begin{equation}\label{e.localize1}
  \begin{split}
         \|\x(T_1,\tau; \Q_s & (d,\tau))\| \le \|\x(T_1,\tau; \P_s(\tau))\|
       \\ & +
     \|\x(T_1,\tau; \Q_s(d,\tau))-   \x(T_1,\tau; \P_s(\tau)) \|    \le (c_{\gamma}+1)  \ep^{\frac{2(1+\nu)}{3}}.
  \end{split}
  \end{equation}

  Now assume $\TTT_{\frac{1}{2}}(d,\tau) < T_1$ so that $[ T_1, \TTT_{1}(d,\tau)] \subset [ \TTT_{\frac{1}{2}}(d,\tau), \TTT_{1}(d,\tau)]$.
 As in the proof of Lemma \ref{forgottentimes}, let us set      $D_1=D_1(d,\tau):= \dist (\PPP_1(d,\tau),\P_u(\TTT_1(d,\tau)))$
 and observe that, by definition, see \eqref{Q1T1},
  $$\x(t,\tau; \Q_s(d,\tau)) \equiv   \x(t,\TTT_1(d,\tau); \PPP_1(d,\tau)) \equiv
   \x(t,\TTT_1(d,\tau); \Q_u(D_1,\TTT_1(d,\tau))), $$
   for any $t \in \R$.
    Further, recalling \eqref{twice}, we find   $D_1 \le \ep^{ \frac{3\sfwd_-}{4}(1+\nu)}$;
then, again from Theorem \ref{keymissed} (see \eqref{keymissed.eu+}),
   and Lemma  \ref{est.mani}, we find
  \begin{equation}\label{e.localize2}
  \begin{split}
       &  \|\x(T_1,\tau; \Q_s(d,\tau))\|   \le \|\x(T_1,\TTT_1(d,\tau);  \P_u(\TTT_1(d,\tau)))\|  \\
       & +
     \|\x(T_1,\TTT_1(d,\tau); \Q_u(D_1,\TTT_1(d,\tau)))-   \x(T_1,\TTT_1(d,\tau); \P_u(\TTT_1(d,\tau))) \|  \\
       & \le c_{\gamma}  \ep^{\frac{2(1+\nu)}{3}}+  D_1^{\sbwd_- -\mu} \le   c_{\gamma} \ep^{\frac{2(1+\nu)}{3}} + \ep^{\frac{9(1+\nu)}{16}}
       \le
       \ep^{\frac{1+\nu}{2}}
  \end{split}
  \end{equation}
and the assertion follows.
\end{proof}

Now we prove Corollary \ref{localize1}
\begin{proof}[\textbf{Proof of Corollary \ref{localize1}}]\label{proof1}
  The part of the proof of Corollary \ref{localize1} concerning $\aleph^+_1$
   follows from Proposition \ref{p.localize1} and Remark \ref{r.in2}; the part concerning
   $\aleph^-_{-1}$ follows from an inversion of time argument as in \S \ref{proof.past}.
  \end{proof}

\section{Semi-conjugacy with the Bernoulli shift}\label{proof.Bernoulli}

Let $\si:{\cal E}\to{\cal E}$ be the  (forward)  Bernoulli shift that is
$\si(\ee):=(\ee_{m+1})_{m\in\Z}$.
In this section, adapting a classical argument, we show that the  action of the forward flow of \eqref{eq-disc}
on the sets $\aleph^+$    constructed via Theorem  \ref{restatefuture} (see \eqref{XandSigma+})  is semi-conjugated  with the   forward
Bernoulli shift,
while the backward flow of  \eqref{eq-disc}
on  $\aleph^-$  constructed via Theorem \ref{restatepast} (see \eqref{XandSigma-})
is semi-conjugated with the backward Bernoulli shift.

 We obtain partial results also in the setting of   Theorem \ref{restatebis}.
In the whole section we follow quite closely \cite[\S 6]{BF11}, from which most of the ideas are borrowed.
See also \cite[\S 5]{BF12} for a survey.

Set
 \begin{equation*}\label{defE}
\begin{gathered}
\hat{\cal E}^+:=\left\{\ee\in{\cal E}^+ \mid   \sup\{m\in\Z^+ \mid \ee_m=1\}=\infty \right\}, \\
{\cal E}_0^+:=\left\{\ee\in{\cal E}^+ \mid  \sup\{m\in\Z^+ \mid \ee_m=1\}<\infty\right\}, \\
\hat{\cal E}^-:=\left\{\ee\in{\cal E}^- \mid \inf\{m\in\Z^- \mid \ee_m=1\}=-\infty\right\}, \\
{\cal \hat{E}}_0^-:=\left\{\ee\in{\cal E}^- \mid \inf\{m\in\Z^- \mid
\ee_m=1\}>-\infty\right\}.
\end{gathered}
\end{equation*}
Note that $\hat{\cal E}^+$, ${\cal E}_0^+$ are positively invariant while  $\hat{{\cal E}}^-$, ${\cal E}_0^-$ are
negatively invariant under the Bernoulli shift.
The set ${\cal E}$  becomes a
totally disconnected compact metric space with the distance
\begin{equation}\label{metric}
d(\ee',\ee'') = \sum_{m\in\Z}\frac{|\ee'_{m}-\ee''_{m}|}{2^{|m|+1}}\, ,
\end{equation}
 and the same happens to  its subsets  $\hat{\cal E}^+$, ${\cal E}_0^+$ and $\hat{\cal E}^-$, ${\cal E}_0^-$ if
we restrict the definition respectively to $\Z^+$ or to $\Z^-$.
Further   let us denote by $$\sigma^k= \stackrel{k \;\text{times}}{\overbrace{\sigma \circ \cdots \circ \sigma}}.$$

To fix the ideas let us consider the case of forward time and $\Z^+$, so let $\tau \in [b_0,b_1]$ and let
$\mathcal{T}^+= (T_m )$, $m\in\Z^+$,   be a  fixed sequence  of values satisfying \eqref{TandKnu}
and  \eqref{TandKnu_tau+}.
Following  \cite[\S 6]{BF11}
 we set
 $\mathcal{T}^{(k)}= ( T_{m+2k} )$  for any $k \in \Z$.

Let $X^+(\ee^+, \tau,\TT^+)$  and $\aleph^+(\tau, \TT^+)$
be the sets constructed via Theorem \ref{restatefuture};
we introduce the sets
\begin{equation}\label{defSk}
	\begin{gathered}
		\aleph_k^+ := \left\{ \xxi_k= \x(T_{k}, \tau; \xxi_0) \mid \xxi_0 \in X^+(\ee^+,\tau,\TT^+), \, \ee^+ \in \EE^+ \right\},\quad k \in
\Z^+,\\
		 \aleph_0^+:=\left\{ \xxi_0\in X^+(\ee^+,\tau,\TT^+)\mid \ee^+ \in \EE^+ \right\}.
	\end{gathered}
\end{equation}

Using \eqref{ej+=1w}   and \eqref{ej+=0w} we get the following.
 \begin{remark}\label{remtwo}
 Assume the hypotheses of Theorem \ref{restatefuture} \textbf{a)}, then using also Corollary \ref{r.allbis}  \textbf{a)} we find
\begin{equation}\label{e.alep}
   \aleph^+(\tau,\TT^+)=\aleph_0^+  \subset  B(\ga(0), \tilde{c}^* \ep)   , \qquad
\aleph_{2k}^+ \subset [B(\ga(0), \tilde{c}^* \ep) \cup B(\vec{0}, \tilde{c}^* \ep)]
\end{equation}
 for any $k \in \Z^+ $.

 Similarly  in the setting of Theorem \ref{restatefuture} \textbf{b)},  using also Corollary \ref{r.allbis}  \textbf{b)} we find
$$ \aleph^+(\tau,\TT^+)=\aleph_0^+  \subset  B(\ga(0), \tilde{c}^* \ep)   , \qquad
\aleph_{2k}^+ \subset [B(\ga(0), \omega(\ep)) \cup B(\vec{0}, \tilde{c}^* \ep)]$$
 for any $k \in \Z^+ $.

Finally  in the setting of Theorem \ref{restatefuture} \textbf{c)}, if we set $\bs{\Gamma^{\Lambda}}= \{ \ga(t)
\mid |t| \le \Lambda^1\}$ we find
$$ \aleph^+(\tau,\TT^+)=\aleph_0^+
  \subset B(\ga(0), \tilde{c}^* \ep)   , \qquad
\aleph_{2k}^+ \subset [B(\bs{\Gamma^{\Lambda}}, \tilde{c}^* \ep) \cup B(\vec{0}, \tilde{c}^* \ep)]$$
 for any $k \in \Z^+ $.
 \end{remark}

Further   $\x(t,\tau ; \xxi_0) \in B(\bs{\Gamma}, \sqrt{\ep})$
for any $t \ge \tau$; hence from Remark  \ref{data.smooth} we see that local
 uniqueness and continuous dependence on initial data is ensured for any
trajectory $\x(t, \tau; \xxi_0)$ such that $\xxi_0 \in \aleph_0^+$.
So we get the following.
\begin{remark}\label{remone}
  By construction,  $\xxi_0 \in \aleph^+_0$ if and only if  $\xxi_k= \x(T_{2k},\tau; \xxi_0) \in \aleph_{2k}^+$ and we have
  \begin{equation}\label{identity}
    \x(t,\tau; \xxi_0) \equiv \x(t,T_{2k}; \xxi_k)   \, , \quad \textrm{for any $t \in \R$}.
  \end{equation}
 \end{remark}

 Let $\xxi_0 \in X^+(\ee, \tau, \TT^+)$, then we set
  $\Psi_0(\xxi_0)=\ee$, so that
    $\Psi_0: \aleph_0^+ \to \EE^+$ is well defined and onto.

  Similarly, let $\xxi_k \in \aleph_{2k}^+$, then there is a  uniquely defined
  $\xxi_0\in \aleph_0^+$  such that $x(\tau, T_{2k}; \xxi_k)=\xxi_0$; further there is a uniquely defined
  $\ee \in \EE^+$ such that $\xxi_0 \in X^+(\ee, \tau,\TT^+)$. Let $\ee^k= \sigma^k(\ee)$, then we
  set $\Psi_k(\xxi_k)=\ee^k$, so that $\Psi_k: \aleph_{2k}^+ \to \EE$ is well defined and onto for any $k \ge 0$.

  \begin{remark}\label{unique.e}
 We   stress that  we might have
$\xxi_0' \ne \xxi_0''$, such that $\xxi_0',\xxi_0'' \in  X^+(\ee,\tau,\TT^+)$.
 So it follows that $\Psi_0$, and consequently  $\Psi_k$  for any $k>1$, might not be injective.
\end{remark}

\begin{proposition}\label{continuityine}
Let the assumptions of Theorem \ref{restatefuture}  hold, then
the map $\Psi_k: \aleph_{2k}^+ \to \EE^+$ is continuous.
\end{proposition}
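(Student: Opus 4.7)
The plan is to exploit the metric \eqref{metric} on $\EE^+$: two sequences are close iff they agree on a long initial block. So fix $\xxi_k \in \Sigma_{2k}^+$ with $\Psi_k(\xxi_k)=\ee^k$, and, given $N\in\N$, I will produce $\delta>0$ such that any $\xxi_k' \in \Sigma_{2k}^+$ with $\|\xxi_k'-\xxi_k\|<\delta$ satisfies $\Psi_k(\xxi_k')_j=\ee^k_j$ for $j=1,\dots,N$.

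The crucial observation is Remark \ref{remtwo}, which, since $\|\ga(0)\|>0$ is bounded away from zero independently of $\ep$, guarantees that for $\ep$ sufficiently small the two balls $B(\ga(0),\tilde c^*\ep)$ and $B(\vec 0,\tilde c^*\ep)$ are disjoint, and
\[
\Sigma_{2(k+j)}^+ \subset B(\ga(0),\tilde c^*\ep)\cup B(\vec 0,\tilde c^*\ep),
\]
with $\x(T_{2(k+j)},T_{2k};\xxi_k)\in B(\ga(0),\tilde c^*\ep)$ iff $\ee^k_j=1$, and in $B(\vec 0,\tilde c^*\ep)$ iff $\ee^k_j=0$. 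Hence to detect the first $N$ symbols of $\Psi_k(\xxi_k')$ it suffices to know in which of the two balls $\x(T_{2(k+j)},T_{2k};\xxi_k')$ lies, for $j=1,\dots,N$.

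Now I invoke continuous dependence of the flow on initial conditions on the compact time interval $[T_{2k},T_{2(k+N)}]$. The trajectories corresponding to points of $\Sigma_{2k}^+$ lie in $\tilde V(t)$ by Remark \ref{r.in2}, hence stay away from sliding regions on $\Om^0$, and every crossing of $\Om^0$ is transversal by Lemma \ref{L.loop}. Therefore Remark \ref{data.smooth} applies: the flow $\xxi\mapsto \x(t,T_{2k};\xxi)$ is a $C^r$ diffeomorphism on a neighborhood of any such trajectory, for each fixed $t\in[T_{2k},T_{2(k+N)}]$. In particular the finitely many maps
\[
\xxi \mapsto \x(T_{2(k+j)},T_{2k};\xxi),\qquad j=1,\dots,N,
\]
are continuous at $\xxi_k$, so there is $\delta>0$ such that $\|\xxi_k'-\xxi_k\|<\delta$ forces $\|\x(T_{2(k+j)},T_{2k};\xxi_k')-\x(T_{2(k+j)},T_{2k};\xxi_k)\|$ to be smaller than half the gap between the two balls for every $j=1,\dots,N$. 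Consequently $\xxi_k'$ and $\xxi_k$ visit the same ball at every time $T_{2(k+j)}$ and $\Psi_k(\xxi_k')$ agrees with $\ee^k$ in the first $N$ entries, proving continuity.

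The only delicate point in this plan is the legitimacy of applying Remark \ref{data.smooth} through the finitely many crossings of $\Om^0$ that occur in $[T_{2k},T_{2(k+N)}]$: I need to know that for $\xxi_k'$ sufficiently close to $\xxi_k$ the perturbed trajectory inherits the same transversal crossing pattern (no grazing, no entering the sliding region around $\vec 0$). This is guaranteed by the fact that the crossings of $\ga(t,T_{2k};\xxi_k)$ through $\Om^0$ occur transversely away from $\vec 0$ (at times where the orbit is close to $\ga(0)$, cf.~Lemma \ref{L.loop} and Scenario 1), combined with openness of transversality; so shrinking $\delta$ further if necessary finishes the argument.
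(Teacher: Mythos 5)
Your argument is correct and is essentially the paper's proof run forwards instead of by contradiction: both reduce continuity to agreement of the first $N$ symbols via the metric \eqref{metric}, detect each symbol from the $O(\ep)$-localization of the trajectory at $t=T_{2j}$ (Remark \ref{remtwo}), and invoke continuous dependence on initial data over the compact interval $[T_{2k},T_{2N}]$ (justified by Remark \ref{data.smooth}). The only imprecision is that under hypothesis \textbf{c)} of Theorem \ref{restatefuture} the symbol~$1$ places the trajectory in the $\tilde c^*\ep$-neighborhood of the arc $\bs{\Gamma^{\Lambda}}=\{\ga(t):|t|\le\Lambda^1\}$, not of the single point $\ga(0)$, so the separation constant must be $\min\{\|\ga(t)\|: |t|\le\Lambda^1\}$ (the paper's $K$) rather than $\|\ga(0)\|$ — a one-line fix that your citation of Remark \ref{remtwo} already contains.
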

\begin{proof}
Assume by contradiction that  $\Psi_k$ is discontinuous at some point  $\xxi \in \aleph_{2k}^+$. This means that  there is
 $\varpi>0$ such that for any $\delta>0$ we can find $\zz \in \aleph_{2k}^+$, $\| \zz- \xxi\| <\delta$  such that
$d(\ee, \ee')> \varpi$ where $\ee=(\ee_j)= \Psi_k( \xxi)$, $\ee'=(\ee_j')= \Psi_k (\zz)$.
From the definition of the distance $d$ in \eqref{metric} we get that there is $N>k$, independent of $\delta$, such that
$$\sum_{j \le N} |\ee_j-\ee'_j| \ge 1.$$
Hence there is $\ell \in \Z^+$, $\ell \le N$ such that $\ee_{\ell} \ne \ee'_{\ell}$.
 We assume for definiteness $\ee_{\ell} =1 $ and $\ee'_{\ell}=0$.

Assume the hypotheses of Theorem \ref{restatefuture} \textbf{c)}.

Let us consider the trajectories $\x(t, T_{2k} ; \xxi)$ and $\x(t, T_{2k} ; \zz)$ of \eqref{eq-disc}:
since $\| \xxi-\zz\| < \delta$,
 using the continuous dependence on initial data we can choose $\delta>0$ so that
\begin{equation}\label{no1new}
\sup \{ \|\x(t, T_{2k} ; \xxi)-\x(t, T_{2k} ; \zz)\| \mid t \in [ \tau; T_{2N}] \} <  K:=\min \left\{ \frac{\|\ga(t)\|}{4} \,\Big{|}\, |t| \le
\Lambda^1
\right\}.
\end{equation}
On the other hand, from  Theorem \ref{restatefuture} \textbf{c)} we find that $\|\x(T_{2\ell}, T_{2k} ; \xxi)-\ga(-\al_\ell^{\ee^+}(\ep))\|\le
\tilde c^* \ep$
and
$\|\x(T_{2\ell}, T_{2k} ; \zz)\|\le \tilde c^* \ep$.

So choosing
$2 \tilde{c}^* \ep\leq 2 \tilde{c}^* \ep_0 < K$ we get
\begin{equation*}
\begin{split}
& \|\x(T_{2\ell}, T_{2k} ; \xxi)  -\x(T_{2\ell}, T_{2k} ; \zz)\| \ge
  \|\ga(-\al_\ell^{\ee^+}(\ep))\| - \|\x(T_{2\ell}, T_{2k} ; \zz)\| \\
 &  -\|\x(T_{2\ell}, T_{2k} ; \xxi)- \ga(-\al_\ell^{\ee^+}(\ep))\| \ge   \|\ga(-\al_\ell^{\ee^+}(\ep))\|  - 2 \tilde{c}^* \ep  \ge K,
\end{split}
\end{equation*}
a contradiction, and the continuity follows.

Since the assumptions of Theorem \ref{restatefuture} \textbf{c)} are weaker than the ones of
Theorem \ref{restatefuture} \textbf{a)} and \textbf{b)} the lemma in these cases is proved, too.
\end{proof}

Now let us fix $\rho>0$ small enough, independent of $\ep$, so that in $B(\bs{\Gamma}, \rho)$ no sliding phenomena may take place.
Then from Remark \ref{data.smooth} we see that for any $k \in \Z^+$ the function $F_k: B(\bs{\Gamma}, \rho) \to \Omega$,
$F_k(\xxi)= \x(T_{2k+2}, T_{2k}; \xxi)$ is a homemomorphism onto its image; the same property holds for $F_0: B(\bs{\Gamma}, \rho) \to
\Omega$,
$F_0(\xxi)= \x(T_{2}, \tau; \xxi)$.
However, notice that $F_k$ is not a diffeomorphism for $k \ge 0$, since the flow of \eqref{eq-disc} is continuous in the domain but not
smooth.
Hence
by construction $F_k :  \aleph_{2k}^+ \to \aleph_{2(k+1)}^+$ is a well-defined homeomorphism too, for any $k \ge 0$.
We want to prove the following.
\begin{theorem} \label{thm.bernoulli}
Let the assumptions of   Theorem \ref{restatefuture}  be satisfied.
  Then for any $0<\ep \le \ep_0$ and any $k \in \Z^+ \cup \{0\}$, we find
\begin{equation}\label{eq.com}
\Psi_{k+1} \circ F_{k} (\xxi) = \sigma \circ \Psi_k (\xxi) \, , \qquad \xxi \in \aleph_{2k}^+,
\end{equation}
i.e., for all $k \ge 0$
the following diagram
commutes:
$$\xymatrix { {\aleph_{2k}^+} \ar[rrrrr]^{F_k}
\ar[d]_{\Psi_k} &&&&&
{\aleph_{2(k+1)}^+}\ar[d]^{\Psi_{k+1}}\\
{\EE^+} \ar[rrrrr]_{\sigma} &&&&& {\EE^+}}
$$
with the notation \eqref{defSk}.
Moreover, for all $k \ge 0$ the map $\Psi_k$ is continuous and onto.
\end{theorem}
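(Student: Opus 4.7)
My plan is to split the theorem into three independent tasks: (i) the commutativity of the semi-conjugacy square, (ii) the continuity of $\Psi_k$, and (iii) the surjectivity of $\Psi_k$. Task (ii) is already discharged by Proposition \ref{continuityine}, so effectively only (i) and (iii) require new arguments, and both turn out to be rather direct once one exploits the flow semigroup property together with the well-posedness results built up in the previous sections.

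For the commutativity, fix $k \ge 0$ and $\xxi \in \Sigma_{2k}^+$. By definition of $\Sigma_{2k}^+$ in \eqref{defSk} and Remark \ref{remone}, there exists $\xxi_0 \in \Sigma_0^+$ with $\x(T_{2k},\tau;\xxi_0)=\xxi$; moreover, since $\x(t,\tau;\xxi_0)\in B(\bs{\Gamma},\sqrt{\ep})$ for all $t\ge\tau$ and no sliding occurs there, Remark \ref{data.smooth} ensures that $\xxi_0$ is uniquely determined by $\xxi$ via backward flow. By Remark \ref{r.unique}, the corresponding $\ee\in\EE^+$ with $\xxi_0\in X^+(\ee,\tau,\TT^+)$ is also unique, so $\Psi_k(\xxi)=\sigma^k(\ee)$ is well defined. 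Now the flow semigroup identity gives
\begin{equation*}
F_k(\xxi)=\x(T_{2k+2},T_{2k};\xxi)=\x(T_{2k+2},\tau;\xxi_0),
\end{equation*}
hence $F_k(\xxi)\in\Sigma_{2(k+1)}^+$ with the \emph{same} underlying pair $(\xxi_0,\ee)$. Consequently $\Psi_{k+1}(F_k(\xxi))=\sigma^{k+1}(\ee)=\sigma(\sigma^k(\ee))=\sigma(\Psi_k(\xxi))$, proving \eqref{eq.com}. Note that $F_k:\Sigma_{2k}^+\to\Sigma_{2(k+1)}^+$ is a homeomorphism (again by Remark \ref{data.smooth}) and is surjective onto $\Sigma_{2(k+1)}^+$ by the very definition of the latter set.

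For surjectivity of $\Psi_k$, pick any $\bar\ee\in\EE^+$ and build $\ee\in\EE^+$ by prepending $k$ arbitrary symbols, i.e.\ setting $\ee_j=\bar\ee_{j-k}$ for $j>k$ and $\ee_j\in\{0,1\}$ chosen arbitrarily for $1\le j\le k$; then $\sigma^k(\ee)=\bar\ee$. By Theorem \ref{restatefuture} the set $X^+(\ee,\tau,\TT^+)$ is non-empty, so any $\xxi_0$ therein produces $\xxi_k:=\x(T_{2k},\tau;\xxi_0)\in\Sigma_{2k}^+$ with $\Psi_k(\xxi_k)=\bar\ee$. The main (already resolved) obstacle is the continuity of $\Psi_k$: this is where the discontinuous flow and the non-uniqueness of $\xxi_0$ inside a given $X^+(\ee,\tau,\TT^+)$ (cf.\ Remark \ref{unique.e}) could in principle cause trouble; Proposition \ref{continuityine} settles it by comparing, for two close initial data with differing symbols at some index $\ell\le N$, the state of the two trajectories at $t=T_{2\ell}$ using the separation of the ``close to $\ga$'' and ``close to $\vec{0}$'' regimes granted by $\bs{C^+_{\ee^+}}$. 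Assembling these three ingredients concludes the proof.
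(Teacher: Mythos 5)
Your proposal is correct and follows essentially the same route as the paper: the commutativity is obtained exactly as in the paper by pulling $\xxi$ back to the unique $\xxi_0\in\Sigma_0^+$ and its unique symbol sequence $\ee$, then using the flow semigroup identity to get $\Psi_{k+1}(F_k(\xxi))=\sigma^{k+1}(\ee)=\sigma(\Psi_k(\xxi))$, while continuity is delegated to Proposition \ref{continuityine} and surjectivity to the construction of $\Psi_k$ (your explicit "prepend $k$ symbols" argument just makes the paper's tacit ontoness claim concrete). No gaps.
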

 \begin{proof}
  We borrow the argument from the proof of Theorem 6.1 in \cite{BF11}.

 We have already shown that $\Psi_k$ is continuous and onto, so we just need to show that the diagram commutes.
 Let $\xxi_k \in \aleph_{2k}^+$ and let $\xxi_{k+1}=  F_k(\xxi_k)= \x(T_{2k+2},T_{2k}; \xxi_k)$, and $\xxi_0=\x(\tau, T_{2k}; \xxi_k) \in
 \aleph_0^+$.
 Let us denote by $\ee= \Psi_0(\xxi_0)$,  $\ee'= \Psi_k(\xxi_k)$, $\ee''= \Psi_{k+1}(\xxi_{k+1})$; then by construction
 $\ee''= \sigma^{k+1}(\ee)$ and $\ee'= \sigma^{k}(\ee)$ so that $\ee''= \sigma (\ee')$. Hence
\begin{equation*}
  \begin{split}
   \Psi_{k+1}\circ F_k(\xxi_k)=  \Psi_{k+1}(\xxi_{k+1})= \ee''= \sigma(\ee')=\sigma \circ \Psi_k (\xxi_k),
  \end{split}
\end{equation*}
so the diagram commutes.
 \end{proof}

Using the inversion of time argument of Section \ref{proof.past}, or simply repeating the argument in backward time, we reprove all the
results of this
subsection
 in the setting of Theorem \ref{restatepast}, in particular Remark \ref{remtwo}, Proposition \ref{continuityine} and
 Theorem~\ref{thm.bernoulli}
 hold for $\aleph^-$ and for $k \le 0$.

\smallskip

 \begin{remark}
  When $\g$ and consequently $\MM$ are $1$-periodic (and we choose a periodic sequence $T_k=t_0+ k\theta(\ep)$
  satisfying \eqref{TandKnu}),
   it can be proved that $\aleph_2^+=\aleph_{2k}^+$, $\Psi_1=\Psi_{k}$ and $F_1=F_k$ for any $k \in \Z^+$, so that the set $\aleph_2^+$ is
   invariant
   for the flow of $F_1$. \\ \indent
    On the contrary,
   even when $\g$ and $\MM$ are quasi-periodic or almost periodic, $\aleph_2$ differs from $\aleph_{2k}^+$ for any $k>1$
   (even if they are quite close since they satisfy \eqref{e.alep}) and consequently
   the endomorphisms $\Psi_k: \aleph_{2k}^+ \to \EE^+$ and the maps $F_k: \aleph_{2k}^+ \to \aleph_{2(k+1)}^+$ are all slightly different.
   In fact we have the same situation classically in literature when dealing with chaos in the real line with almost periodic perturbation,
   see, e.g.\ \cite{PS} and in particular the Remark at page 599 in \cite{PS}, i.e., $\aleph_2^+$ is not invariant under
   $F_1$. However
    in the periodic case we find, e.g., infinitely many periodic orbits
    and in the almost periodic case infinitely many almost
    periodic orbits.
   \\ \indent
   This kind of argument, relying on variable sets $\aleph_{2k}^+$ and variable endomorphisms $\Psi_k: \aleph_{2k}^+ \to \EE^+$
   which is needed in the almost periodic case,  as can be found  detailed in \cite{BF11}, is naturally generalized
   to our aperiodic (and one sided) setting.
\end{remark}

Now we briefly consider the setting of Theorem \ref{restatebis}. In this case
we can still define the sets $\aleph_k^+$ as in \eqref{defSk}. Further,    we can define the  mappings
$\Psi_k$ and $F_k$ as above and  we obtain that  $\Psi_k$ is onto and $F_k$ is a homeomorphism.   However
Remark \ref{remtwo} is replaced by the following weaker result.
 \begin{remark}\label{remthree}
 Assume the hypotheses of Theorem \ref{restatebis} \textbf{a)},  and set
 $\ov{B}_N= \max \{B_j \mid 1 \le j \le N \}$ and
  $\bs{\Gamma^N}= \{ \ga(t) \mid |t| \le \ov{B}_N \}$. Then
$$ \aleph^+(\TT^+)=\aleph_0^+  \subset B(\ga(0), \tilde{c}^* \ep)   , \qquad
\aleph_{2k}^+ \subset [B(\bs{\Gamma^{k}}, \tilde{c}^* \ep) \cup B(\vec{0}, \tilde{c}^* \ep)] \subset B(\bs{\Gamma}, \tilde{c}^* \ep)$$
 for any $k \in \Z^+ $.
 \end{remark}
 Notice that if $(B_j)$ is unbounded then the two sets $B(\bs{\Gamma^{k}}, \tilde{c}^* \ep)$
 and $B(\vec{0}, \tilde{c}^* \ep)$ may intersect, so $\Psi_k$ might not be continuous.

 So with the previous argument we can reprove \eqref{eq.com}  obtaining that  $\Psi_k$ is onto but possibly discontinuous, so we do not have a
 real
 semi-conjugation with the Bernoulli shift even if the diagram in Theorem \ref{thm.bernoulli} still commutes.

\appendix
\section{Formula  for the Melnikov function}

This appendix is devoted to correct the imprecise formula
for the Melnikov function $\MM(\al)$ in the $2$-dimensional case appeared
at page 757 in \cite{CaFr}.
Unfortunately the mistake has been repeated in \cite{CFPRimut} even if the formula was not explicitly used there.

Let us denote by $\mathcal{N}\bs{A}$ and by $\mathcal{R}\bs{A}$ respectively the nullspace and the range of the matrix $\bs{A}$.
First we recall that the condition \assump{F4$'$},
  \begin{description}
\item[\assump{F4$'$}] $\dim([\lspan(\vec{\nabla}G(\ga(0)))]^{\perp} \cap  \mathcal{N} \boldsymbol{P^-} \cap \mathcal{R}
    \boldsymbol{P^+})=n-2=0,$
\end{description}
 required in \cite{CaFr} follows from $\bs{K}$, since $\mathcal{N} \boldsymbol{P^-}=\lspan[\f^-(\ga(0))]$ and $\mathcal{R}
 \boldsymbol{P^+}=\lspan[\f^+(\ga(0))]$
 in the $2$-dimensional case.

 We denote by $\vec{\psi}=\bs{J} \frac{\na G (\ga(0))}{\|\na G (\ga(0))\|} $ where
 $\boldsymbol{J}=\left(\begin{smallmatrix}0&-1\\1&0\end{smallmatrix}\right)$.
 Further we denote by $\vec{w}^{\pm}= \bs{J} \frac{\f^{\pm}(\ga(0))}{ \|\f^{\pm}(\ga(0))\|}$.

 We start from the general formula for $\MM(\tau)$ developed in \cite{CaFr} at page 753 for the $n \ge 2$ case (which is correct) and we
 detail
 the reduction to the $2$-dimensional case:
 \begin{equation*}
    M(\al)=\int_{-\infty}^{+\infty} \vec{\psi}^*(t)\g(t+\al,\ga(t),0) dt\,
\end{equation*}
where
$$\vec{\psi}(t)= \left\{\begin{array}{ll} \vec{\psi}^-(t)=
[[\boldsymbol{X^-}(t)]^{-1}]^*  [\boldsymbol{R^-}]^* \vec{\psi} &
\text{if $t<0$}, \\ %
\vec{\psi}^+(t)=[[\boldsymbol{X^+}(t)]^{-1}]^* [\boldsymbol{R^+}]^* \vec{\psi} & \text{if $t \ge 0$,}
\end{array} \right.
$$
 and $\bs{X^{\pm}}(t)$ is the fundamental matrix of the variational system, i.e.\ the solution of $\bs{\dot{X}^{\pm}}=
 \bs{f^{\pm}_x}(\ga(t))\bs{X^{\pm}}$, respectively for $t\ge 0$ and $t \le 0$,
 such that $\bs{X^{\pm}}(0)=\bs{I}$ is the identity, and
 $\boldsymbol{R^{\pm}}$ is the projection with range $\mathcal{R}(\bs{R^{\pm}})=\lspan[\vec{\psi}]$ and nullspace
 $\mathcal{N}(\bs{R^{\pm}})=\lspan[\f^{\pm}(\ga(0))]$, i.e.
 $$\boldsymbol{R^\pm}=\bs{I}-\frac{\f^\pm(\ga(0))\na G(\ga(0))^*}{\na G(\ga(0))^*\f^\pm(\ga(0))}.$$
 Recall that $\mathcal{R}[(\bs{R^{\pm}})^*]$ is the orthogonal complement to $\mathcal{N}(\bs{R^{\pm}})$, so
  there are $C_{\perp}^{\pm} \in \R$   such that $[\bs{R^{\pm}}]^*\vec{\psi}= C_{\perp}^{\pm} \vec{w}^{\pm}$.
  Omitting  the dependence of $G$ and $\f^{\pm}$ on $\ga(0)$, we find
 \begin{equation*}
	\begin{split}
		&   ((\bs{R^{\pm}})^* \bs{J}\na G)^*  \,  \bs{J}\f^{\pm }     =
		  (\bs{J} \na G)^*  \,    \bs{R^{\pm}}\bs{J}\f^{\pm }     = \left(\bs{J} \na G \right)^{*}\left(\bs{I}-\frac{\f^\pm\na G^*}{\na
G^*\f^\pm}\right)\bs{J}\f^{\pm }\\
		&= [\bs{J}\na G]^{*}[\bs{J}\f^{\pm }]-\frac{[\bs{J}\na G]^{ *}\f^\pm \cdot \na G^*[\bs{J}\f^{\pm}]}{\na G^*\f^\pm}
		= \frac{(\na G^*\f^\pm)^2+[(\bs{J}\na G)^{ *}\f^\pm]^2}{\na G^*\f^\pm}\\
		&= \frac{\|\na G\|^2\|\f^\pm\|^2}{\na G^*\f^\pm},
	\end{split}
\end{equation*}
  where we  used $[\bs{J}\vec{v}]^*[\bs{J}\vec{w}]= \vec{v}^*\vec{w}$ and $\bs{J}\bs{J} \vec{v}= - \vec{v}$.
  Hence
  \begin{equation*}
    \begin{split}
        &  C_{\perp}^{\pm} =  ((\bs{R^{\pm}})^* \vec{\psi})^*  \,  \vec{w}^{\pm}   =
         \frac{\|\na G(\ga(0))\| \cdot \|\f^\pm(\ga(0))\|}{ \na G(\ga(0))^*  \f^\pm(\ga(0))} >0
    \end{split}
  \end{equation*}
by \assump{K}.

  Then, following \cite[page 253]{Pa84}, we see that  if $t<0<s$ we find
  $$\vec{\psi}^{-}(t)= C_{\perp}^{-} [(\bs{X^-}(t))^{-1}]^* \vec{w}^-= c_{\perp}^{-} \eu^{-\int_0^t\operatorname{div}\f^-(\ga(\tau))d\tau }
  \bs{J}\f^-(\ga(t)) \,, $$
 $$\vec{\psi}^{+}(s)= C_{\perp}^{+} [(\bs{X^+}(s))^{-1}]^* \vec{w}^+=  c_{\perp}^{+}\eu^{-\int_0^s\operatorname{div}\f^+(\ga(\tau))d\tau }
 \bs{J}\f^+(\ga(s))$$
 with
\begin{equation}\label{cperp}
	c_{\perp}^{\pm}=\frac{C_{\perp}^{\pm}}{\|\f^\pm(\ga(0))\|}
	=\frac{\|\na G(\ga(0))\|}{ \na G(\ga(0))^*  \f^\pm(\ga(0)) } >0.
\end{equation}
 Therefore the Melnikov function $\MM(\al)$ can be written as follows:
  \begin{equation}\label{melni.correct}
    \begin{split}
   \mathcal{M}(\al) &=    c_{\perp}^{-}\int_{-\infty}^{0} \eu^{-\int_0^t
			\tr\boldsymbol{\f_x^-}(\ga^-(s))ds} \left( \bs{J}\f^-(\ga(t)) \right)^*  \,
		 \g(t+\al,\ga^-(t),0)  dt \\
   & + c_{\perp}^{+} \int_{0}^{+\infty} \eu^{-\int_0^t
			\tr\boldsymbol{\f_x^+}(\ga^+(s))ds} \left( \bs{J} \f^+(\ga(t))\right)^*  \,
		 \g(t+\al,\ga^+(t),0) dt\\
   &= c_{\perp}^{-}\int_{-\infty}^{0} \eu^{-\int_0^t
			\tr\boldsymbol{\f_x^-}(\ga^-(s))ds} \f^-(\ga(t))
		\wedge \g(t+\al,\ga^-(t),0) dt\\
		&\quad {}+ c_{\perp}^{+} \int_{0}^{+\infty} \eu^{-\int_0^t
			\tr\boldsymbol{\f_x^+}(\ga^+(s))ds} \f^+(\ga(t))
		\wedge \g(t+\al,\ga^+(t),0) dt.
\end{split}
\end{equation}


\begin{thebibliography}{99}


	\bibitem{BF02C} {\sc  F. Battelli and M. Fe\v ckan}: {\it Chaos arising
		near a topologically transversal homoclinic set},
	Topol. Methods Nonlinear Anal. {\bf 20} (2002), no. 2, 195--215.

\bibitem{BF02M} {\sc  F. Battelli and M. Fe\v ckan}: {\it Some remarks on
the Melnikov function},
Electron. J. Differential Equations {\bf 2002}, no. 13, 29 pp.
	
	\bibitem{BF08} {\sc  F. Battelli and M. Fe\v ckan}: {\it Homoclinic trajectories in discontinuous systems},
	J. Dyn. Differ. Equations {\bf 20} (2008), no. 2,  337--376.
	
	\bibitem{BF10} {\sc  F. Battelli and M. Fe\v ckan}: {\it   An example of chaotic behaviour in presence of a sliding homoclinic orbit},
Ann. Mat. Pura Appl. \textbf{189}
(2010), no. 4, 615--642.
	
	\bibitem{BF11} {\sc  F. Battelli and M. Fe\v ckan}: {\it On the chaotic behaviour of discontinuous systems},
	J. Dyn. Differ. Equations {\bf 23} (2011), no. 3, 495--540.
	
	\bibitem{BF12} {\sc  F. Battelli and M. Fe\v ckan}: {\it  Nonsmooth homoclinic orbits, Melnikov functions and chaos in discontinuous
systems}, Phys. D \textbf{241}
(2012), no. 22, 1962--1975.


 \bibitem{BW} {\sc K. Burns and H. Weiss}: {\it A geometric criterion for positive topological entropy}, Comm.
Math. Phys. {\bf 172} (1995), 95--118.
	




	\bibitem{CDFP}  {\sc A. Calamai, J. Dibl\' ik, M. Franca, and M. Posp\' i\v sil }: {\it On the position of chaotic trajectories},  J. Dyn.
Differ. Equations {\bf 29}
(2017), no. 4, 1423--1458.

	\bibitem{CaFr} {\sc A. Calamai and M. Franca}: {\it Melnikov methods and homoclinic orbits in discontinuous systems},
	J. Dyn. Differ. Equations {\bf 25} (2013), no. 3, 733--764.

\bibitem{CFPRimut}  {\sc A. Calamai,  M. Franca, and M. Posp\' i\v sil}: {\it On the dynamics of non-autonomous systems in a neighborhood of a
    homoclinic
    trajectory},   Rend. Mat. Univ. Trieste, {\bf 56} (2024), art. no. 10, 67 pp.

    \bibitem{CFPdiscAr}  {\sc A. Calamai,  M. Franca, and M. Posp\' i\v sil}: {\it  A new construction for Melnikov chaos in piecewise-smooth
        planar systems},  preprint (2025), 51 pages,
https://doi.org/10.48550/arXiv.2507.15543
	


\bibitem{CHM} {\sc S.N. Chow, J.K. Hale, and J. Mallet-Paret}: {\it An example of bifurcation to homoclinic orbits},
J. Differential Equations {\bf 37} (1980), no. 3, 351--373.
	


  \bibitem{DE13}    {\sc L. Dieci, C. Elia and L. Lopez}: {\it A Filippov sliding vector field on an attracting co-dimension 2 discontinuity
      surface, and a limited
      loss-of-attractivity analysis}, J. Differential Equations 254 (2013), no. 4, 1800--1832.
	
	\bibitem{FrPo} {\sc  M. Franca and M. Posp\' i\v sil}: {\it New global bifurcation diagrams for piecewise smooth systems: Transversality
of homoclinic points does not
imply chaos},
	J.  Differ. Equations {\bf 266} (2019), 1429--1461.

\bibitem{mFaS}
{\sc M. Franca and A. Sfecci}: {\it On a diffusion model with absorption and production},
Nonlinear Anal. Real World Appl. \textbf{34} (2017), 41--60.



\bibitem{G1} {\sc J. Gruendler}: {\it Homoclinic solutions for autonomous ordinary differential equations with nonautonomous perturbations},
J. Differ. Equ. {\bf 122} (1995), no. 1, 1--26.


\bibitem{GZ23} {\sc O. Gjata and F. Zanolin}: {\it An application of the Melnikov method to a piecewise oscillator},
Contemp. Math. {\bf 4} (2023), no. 2, 249--269.

\bibitem{GH} {\sc J. Guckenheimer and P. Holmes}: {\it Nonlinear Oscillations,
Dynamical Systems, and Bifurcations of Vector Fields},
Springer--Verlag, New York, 1983.

\bibitem{HK} {\sc J. Hale and H. Ko\c{c}ak}: {\it Dynamics and Bifurcations}, Volume 3, Springer--Verlag, New York, 1991.


\bibitem{HL24} {\sc D. Hua and X. Liu}: {\it Limit cycle bifurcations near nonsmooth homoclinic cycle in discontinuous systems},
J. Dyn.
Differ. Equations (2024), https://doi.org/10.1007/s10884-024-10358-7.



\bibitem{Jsell}
{\sc R. Johnson}: {\it Concerning a theorem of Sell}, J. Differential Equations \textbf{30} (1978),
324--339.	

\bibitem{JPY} {\sc R. Johnson, X.B. Pan and Y.F. Yi}: {\it The Melnikov method and
elliptic equations with critical exponent},
Indiana Univ. Math. J. {\bf 43} (1994), no. 3, 1045--1077.


\bibitem{HuLi} Hua D, Liu X. \textit{Bifurcations of degenerate homoclinic solutions in discontinuous systems under non-autonomous
    perturbations}, Chaos
    \textbf{34(6)} (2024),   doi: 10.1063/5.0200037.

\bibitem{K2} {\sc P. Kuku\v cka}: {\it Melnikov method for discontinuous planar systems},
Nonlinear Anal. {\bf 66} (2007), no. 12, 2698--2719.

\bibitem{KRG} {\sc Yu.A. Kuznetsov, S. Rinaldi and A. Gragnani}: {\it One-parametric bifurcations in planar Filippov systems},
Int. J. Bifurcation Chaos {\bf 13} (2003), no. 8, 2157--2188.



\bibitem{LSZH16} {\sc S. Li, C. Shen, W. Zhang, and Y. Hao}: {\it The Melnikov method of heteroclinic orbits for a class of planar hybrid
    piecewise-smooth systems and
    application}, Nonlinear Dynam. \textbf{85} (2016), no. 2, 1091--1104.

\bibitem{LPT} {\sc J. Llibre, E. Ponce and A.E. Teruel}:
{\it Horseshoes near homoclinic orbits for piecewise linear
differential systems in $\R^3$},
Int. J. Bifurcation Chaos {\bf 17} (2007), no. 4, 1171--1184.


\bibitem{MV21} {\sc O. Makarenkov and F. Verhulst}: {\it Resonant periodic solutions in regularized impact oscillator},
J. Math. Anal. Appl. {\bf 499} (2021), Paper no. 125035, 17 pp.

\bibitem{Me}
{\sc V.K. Melnikov}:  {\it On the stability of the center for time-periodic perturbations},
Trans. Mosc. Math. Soc. {\bf 12} (1963), 1--56.

\bibitem{MS} {\sc K.R. Meyer and G.R. Sell}: {\it Melnikov transforms,
Bernoulli bundles, and almost periodic perturbations}, Trans.
Amer. Math. Soc. {\bf 314} (1989), 63--105.

\bibitem{Pa84} {\sc K.J. Palmer}: {\it Exponential dichotomies and transversal
homoclinic points}, J. Differential Equations {\bf 55} (1984), no. 2, 225--256.

\bibitem{Pa00} {\sc K.J. Palmer}: {\it Shadowing in Dynamical systems. Theory and Applications},
{\bf 501}, Kluwer Academic Publishers, Dordrecht, 2000.

\bibitem{PS} {\sc K.J. Palmer and D. Stoffer}: {\it Chaos in almost periodic systems},
Z. Angew. Math. Phys.  (ZAMP) {\bf 40} (1989), no. 4, 592--602.

 \bibitem{PZ}
{\sc  D. Papini and F. Zanolin}: {\it Periodic points and chaotic-like dynamics
of planar maps associated to nonlinear Hill's equations with
indefinite weight}, Georgian Math. J. {\bf 9} (2002),  339--366.



	
\bibitem{Sch} {\sc J. Scheurle}: {\it Chaotic solutions of systems with almost periodic forcing},
Z. Angew. Math. Phys.  (ZAMP) {\bf 37} (1986), no. 1, 12--26.

 \bibitem{SW}  {\sc R. Srzednicki and K. Wojcik}: {\it A geometric method for detecting chaotic dynamics},
J. Differ. Equ. {\bf 135} (1997), 66--82.

\bibitem{St} {\sc D. Stoffer}: {\it Transversal homoclinic points and hyperbolic sets for non-autonomous maps I, II},
Z. Angew. Math. Phys.  (ZAMP) {\bf 39} (1988), no. 4,  518--549 \textup{and} no. 6,  783--812.


\bibitem{WigBook} {\sc S. Wiggins}: {\it Introduction to Applied Nonlinear Dynamical Systems and Chaos},
Springer, New York, 2003.

\bibitem{Wig99} {\sc S. Wiggins}: {\it Chaos in the dynamics generated by sequences of maps,
with applications to chaotic advection in flows with aperiodic
time dependence}, Z. Angew. Math. Phys. (ZAMP) {\bf 50} (1999),  no. 4, 585--616.




\end{thebibliography}
\end{document}